\newtheorem{definition}{Definition}[section]
\newtheorem{theorem}{Theorem}[section]
\newtheorem{proposition}{Proposition}[section]
\newtheorem{remark}{Remark}[section]
\newtheorem{corollary}{Corollary}[section]
\newtheorem{lemma}{Lemma}[section]
\numberwithin{equation}{section}
\title[Construction of blow-up solutions]{Construction of type II blow-up solutions for the energy-critical wave equation with an inverse-square potential in dimension 3}
\author{Dinghan Wang}
\address{School of Mathematical Sciences\\ University of Science and Technology of China\\ Hefei 230026\\ Anhui\\ China}
\email{wangdinghan@mail.ustc.edu.cn}
\date{\today}
\subjclass{}
\keywords{blowup analysis, distorted Hankel transform, energy-critical wave equation, inverse-square potential, soliton}
\begin{document}
	
	\begin{abstract}
		In this paper, we construct finite-time type-II blow-up solutions for the focusing energy-critical wave equation with an inverse-square potential
		$$\partial_t^2 u-\Delta u+\frac{\alpha}{|x|^2}u = u^5,$$
		with discussions of the influence of the potential being made. The key ingredients include an approxiamte construction of solutions and spectral analysis of the linearized operator, especially the distorted Hankel transform. The construction is based on the framework established by Krieger-Schlag-Tataru in 2009.
	\end{abstract}
	\maketitle
	\tableofcontents
	
	\section{Introduction}
	
	We consider the energy-critical wave equation with an inverse-square potential in spacial dimension $3$:
	\begin{equation}\label{eq:NLW}
		\partial_t^2 u - \Delta u + \frac{\alpha}{r^2} = u^5, \tag{NLWp}
	\end{equation}
	where $\frac{\alpha}{r^2}$ is the inverse-square potential and $\alpha>-\frac{1}{4}$ denotes the potential strength. Here we restrict the range of $\alpha$ to be $(-\frac{1}{4},+\infty)$ in order to ensure that $\mathcal{L}_\alpha:=-\Delta+\frac{\alpha}{r^2}$ is a positive operator, which enables us to use wave technics to study (\ref{eq:NLW}).  Here we interpret $\mathcal{L}_\alpha$ as the Friedrichs extension of the quadratic form defined on $C_0^\infty(\mathbb{R}^3\backslash\{0\})$ via
	\begin{equation*}
		u\mapsto \int_{\mathbb{R}^3}\left(|\nabla u|^2+\frac{\alpha}{r^2}u^2\right)dx,\quad u\in C_0^\infty(\mathbb{R}^3\backslash\{0\}).
	\end{equation*}
	We define the $\alpha$-Sobolev space $\dot{H}_\alpha^1(\mathbb{R}^3)$, that is the Sobolve spce associated with $\mathcal{L}_\alpha$, to have the squared norm
	\begin{equation*}
		\|u\|_{\dot{H}_\alpha^1(\mathbb{R}^3)}^2:=\left<\mathcal{L}_\alpha u,u\right>=\int_{\mathbb{R}^3}\left(|\nabla u|^2+\frac{\alpha}{r^2}u^2\right)dx,\quad u\in C_0^\infty(\mathbb{R}^3\backslash\{0\}).
	\end{equation*}
	We also denote by $\dot{H}_\alpha^1(\mathbb{R}^3)\times L^2(\mathbb{R}^3)$ the energy space and say $(u,\partial_t u)\in C(I,\dot{H}_\alpha^1(\mathbb{R}^3)\times L^2(\mathbb{R}^3))$ is a solution of (\ref{eq:NLW}) if the following Duhamel formula holds:
	\begin{equation*}
		u(t)=\cos(t\sqrt{\mathcal{L}_\alpha})u(0)+\frac{\sin(t\sqrt{\mathcal{L}_\alpha})}{\sqrt{\mathcal{L}_\alpha}}\partial_t u(0)+\int_{0}^{t}\frac{\sin((t-s)\sqrt{\mathcal{L}_\alpha})}{\sqrt{\mathcal{L}_\alpha}}u^5(s) ds,
	\end{equation*}
	where $\sqrt{\mathcal{L}_\alpha}$ is defined via $L^2$-functional calculus. If $\alpha>-\frac{1}{4}+\frac{1}{25}$, then (\ref{eq:NLW}) is locally well-posed, see \cite{MMZ20}.
	
	The solution of (\ref{eq:NLW}) admits a conserved energy
	\begin{equation*}
		\mathcal{E}[u](t):=\int_{\mathbb{R}^3}\left(\frac{1}{2}(\partial_t u)^2 + \frac{1}{2}|\nabla u|^2 + \frac{\alpha}{r^2} u^2 - \frac{u^6}{6}\right)dx.
	\end{equation*}
	As one may check, if $u$ is a solution to (\ref{eq:NLW}), then so is $u_\lambda$, where
	\begin{equation*}
		u_\lambda(t,x)=\lambda^{\frac{1}{2}}u(\lambda t,\lambda x),\quad \lambda>0,
	\end{equation*}
	and we have
	\begin{equation*}
		\mathcal{E}[u]=\mathcal{E}[u_\lambda].
	\end{equation*}
	It is the property of the scale-invarience of energy that we call (\ref{eq:NLW}) the energy-critical equation.
	
	\par As is shown in \cite{MZZ13}, (\ref{eq:NLW}) has a stationary solution, called the ground state soliton,
	\begin{equation*}
		W_\alpha(r)=(3\beta^2)^{\frac{1}{4}}\left(\frac{r^{\beta-1}}{1+r^{2\beta}}\right)^{\frac{1}{2}}, \quad r=|x|,
	\end{equation*}
	where $\beta:=\sqrt{1+4\alpha}>0$, and it is the unique (up to translations and scaling) positive radial solution of the nonlinear elliptic equation
	\begin{equation*}
		-\Delta u + \frac{\alpha}{r^2} u = u^5.
	\end{equation*}

	\par (\ref{eq:NLW}) can be viewed as a natural generalization of energy-critical wave equation, which corresponds to $\alpha=0$. One of the key features of $\mathcal{L}_\alpha$ is its scale-invarience: it is homogeneous of degree $-2$, due to which $\mathcal{L}_\alpha$ often appears as a scaling limit of non-homogeneous operators (for example, in strong resolvent sence). As a result, the Laplacian $\Delta$ and the potential $\frac{\alpha}{r^2}$ can be thought of equally strong at any length scale, which makes it impossible to use simple perturbative arguments. 
	
	\par The inverse-square potential arises frequently in many contents of mathematics and physics, where the scaling behavior may not hold everywhere, but at least in a certain region of the space, for example near the origin, or near the infinity, or both. For example, the Dirac equation with an Coulomb potential can be recast in a form of Klein-Gordon equation with an inverse-square potential, plus some other terms. It also appears in the linearized perturbations of classical space-time metrices to the Einstein equation in general relativity, such as Schwarzschild solution or Reissner–Nordström solution. Furthermore, $\mathcal{L}_\alpha$ is also related to the geometry of manifolds with conic sigularities. (\ref{eq:NLW}) is often to be thought as a model problem for more complicated settings of actual physics and geometric interest.
	
	\par In the defocusing case, that is to replace $u^5$ in the RHS of (\ref{eq:NLW}) by $-u^5$, \cite{MMZ20} shows the global existence and scattering. But for the focusing case we are considering now, finite-time blow-up may occur. According to the boundedness of the solution in energy space, there are two types of blow-up solutions. Type I solutions are those when approaching the blow-up time (say $0$), 
	\begin{equation*}
		\limsup\limits_{t\to 0^+}\|(u,\partial_t u)\|_{\dot{H}_\alpha^1(\mathbb{R}^3)\times L^2(\mathbb{R}^3)}=\infty.
	\end{equation*}
	Type II solutions are those whose energy norm remain bounded:
	\begin{equation*}
		\limsup\limits_{t\to 0^+}\|(u,\partial_t u)\|_{\dot{H}_\alpha^1(\mathbb{R}^3)\times L^2(\mathbb{R}^3)}<\infty.
	\end{equation*}
	In this paper, we consider the type II blow-up solutions.
	
	\par The soliton resolution conjecture is a well-known conjecture for general dispersive equations. In the radial context of energy-critical wave equations, it states that any radial type II finite-time blow-up solution of (\ref{eq:NLW}) decomposes into a finite sum of decoupled solitons and a radiation (we assume the blow-up time to be $0$):
	\begin{equation}\label{eq:sol_res}.
		u(t,r)=\sum_{j=1}^{N}\iota_j\lambda_j^{\frac{1}{2}}(t) W_\alpha(\lambda_j(t)r) + v(t,r) + o_{\dot{H}_\alpha^1\times L^2}(1),\quad t\to 0^+,\quad \iota_j\in\{\pm 1\}.
	\end{equation}
	For energy-critical wave equation, that is $\alpha=0$, the soliton resolution conjecture has been proved in the radial case through a series of works by Duyckaerts, Kenig, Merle, Jendrej and Lawrie, see \cite{DKM23}\cite{JL23} and references therein. For general $\alpha$, this was also proved by Li, Miao and Zhao in \cite{LMZ22} with a technical restriction on $\alpha$. The aim of this paper is to \textit{construct} a solution of the type predicted by the soliton resolution conjecture. We shall confine ourselves to the \textit{one bubble} case, that is $N=1$ in (\ref{eq:sol_res}).

	\par There are lots of works concerning the constrction of blow-up solutions for the focusing energy-critical wave equation. In \cite{KST09a}, Krieger, Schlag and Tataru constructed finite-time blow-up solutions in dimension $3$ with polynomial blow-up rate $\lambda(t)=t^{-1-\nu}$ for any $\nu>\frac{1}{2}$. Later, the bound of $\nu$ is relaxed to $\nu>0$ by Krieger and Schlag in \cite{KS14}, which contains all possible powers for a polynomial blow-up rate because of the dynamical condition $\lim_{t\to 0^+}t\lambda(t)=\infty$. Recently, the above result is generalized to dimension $4$ and $5$ by Samuelian in \cite{Sa24}, but it seems harder to further apply the same method to $6$ or higher dimensions. There are also results with prescirbed non-polynomial blow-up rate, for example, Donninger, Huang, Krieger and Schlag \cite{DHKS14} shows that the oscillatory blow-up rate $\lambda(t)=t^{-1-\nu(t)}$, where $\nu(t)=\nu+\varepsilon_0\frac{\sin(\log(t))}{\log(t)}$ is admissble in dimension $3$ for $|\varepsilon_0|\ll 1$ and $\nu>3$. Based on a different approach, Hillairet and Raphaël \cite{HR12} obtained $C^\infty$ blow-up solutions in dimension $4$ with blow-up rate $\lambda(t)=t^{-1}e^{\sqrt{-\log(t)}(1+o(1))}$. In \cite{Jen17}, Jendrej obtained blow-up solutions in dimension $5$ with $\lambda(t)=C t^{-4}+o(t^{-4})$ and $\lambda(t)=t^{-1-\nu}$ for $\nu>8$ with nondegenerate asymptotic profile.

	The following is the main result of this paper, which is in the same spirit of \cite{KST09a}.
	
	\begin{theorem}[main theorem]\label{thm:main}
		Let $\nu>0$ and $-\frac{1}{4}+\frac{1}{16}<\alpha<\frac{15}{4}$ with $\nu>\frac{1}{2\sqrt{1+4\alpha}}$. For any $\delta>0$, there exists an energy solution of (\ref{eq:NLW}), which blows up precisely at $r=t=0$ and which has the following property. In the light cone $|x|=r\leq t$ and for small times $t$, the solution has the form, with $\lambda(t)=t^{-1-\nu}$,
		\begin{equation*}
			u(t,x)=\lambda^{1/2}(t)W_\alpha(\lambda(t)x)+\eta(t,x),
		\end{equation*}
		where the local energy of $\eta(t,\cdot)$ tends to $0$ as $t\to0$, i.e.
		\begin{equation*}
			\mathcal{E}_{\mathrm{loc}}[\eta](t):=\int_{|x|<t}\left((\partial_t \eta)^2+|\nabla \eta|^2+\frac{\alpha}{r^2}u^2+\eta^6\right)dx\to 0,\quad t\to 0^+,
		\end{equation*}
		and outside the light cone, $u(t,x)$ satisfies
		\begin{equation*}
			\int_{|x|\geq t}\left(+|\partial_tu(t,x)|^2+|\nabla u(t,x)|^2+\frac{\alpha}{r^2}u^2+|u(t,x)|^6\right)dx<\delta
		\end{equation*}
		for all sufficiently small $t>0$. In particular, the energy of these blow-up solutions can be chosen arbitrary close to $E(W_\alpha,0)$, that is, the energy of the stationary solution. 
	\end{theorem}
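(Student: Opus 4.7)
The plan is to follow the Krieger--Schlag--Tataru framework of \cite{KST09a}, making the modifications required by the presence of the singular potential $\alpha/r^2$. Introduce the self-similar coordinate $a=\lambda(t)r$ and the slow time $\tau=\int_t^{t_0}\lambda(s)\,ds$, so that the profile $\lambda^{1/2}(t)W_\alpha(\lambda(t)r)$ becomes time-independent in $(a,\tau)$ up to errors controlled by $\dot\lambda/\lambda^2$, which is small under the condition $\lim_{t\to 0^+}t\lambda(t)=\infty$. After the usual radial reduction $\phi(a)=aW_\alpha(a)$, the linearization of $\mathcal{L}_\alpha-5W_\alpha^{4}$ reads
\[
L_\alpha=-\partial_a^{2}+\frac{\alpha}{a^{2}}-5W_\alpha^{4}(a),
\]
whose explicit ground state/resonance structure will be needed throughout; since $\partial_\lambda[\lambda^{1/2}W_\alpha(\lambda r)]|_{\lambda=1}$ is in the kernel of $L_\alpha$ (up to the rescaling), the spectral picture is analogous to the $\alpha=0$ case but with decay and oscillation rates depending on $\beta=\sqrt{1+4\alpha}$.

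Next, I would build an approximate blow-up profile $u_{2k-1}$ by the inductive scheme of \cite{KST09a}: starting from $u_1=\lambda^{1/2}(t)W_\alpha(a)$, at each odd step one adds an inner correction $v_{2k-1}(a,\tau)$ obtained by solving an elliptic problem $L_\alpha v_{2k-1}=\text{error}_{2k-2}$ by variation of parameters, which produces the correct polynomial decay in $a$ inside the light cone; at each even step one adds an outer correction $v_{2k}(b,\tau)$ with $b=r/t$ solving an inhomogeneous hyperbolic problem in the exterior of the cone, matched across $a\sim\tau$. Every iteration gains a factor of $\tau^{-2}$ (or, in places where the potential causes resonant behaviour, $\tau^{-2}\log\tau$), and the gains compound precisely when $\nu\beta>1/2$, yielding the hypothesis $\nu>1/(2\beta)$ in the theorem. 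The upper bound $\alpha<15/4$ (i.e.\ $\beta<4$) should come from ensuring that $W_\alpha^{4}(a)\sim a^{2\beta-2}$ at the origin is integrable enough against $L_\alpha^{-1}$-type kernels so that the iterates remain in the function spaces one uses (weighted $L^\infty$ spaces in $a$ times polynomials in $\tau^{-1}$, $\log\tau$); the lower bound $\alpha>-1/4+1/16$ (i.e.\ $\beta>1/2$) matches the threshold beyond which the distorted Hankel transform is well-behaved and local well-posedness in $\dot H^1_\alpha$ holds, cf.\ \cite{MMZ20}.

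Once the approximate solution $u_{2N-1}$ has error $e_N$ with $\|e_N\|\lesssim\tau^{-N}$ in appropriate norms, I would write $u=u_{2N-1}+\varepsilon$ and linearize to obtain a forced wave equation with potential
\[
\bigl(\partial_t^{2}+L_\alpha^{\lambda(t)}+\text{lower order}\bigr)\varepsilon=N(\varepsilon)+e_N,
\]
where $L_\alpha^{\lambda(t)}$ is the rescaling of $L_\alpha$. Here the central analytic tool is the \emph{distorted Hankel transform} $\mathcal{F}_\alpha$ diagonalising $L_\alpha$: one must develop the generalised eigenfunction theory, prove that the spectral measure $\rho_\alpha(\xi)d\xi$ satisfies bounds uniform in $\alpha$ in the admissible range, establish the transference identities for $\partial_\tau$ acting under $\mathcal{F}_\alpha$ (the so-called $\mathcal{K}$-operator), and control its off-diagonal decay. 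After transforming, the equation becomes a transport equation in the Fourier variable $\xi$ along curves $\xi\lambda(\tau)^{-1}=\text{const}$, coupled to the angle of the solution; I would solve it by Duhamel against the half-wave propagators $\exp(\pm i\int\sqrt{\xi}\,d\tau/\lambda)$ and a contraction argument in a hybrid norm mixing weighted spaces near the origin, energy-type norms at the distorted Fourier side, and Strichartz-type norms near the light cone.

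The main obstacle will undoubtedly be establishing the distorted Hankel transform for $L_\alpha$ with sufficient quantitative control: one needs the Jost/Weyl--Titchmarsh analysis of $L_\alpha$ with two irregular singular points (at $0$ produced by the potential $\alpha/a^{2}$, and at $\infty$ from $W_\alpha^{4}\sim a^{-2-2\beta}$), explicit asymptotics for the spectral density $\rho_\alpha$, and the associated transference operator bounds. Everything else---the iterative construction, the matching across $a\sim\tau$, and the final fixed-point argument---follows the well-established pattern of \cite{KST09a,KS14} with only technical modifications, but the spectral side is genuinely new in the $\alpha\neq 0$ setting and is where the restrictions on $\alpha$ and on $\nu$ must be proved to be sharp for the scheme to close.
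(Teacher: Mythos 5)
Your proposal follows essentially the same Krieger--Schlag--Tataru scheme the paper uses: inner/outer iterative profile construction in $(R,a)$ coordinates, passage to the $(\tau,R)$ ``dilated'' equation, a distorted Hankel transform diagonalising $\mathcal{L}=-\partial_R^2+\alpha/R^2-5W_\alpha^4$, the transference operator $\mathcal{K}$ with $[\mathcal{K},\xi\partial_\xi]$ bounds, a contraction in $\tau$-weighted $L^{2,s}_\rho$ spaces, and finally local well-posedness plus finite speed of propagation to obtain a genuine solution. Two of your attributions of the parameter thresholds are, however, off the mark: the lower bound $\beta>\tfrac12$ (i.e.\ $\alpha>-\tfrac14+\tfrac1{16}$) does not come from local well-posedness --- that only needs $\beta>\tfrac25$, i.e.\ $\alpha>-\tfrac14+\tfrac1{25}$, per \cite{MMZ20} --- but from the boundedness of $[\mathcal{K},\xi\partial_\xi]$ and the Sobolev-equivalence $H^s_\alpha=H^s$ used in the nonlinear (quintilinear) estimate; and the upper bound $\beta<4$ is traced in the paper to the fast-growing mode of the linearized elliptic ODE (so that the first correction $v_1$ stays in $\dot H^1_\alpha$), not to integrability of $W_\alpha^4$ against $L_\alpha^{-1}$-kernels. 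You also gloss over a genuine new spectral feature the paper must handle: for $\beta>2$ the threshold resonance of the $\alpha=0$ case becomes a true $L^2$ eigenvalue at $\xi=0$, so the spectral measure acquires an extra $\delta_0$ piece and $\mathcal{K}$ becomes a $3\times3$ block operator; and the iteration no longer stays in finitely many $IS^m(R^k(\log R)^\ell,\mathcal{Q}_\beta)$ spaces as in $\alpha=0$, so the paper must track the (controllably) increasing indices, and the $\mathcal{Q}_\beta$ algebra itself is built on ``power series'' in finitely many base powers rather than integers. None of these invalidate your plan, but they are where the extra work in the $\alpha\neq0$ setting actually concentrates.
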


	\begin{remark}[on the lower bound of $\alpha$]
		We remark that in this paper, we have to restrict $\alpha$ to be in $(-\frac{1}{4}+\frac{1}{16},\frac{15}{4})$. The additional $\frac{1}{16}$ in the lower bound is due to the equivalence of Sobolev spaces $H^s_\alpha(\mathbb{R}^3)=H^s(\mathbb{R}^3)$ for suitable $s=s(\beta)$. This lower bound may not be optimal, and by analyzing the nonlinearity more carefully, one may extend this result to $\alpha>-\frac{1}{4}+\frac{1}{25}$. However, it is not clear whether one can remove this $\frac{1}{25}$ in the lower bound, since the local well-posedness of (\ref{eq:NLW}) has only been proved in this range (see \cite{MMZ20}). Moreover, in proving the equivalence of $\alpha$-Sobolev space $H^{2s}_\alpha(\mathbb{R}^3)$ in physics space and the weighted $L^2$ space $L^{2,s}_{d\rho}$ in frequency sapce, we also need $\alpha>-\frac{1}{4}+\frac{1}{25}$ to use the Hardy's inequality associated with $\mathcal{L}_\alpha$. Nonetheless, this lower bound is not so important and less related to our propose. The only significant thing is that our result admits some \textbf{negative $\alpha$}.
	\end{remark}

	\begin{remark}[on the upper bound of $\alpha$]\label{rem:ubound}
		The upper bound $\alpha<\frac{15}{4}$ is the main assumption in this paper. We cannot construct the desired sequence of approximate solutions to (\ref{eq:NLW}) when $\alpha\geq\frac{15}{4}$ because of the fast growing mode of the linearized ellptic equation, and we have to impose this upper bound to make sure the first correction to be in $\dot{H}_\alpha^1(\mathbb{R}^3)$. Moreover, this upper bound also guarantees the boundedness of transference operator $[\mathcal{K},\xi\partial_\xi]$, see Section \ref{sec:trans}. 
		\par Actually, it is even not clear whether this kind of blow-up solution exists when $\alpha\geq\frac{15}{4}$. There is a rough analogy between $3$-dim (\ref{eq:NLW}) and $d$-dim energy-critical wave equation:
		\begin{equation*}
			4\alpha=(d-3)(d-1).
		\end{equation*}
		This identity follows from comparaing the linearzed operator and its spectral measure of this two equations. As a result, $\alpha\geq\frac{15}{4}$ corresponds exactly to $d\geq 6$, which may suggest that there might not exist finite-time type II blow-up solution for energy critical wave equation in dimension $d\geq 6$.
	\end{remark}

	\begin{remark}
		The blow-up solution we constructed is only of finite regularity $H^{1+\frac{\beta\nu}{2}-}\cap H^{1+\frac{\beta}{2}}\cap H^{\frac{3}{2}}$. This non-smoothness comes from the sigularity of the type $(1-r/t)^{\frac{1+\beta\nu}{2}}(\log(1-r/t))^n$ of our approxiamte solution across the light-cone, as well as restrictions from nonlinear estimate.
	\end{remark}

	\par Rather than using modulation theory, our construction is based on the framework established in \cite{KST09a}. Their method is proven to be robust and can be applied to many different settings. For example, based on the same framework, there are constructive results for wave maps \cite{KST08}, for Yang-Mills equation \cite{KST09b}, for energy-critical Schr\"odinger equation \cite{OP14}\cite{Sch23} and Schr\"odinger maps \cite{Per14}, and very recently for $4$ dimensional energy critical Zakharov system \cite{KS24a}\cite{KS24b}. See also \cite{DK13} for ``infinite-time blow-up" for energy-critical wave and \cite{BMG21} for blow-up construction for hyperbolic mean curvature flow.

	\par Now we succinctly explain the steps of the proof.
	\begin{itemize}
		\item[Step 1.] We construct a sequence of approximate solutions $u_k=u_{k-1}+v_k$ with $u_0(t,r)=\lambda^{\frac{1}{2}}(t)W_\alpha(\lambda(t)r)$ by adding corrections $v_k$. As $k$ increases, the fully nonlinear error of $u_{k}$, $e_k:=u_k^5-(\partial_t^2-\Delta +\frac{\alpha}{r^2})u_k$ has stronger time decay as $t\to 0^+$. This is called the renormaliztion step, and we will follow the iteration procedure introduced in \cite{KST09a}: we iterate between solving an elliptic equation and a wave equation with good scaling property using right ansatz.
		\item[Step 2.] We perturb around the renormalized profile $u_{k}$ for $k$ large enough. To use the Fourier method to solve this perturbed equation, we eliminate the time-dependent potential $5W_\alpha^4(\lambda(t)r)$ by changing to $\tau,R$ variables, which results in a ``dilated type" nonlinear wave equation, that is $\partial_\tau$ being replaced by $\partial_\tau+\frac{\lambda_{\tau}}{\lambda}(R\partial_R-1)$.
		\item[Step 3.] By analyzing the spectrum of the linearized opertor $\mathcal{L}-\partial_R^2+\frac{\alpha}{R^2}-5W_\alpha^4(R)$, including precise asymptotics of its spectral measure, Weyl-Titchmarsh solutions, etc., we establish the distorted Hankel transform which digonalizes $\mathcal{L}$.
		\item[Step 4.] After applying the distorted Hankel transform established above, we solve the final equation on the Fourier side. With a crucial observation in \cite{KS14} that this in-homogeneous problem can be solved explicitly (at least for non-negative frequencies), we can solve this equation in suitable weighted $L^2$ space. However, there are some \textit{linear} error terms involving the so-called transference operator $\mathcal{K}$ also appearing in the final equation, which is not clear whether they can be iterated away when solving the equation. Here we take the same trick as in \cite{KST09a}, that is by putting a huge weight on time in our function space, one can get the smallness of these linear error trivially. This is exactly the reason to do a renormalization procedure before doing perturbation. This is done in Section \ref{sec:trans} and Section \ref{sec:final}.
		\item[Step 5.] For nonlinear terms, we bound them in the $\alpha$-Sobolev spaces associated with $\mathcal{L}_\alpha$. We will mainly depend on the equivalence of Sobolev spaces between the one we are going to use and the usual one established in \cite{KMVZZ18}. Here comes the major restriction on the lower bound of $\alpha$, that is $\alpha>-\frac{1}{4}+\frac{1}{16}$.
		\item[Step 6.] Finally, we use the local well-posedness and finite speed of propagation to get a real solution in the whole space (the approximate solutions are only constructed inside the light-cone).
	\end{itemize}

	\par Here we summarize some major differences and novelties in our paper compared to \cite{KST09a}.
	\begin{itemize}
		\item[(i)] In the renormalization procedure, we can not keep the iteriated corrections and errors in finitely many $IS^m(R^k(\log(R))^\ell,\mathcal{Q}_\beta)$ spaces going to be introduced. Instead, when $0<\beta<1$ and $3<\beta<4$, the indices of these spaces, that is $k,m$ and $\ell$ would increases, but in a controllable manner (strictly less than $2$ order for $m$ at each step).
		\item[(ii)] Also in the renormalization procedure, we encounter a kind of power series of the form
		\begin{equation*}
			f(R)=R^\delta\sum_{i_1,\cdots,i_N=0}^{\infty}a_{i_1,\cdots,i_N}R^{i_1p_1+\cdots+i_Np_N},\quad R < R_0,
		\end{equation*}
		where $N\geq 1$ is a finite number and $p_1,\cdots,p_N$'s are finitely many ``base powers". This kind of power series shares many properties with ususal power series, but the point is that we have to carefully show that there are indeed only finitely many base powers.
		\item[(iii)] The approximate construction scheme fails for $\alpha\geq\frac{15}{4}$, due to a fast growing mode of the linearzied elliptic equation, which won't happen in the context of enegry-critical wave equation in dimension $d<6$.
		\item[(iv)] The spectrum of the linearized operator $\mathcal{L}=-\partial_R^2+\frac{\alpha}{R^2}-5W_\alpha^4(R)$ various with $\alpha$. In particular, $0$ will be become an eigenvalue instead of only being a resonance when $\beta>2$. Compared to the negative eigenfunction $\phi_d(R)$ which has exponential decay as $R\to\infty$, the eigenfunction $\phi_0(R)$ only has limited decay rate as $R\to \infty$, which caused more difficulties.
		\item[(v)] The perturbation $5W_\alpha^4(R)$ has a strong singularity at $R=0$, making various estimates on the boundedness of the transference operator $\mathcal{K}$ become extremely delicate. For example, the Schwartz kernel of $\mathcal{K}_{cc}$ doesn't have arbitrary off-diagonal decay anymore. For another example, small $\beta$ prevents us from proving the equivalence of higer order Sobolve spaces defined via $\mathcal{L}_\alpha$ and $\mathcal{L}=\mathcal{L}_\alpha-5W_\alpha^4(R)$. This turns out to be the most difficult part of this paper.
		\item[(vi)] For nonlinear estimates, we work on the $\alpha$-Sobolev spaces, that is the Sobolev spaces defined via $\mathcal{L}_\alpha$ instead of $-\Delta$. We have to depend on some recent results on this kind of Sobolev space, including Hardy's inequality associated with $\mathcal{L}_\alpha$, Littlewood-Paley theory and Strichartz estimates, etc.
	\end{itemize}

	The paper is organized as follows. In Section \ref{sec:appro}, we construct our approxiamate solution. The perturbed equation around the renormalization profile is derived in Section \ref{sec:perturbed}. In Section \ref{sec:spec}, we collect some spectral properties of the linearized operator, and deduce the precise asymptotics of the spectral measure. In Section \ref{sec:trans}, we introduce the transference operator and study its $L^p$ boundedness. In Section \ref{sec:final}, we solve the perturbed eqaution in the Fourier side while bounding the parametrix in some $t$-weighted space. In Section \ref{sec:nonlinear}, we establish the nonlinear estimate in $\alpha$-Sobolev spaces. Finally, we recall the well-posedness result and construct the real solution with desired bulk term, thus giving a proof of our main theorem in Section \ref{sec:proof}.

	For convenience of the reader, we collect some notations frequently used in the paper below.
	\begin{itemize}
		\item $\alpha>-\frac{1}{4}$ the potential strength, $\beta=\sqrt{1+4\alpha}>0$ 
		\item $W_\alpha$ the ground state soliton
		\item $\lambda(t)=t^{-1-\nu}$ the concentration rate with $\nu>0$
		\item $\mathcal{Q}_\beta$ function algebra, also $\mathcal{Q}_\beta'$
		\item $S^m(R^k(\log(R))^\ell)$, $IS^m(R^k(\log(R))^\ell,\mathcal{Q}_\beta)$, $IS^m(R^k(\log(R))^\ell,\mathcal{Q}_\beta')$ function spaces
		\item $\sigma(\mathcal{L})$, $\sigma_{ac}(\mathcal{L})$, $\sigma_{pp}(\mathcal{L})$, $\sigma_{ess}(\mathcal{L})$ the spectrum, absolutely continuous spectrum, point spectrum, essential spectrum of the operator $\mathcal{L}$
		\item $\mathcal{F}f$ or $\widehat{f}$ the distorted Hankel transfrom of $f$
		\item $L^{2,s}_\rho$, $L^{\infty,N}L^{2,s}_\rho$ weighted $L^2$ spaces
		\item $(x)_+=\max\{x,0\}$, $(x)_-=-(-x)_+$ for $x\in\mathbb{R}$
		\item $\left<u\right>=\sqrt{1+|u|^2}$ for $u\in\mathbb{C}$
		\item $A\lesssim B$ means $A\leq CB$ for some constant $C$, $C$ may vary from line to line
		\item $A\approx B$ means $A\lesssim B$ and $B\lesssim A$
	\end{itemize}

	\section{Construction of approximate solutions}\label{sec:appro}
	
	We want to construct a solution $u$ to (\ref{eq:NLW}) with $u_0(t,x)=\lambda^{\frac{1}{2}}(t)W_\alpha(\lambda(t)r)$ being the bulk term, where here and below, we fix $\lambda(t)=t^{-1-\nu}$ with $\nu>0$. More precisely, we will construct a radial solution $u$ of the form
	\begin{equation*}
		u(t,x) = u_0(t,x) + w(t,x),
	\end{equation*}
	with $w$ being small as $t\to 0^+$ in a suitable sense.
	\par To simplify the notation, let $R=\lambda(t)r$. Note that $W_\alpha(R)$ is smooth on $(0,+\infty)$ and has compactly uniformly and absolutely convergent expansions near $R=0$ and $R=+\infty$,
	\begin{equation*}
		\begin{aligned}
			W_\alpha(R) &= (3\beta^2)^{\frac{1}{4}}R^{\frac{\beta-1}{2}}\left(1-\frac{1}{2}R^{2\beta}+\cdots\right)=R^{\frac{\beta-1}{2}}g_1(R^{2\beta}),& 0<R<1,
			\\ &= (3\beta^2)^{\frac{1}{4}}R^{\frac{-\beta-1}{2}}\left(1-\frac{1}{2}R^{-2\beta}+\cdots\right)=R^{\frac{-\beta-1}{2}}g_2(R^{-2\beta}),& R>1,
		\end{aligned}
	\end{equation*}
	where $g_1,g_2$ are two real-analytic functions on $[0,1)$ with convergent radius being $1$ at $0$. This nature of $W_\alpha$ is invariant under applying $R\partial_R$, so we introduce the following function space that captures this kind of property.
	\begin{definition}
		Let $\left<\beta,|\beta-1|,|\beta-2|,|\beta-3|,4-\beta\right>=\mathbb{Z}_{\geq0}\cdot \beta+\mathbb{Z}_{\geq0}\cdot |\beta-1|+\mathbb{Z}_{\geq0}\cdot|\beta-2|+\mathbb{Z}_{\geq0}\cdot|\beta-3|+\mathbb{Z}_{\geq0}\cdot(4-\beta)$ denote the set of real numbers that are $\mathbb{Z}_{\geq0}$-linear combinations of $\beta$, $|\beta-1|$, $|\beta-2|$, $|\beta-3|$ and $4-\beta$. We also denote by $\{m_i\}_{i\geq0}$ an increasing arrangement of $\left<\beta,|\beta-1|,|\beta-2|,|\beta-3|,4-\beta\right>$, such that $0=m_0<m_1<m_2<\cdots$. Furthermore, let $I\geq1$ be the largest positive integer such that $m_I<2$.
	\end{definition}
	\begin{remark}
		We have for any $i,j\geq0$ that $m_i+m_j\geq m_{i+j}$.
	\end{remark}
	\begin{definition}
		Let $S^m(R^k(\log(R))^{\ell})$ denote the function space consisting of smooth functions $f$ on $(0,+\infty)$ with the following two properties:
		\par (1) $f$ admits an absolutely convergent expansion at $R=0$:
		\begin{equation*}
			f(R) = \sum_{i=0}^{\infty}\sum_{j=0}^{\infty}c_{i,j}R^{m+2i+2\beta j},\quad 0<R\ll 1;
		\end{equation*}
		\par (2) $f$ admits an absolutely convergent expansion at $R=\infty$:
		\begin{equation*}
		f(R) = \sum_{i=0}^{\infty}\sum_{j=0}^{\ell+i}d_{i,j}R^{k-m_i}(\log(R))^{j},\quad R\gg 1.
		\end{equation*}
	\end{definition}
	\begin{remark}
		Near $R=\infty$, $f\in S^m(R^k(\log(R))^{\ell})$ behaves like a power series, but with powers being $\mathbb{Z}_{\geq 0}$-combinations of finitely many ``base powers". This kind of power series shares many properties with the usual one, see Appendix \ref{app:A} for more discussion.
	\end{remark}
	\par The above discussion can be recapitulated as $W_\alpha\in S^{\frac{\beta-1}{2}}(R^{\frac{-\beta-1}{2}})$ and $S^m(R^k(\log(R))^\ell)$ is invariant under applying $R\partial_R$.
	\par We define $e_0$ to be the fully nonlinear error of $u_0$, that is 
	\begin{equation*}
		e_0(t,x) = u_0^5 - \left(\partial_t^2 u_0 - \Delta u_0 + \frac{\alpha}{r^2} u_0\right)= -\partial_t^2 u_0.
	\end{equation*}
	By a simple computation, one may check that 
	\begin{equation*}
		t^2e_0 = \lambda^{\frac{1}{2}}(t)\left(c_1 W_\alpha(R) + c_2 (R\partial_R)W_\alpha(R) + c_3 (R\partial_R)^2 W_\alpha(R)\right),
	\end{equation*}
	where $c_1,c_2$ and $c_3$ are constants containing $\nu$. Using the definition we just made, $t^2e_0\in \lambda^{\frac{1}{2}}(t)S^{\frac{\beta-1}{2}}(R^{\frac{-\beta-1}{2}})$.
	
	\par We now ingroduce the function algerbra used in the correction near the tip of the light-cone.

	\begin{definition}[admissible powers]
		We denote by $\mathcal{P}_\beta=\{p_1<p_2<\cdots\}$ the set of positive numbers of the form
		$$\sum_{k_0\geq 0,\mathrm{finite}}\frac{(\beta+4k_0\min\{\beta,1\})\nu+1}{2}+\cdots+\sum_{k_I\geq 0,\mathrm{finite}}\frac{(\beta+4k_I\min\{\beta,1\}+2m_I)\nu+1}{2}$$
		if $0<\beta\leq 2$, the set of numbers of the form
		$$\sum_{k_0\geq 0,\mathrm{finite}}\frac{(4-\beta+4k_0\min\{4-\beta,1\})\nu+1}{2}+\cdots+\sum_{k_I\geq 0,\mathrm{finite}}\frac{(4-\beta+4k_I\min\{4-\beta,1\}+2m_I)\nu+1}{2}$$if $2<\beta<4$.
	\end{definition}
	\begin{remark}
		We have for any $i\geq 1$ that $p_i>\frac{1}{2}$ since $\beta>0$ and $4-\beta>0$.
	\end{remark}
	\begin{remark}
		We have for any $i,j\geq 1$ that $p_i+p_j\geq p_{i+j}$.
	\end{remark}

	\begin{definition}
		We denote by $\mathcal{Q}_\beta$ the function space consisting of smooth functions $q$ on $(0,1)$ satisfying the following properties:
		\par (1) $q$ has an even expansion at $a=0$;
		\par (2) there are functions $q_{i,j}$ analytic at $a=1$, such that 
		\begin{equation*}
		q(a)=q_0(a)+\sum_{i=1}^{+\infty}(1-a)^{p_i}\sum_{j=0}^{\mathrm{finite}}q_{i,j}(a)\left(\log(1-a)\right)^{j},\quad a\approx 1,
		\end{equation*}
		where $p_i\in\mathcal{P}_\beta$.
	\end{definition}
	\begin{definition}
		We denote by $\mathcal{Q}_\beta'$ the function space consisting of smooth functions $q$ on $(0,1)$ satisfying the following properties:
		\par (1) $q$ has an even expansion at $a=0$;
		\par (2) there are functions $q_{i,j}$ analytic at $a=1$, such that 
		\begin{equation*}
		q(a)=q_0(a)+\sum_{i=1}^{+\infty}(1-a)^{p_i-1}\sum_{j=0}^{\mathrm{finite}}q_{i,j}(a)\left(\log(1-a)\right)^{j},\quad a\approx 1,
		\end{equation*}
		where $p_i\in\mathcal{P}_\beta$.
	\end{definition}
	\par It's not hard to see that $\mathcal{Q}_\beta$ is an algebra and $\mathcal{Q}_\beta\subset \mathcal{Q}_\beta'$.

	\begin{definition}
		We denote by $IS^m(R^k(\log(R))^\ell,\mathcal{Q}_\beta)$ the function space consisting of functions $w$ defined on the interior of the light cone, which are of the form
		\begin{equation*}
		w(t,r)=\sum_{i=1}^{\mathrm{finite}}q_i(a)f_i(R),
		\end{equation*}
		where $q_i\in\mathcal{Q}_\beta$ and $f_i\in S^m(R^k(\log(R))^\ell)$.
		\par The space $IS^m(R^k(\log(R))^\ell,\mathcal{Q}_\beta')$ is defined similarly by replacing $\mathcal{Q}_\beta$ with $\mathcal{Q}_\beta'$.
	\end{definition}
\iffalse
	\begin{remark}
		Our definition of apace $IS^m(R^k(\log(R))^\ell,\mathcal{Q}_\beta)$ differs from  [KST09] in the sense that we do not let functions in this space depend (explictly) on $b$. This is okay since we will keep the track of the order increaing in $m$ when doing iterations.
	\end{remark}
	\fi

	\par Before starting the correction procedure, we first establish two ODE lemmas that will be frequently invoked.

	\begin{lemma}[correction in the cuspidal region]\label{lem:R}
		Let $V$ be the solution to the following equation
		$$\left(-\partial_R^2 -\frac{2}{R}\partial_R +\frac{\alpha}{R^2}-5W_\alpha^4(R)\right)V(R)=g(R)$$
		with zero Cauchy data at $R=0$. If $g\in S^{m}(R^k(\log(R))^\ell)$, and $m>\frac{\beta-5}{2}$, then we have $V\in S^{m+2}(\max\{R^{k+2}(\log(R))^{\ell+1}, R^{\frac{\beta-1}{2}}(\log(R))^{\ell}\})$.
	\end{lemma}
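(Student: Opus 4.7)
The plan is to reduce the problem to classical one-dimensional ODE theory via variation of parameters, once two explicit fundamental solutions of the homogeneous equation $\mathcal{L}_{\mathrm{ell}}V=0$ with $\mathcal{L}_{\mathrm{ell}}:=-\partial_R^2-\frac{2}{R}\partial_R+\frac{\alpha}{R^2}-5W_\alpha^4$ are in hand. The scaling invariance $u\mapsto\lambda^{1/2}u(\lambda\cdot)$ of the stationary equation yields an explicit solution via
\begin{equation*}
\phi_0(R):=\tfrac{d}{d\lambda}\big|_{\lambda=1}\lambda^{1/2}W_\alpha(\lambda R)=\bigl(R\partial_R+\tfrac{1}{2}\bigr)W_\alpha(R),
\end{equation*}
and since $W_\alpha\in S^{(\beta-1)/2}(R^{-(\beta+1)/2})$ and $R\partial_R$ preserves the $S$-class, we also have $\phi_0\in S^{(\beta-1)/2}(R^{-(\beta+1)/2})$. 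A linearly independent solution $\theta_0$ is constructed by Frobenius expansion at the regular singular point $R=0$, whose indicial roots are $(-1\pm\beta)/2$; globalizing via the ODE to all of $(0,\infty)$ and using that $\phi_0$ is the unique (up to scaling) solution with the decaying branch $R^{-(\beta+1)/2}$ at infinity, one sees that $\theta_0$ has leading order $R^{-(\beta+1)/2}$ near $R=0$ and $R^{(\beta-1)/2}$ near $R=\infty$, with $S$-type expansions at both ends (and possible intrinsic $\log R$-factors in the integer-resonant cases $\beta\in\mathbb{Z}$). The Wronskian $W=\phi_0\theta_0'-\theta_0\phi_0'$ is a constant multiple of $R^{-2}$.

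Variation of parameters then gives
\begin{equation*}
V(R)=c\Bigl[\phi_0(R)\int_0^R\theta_0(s)g(s)s^2\,ds-\theta_0(R)\int_0^R\phi_0(s)g(s)s^2\,ds\Bigr],
\end{equation*}
and the hypothesis $m>(\beta-5)/2$ is exactly what is needed for the first integrand, behaving like $s^{m+(3-\beta)/2}$ at $s=0$, to be absolutely integrable there. I would now substitute the expansions of $g$ and integrate term by term. Near $R=0$, a summand $c_{i,j}s^{m+2i+2\beta j}$ produces, after integration and multiplication by $\phi_0$ or $\theta_0$, a term of order $R^{m+2+2i+2\beta j}$, which assembles into a valid $S^{m+2}$-expansion. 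Near $R=\infty$, a summand $d_{i,j}s^{k-m_i}(\log s)^j$ integrated against $\phi_0(s)s^2\sim s^{(3-\beta)/2}$ (respectively $\theta_0(s)s^2\sim s^{(\beta+3)/2}$) and multiplied by the outer $\theta_0(R)$ (respectively $\phi_0(R)$) generically yields $R^{k+2-m_i}(\log R)^j$; in the resonant case where the integrand has exponent $-1$, an extra $\log R$-factor is picked up. Both cases lie in the envelope $R^{k+2}(\log R)^{\ell+1}$. The alternative envelope $R^{(\beta-1)/2}(\log R)^\ell$ arises whenever $\int_0^\infty\phi_0(s)g(s)s^2\,ds$ converges, in which case a constant-of-integration times $\theta_0(R)$ appears in $V$, producing a leading term of order $R^{(\beta-1)/2}$; this contribution dominates the other envelope precisely when $k+2<(\beta-1)/2$.

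The main bookkeeping obstacle is verifying that the exponents and $\log$-powers produced by the iterated integrations all belong to the admissible base-power set $\{k+2-m_i\}$, so that $V$ is genuinely in $S^{m+2}$ and not merely in a larger formal power-series class. This relies on the additivity $m_i+m_j\geq m_{i+j}$ from the remark in Section \ref{sec:appro}, together with closure of the base-power index set under term-by-term integration established in Appendix \ref{app:A}. The integer-resonant case $\beta\in\mathbb{Z}$ introduces additional $\log R$-factors in $\theta_0$ itself, which must be tracked through the products $\theta_0 g s^2$ and $\phi_0 g s^2$ and their primitives, but the $\log$-power budget remains compatible with $(\log R)^{\ell+1}$.
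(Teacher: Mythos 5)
Your proposal follows essentially the same route as the paper: variation of parameters with $\phi_0$ obtained from scaling invariance, a linearly independent $\theta_0$, Wronskian $\propto R^{-2}$, the integrability threshold $m>(\beta-5)/2$, and the same endpoint analysis at $R=0$ and $R=\infty$; the paper's only cosmetic difference is that it conjugates by $R$ first to pass to the Sturm--Liouville form $\widetilde L=-\partial_R^2+\alpha/R^2-5W_\alpha^4$, which makes the Wronskian constant. One concern you raised is unnecessary: rather than invoking Frobenius theory and guarding against resonant $\log R$-factors for integer $\beta$, the paper exhibits the second homogeneous solution in closed form ($\theta_0(R)=R^{-(\beta+1)/2}(1-6R^{2\beta}+R^{4\beta})/(\beta^2(1+R^{2\beta})^{3/2})$ in your normalization), which is manifestly free of $\log R$-factors for all $\beta>0$, so logs enter only through the inhomogeneous integrations exactly as you described.
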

	
	\begin{proof}
		We first conjugate the operator to cancel the $\partial_R$ term. Let
		\begin{equation*}
		\widetilde{L}=R\left(-\partial_R^2 -\frac{2}{R}\partial_R +\frac{\alpha}{R^2}-5W_\alpha^4(R)\right)R^{-1}=-\partial_R^2 + \frac{\alpha}{R^2} - 5W_\alpha^4(R),
		\end{equation*}
		then we need to solve
		\begin{equation*}
		\widetilde{L}(RV(R))=Rg(R),
		\end{equation*}
		with zero Cauchy data at $R=0$.
		\par Since $\{\lambda^{\frac{1}{2}}W_\alpha(\lambda R)\}_{\lambda>0}$ is a family of solutions to the nonlinear problem
		\begin{equation*}
		\left(-\partial_R^2-\frac{2}{R}\partial_R+\frac{\alpha}{R^2}\right)u=u^5,
		\end{equation*}
		we get a solution $\phi(R)$ to $\widetilde{L}u=0$ by differentiating with parameter $\lambda$, that is 
		\begin{equation*}
		\phi(R)=R\frac{d}{d\lambda}\Big|_{\lambda=1}\lambda^{\frac{1}{2}}W_\alpha(\lambda R)=\frac{\beta R^{\frac{\beta+1}{2}}(1-R^{2\beta})}{(1+R^{2\beta})^{\frac{3}{2}}}.
		\end{equation*}
		With $\phi$ at hand, we get another solution $\theta$ to $\widetilde{L}u=0$,
		\begin{equation*}
		\theta(R)=\frac{R^{\frac{-\beta+1}{2}}(1-6R^{2\beta}+R^{4\beta})}{\beta^2(1+R^{2\beta})^{\frac{3}{2}}}
		\end{equation*}
		with Wronski consistancy
		\begin{equation*}
		W(\phi,\theta)(R) = 1.
		\end{equation*}
		Note that
		\begin{equation*}
		\phi\in S^{\frac{\beta+1}{2}}(R^{\frac{-\beta+1}{2}}),\quad \theta\in S^{\frac{-\beta+1}{2}}(R^{\frac{\beta+1}{2}}),
		\end{equation*}
		and we will use $\phi$ and $\theta$ as a fundamental system of solutions to $\widetilde{L}u=0$.
		\par Using the standard Green's function, we can explictly solve $V$ as
		\begin{equation}\label{eq:V_<}
		V(R)=R^{-1}\phi(R)\int_{0}^{R}\theta(R')R'g(R')dR'-R^{-1}\theta(R)\int_{0}^{R}\phi(R')R'g(R')dR'.
		\end{equation}
		\par We now determine the behavior of $V$ near $R=0$ and $R=+\infty$. For $0<R\ll 1$, we write $\phi(R)=R^{\frac{\beta+1}{2}}\phi_<(R^{2\beta})$ and  $\theta(R)=R^{\frac{-\beta+1}{2}}\theta_<(R^{2\beta})$ with $\phi_<,\ \theta_<$ being analytic at $0$. The first term in the RHS of (\ref{eq:V_<}) becomes
		\begin{equation*}
			\begin{aligned}
				R^{-1}\phi(R)\int_{0}^{R}\theta(R')R'g(R')dR'&=R^{\frac{\beta-1}{2}}\phi_<(R^{2\beta})\int_{0}^{R}\sum_{i,j,k=0}^{\infty}c_{ijk}R'^{\frac{-\beta+1}{2}+1+m+2i+2\beta j+2\beta k} dR'\\
				&=R^{\frac{\beta-1}{2}}\phi_<(R^{2\beta})\sum_{i,j,k=0}^{\infty}c_{ijk}'R^{\frac{-\beta+5}{2}+m+2i+2\beta j+2\beta k}\\
				&=\sum_{i,j=0}^{\infty}c_{ijk}''R^{m+2+2i+2\beta j}.\\
			\end{aligned}
		\end{equation*} 
		We have used the fact that $m>\frac{\beta-5}{2}$ hence there is no $\log$ term. Similarly, the second term in the RHS of (\ref{eq:V_<}) becomes
		\begin{equation*}
			\begin{aligned}
			R^{-1}\theta(R)\int_{0}^{R}\phi(R')R'g(R')dR'&=R^{\frac{-\beta-1}{2}}\theta_<(R^{2\beta})\int_{0}^{R}\sum_{i,j,k=0}^{\infty}d_{ijk}R'^{\frac{\beta+1}{2}+1+m+2i+2\beta j+2\beta k} dR'\\
			&=R^{\frac{-\beta-1}{2}}\theta_<(R^{2\beta})\sum_{i,j,k=0}^{\infty}d_{ijk}'R^{\frac{\beta+5}{2}+m+2i+2\beta j+2\beta k}\\
			&=\sum_{i,j=0}^{\infty}d_{ijk}''R^{m+2+2i+2\beta j}.\\
			\end{aligned}
		\end{equation*}
		\par To determine the large $R$ behavior, we write $\phi(R)=R^{\frac{-\beta+1}{2}}\phi_>(R^{-2\beta})$ and $\theta(R)=R^{\frac{\beta+1}{2}}\theta_>(R^{-2\beta})$ with $\phi_>,\ \theta_>$ being analytic at $0$. We also need to modify (\ref{eq:V_<}) to a form convenient for large $R$ expansion:
		\begin{equation}\label{eq:V_>}
			V(R) = c_1 R^{-1}\phi(R) + c_2 R^{-1}\theta(R) + R^{-1}\phi(R) \int_{R_0}^{R}\theta(R')R'g(R')dR' - R^{-1}\theta(R)\int_{R_0}^{R}\phi(R')R'g(R')dR',
		\end{equation}
		where $R_0\gg 1$ is a fixed constant. The third term in the RHS of (\ref{eq:V_>}) becomes
		\begin{equation*}
			\begin{aligned}
				&\quad R^{-1}\phi(R) \int_{R_0}^{R}\theta(R')R'g(R')dR'\\
				&=R^{\frac{-\beta-1}{2}}\phi_>(R^{-2\beta})\int_{R_0}^{R}\sum_{i=0}^{\infty}\sum_{j=0}^{\ell+i}\sum_{p=0}^{\infty}c_{ijp}R'^{\frac{\beta+1}{2}+1+k-m_i-2\beta p}(\log(R))^{j} dR'\\
				&=R^{\frac{-\beta-1}{2}}\phi_>(R^{-2\beta})\sum_{i'=0}^{\infty}\sum_{j=0}^{\ell+i'}\int_{R_0}^{R}c_{i'j}R'^{\frac{\beta+3}{2}+k-m_{i'}}(\log(R))^{j} dR'\\
				&=cR^{-1}\phi(R)+\sum_{i'=0}^{\infty}\sum_{j=0}^{\ell+i'}c_{i'j}'R^{k+2-m_{i'}}(\log(R))^{\ell+j}+R^{-1}\phi(R)\sum_{j=0}^{i_0'+\ell}c_j'(\log(R))^{j+1}\\
				&=cR^{-1}\phi(R)+\sum_{i'=0}^{\infty}\sum_{j=0}^{\ell+1+i'}c_{i'j}''R^{k+2-m_{i'}}(\log(R))^{j},\\
			\end{aligned}
		\end{equation*} 
		where $i_0'$ satisfies $m_{i_0'}=k+\frac{\beta+5}{2}$ (if such $i_0'$ does not exist, then $c_j'=0$ in the second-to-last line). Similarly, the fourth term in the RHS of (\ref{eq:V_>}) becomes
		\begin{equation*}
			\begin{aligned}
				&\quad R^{-1}\theta(R) \int_{R_0}^{R}\phi(R')R'g(R')dR'\\
				&=R^{\frac{\beta-1}{2}}\theta_>(R^{-2\beta})\int_{R_0}^{R}\sum_{i=0}^{\infty}\sum_{j=0}^{\ell+i}\sum_{p=0}^{\infty}d_{ijp}R'^{\frac{-\beta+1}{2}+1+k-m_i-2\beta p}(\log(R))^{j} dR'\\
				&=R^{\frac{\beta-1}{2}}\theta_>(R^{-2\beta})\sum_{i'=0}^{\infty}\sum_{j=0}^{\ell+i'}\int_{R_0}^{R}d_{i'j}R'^{\frac{-\beta+3}{2}+k-m_{i'}}(\log(R))^{j} dR'\\
				&=dR^{-1}\theta(R)+\sum_{i'=0}^{\infty}\sum_{j=0}^{\ell+i'}d_{i'j}'R^{k+2-m_{i'}}(\log(R))^{\ell+j}+R^{-1}\theta(R)\sum_{j=0}^{i_0'}d_j'(\log(R))^{j+1}\\
				&=dR^{-1}\theta(R)+\sum_{i'=0}^{\infty}\sum_{j=0}^{\ell+1+i'}d_{i'j}''R^{k+2-m_{i'}}(\log(R))^{j},\\
			\end{aligned}
		\end{equation*}
		where $i_0'$ satisfies $m_{i_0'}=k+\frac{-\beta+5}{2}$ (if such $i_0'$ does not exist, then $d_j'=0$ in the second-to-last line). As a result, when $R\gg 1$, $V(R)$ is of the form
		\begin{equation*}
			V(R) = C_1 R^{\frac{-\beta-1}{2}}\phi_>(R^{-2\beta}) + C_2 R^{\frac{\beta-1}{2}}\theta_>(R^{-2\beta}) + \sum_{i=0}^{\infty}\sum_{j=0}^{\ell+i}C_{ij}R^{k+2-m_i}(\log(R))^{j+1}.
		\end{equation*}
		\par In conclusion, we have verified that $V\in S^{m+2}(\max\{R^{k+2}(\log(R))^{\ell+1}, R^{\frac{\beta-1}{2}}(\log(R))^{\ell}\})$, which completes the proof.
	\end{proof}

	\begin{lemma}[correction near the tip of the light-cone]\label{lem:a}
		Let $\gamma>0$ be a constant. Let $g\in a^m\mathcal{Q}_{\beta}$ and $W$ be the solution of
		\begin{equation*}
			L_\gamma W(a) = g(a)
		\end{equation*}
		with zero Cauchy data at $a=0$, where
		\begin{equation*}
			L_{\gamma}=(a^2-1)\partial_a^2+\left((3-\gamma\nu)a-2a^{-1}\right)\partial_a+\left(\frac{\gamma\nu-1}{2}\frac{\gamma\nu-3}{2}+\alpha a^{-2}\right).
		\end{equation*}
		If $m>\frac{\beta-5}{2}$, and $\gamma\in \mathcal{P}_\beta$. Then we have $W\in a^{m+2}\mathcal{Q}_\beta$.
	\end{lemma}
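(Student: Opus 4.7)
The plan is to mimic the proof of Lemma \ref{lem:R}: construct a fundamental system for $L_\gamma u = 0$, write $W$ via variation of parameters with zero Cauchy data at $a=0$, and then read off the claimed structure of $W$ from the expansions of that formula at the two singular points $a = 0$ and $a = 1$. The first step is to kill the $\alpha a^{-2}$ singularity by the conjugation $u(a) = a^{(\beta-1)/2} v(z)$, $z = a^2$; using $\beta^2 = 1 + 4\alpha$, a direct computation reduces $L_\gamma u = 0$ to a Gauss hypergeometric equation
\begin{equation*}
z(1-z)\, v_{zz} + \bigl[c - (A+B+1) z\bigr] v_z - A B \, v = 0,
\end{equation*}
with $c = (\beta+2)/2$, $A+B = (\beta+1-\gamma\nu)/2$ and $AB = (\gamma\nu-\beta)(\gamma\nu-\beta-2)/16$. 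The indicial roots of $L_\gamma$ at $a = 0$ are $(\beta-1)/2$ and $-(\beta+1)/2$, giving a fundamental pair $\phi_\gamma \sim a^{(\beta-1)/2}$, $\theta_\gamma \sim a^{-(\beta+1)/2}$; the indicial roots at $a = 1$ are $0$ and $c - A - B = (\gamma\nu+1)/2$, giving a second fundamental pair $\chi_\gamma$ analytic at $a=1$ and $\psi_\gamma \sim (1-a)^{(\gamma\nu+1)/2}$. The two pairs are linearly related through the classical hypergeometric connection formulae, and an Abel-type computation yields the explicit Wronskian $\mathcal{W}(a) \propto a^{-2}(a^2-1)^{(\gamma\nu-1)/2}$.

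With $\phi_\gamma, \theta_\gamma$ and $\mathcal{W}$ in hand, the unique solution of $L_\gamma W = g$ with zero Cauchy data at $a=0$ is
\begin{equation*}
W(a) = \phi_\gamma(a)\int_0^a \frac{\theta_\gamma(b)\, g(b)}{(b^2-1)\mathcal{W}(b)}\,db - \theta_\gamma(a)\int_0^a \frac{\phi_\gamma(b)\, g(b)}{(b^2-1)\mathcal{W}(b)}\,db.
\end{equation*}
For the small-$a$ analysis I would plug in the even expansion $g(a) = \sum c_{i,j}\, a^{m+2i+2\beta j}$ of $g \in a^m\mathcal{Q}_\beta$ at the origin and integrate term-by-term. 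The $\theta_\gamma$-piece of the integrand is of order $b^{\,m+(3-\beta)/2}$ near $b = 0$, so the hypothesis $m > (\beta-5)/2$ is exactly what keeps this exponent strictly bigger than $-1$; this ensures integrability and, more importantly, prevents any $\log a$ factor, so each integral contributes an even series starting at $a^{m+2}$, as required of $a^{m+2}\mathcal{Q}_\beta$ near $a = 0$. For the behaviour near $a = 1$ I would rewrite the integrals with an interior base point $a_0 \in (0,1)$ and use the antisymmetry of the Green's function to pass to the $(\chi_\gamma,\psi_\gamma)$ basis, reducing $W$ (up to a solution of the homogeneous equation absorbed into the constants of integration) to a combination of the form
\begin{equation*}
\chi_\gamma(a)\int_{a_0}^a \frac{\psi_\gamma(b)\, g(b)}{(b^2-1)\mathcal{W}(b)}\,db - \psi_\gamma(a)\int_{a_0}^a \frac{\chi_\gamma(b)\, g(b)}{(b^2-1)\mathcal{W}(b)}\,db.
\end{equation*}
Substituting the near-$1$ expansion $g(b) = g_0(b) + \sum_i (1-b)^{p_i}\sum_j g_{i,j}(b)(\log(1-b))^j$ of $g \in a^m\mathcal{Q}_\beta$ and integrating term-by-term produces three kinds of pieces: analytic contributions (absorbed into the $q_0$ part), $(1-a)^{p_i}$-type contributions inherited from $g$, and new ones generated by multiplication against $\psi_\gamma$; $\log(1-a)$ of finite order arises only at the resonant values where two exponents in the expansion coincide.

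The main obstacle is the last bookkeeping step: verifying that every exponent appearing in the expansion of $W$ near $a = 1$ is actually an element of $\mathcal{P}_\beta$, and that the powers of $\log(1-a)$ stay finite. This is the raison d'être of the recursive design of $\mathcal{P}_\beta$: the hypothesis $\gamma \in \mathcal{P}_\beta$ and the additive structure of $\mathcal{P}_\beta$ (together with the inequality $p_i + p_j \geq p_{i+j}$ recorded after its definition) are precisely what ensure that the new exponents produced by $\psi_\gamma$ and by integration against the $(b^2-1)\mathcal{W}$ factor again lie inside $\mathcal{P}_\beta$. Resonant integrals of the type $\int (1-b)^{-1}(\log(1-b))^j\,db$ arise whenever two such exponents collide; they raise the log-order by exactly one, but only finitely often in each $i$-block, so the total log-order in the resulting expansion stays finite. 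Once this verification is carried out and combined with the near-$0$ analysis above, the two expansions assemble into exactly the $a^{m+2}\mathcal{Q}_\beta$ structure claimed by the lemma.
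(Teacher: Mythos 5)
Your proposal follows essentially the same route as the paper: the same two fundamental systems at $a=0$ and $a=1$ with Wronskian $\propto a^{-2}(1-a^2)^{(\gamma\nu-1)/2}$, the same variation-of-parameters formula with zero Cauchy data at $a=0$, the same use of $m>\frac{\beta-5}{2}$ to make the exponent of the $\theta_\gamma$-integrand near the origin exceed $-1$ (so no logarithm is produced and $W\sim a^{m+2}$), and the same $\mathcal{P}_\beta$-bookkeeping near $a=1$ using the assumption that $\gamma$ is admissible and the additivity of $\mathcal{P}_\beta$. The observation that the substitution $u=a^{(\beta-1)/2}v(a^2)$ brings $L_\gamma$ to a Gauss hypergeometric equation is a pleasant extra that explains the Frobenius structure at both singular points, but the paper never invokes it and it is not needed for the computation.
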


	\begin{proof}
		We first analyze the behavior of $W$ near $a=1$. One may check that
		\begin{equation*}
		\phi_1(a)=1+\sum_{\ell=1}^{\infty}d_\ell(1-a)^\ell+c\phi_2(a)\log(1-a),\quad \phi_2(a)=(1-a)^{\frac{\gamma\nu+1}{2}}\left(1+\sum_{\ell=1}^\infty\widetilde{d}_\ell(1-a)^\ell\right)
		\end{equation*}
		is a fundamental system of solutions for $L_\gamma u=0$. Their Wronskian is given by
		\begin{equation*}
		W(\phi_1,\phi_2)(a)=k(1-a^2)^{\frac{\gamma\nu-1}{2}}a^{-2},\quad k\neq 0.
		\end{equation*}
		Using the standard Green's function, we get
		\begin{equation*}
		W(a)=c_1\phi_1(a)+c_2\phi_2(a)+\phi_1(a)\int_{a_0}^{a}\frac{\phi_2(a')g(a')}{W(\phi_1,\phi_2)(a')}da'-\phi_2(a)\int_{a_0}^{a}\frac{\phi_1(a')g(a')}{W(\phi_1,\phi_2)(a')}da'.
		\end{equation*}
		In the following, we will use $q(a)$ to denote a general function analytic at $a=1$.
		\begin{equation*}
			\begin{aligned}
				&\quad\phi_1(a)\int_{a_0}^{a}\frac{\phi_2(a')g(a')}{W(\phi_1,\phi_2)(a')}da'\\&=\left(q(a)+(1-a)^{\frac{\gamma\nu+1}{2}}\log(1-a)q(a)\right)\\&\quad\times\int_{a_0}^{a}\frac{(1-a')^{\frac{\gamma\nu+1}{2}}}{(1-a')^{\frac{\gamma\nu-1}{2}}}\sum_{i=1}^{\infty}(1-a')^{p_i-1}\sum_{j=0}^{\mathrm{finite}}q_{i,j}(a')(\log(1-a'))^{j}da'\\&=c\phi_1(a) + \sum_{i=1}^{\infty}(1-a)^{p_i+1}\sum_{j=0}^{\mathrm{finite}}q_{i,j}'(a)(\log(1-a))^j.\\
			\end{aligned}
		\end{equation*}
		\begin{equation*}
			\begin{aligned}
				&\quad\phi_2(a)\int_{a_0}^{a}\frac{\phi_1(a')g(a')}{W(\phi_1,\phi_2)(a')}da'\\&=(1-a)^{\frac{\gamma\nu+1}{2}}q(a)\\&\quad \times \int_{a_0}^{a}\frac{\left(q(a')+(1-a')^{\frac{\gamma\nu+1}{2}}\log(1-a')q(a')\right)}{(1-a')^{\frac{\gamma\nu-1}{2}}}\sum_{i=1}^{\mathrm{finite}}(1-a')^{p_i-1}\sum_{j=0}^{\infty}q_{i,j}(a')(\log(1-a'))^{j}da'\\&=d\phi_2(a)+\sum_{i=1}^{\infty}(1-a)^{p_i+1}\sum_{j=0}^{\mathrm{finite}}q_{i,j}'(a)(\log(1-a))^j.\\
			\end{aligned}
		\end{equation*}
		\par In conclusion, we get near $a=1$, $W$ is again of the form
		%\begin{equation}
		%W(a)=q(a)+(1-a)^{\frac{\gamma\nu+1}{2}}q(a)+(1-a)^{\frac{\gamma\nu+1}{2}}\log(1-a)q(a).
		%\end{equation} 
		
		\begin{equation*}
		W(a)=\sum_{i=1}^{\infty}(1-a)^{p_i}\sum_{j=0}^{\mathrm{finite}}\widetilde{q}_{i,j}(a)(\log(1-a))^j.
		\end{equation*}

		\par Next, we analyze the behavior of $W$ near $a=0$. As one can check, we have a fundamental system of solutions $\psi_1,\psi_2$ of $L_\gamma u=0$ with the following expansions at $a=0$,
		\begin{equation*}
		\psi_1(a)=a^{\frac{\beta-1}{2}}\left(1+\sum_{\ell=1}^{\infty}e_\ell a^{2\ell}\right),\quad \psi_2(a)=a^{\frac{-\beta-1}{2}}\left(1+\sum_{\ell=1}^{\infty}\widetilde{e}_{\ell}a^{2\ell}\right).
		\end{equation*}
		Their Wronskian is given by
		\begin{equation*}
		W(\psi_1,\psi_2)(a)=k(1-a^2)^{\frac{\gamma\nu-1}{2}}a^{-2},\quad k\neq 0.
		\end{equation*}
		The variation of parameter formula yields
		\begin{equation*}
		W(a)=\psi_1(a)\int_{0}^{a}\frac{\psi_2(a')a'^{m}q(a')}{a'^{-2}}da'-\psi_2(a)\int_{0}^{a}\frac{\psi_1(a')a'^{m}q(a')}{a'^{-2}}da',
		\end{equation*}
		where in this part we use $q(a)$ to denote an arbitrary even analytic funtion which may vary from line to line. For the first term,
		\begin{equation*}
		\begin{aligned}
		\psi_1(a)\int_{0}^{a}\frac{\psi_2(a')a'^{m}q(a')}{a'^{-2}}da'&=a^{\frac{\beta-1}{2}}q(a)\int_{0}^{a}a'^{m+\frac{-\beta+3}{2}}q(a')da'\\&=a^{m+2}q(a)
		\end{aligned}
		\end{equation*}
		where we have used that $m>\frac{\beta-5}{2}$, so $m+\frac{-\beta+3}{2}>-1$, i.e. there are no $\log$ terms. For the second term,
		\begin{equation*}
		\begin{aligned}
		\psi_2(a)\int_{0}^{a}\frac{\psi_1(a')a'^{m}q(a')}{a'^{-2}}da'&=a^{\frac{-\beta-1}{2}}q(a)\int_{0}^{a}a'^{m+\frac{\beta+3}{2}}q(a')da'\\&=a^{m+2}q(a),
		\end{aligned}
		\end{equation*}
		again there are no extra $\log$ terms due to the fact that $m+\frac{\beta+3}{2}>\frac{\beta-5}{2}+\frac{\beta+3}{2}=\beta-1>-1$.
		\par In conclusion, we get near $a=0$ that
		\begin{equation*}
		W(a)=a^{m+2}q(a)
		\end{equation*}
		where $q$ is analytic and has an even expansion at $a=0$, which completes the proof.
	\end{proof}

	\subsection{The first correction}
	
	In order to make $u_0 + \varepsilon$ be an exact soluton, the radial correction term $\varepsilon$ shall satisfy
	\begin{equation*}
		\left(\partial_t^2 - \partial_r^2 - \frac{2}{r}\partial_r + \frac{\alpha}{r^2} - 5u_0^4\right)\varepsilon = N_1(\varepsilon) + e_0,
	\end{equation*}
	where $N_1(\varepsilon)$ denotes the nonlinear error appearing in the first correction, i.e.
	\begin{equation*}
		N_1(\varepsilon) = 10u_0^3\varepsilon^2 +10 u_0^2 \varepsilon^3 + 5u_0\varepsilon^4 + \varepsilon^5.
	\end{equation*}
	
	\par The first correction will be done in the cuspidal region $r\ll t$ and we assume that $\partial_t^2$ derivate is less important (in fact we include it into the error $e_1$), so we seek a correction term $v_1$ to be a solution to (note that the nonlinear term $N_1(\varepsilon)$ is omitted)
	\begin{equation*}
		\left(- \partial_r^2 - \frac{2}{r}\partial_r + \frac{\alpha}{r^2} - 5u_0^4\right)v_1 = e_0.
	\end{equation*}
	By the homogeneity consideration, we make the ansatz that $v_1(t,x)=\frac{\lambda^{\frac{1}{2}}(t)}{(t\lambda(t))^2}V_1(R)$. Together with the relation $t\partial_t=(-1-\nu)R\partial_R$ and $r\partial_r=R\partial_R$, we get the equation for $V_1$ as
	\begin{equation*}
		\left(-\partial_R^2-\frac{2}{R}\partial_R+\frac{\alpha}{R^2}-5W_\alpha^4(R)\right)V_1(R)=g(R),
	\end{equation*}
	where
	\begin{equation*}
		g(R)=t^2\lambda^{-\frac{1}{2}}(t) e_0(t,r)\in S^{\frac{\beta-1}{2}}(R^{\frac{-\beta-1}{2}}).
	\end{equation*}
	Imposing zero Cauchy data of $V_1(R)$ at $R=0$, Lemma \ref{lem:R} shows that $V_1 \in S^{\frac{\beta+3}{2}}(\max\{R^{\frac{-\beta+3}{2}}\log(R),R^{\frac{\beta-1}{2}}\})$. This splits into three cases:
	\begin{equation*}
		\begin{aligned}
			v_1 &\in \frac{\lambda^{\frac{1}{2}}(t)}{(t\lambda(t))^2}S^{\frac{\beta+3}{2}}(R^{\frac{-\beta+3}{2}}\log(R)),& 0<\beta<2;\\
			v_1 &\in \frac{\lambda^{\frac{1}{2}}(t)}{(t\lambda(t))^2}S^{\frac{5}{2}}(R^{\frac{1}{2}}\log(R)),& \beta=2;\\
			v_1 &\in \frac{\lambda^{\frac{1}{2}}(t)}{(t\lambda(t))^2}S^{\frac{\beta+3}{2}}(R^{\frac{\beta-1}{2}}),& 2<\beta<4.\\
		\end{aligned}
	\end{equation*}

	\par Now we let 
	\begin{equation*}
		u_1=u_0+v_1.
	\end{equation*}
	Due to a gaining of the factor $\frac{1}{(t\lambda(t))^2}=t^{2\nu}$, $v_1$ is much smaller than $u_1$ in terms of energy inside the light cone as $t\to 0^+$.

	\subsection{The error from $u_1$}
	
	Now we compute the fully nonlinear error $e_1$ of $u_1$.
	
	\begin{equation*}
		\begin{aligned}
			e_1(t,x) &= u_1^5 - \left(\partial_t^2 u_1 - \Delta u_1 + \frac{\alpha}{r^2} u_1\right)\\&= -\partial_t^2v_1+N_1(v_1).\\
		\end{aligned}
	\end{equation*}
	Using the relation $t\partial_t=(-1-\nu)R\partial_R$ and $t^2\partial_t^2=(t\partial_t)^2-t\partial_t$, we immediately get
	\begin{equation*}
		-t^2\partial_t^2v_1\in \frac{\lambda^{\frac{1}{2}}(t)}{(t\lambda(t))^2}S^{\frac{\beta+3}{2}}(\max\{R^{\frac{-\beta+3}{2}}\log(R),R^{\frac{\beta-1}{2}}\}).
	\end{equation*}
	\par For terms in $N_1(v_1)$, we only present the calculation for two ``extremal" terms $t^2u_0^3v_1^2$ and $t^2v_1^5$, since intermediate terms are similar.
	
	\begin{equation*}
		\begin{aligned}
			t^2u_0^3v_1^2&\in t^2\lambda^{\frac{3}{2}}(t)S^{\frac{3\beta-3}{2}}(R^{\frac{-3\beta-3}{2}})\frac{\lambda(t)}{(t\lambda(t))^{4}}S^{\beta+3}(\max\{R^{-\beta+3}(\log(R))^2,R^{\beta-1}\})\\
			&\subset\frac{\lambda^{\frac{1}{2}}(t)}{(t\lambda(t))^2}S^{\frac{5\beta+3}{2}}(\max\{R^{\frac{-5\beta+3}{2}}(\log(R))^2,R^{\frac{-\beta-5}{2}}\})\\
			&\subset\frac{\lambda^{\frac{1}{2}}(t)}{(t\lambda(t))^2} S^{\frac{5\beta+3}{2}}(\max\{R^{-2\beta}R^{\frac{-\beta+3}{2}}(\log(R))^2,R^{-\beta-2}R^{\frac{\beta-1}{2}}\})\\
			&\subset\frac{\lambda^{\frac{1}{2}}(t)}{(t\lambda(t))^2} S^{\frac{\beta+3}{2}}(\max\{R^{\frac{-\beta+3}{2}}(\log(R))^2,R^{\frac{\beta-1}{2}}\}).\\
		\end{aligned}
	\end{equation*}
	
	\begin{equation*}
		\begin{aligned}
			t^2v_1^5&\in t^2\frac{\lambda^{\frac{5}{2}}(t)}{(t\lambda(t))^{10}}S^{\frac{5\beta+15}{2}}(\max\{R^{\frac{-5\beta+15}{2}}(\log(R))^5,R^{\frac{5\beta-5}{2}}\})\\
			&\subset\frac{\lambda^{\frac{1}{2}}(t)}{(t\lambda(t))^2}b^6S^{\frac{5\beta+15}{2}}(\max\{R^{-2\beta+6}R^{\frac{-\beta+3}{2}}(\log(R))^5,R^{2\beta-2}R^{\frac{\beta-1}{2}}\})\\
			&\subset\frac{\lambda^{\frac{1}{2}}(t)}{(t\lambda(t))^2}a^6S^{\frac{9\beta+3}{2}}(\max\{R^{-2\beta}R^{\frac{-\beta+3}{2}}(\log(R))^5,R^{2\beta-8}R^{\frac{\beta-1}{2}}\})\\
			&\subset\frac{\lambda^{\frac{1}{2}}(t)}{(t\lambda(t))^2}a^6 S^{\frac{\beta+3}{2}}(\max\{R^{\frac{-\beta+3}{2}}(\log(R))^5,R^{\frac{\beta-1}{2}}\}).\\
		\end{aligned}
	\end{equation*}
	\par As a conclusion, we get
	\begin{equation*}
		t^2e_1\in\frac{\lambda^{\frac{1}{2}}(t)}{(t\lambda(t))^2}IS^{\frac{\beta+3}{2}}(\max\{R^{\frac{-\beta+3}{2}}(\log(R))^5,R^{\frac{\beta-1}{2}}\},\mathcal{Q}_\beta),
	\end{equation*}
	which again splits into three cases:
	\begin{equation*}
		\begin{aligned}
			t^2e_1&\in \frac{\lambda^{\frac{1}{2}}(t)}{(t\lambda(t))^2}IS^{\frac{\beta+3}{2}}(R^{\frac{-\beta+3}{2}}(\log(R))^5,\mathcal{Q}_\beta),& 0<\beta<2;\\
			t^2e_1&\in \frac{\lambda^{\frac{1}{2}}(t)}{(t\lambda(t))^2}IS^{\frac{5}{2}}(R^{\frac{1}{2}}(\log(R))^5,\mathcal{Q}_2),& \beta=2;\\
			t^2e_1&\in \frac{\lambda^{\frac{1}{2}}(t)}{(t\lambda(t))^2}IS^{\frac{\beta+3}{2}}(R^{\frac{\beta-1}{2}},\mathcal{Q}_\beta),& 2<\beta<4.\\
		\end{aligned}
	\end{equation*}

	\subsection{The second correction}
	
	We do the second correction near the boundary of the light cone, i.e. at $r\approx t$. We will use the self-similar variable $a=\frac{r}{t}$. The second correction $v_2$ is determined via
	\begin{equation*}
		\left(\partial_t^2-\partial_r^2-\frac{2}{r}\partial_r+\frac{\alpha}{r^2}\right)v_2=tf_1,
	\end{equation*}
	where $f_1$ is the main asymptotic component of $e_1$, that is
	\begin{equation*}
		\begin{aligned}
			t^2f_1&=\frac{\lambda^{\frac{1}{2}}(t)}{(t\lambda(t))^2}\sum_{i=0}^{I}\sum_{j=0}^{i+5}q_{i,j}(a)R^{\frac{-\beta+3}{2}-m_i}\left(\log(R)\right)^{j}\\&=\frac{\lambda^{\frac{1}{2}}(t)}{(t\lambda(t))^{\frac{\beta+1}{2}}}\sum_{i=0}^{I}b^{m_j}\sum_{j=0}^{i+5}a^{\frac{-\beta+3}{2}-m_i}q_{i,j}(a)\left(\log(R)\right)^{j},\quad 0<\beta\leq 2,\\
		\end{aligned}
	\end{equation*}
	or
	\begin{equation*}
		\begin{aligned}
			t^2f_1&=\frac{\lambda^{\frac{1}{2}}(t)}{(t\lambda(t))^2}\sum_{i=0}^{I}\sum_{j=0}^{i}q_{i,j}(a)R^{\frac{\beta-1}{2}-m_i}\left(\log(R)\right)^{j}\\&=\frac{\lambda^{\frac{1}{2}}(t)}{(t\lambda(t))^{\frac{5-\beta}{2}}}\sum_{i=0}^{I}b^{m_j}\sum_{j=0}^{i}a^{\frac{\beta-1}{2}-m_i}q_{i,j}(a)\left(\log(R)\right)^{j},\quad 2<\beta<4,\\
		\end{aligned}
	\end{equation*}
	where $I$ is the largest integer such that $m_I<2$ (note that $I$ depends on $\beta$). 
	\par By the homogeneity consideration, we make the following ansatz for $v_2$:
	\begin{equation*}
	\begin{aligned}
		v_2&=\frac{\lambda^{\frac{1}{2}}(t)}{(t\lambda(t))^{\frac{\beta+1}{2}}}\sum_{i=0}^{I}b^{m_i}\sum_{j=0}^{i+5}W_{i,j}(a)\left(\log(R)\right)^j,& 0<\beta\leq 2,\\
		v_2&=\frac{\lambda^{\frac{1}{2}}(t)}{(t\lambda(t))^{\frac{5-\beta}{2}}}\sum_{i=0}^{I}b^{m_i}\sum_{j=0}^{i}W_{i,j}(a)\left(\log(R)\right)^j,& 2<\beta<4.\\
	\end{aligned}
	\end{equation*}
	For each $i=1,2,\cdots,I$, we solve 
	\begin{equation*}
	\begin{aligned}
		t^2\left(\partial_t^2-\partial_r^2-\frac{2}{r}\partial_r+\frac{\alpha}{r^2}\right)&\left(\frac{\lambda^{\frac{1}{2}}(t)}{(t\lambda(t))^{\frac{\beta+1}{2}+m_i}}\sum_{j=0}^{i+5}W_{i,j}(a)\left(\log(R)\right)^{j}\right)\\
		&=\frac{\lambda^{\frac{1}{2}}(t)}{(t\lambda(t))^{\frac{\beta+1}{2}+m_i}}\sum_{j=0}^{i+5}a^{\frac{-\beta+3}{2}-m_i}q_{i,j}(a)\left(\log(R)\right)^{j},\quad 0<\beta\leq 2,
	\end{aligned}
	\end{equation*}
	or 
	\begin{equation*}
	\begin{aligned}
		t^2\left(\partial_t^2-\partial_r^2-\frac{2}{r}\partial_r+\frac{\alpha}{r^2}\right)&\left(\frac{\lambda^{\frac{1}{2}}(t)}{(t\lambda(t))^{\frac{5-\beta}{2}+m_i}}\sum_{j=0}^{i}W_{i,j}(a)\left(\log(R)\right)^{j}\right)\\&=\frac{\lambda^{\frac{1}{2}}(t)}{(t\lambda(t))^{\frac{5-\beta}{2}+m_i}}\sum_{j=0}^{i}a^{\frac{\beta-1}{2}-m_i}q_{i,j}(a)\left(\log(R)\right)^{j},\quad 2<\beta\leq 4,
	\end{aligned}
	\end{equation*}
	with zero Cauchy data at $a=0$. Equations for $a$ are obtained by matching the powers of $\log(R)$,
	\begin{equation*}
		\begin{aligned}
			L_{\beta+2m_i} W_{i,j}&=a^{\frac{-\beta+3}{2}-m_i}q_{i,j}(a)+\cdots,& 0<\beta\leq 2,\\
			L_{4-\beta+2m_i}W_{i,j}&=a^{\frac{\beta-1}{2}-m_i}q_{i,j}(a)+\cdots,& 2<\beta<4,\\
		\end{aligned}
	\end{equation*}
	where $\cdots$ contains $W_{i,j'}$ for $j'>j$ and their derivatives, but its explicit expression is not so relevant.

	\par Using Lemma \ref{lem:a}, we get $W_{i,j}\in a^{\frac{-\beta+7}{2}-m_j}\mathcal{Q}_\beta$ if $0<\beta\leq 2$, $W_{i,j}\in a^{\frac{\beta+3}{2}}\mathcal{Q}_\beta$ if $2<\beta<4$ and 
	\begin{equation*}
		\begin{aligned}
			v_2&=\frac{\lambda^{\frac{1}{2}}(t)}{(t\lambda(t))^4}\sum_{i=0}^{I}R^{\frac{-\beta+7}{2}-m_i}\sum_{j=0}^{i+5}\underset{\in\mathcal{Q}_\beta}{\underbrace{a^{m_i-\frac{-\beta+7}{2}}W_{i,j}(a)}}\left(\log(R)\right)^j,& 0<\beta\leq 2,\\
			v_2&=\frac{\lambda^{\frac{1}{2}}(t)}{(t\lambda(t))^4}\sum_{i=0}^{I}R^{\frac{\beta+3}{2}-m_i}\sum_{j=0}^{i}\underset{\in\mathcal{Q}_\beta}{\underbrace{a^{m_i-\frac{\beta+3}{2}}W_{i,j}(a)}}\left(\log(R)\right)^j,& 2<\beta<4.\\
		\end{aligned}
	\end{equation*}
	We further modify $v_2$ to be
	\begin{equation*}
		\begin{aligned}
			v_2&=\frac{\lambda^{\frac{1}{2}}(t)}{(t\lambda(t))^4}\sum_{i=0}^{I}R^{\frac{-\beta+7}{2}-m_i}\cdot\frac{R^{\beta+m_i}}{(1+R^{2})^{\frac{\beta+m_i}{2}}}\sum_{j=0}^{i+5}\underset{\in\mathcal{Q}_\beta}{\underbrace{a^{m_i-\frac{-\beta+7}{2}}W_{i,j}(a)}}\left(\frac{1}{2}\log(1+R^{2})\right)^j,& 0<\beta\leq 2,\\
			v_2&=\frac{\lambda^{\frac{1}{2}}(t)}{(t\lambda(t))^4}\sum_{i=0}^{I}R^{\frac{\beta+3}{2}-m_i}\cdot\frac{R^{2+m_i}}{(1+R^{2})^{\frac{2+m_i}{2}}}\sum_{j=0}^{i}\underset{\in\mathcal{Q}_\beta}{\underbrace{a^{m_i-\frac{\beta+3}{2}}W_{i,j}(a)}}\left(\frac{1}{2}\log(1+R^{2})\right)^j,& 2<\beta<4.\\
		\end{aligned}
	\end{equation*}
	
	\par In conclusion, we have
	\begin{equation*}
		v_2\in\frac{\lambda^{\frac{1}{2}}(t)}{(t\lambda(t))^4}IS^{\frac{\beta+7}{2}}(\max\{R^{\frac{-\beta+7}{2}}(\log(R))^5,R^{\frac{\beta+3}{2}}\},\mathcal{Q}_\beta).
	\end{equation*}

	\par Now we let
	\begin{equation*}
		u_2=u_1+v_2=u_0+v_1+v_2.
	\end{equation*}

	\subsection{The error from $u_2$}
	
	The fully nonlinear error $e_2$ of $u_2$ is given by
	\begin{equation*}
	\begin{aligned}
		t^2e_2&=t^2u_2^5-t^2\left(\partial_t^2-\partial_r^2-\frac{2}{r}\partial_r+\frac{\alpha}{r^2}\right)u_2\\&=\left(t^2e_1-t^2e_1^0\right)+t^2\left(e_1^0-\left(\partial_t^2-\partial_r^2-\frac{2}{r}\partial_r+\frac{\alpha}{r^2}\right)v_2\right)+t^2N_2(v_2),
	\end{aligned}
	\end{equation*}
	where
	\begin{equation*}
		\begin{aligned}
			t^2e_1^0&=\frac{\lambda^{\frac{1}{2}}(t)}{(t\lambda(t))^2}\sum_{i=0}^{I}\sum_{j=0}^{i+5}q_{i,j}(a)R^{\frac{-\beta+3}{2}-m_i}\times\frac{R^{\beta+m_i}}{(1+R^{2})^{\frac{\beta+m_i}{2}}}\left(\frac{1}{2}\log\left(1+R^{2}\right)\right)^{j},& 0<\beta\leq 2,\\
			t^2e_1^0&=\frac{\lambda^{\frac{1}{2}}(t)}{(t\lambda(t))^2}\sum_{i=0}^{I}\sum_{j=0}^{i}q_{i,j}(a)R^{\frac{\beta-1}{2}-m_i}\times\frac{R^{2+m_i}}{(1+R^{2})^{\frac{2+m_i}{2}}}\left(\frac{1}{2}\log\left(1+R^{2}\right)\right)^{j},& 2<\beta<4.\\
		\end{aligned}
	\end{equation*}
	Note that
	\begin{equation*}
		\begin{aligned}
			R^{\frac{-\beta+3}{2}-m_i}&\times\frac{R^{\beta+m_i}}{(1+R^{2})^{\frac{\beta+m_i}{2}}}\left(\frac{1}{2}\log\left(1+R^{2}\right)\right)^{j}\\&=R^{\frac{\beta+3}{2}-m_i}\left(\log(R)\right)^{j}\left(c_0+c_1R^{-2}+\cdots\right),& 0<\beta\leq 2,\\
			R^{\frac{\beta-1}{2}-m_i}&\times\frac{R^{2+m_i}}{(1+R^{2})^{\frac{2+m_i}{2}}}\left(\frac{1}{2}\log\left(1+R^{2}\right)\right)^{j}\\&=R^{\frac{\beta+3}{2}-m_i}\left(\log(R)\right)^{j}\left(c_0+c_1R^{-2}+\cdots\right),& 2<\beta<4,\\
		\end{aligned}
	\end{equation*}
	so
	\begin{equation*}
		t^2e_1-t^2e_1^0\in\frac{\lambda^{\frac{1}{2}}(t)}{(t\lambda(t))^2} IS^{\frac{\beta+3}{2}}(\max\{R^{\frac{-\beta-1}{2}}(\log(R))^{J+2},R^{\frac{\beta-5}{2}}(\log(R))^{J+1}\},\mathcal{Q}_\beta).
	\end{equation*}
	
	 \par For nonlinear terms, we only present the calculation for $t^2u_1^4v_2$ and $t^2v_2^5$. Note that 
	 \begin{equation*}
	 	u_1 \in \lambda^{\frac{1}{2}}(t)IS^{\frac{\beta-1}{2}}(\max\{R^{\frac{-\beta-1}{2}}\log(R),R^{\frac{\beta-5}{2}}\},\mathcal{Q}_\beta),
	 \end{equation*}
	 hence
	 \begin{equation*}
	 \begin{aligned}
	 t^2u_1^4v_2&\in t^2\lambda^{2}(t)IS^{2\beta-2}(\max\{R^{-2\beta-2}(\log(R))^4,R^{2\beta-10}\},\mathcal{Q}_\beta)\\&\quad \times\frac{\lambda^{\frac{1}{2}}(t)}{(t\lambda(t))^4}IS^{\frac{\beta+7}{2}}(\max\{R^{\frac{-\beta+7}{2}}\log(R),R^{\frac{\beta+3}{2}}\},\mathcal{Q}_\beta)\\
	 &\subset  \frac{\lambda^{\frac{1}{2}}(t)}{(t\lambda(t))^{2}}IS^{\frac{5\beta+3}{2}}(\max\{R^{\frac{-5\beta+3}{2}}(\log(R))^5,R^{\frac{5\beta-17}{2}}\},\mathcal{Q}_\beta)\\
	 &\subset \frac{\lambda^{\frac{1}{2}}(t)}{(t\lambda(t))^{2}}IS^{\frac{\beta+3}{2}}(\max\{R^{\frac{-\beta-1}{2}+2(1-\beta)}(\log(R))^5,R^{\frac{\beta-5}{2}+2(\beta-3)}\},\mathcal{Q}_\beta).\\
	 \end{aligned}
	 \end{equation*}
	 Likewise, we have
	\begin{equation*}
		\begin{aligned}
			t^2v_2^5&\in \frac{t^2\lambda^{\frac{5}{2}}(t)}{(t\lambda(t))^{10}}IS^{\frac{5\beta+15}{2}}(\max\{R^{\frac{-5\beta+15}{2}}(\log(R))^5,R^{\frac{5\beta-5}{2}}\},\mathcal{Q}_\beta)\\
			&\subset  \frac{\lambda^{\frac{1}{2}}(t)}{(t\lambda(t))^{2}}b^{6}IS^{\frac{5\beta+15}{2}}(\max\{R^{-2\beta+8}R^{\frac{-\beta-1}{2}}(\log(R))^5,R^{2\beta}R^{\frac{\beta-5}{2}}\},\mathcal{Q}_\beta)\\
			&\subset \frac{\lambda^{\frac{1}{2}}(t)}{(t\lambda(t))^{2}}a^6IS^{\frac{5\beta+3}{2}}(\max\{R^{\frac{-\beta-1}{2}+2(1-\beta)}(\log(R))^5,R^{\frac{\beta-5}{2}+2(\beta-3)}\},\mathcal{Q}_\beta)\\
			&\subset\frac{\lambda^{\frac{1}{2}}(t)}{(t\lambda(t))^2}IS^{\frac{\beta+3}{2}}(\max\{R^{\frac{-\beta-1}{2}+2(1-\beta)}(\log(R))^5,R^{\frac{\beta-5}{2}+2(\beta-3)}\},\mathcal{Q}_\beta).\\
		\end{aligned}
	\end{equation*}
	Note that an increasing of order at large $R$ of order $2(1-\beta)_+$ or $2(\beta-3)_+$ is admissile since we have restricted $\beta$ to be in $(0,4)$. The other three terms in $t^2N_2(v_2)$ can be treated similarly.
	\par In conclusion, we have
	\begin{equation*}
			t^2e_2\in\frac{\lambda^{\frac{1}{2}}(t)}{(t\lambda(t))^2}IS^{\frac{\beta+3}{2}}(\max\{R^{\frac{-\beta-1}{2}+2(1-\beta)_+}(\log(R))^{\max\{J+2,5\}},R^{\frac{\beta-5}{2}+2(\beta-3)_+}(\log(R))^{J+1}\},\mathcal{Q}_\beta).
	\end{equation*}

	\subsection{Higher iterations}
	We prove the following by induction.

	\begin{equation}\label{eq:v_1}
		\begin{aligned}
			v_{2k-1}&\in\frac{\lambda^{\frac{1}{2}}(t)}{(t\lambda(t))^{2k}}\\&\times IS^{\frac{\beta+3}{2}+2(k-1)}(\max\{R^{\frac{-\beta+3}{2}+2(k-1)(1-\beta)_+}(\log(R))^{m_k},R^{\frac{\beta-1}{2}+2(k-1)(\beta-3)_+}(\log(R))^{m_k'}\},\mathcal{Q}_\beta),\\
		\end{aligned}
	\end{equation}
	\begin{equation}\label{eq:e_1}
		\begin{aligned}
			t^2e_{2k-1}&\in\frac{\lambda^{\frac{1}{2}}(t)}{(t\lambda(t))^{2k}}\\&\times IS^{\frac{\beta+3}{2}+2(k-1)}(\max\{R^{\frac{-\beta+3}{2}+2(k-1)(1-\beta)_+}(\log(R))^{p_k},R^{\frac{\beta-1}{2}+2(k-1)(\beta-3)_+}(\log(R))^{p_k'}\},\mathcal{Q}_\beta'),\\
		\end{aligned}
	\end{equation}
	\begin{equation}\label{eq:v_2}
		\begin{aligned}
			v_{2k}&\in\frac{\lambda^{\frac{1}{2}}(t)}{(t\lambda(t))^{2k+2}}\\&\times IS^{\frac{\beta+7}{2}+2(k-1)}(\max\{R^{\frac{-\beta+7}{2}+2(k-1)(1-\beta)_+}(\log(R))^{p_k},R^{\frac{\beta+3}{2}+2(k-1)(\beta-3)_+}(\log(R))^{p_k'}\},\mathcal{Q}_\beta),\\
		\end{aligned}
	\end{equation}
	\begin{equation}\label{eq:e_2}
		\begin{aligned}
			t^2e_{2k}&\in \frac{\lambda^{\frac{1}{2}}(t)}{(t\lambda(t))^{2k}}IS^{\frac{\beta-1}{2}+2k}(\max\{R^{\frac{-\beta-1}{2}+2k(1-\beta)_+}(\log(R))^{q_k},R^{\frac{\beta-5}{2}+2k(\beta-3)_+}(\log(R))^{q_k'}\},\mathcal{Q}_\beta)\\
			&+\frac{\lambda^{\frac{1}{2}}(t)}{(t\lambda(t))^{2k+2}}IS^{\frac{\beta+3}{2}+2k}(\max\{R^{\frac{-\beta+3}{2}+2k(1-\beta)_+}(\log(R))^{q_k},R^{\frac{\beta-1}{2}+2k(\beta-3)_+}(\log(R))^{q_k'}\},\mathcal{Q}_\beta').\\
		\end{aligned}
	\end{equation}

	\par \underline{\textbf{Step 0}:} We check (\ref{eq:e_1}) for $k=0$.
	\par This follows directly from $t^2 e_0=-t^2\partial_t^2u_0\in\lambda^{\frac{1}{2}}(t)IS^{\frac{\beta-1}{2}}(R^{\frac{-\beta-1}{2}},\mathcal{Q}_\beta)$. Also note that when $2\leq\beta<4$, $\lambda^{\frac{1}{2}}(t)IS^{\frac{\beta-1}{2}}(R^{\frac{-\beta-1}{2}},\mathcal{Q}_\beta)\subset \lambda^{\frac{1}{2}}(t)IS^{\frac{\beta-1}{2}}(R^{\frac{\beta-5}{2}},\mathcal{Q}_\beta)$.
	
	\par \underline{\textbf{Step 1}:} We show that (\ref{eq:v_1}) holds assuming $e_{2k-2}$ satisfies (\ref{eq:e_2}).
	
	\par In this step, we only take the leading part of $t^2e_{2k-2}$ into account. We set
	\begin{equation*}
		t^2e_{2k-2}=t^2e_{2k-2}^0+t^2e_{2k-2}^1,
	\end{equation*}
	where 
	\begin{equation*}
		\begin{aligned}
			&\quad t^2e_{2k-2}^0\\&\in \frac{\lambda^{\frac{1}{2}}(t)}{(t\lambda(t))^{2k-2}}\\&\times IS^{\frac{\beta-1}{2}+2(k-1)}(\max\{R^{\frac{-\beta-1}{2}+2(k-1)(1-\beta)_+}(\log(R))^{q_{k-1}},R^{\frac{\beta-5}{2}+2(k-1)(\beta-3)_+}(\log(R))^{q_{k-1}'}\},\mathcal{Q}_\beta),\\
		\end{aligned}
	\end{equation*}
	and 
	\begin{equation*}
		\begin{aligned}
			&\quad t^2e_{2k-2}^1\\&\in\frac{\lambda^{\frac{1}{2}}(t)}{(t\lambda(t))^{2k}}\\&\times IS^{\frac{\beta+3}{2}+2(k-1)}(\max\{R^{\frac{-\beta+3}{2}+2(k-1)(1-\beta)_+}(\log(R))^{q_{k-1}},R^{\frac{\beta-1}{2}+2(k-1)(\beta-3)_+}(\log(R))^{q_{k-1}'}\},\mathcal{Q}_\beta').\\
		\end{aligned}
	\end{equation*}
	Note that $t^2e_{2k-2}^1$ can be included in $t^2e_{2k-1}$, see (\ref{eq:e_1}). In the following, we treat $a$ as a parameter and make the following ansatz for $v_{2k-1}$: 
	\begin{equation*}
		v_{2k-1}(t,r)=\frac{\lambda^{\frac{1}{2}}(t)}{(t\lambda(t))^{2k}}V_{2k-1}(R),
	\end{equation*}	
	then $V_{2k-1}$ satisfies the equation
	\begin{equation*}
		\left(-\partial_R^2-\frac{2}{R}\partial_R +\frac{\alpha}{R^2} -5W_\alpha^4(R)\right)V_{2k-1}=t\lambda^{-\frac{1}{2}}(t)(t\lambda(t))^{2k-2}e_{2k-2}^0=:g(R),
	\end{equation*}
	where (recall that we have frozen the $a$ variable)
	\begin{equation*}
		g(R)\in S^{\frac{\beta-1}{2}+2(k-1)}(\max\{R^{\frac{-\beta-1}{2}+2(k-1)(1-\beta)_+}(\log(R))^{q_{k-1}},R^{\frac{\beta-5}{2}+2(k-1)(\beta-3)_+}(\log(R))^{q_{k-1}'}\}).
	\end{equation*}

	\par Invoking Lemma \ref{lem:R}, we see that
	\begin{equation*}
		V_{2k-1}\in S^{\frac{\beta+3}{2}+2(k-1)}(\max\{R^{\frac{-\beta+3}{2}+2(k-1)(1-\beta)_+}(\log(R))^{q_{k-1}+1},R^{\frac{\beta-1}{2}+2(k-1)(\beta-3)_+}(\log(R))^{q_{k-1}'+1}\}).
	\end{equation*}
	
	Let $m_k=q_{k-1}+1$ and $m_k'=q_{k-1}'+1$, thus we have
	\begin{equation*}
		\begin{aligned}
			&\quad v_{2k-1}\\&\in\frac{\lambda^{\frac{1}{2}}(t)}{(t\lambda(t))^{2k}}\\&\times IS^{\frac{\beta+3}{2}+2(k-1)}(\max\{R^{\frac{-\beta+3}{2}+2(k-1)(1-\beta)_+}(\log(R))^{m_k},R^{\frac{\beta-1}{2}+2(k-1)(\beta-3)_+}(\log(R))^{m_k'}\},\mathcal{Q}_\beta),\\
		\end{aligned}
	\end{equation*}
	which concludes the proof of (\ref{eq:v_1}).

	\underline{\textbf{Step 2}:} We begin with (\ref{eq:v_1}) then deduce (\ref{eq:e_1}).
	\par The error $t^2e_{2k-1}$ is given by
	\begin{equation*}
		e_{2k-1}=e_{2k-2}^1+N_{2k-1}(v_{2k-1})+E^tv_{2k-1}+E^av_{2k-1},
	\end{equation*}	
	where
	\begin{equation*}
		N_{2k-1}(v_{2k-1})=5(u_{2k-2}^4-u_0^4)v_{2k-1}+10u_{2k-2}^3v_{2k-1}^2+10u_{2k-2}^2v_{2k-1}^3+5u_{2k-2}v_{2k-1}^4+v_{2k-1}^5,
	\end{equation*} 
	$E^tv_{2k-1}$ contains terms in $-\partial_t^2v_{2k-1}$ where no derivative applies to $a$, while $E^av_{2k-1}$ contains those terms in $\left(-\partial_t^2+\partial_r^2+\frac{2}{r}\partial_r-\frac{\alpha}{r^2}\right)v_{2k-1}$ where at least one derivative applies to $a$ (recall that in Step 1 the parameter $a$ was frozen).
	\par First, using $t\partial_t=(-1-\nu)R\partial_R$ and $t^2\partial_t^2=(t\partial_t)^2-t\partial_t$, we find that
	\begin{equation*}
		\begin{aligned}
			&\quad t^2E^tv_{2k-1}\\&\in \frac{\lambda^{\frac{1}{2}}(t)}{(t\lambda(t))^{2k}}\\& \times IS^{\frac{\beta+3}{2}+2(k-1)}(\max\{R^{\frac{-\beta+3}{2}+2(k-1)(1-\beta)_+}(\log(R))^{m_k},R^{\frac{\beta-1}{2}+2(k-1)(\beta-3)_+}(\log(R))^{m_k'}\},\mathcal{Q}_\beta).\\
		\end{aligned}
	\end{equation*}
	\par Then we consider $E^av_{2k-1}$, which can be computed to be
	\begin{equation*}
	\begin{aligned}
		&\quad t^2E^av_{2k-1}\\&=
		(a\partial_a)^2V_{2k-1}+2\left(4k\nu-\frac{1+\nu}{2}\right)(a\partial_a)V_{2k-1}+2(-1-\nu)(R\partial_R)(a\partial_a)V_{2k-1}+(a\partial_a)V_{2k-1}\\
		&\quad -\partial_a^2V_{2k-1}-2(R\partial_R)a^{-1}\partial_aV_{2k-1}\\
		&\quad -2a^{-1}\partial_aV_{2k-1}.\\
	\end{aligned}
	\end{equation*}
	Note that 
	\begin{equation*}
		(1-a^2)\partial_a^2,\ a\partial_a,\ a^{-1}\partial_a\colon \mathcal{Q}_\beta\to\mathcal{Q}_\beta',
	\end{equation*}
	therefore
	\begin{equation*}
		\begin{aligned}
			&\quad t^2E^av_{2k-1}\\&\in\frac{\lambda^{\frac{1}{2}}(t)}{(t\lambda(t))^{2k}}\\& \times IS^{\frac{\beta+3}{2}+2(k-1)}(\max\{R^{\frac{-\beta+3}{2}+2(k-1)(1-\beta)_+}(\log(R))^{m_k},R^{\frac{\beta-1}{2}+2(k-1)(\beta-3)_+}(\log(R))^{m_k'}\},\mathcal{Q}_\beta').\\
		\end{aligned}
	\end{equation*}
	\par Finally, we consider terms in $N_{2k-1}(v_{2k-1})$. Since
	\begin{equation*}
		u_{2k-2}=u_0+v_1+\cdots+v_{2k-2},
	\end{equation*}
	we have
	\begin{equation*}
		u_{2k-2}-u_0\in\frac{\lambda^{\frac{1}{2}}(t)}{(t\lambda(t))^{2}}IS^{\frac{\beta+3}{2}}(\max\{R^{\frac{-\beta+3}{2}}(\log(R))^{n_k},R^{\frac{\beta-1}{2}}(\log(R))^{n_k'}\},\mathcal{Q}_\beta)
	\end{equation*}
	for some integer $n_k\geq 0$ or $n_k'\geq 0$. Then
	\begin{equation*}
	\begin{aligned}
		t^2(u_{2k-2}^4-u_0^4)v_{2k-1}&=t^2(u_{2k-2}-u_0)^4v_{2k-1}+4t^2(u_{2k-2}-u_0)^3u_0v_{2k-1}\\&\quad +6t^2(u_{2k-2}-u_0)^2u_0^2v_{2k-1}+4t^2(u_{2k-2}-u_0)u_0^3v_{2k-1}.
	\end{aligned}
	\end{equation*}
	We have
	\begin{equation*}
	\begin{aligned}
		&\quad t^2(u_{2k-2}-u_0)^4v_{2k-1}\\
		&\in t^2\frac{\lambda^2(t)}{(t\lambda(t))^8}IS^{2\beta+6}(\max\{R^{-2\beta+6}(\log(R))^{4n_k},R^{2\beta-2}(\log(R))^{4n_k'}\},\mathcal{Q}_\beta)\\&\quad \times\frac{\lambda^{\frac{1}{2}}(t)}{(t\lambda(t))^{2k}}\\&\quad \times IS^{\frac{\beta+3}{2}+2(k-1)}(\max\{R^{\frac{-\beta+3}{2}+2(k-1)(1-\beta)_+}(\log(R))^{m_k},R^{\frac{\beta-1}{2}+2(k-1)(\beta-3)_+}(\log(R))^{m_k'}\},\mathcal{Q}_\beta)\\
		%&\subset \frac{\lambda^{\frac{1}{2}}(t)}{(t\lambda(t))^{2k}}b^6IS^{\frac{5\beta+15}{2}+2(k-1)}(\max\{R^{\frac{-5\beta+15}{2}+2(k-1)(1-\beta)_+}(\log(R))^{4n_k+m_k},R^{\frac{5\beta-5}{2}+2(k-1)(\beta-3)_+}(\log(R))^{4n_k'+m_k'}\},\mathcal{Q}_\beta)\\
		%&\subset\frac{\lambda^{\frac{1}{2}}(t)}{(t\lambda(t))^{2k}}a^6R^{-6} IS^{\frac{5\beta+15}{2}+2(k-1)}(\max\{R^{-2\beta+6}R^{\frac{-\beta+3}{2}+2(k-1)(1-\beta)_+}(\log(R))^{4n_k+m_k},R^{2\beta-2}R^{\frac{\beta-1}{2}+2(k-1)(\beta-3)_+}(\log(R))^{4n_k'+m_k'}\},\mathcal{Q}_\beta)\\
		&\subset\frac{\lambda^{\frac{1}{2}}(t)}{(t\lambda(t))^{2k}}a^6\\&\quad \times IS^{\frac{\beta+3}{2}+2(k-1)}(\max\{R^{\frac{-\beta+3}{2}+2(k-1)(1-\beta)_+}(\log(R))^{4n_k+m_k},R^{\frac{\beta-1}{2}+2(k-1)(\beta-3)_+}(\log(R))^{4n_k'+m_k'}\},\mathcal{Q}_\beta),\\
	\end{aligned}
	\end{equation*}
	and
	\begin{equation*}
		\begin{aligned}
			&\quad 4t^2(u_{2k-2}-u_0)u_0^3v_{2k-1}\\
			&\in t^2\frac{\lambda^{\frac{1}{2}}(t)}{(t\lambda(t))^2}IS^{\frac{\beta+3}{2}}(\max\{R^{\frac{-\beta+3}{2}}(\log(R))^{n_k},R^{\frac{\beta-1}{2}}(\log(R))^{n_k'}\},\mathcal{Q}_\beta)\\&\quad \times\lambda^{\frac{3}{2}}(t)S^{\frac{3\beta-3}{2}}(R^{\frac{-3\beta-3}{2}})\frac{\lambda^{\frac{1}{2}}(t)}{(t\lambda(t))^{2k}}\\&\quad \times IS^{\frac{\beta+3}{2}+2(k-1)}(\max\{R^{\frac{-\beta+3}{2}+2(k-1)(1-\beta)_+}(\log(R))^{m_k},R^{\frac{\beta-1}{2}+2(k-1)(\beta-3)_+}(\log(R))^{m_k'}\},\mathcal{Q}_\beta)\\
			%&\subset \frac{\lambda^{\frac{1}{2}}(t)}{(t\lambda(t))^{2k}}IS^{\frac{5\beta+3}{2}+2(k-1)}(\max\{R^{\frac{-5\beta+3}{2}+2(k-1)(1-\beta)_+}(\log(R))^{n_k+m_k},R^{\frac{-3\beta-1}{2}+2(k-1)(\beta-3)_+}(\log(R))^{n_k'+m_k'}\},\mathcal{Q}_\beta)\\
			&\subset\frac{\lambda^{\frac{1}{2}}(t)}{(t\lambda(t))^{2k}}\\&\quad\times IS^{\frac{\beta+3}{2}+2(k-1)}(\max\{R^{\frac{-\beta+3}{2}+2(k-1)(1-\beta)_+}(\log(R))^{n_k+m_k},R^{\frac{\beta-1}{2}+2(k-1)(\beta-3)_+}(\log(R))^{n_k'+m_k'}\},\mathcal{Q}_\beta).\\
		\end{aligned}
	\end{equation*}
	The other two terms in $t^2(u_{2k-2}^4-u_0^4)v_{2k-1}$ are similar. Next, we compute
	\begin{equation*}
	\begin{aligned}
		&\quad t^2v_{2k-1}^5\\
		&\in t^2\frac{\lambda^{\frac{5}{2}}(t)}{(t\lambda(t))^{10k}}\\&\quad \times IS^{\frac{5\beta+15}{2}+10(k-1)}(\max\{R^{\frac{-5\beta+15}{2}+10(k-1)(1-\beta)_+}(\log(R))^{5m_k},R^{\frac{5\beta-5}{2}+10(k-1)(\beta-3)_+}(\log(R))^{5m_k'}\},\mathcal{Q}_\beta)\\
		%&\subset\frac{\lambda^{\frac{1}{2}}(t)}{(t\lambda(t))^{2k}}b^{8k-2}IS^{\frac{5\beta+15}{2}+10(k-1)}(\max\{R^{-2\beta+6+8(k-1)(1-\beta)_+}R^{\frac{-\beta+3}{2}+2(k-1)(1-\beta)_+}(\log(R))^{5m_k},R^{2\beta-2+8(k-1)(\beta-3)_+}R^{\frac{\beta-1}{2}+2(k-1)(\beta-3)_+}(\log(R))^{5m_k'}\},\mathcal{Q}_\beta)\\
		&\subset\frac{\lambda^{\frac{1}{2}}(t)}{(t\lambda(t))^{2k}}a^{8k-2}\\&\quad \times IS^{\frac{\beta+3}{2}+2(k-1)}(\max\{R^{\frac{-\beta+3}{2}+2(k-1)(1-\beta)}(\log(R))^{5m_k},R^{\frac{\beta-1}{2}+2(k-1)(\beta-3)_+}(\log(R))^{5m_k'}\},\mathcal{Q}_\beta).\\
	\end{aligned}
	\end{equation*}
	Note that
	\begin{equation*}
		u_{2k-2}\in\lambda^{\frac{1}{2}}(t)IS^{\frac{\beta-1}{2}}(\max\{R^{\frac{-\beta-1}{2}}(\log(R))^{\widetilde{n}_k},R^{\frac{\beta-5}{2}}(\log(R))^{\widetilde{n}_{k}'}\},\mathcal{Q}_\beta),
	\end{equation*}
	hence
	\begin{equation*}
		\begin{aligned}
			&\quad 10t^2u_{2k-2}^3v_{2k-1}^2\\
			&\in t^2\lambda^{\frac{3}{2}}(t)IS^{\frac{3\beta-3}{2}}(\max\{R^{\frac{-3\beta-3}{2}}(\log(R))^{3\widetilde{n}_k},R^{\frac{3\beta-15}{2}}(\log(R))^{3\widetilde{n}_k'}\},\mathcal{Q}_\beta)\\&\quad \times\frac{\lambda(t)}{(t\lambda(t))^{4k}}\\&\quad \times IS^{\beta+3+4(k-1)}(\max\{R^{-\beta+3+4(k-1)(1-\beta)_+}(\log(R))^{2m_k},R^{\frac{\beta-1}{2}+4(k-1)(\beta-3)_+}(\log(R))^{2m_k'}\},\mathcal{Q}_\beta)\\
			%&\subset\frac{\lambda^{\frac{1}{2}}(t)}{(t\lambda(t))^{2k}}b^{2k-2}IS^{\frac{5\beta+3}{2}+4(k-1)}(\max\{R^{\frac{-5\beta+3}{2}+4(k-1)(1-\beta)_+}(\log(R))^{3\widetilde{n}_k+2m_k},R^{\frac{5\beta-17}{2}+4(k-1)(\beta-3)_+}(\log(R))^{3\widetilde{n}_k'+2m_k'}\},\mathcal{Q}_\beta)\\
			%&\subset\frac{\lambda^{\frac{1}{2}}(t)}{(t\lambda(t))^{2k}}a^{2k-2}R^{-2k+2}IS^{\frac{\beta+3}{2}+4(k-1)}(\max\{R^{-2\beta}R^{\frac{-\beta+3}{2}+4(k-1)(1-\beta)_+}(\log(R))^{3\widetilde{n}_k+2m_k},R^{2\beta-8}R^{\frac{\beta-1}{2}+4(k-1)(\beta-3)_+}(\log(R))^{3\widetilde{n}_k'+3m_k'}\},\mathcal{Q}_\beta)\\
			&\subset\frac{\lambda^{\frac{1}{2}}(t)}{(t\lambda(t))^{2k}}a^{2k-2}\\&\quad \times IS^{\frac{\beta+3}{2}+2(k-1)}(\max\{R^{\frac{-\beta+3}{2}+2(k-1)(1-\beta)_+}(\log(R))^{3\widetilde{n}_k+2m_k},R^{\frac{\beta-1}{2}+2(k-1)(\beta-3)_+}(\log(R))^{3\widetilde{n}_k'+3m_k'}\},\mathcal{Q}_\beta).\\
		\end{aligned}
	\end{equation*}
	The remaining two terms in $N_{2k-1}(v_{2k-1})$ are similar, so we conclude that (\ref{eq:e_1}) holds.

	\par \underline{\textbf{Step 3}:} We begin with (\ref{eq:e_1}) and show (\ref{eq:v_2}).
	\par The main asymptotic component of $t^2e_{2k-1}$ at $R=\infty$ is given by
	\begin{equation*}
	\begin{aligned}
		t^2f_{2k-1}&=\frac{\lambda^{\frac{1}{2}}(t)}{(t\lambda(t))^{2k}}\sum_{i=0}^{I}\sum_{j=0}^{i+p_k}q_{i,j}(a)R^{\frac{-\beta+3}{2}+2(k-1)(1-\beta)_+-m_i}(\log(R))^{j}\\&=\frac{\lambda^{\frac{1}{2}}(t)}{(t\lambda(t))^{2(k-1)\min\{\beta,1\}+\frac{\beta+1}{2}+m_i}}\sum_{i=0}^{I}\sum_{j=0}^{i+p_k}a^{\frac{-\beta+3}{2}+2(k-1)(1-\beta)_+-m_i}q_{i,j}(a)(\log(R))^{j},\\
	\end{aligned}
	\end{equation*}
	for $0<\beta\leq 2$ or
	\begin{equation*}
	\begin{aligned}
	t^2f_{2k-1}&=\frac{\lambda^{\frac{1}{2}}(t)}{(t\lambda(t))^{2k}}\sum_{i=0}^{I}\sum_{j=0}^{i+p_k'}q_{i,j}(a)R^{\frac{-\beta+3}{2}+2(k-1)(\beta-3)_+-m_i}(\log(R))^{j}\\&=\frac{\lambda^{\frac{1}{2}}(t)}{(t\lambda(t))^{2(k-1)\min\{4-\beta,1\}+\frac{5-\beta}{2}+m_i}}\sum_{i=0}^{I}\sum_{j=0}^{i+p_k'}a^{\frac{\beta-1}{2}+2(k-1)(\beta-3)_+-m_i}q_{i,j}(a)(\log(R))^{j},\\
	\end{aligned}
	\end{equation*}
	for $2<\beta<4$. By the homogeneity consideration, we make the following ansatz for $v_{2k}$:
	\begin{equation*}
	\begin{aligned}
		v_{2k}&=\frac{\lambda^{\frac{1}{2}}(t)}{(t\lambda(t))^{2(k-1)\min\{\beta,1\}+\frac{\beta+1}{2}+m_i}}\sum_{i=0}^{I}\sum_{j=0}^{i+p_k}W_{i,j}(a)(\log(R))^{j},& 0<\beta\leq 2,\\
		v_{2k}&=\frac{\lambda^{\frac{1}{2}}(t)}{(t\lambda(t))^{2(k-1)\min\{4-\beta,1\}+\frac{5-\beta}{2}+m_i}}\sum_{i=0}^{I}\sum_{j=0}^{i+p_k'}W_{i,j}(a)(\log(R))^{j},& 2<\beta<4.\\
	\end{aligned}
	\end{equation*}
	The goal is to show that $W_{i,j}\in a^{\frac{-\beta+7}{2}+2(k-1)(1-\beta)_+-m_i}\mathcal{Q}_\beta$ for $0<\beta\leq 2$, and $W_{i,j}\in a^{\frac{\beta+3}{2}+2(k-1)(\beta-3)_+-m_i}\mathcal{Q}_\beta$ for $2<\beta<4$, hence (after suitable modification) we get
	%$\times \frac{R^{\beta+m_j+2(k-1)\beta}}{(1+R^2)^{\frac{\beta+m_j}{2}+(k-1)\beta}}$)
	\begin{equation*}
		\begin{aligned}
			v_{2k}&\in\frac{\lambda^{\frac{1}{2}}(t)}{(t\lambda(t))^{2k+2}}\\&\quad \times IS^{\frac{\beta+7}{2}+2(k-1)}(\max\{R^{\frac{-\beta+7}{2}+2(k-1)(1-\beta)_+}(\log(R))^{p_k},R^{\frac{\beta+3}{2}+2(k-1)(\beta-3)_+}(\log(R))^{p_k'}\},\mathcal{Q}_\beta).
		\end{aligned}
	\end{equation*}

	For each $i=1,2,\cdots,I$, we need to solve 
	\begin{equation*}
	\begin{aligned}
		t^2&\left(\partial_t^2-\partial_r^2-\frac{2}{r}\partial_r+\frac{\alpha}{r^2}\right)\left(\frac{\lambda^{\frac{1}{2}}(t)}{(t\lambda(t))^{2(k-1)\min\{\beta,1\}+\frac{\beta+1}{2}+m_i}}\sum_{j=0}^{i+p_k}W_{i,j}(a)\left(\log(R)\right)^{j}\right)\\&=\frac{\lambda^{\frac{1}{2}}(t)}{(t\lambda(t))^{2(k-1)\min\{\beta,1\}+\frac{\beta+1}{2}+m_i}}\sum_{j=0}^{i+p_k}a^{\frac{-\beta+3}{2}+2(k-1)(1-\beta)_+-m_i}q_{i,j}(a)\left(\log(R)\right)^{j},\quad 0<\beta\leq 2,
	\end{aligned}
	\end{equation*}
	or
	\begin{equation*}
	\begin{aligned}
		t^2&\left(\partial_t^2-\partial_r^2-\frac{2}{r}\partial_r+\frac{\alpha}{r^2}\right)\left(\frac{\lambda^{\frac{1}{2}}(t)}{(t\lambda(t))^{2(k-1)\min\{4-\beta,1\}+\frac{5-\beta}{2}+m_i}}\sum_{j=0}^{i+p_k'}W_{i,j}(a)\left(\log(R)\right)^{j}\right)\\&=\frac{\lambda^{\frac{1}{2}}(t)}{(t\lambda(t))^{2(k-1)\min\{4-\beta,1\}+\frac{5-\beta}{2}+m_i}}\sum_{j=0}^{i+p_k'}a^{\frac{\beta-1}{2}+2(k-1)(\beta-3)_+-m_i}q_{i,j}(a)\left(\log(R)\right)^{j},\quad 2<\beta<4,
	\end{aligned}
	\end{equation*}
	with zero Cauchy data at $a=0$. Equations for $a$ are obtained by matching the powers of $\log(R)$,
	\begin{equation*}
		\begin{aligned}
			L_{\beta+2m_i+2(k-1)\min\{\beta,1\}} W_{i,j}&=a^{\frac{-\beta+3}{2}+2(k-1)(1-\beta)_+-m_i}q_{i,j}(a)+\cdots,& 0<\beta\leq 2,\\
			L_{4-\beta+2m_i+2(k-1)\min\{4-\beta,1\}} W_{i,j}&=a^{\frac{\beta-1}{2}+2(k-1)(\beta-3)_+-m_i}q_{i,j}(a)+\cdots,& 2<\beta<4,\\
		\end{aligned}
	\end{equation*}
	where $\cdots$ contains $W_{i,j'}$ for $j'>j$ and their derivatives, but its explicit expression is not so relevant. Then invoking Lemma \ref{lem:a} gives the desired result.

	\par \underline{\textbf{Step 4}:} We begin with (\ref{eq:v_2}) and show (\ref{eq:e_2}).
	\par First, let
	\begin{equation*}
		\begin{aligned}
			t^2e_{2k-1}^0=\frac{\lambda^{\frac{1}{2}}(t)}{(t\lambda(t))^{2k}}\sum_{i=0}^{I}\sum_{j=0}^{i+p_k}&q_{i,j}(a)R^{\frac{-\beta+3}{2}+2(k-1)(1-\beta)_+-m_i}\\&\times\frac{R^{\beta+m_i+2(k-1)\max\{\beta,1\}}}{(1+R^2)^{\frac{\beta+m_1}{2}+(k-1)\max\{\beta,1\}}}\left(\frac{1}{2}\log(1+R^2)\right)^{j},& 0<\beta\leq 2,\\
			t^2e_{2k-1}^0=\frac{\lambda^{\frac{1}{2}}(t)}{(t\lambda(t))^{2k}}\sum_{i=0}^{I}\sum_{j=0}^{i+p_k'}&q_{i,j}(a)R^{\frac{\beta-1}{2}+2(k-1)(\beta-3)_+-m_i}\\&\times\frac{R^{2+m_i+2(k-1)\max\{4-\beta,1\}}}{(1+R^2)^{\frac{\beta+m_1}{2}+(k-1)\max\{4-\beta,1\}}}\left(\frac{1}{2}\log(1+R^2)\right)^{j},& 2<\beta<4,\\
		\end{aligned}
	\end{equation*}
	then
	\begin{equation*}
	\begin{aligned}
		&\quad t^2e_{2k-1}-t^2e_{2k-1}^0\\&\in \frac{\lambda^{\frac{1}{2}}(t)}{(t\lambda(t))^{2k}}IS^{\frac{\beta-1}{2}+2k}(\max\{R^{\frac{-\beta-1}{2}+2(k-1)(1-\beta)_+}(\log(R))^{p_k},R^{\frac{\beta-5}{2}+2(k-1)(\beta-3)_+}(\log(R))^{p_k'}\},\mathcal{Q}_\beta')\\&\subset\frac{\lambda^{\frac{1}{2}}(t)}{(t\lambda(t))^{2k}}IS^{\frac{\beta-1}{2}+2k}(\max\{R^{\frac{-\beta+3}{2}+2(k-1)(1-\beta)_+}(\log(R))^{p_k},R^{\frac{\beta-1}{2}+2(k-1)(\beta-3)_+}(\log(R))^{p_k'}\},\mathcal{Q}_\beta)\\&\quad +\frac{\lambda^{\frac{1}{2}}(t)}{(t\lambda(t))^{2k+2}}IS^{\frac{\beta+3}{2}+2k}(\max\{R^{\frac{-\beta+3}{2}+2k(1-\beta)_+}(\log(R))^{p_k},R^{\frac{\beta-1}{2}+2k(\beta-3)_+}(\log(R))^{p_k'}\},\mathcal{Q}_\beta'),
	\end{aligned}
	\end{equation*}
	which can be seen from
	\begin{equation*}
		1=(1-a^2)+b^2R^2,
	\end{equation*}
	and $(1-a^2)\mathcal{Q}_\beta'\subset\mathcal{Q}_\beta$. The error $e_{2k}$ is given by
	\begin{equation*}
		t^2e_{2k}=\left(t^2e_{2k-1}-t^2e_{2k-1}^0\right)+t^2\left(e_{2k-1}^0-\left(\partial_t^2-\partial_r^2-\frac{2}{r}\partial_r+\frac{\alpha}{r^2}\right)v_{2k}\right)+t^2N_{2k}(v_{2k}).
	\end{equation*}
	\par For the second term in $t^2e_{2k}$, one may check that at $R=\infty$, it's of the order $\max\{\frac{-\beta-1}{2}+2(k-1)(1-\beta)_+,\frac{\beta-5}{2}+2(k-1)(\beta-3)_+\}+$ (``$+$" stands for $(\log(R))^\ell$ terms). At $R=0$, a direct computation shows that it's of the order $\frac{\beta+3}{2}+2(k-1)=\frac{\beta-1}{2}+2k$. Therefore, we have
	\begin{equation*}
		\begin{aligned}
			&\quad t^2\left(e_{2k-1}^0-\left(\partial_t^2-\partial_r^2-\frac{2}{r}\partial_r+\frac{\alpha}{r^2}\right)v_{2k}\right)\\&\in\frac{\lambda^{\frac{1}{2}}(t)}{(t\lambda(t))^{2k}}IS^{\frac{\beta-1}{2}+2k}(\max\{R^{\frac{-\beta-1}{2}+2(k-1)(1-\beta)_+}(\log(R))^{p_k},R^{\frac{\beta-5}{2}+2(k-1)(\beta-3)_+}(\log(R))^{p_k'}\},\mathcal{Q}_\beta'),
		\end{aligned}
	\end{equation*}
	which is admissible by the decomposition above.
	\par For terms in $t^2N_{2k}(v_{2k})$, we have
	\begin{equation*}
		\begin{aligned}
			&\quad t^2v_{2k}^5\\&\in t^2\frac{\lambda^{\frac{5}{2}}(t)}{(t\lambda(t))^{10k+10}}\\&\times IS^{\frac{5\beta+35}{2}+10(k-1)}(\max\{R^{\frac{-5\beta+35}{2}+10(k-1)(1-\beta)_+}(\log(R))^{5p_k},R^{\frac{5\beta+15}{2}+10(k-1)(\beta-3)_+}(\log(R))^{5p_k'}\},\mathcal{Q}_\beta)\\
			%&\subset \frac{\lambda^{\frac{1}{2}}(t)}{(t\lambda(t))^{2k}}b^{8k+8}IS^{\frac{5\beta+35}{2}+10(k-1)}(\max\{R^{-2\beta+18+(8k-10)(1-\beta)_+}R^{\frac{-\beta-1}{2}+2k(1-\beta)_+}(\log(R))^{5p_k},R^{2\beta+10+(8k-10)(\beta-3)_+}R^{\frac{\beta-5}{2}+2k(\beta-3)_+}(\log(R))^{5p_k'}\},\mathcal{Q}_\beta)\\
			&\subset\frac{\lambda^{\frac{1}{2}}(t)}{(t\lambda(t))^{2k}}a^{8k+8}IS^{\frac{\beta-1}{2}+2k}(\max\{R^{\frac{-\beta-1}{2}+2k(1-\beta)_+}(\log(R))^{5p_k},R^{\frac{\beta-5}{2}+2k(\beta-3)_+}(\log(R))^{5p_k'}\},\mathcal{Q}_\beta).\\
		\end{aligned}
	\end{equation*}
	Note that 
	\begin{equation*}
		u_{2k-1}\in\lambda^{\frac{1}{2}}(t)IS^{\frac{\beta-1}{2}}(\max\{R^{\frac{-\beta-1}{2}}(\log(R))^{\widetilde{m}_k},R^{\frac{\beta-5}{2}}(\log(R))^{\widetilde{m}_k'}\},\mathcal{Q}_\beta)
	\end{equation*}
	for some integer $\widetilde{m}_k$. Hence
%	\begin{equation}
	%	\begin{aligned}
		%	5t^2u_{2k-1}^3v_{2k}^2&\in t^2\lambda^{\frac{3}{2}}(t)IS^{\frac{3\beta-3}{2}}(R^{\frac{-3\beta-3}{2}}(\log(R))^{3m},\mathcal{Q})\frac{\lambda(t)}{(t\lambda(t))^{4k+4}}IS^{\beta+7+4(k-1)}(R^{-\beta+7+4(k-1)(1-\beta)}(\log(R))^{2p_k},\mathcal{Q})\\&\subset \frac{\lambda^{\frac{1}{2}}(t)}{(t\lambda(t))^{2k}}b^{2k+2}IS^{\frac{5\beta+11}{2}+4(k-1)}(R^{\frac{-5\beta+11}{2}+4(k-1)(1-\beta)}(\log(R))^{3m+2p_k},\mathcal{Q})\\&\subset\frac{\lambda^{\frac{1}{2}}(t)}{(t\lambda(t))^{2k}}IS^{\frac{5\beta+3}{2}+2(k-1)}(R^{\frac{-\beta-1}{2}+2k(1-\beta)}(\log(R))^{3m+2p_k},\mathcal{Q})\\&\subset\frac{\lambda^{\frac{1}{2}}(t)}{(t\lambda(t))^{2k}}IS^{\frac{\beta-1}{2}+2k}(R^{\frac{-\beta-1}{2}+2k(1-\beta)}(\log(R))^{3m+2p_k},\mathcal{Q}),\\
	%	\end{aligned}
%	\end{equation}
	\begin{equation*}
		\begin{aligned}
			&\quad 5t^2u_{2k-1}^4v_{2k}\\&\in t^2\lambda^{2}(t)IS^{2\beta-2}(\max\{R^{-2\beta-2}(\log(R))^{4\widetilde{m}_k},R^{2\beta-10}(\log(R))^{4\widetilde{m}_k'}\},\mathcal{Q}_\beta)\\&\quad \times\frac{\lambda^{\frac{1}{2}}(t)}{(t\lambda(t))^{2k+2}}\\&\quad \times IS^{\frac{\beta+7}{2}+2(k-1)}(\max\{R^{\frac{-\beta+7}{2}+2(k-1)(1-\beta)_+}(\log(R))^{p_k},R^{\frac{\beta+3}{2}+2(k-1)(\beta-3)_+}(\log(R))^{p_k'}\},\mathcal{Q}_\beta)\\
			%&\subset \frac{\lambda^{\frac{1}{2}}(t)}{(t\lambda(t))^{2k}}IS^{\frac{5\beta+3}{2}+2(k-1)}(\max\{R^{\frac{-5\beta+3}{2}+2(k-1)(1-\beta)_+}(\log(R))^{4\widetilde{m}_k+p_k},R^{\frac{5\beta-17}{2}+2(k-1)(\beta-3)_+}(\log(R))^{4\widetilde{m}_k'+p_k'}\},\mathcal{Q}_\beta)\\
			&\subset\frac{\lambda^{\frac{1}{2}}(t)}{(t\lambda(t))^{2k}}\\&\quad\times IS^{\frac{\beta-1}{2}+2k}(\max\{R^{\frac{-\beta-1}{2}+2k(1-\beta)_+}(\log(R))^{4\widetilde{m}_k+p_k},R^{\frac{\beta-5}{2}+2k(\beta-3)_+}(\log(R))^{4\widetilde{m}_k'+p_k'}\},\mathcal{Q}_\beta).\\
		\end{aligned}
	\end{equation*}
	The other three terms in $t^2N_{2k}(v_{2k})$ are similar, which concludes the proof of (\ref{eq:e_2}).

	\section{The perturbed equation}\label{sec:perturbed}
	
	Although the error $e_{2k-1}$ of $u_{2k-1}$ gains more decay in $t$ as $k$ increases, the sum $\sum_{\ell=0}^{\infty}v_{\ell}$ doesn't really converge (the coefficients involve divergent factors like $k!$), hence we only get an asymptotic expression by summing up $v_{\ell}$'s together. Instead, we will stop at some $u_{2k-1}$ for $k$ large enough, which is the ``correct" object to perturb around.
	\par We seek a radial solution of the form $u(t,r)=u_{2k-1}(t,r)+\varepsilon(t,r)$, which leads to the following equation for the perturbation $\varepsilon$:
	\begin{equation}\label{eq:perturbed}
		\partial_t^2\varepsilon-\partial_r^2\varepsilon-\frac{2}{r}\partial_r\varepsilon+\frac{\alpha}{r^2}\varepsilon-5\lambda^2(t)W_\alpha^4(\lambda(t)r)\varepsilon = N_{2k-1}(\varepsilon) + e_{2k-1}.
	\end{equation}
	We wish to apply the Fourier method to get an exact solution of this equation, where the time-dependent potential $5\lambda^2(t)W_\alpha^4(\lambda(t)r)$ is inadmissible. To eliminate the time-dependent scaling parameter $\lambda(t)$, we change to $(\tau,R)$-variables, where $\tau(t)=\frac{1}{\nu}t^{-\nu}$ and $R(t,r)=\lambda(t)r$. Let 
	\begin{equation*}
		\varepsilon(t,r)=:R^{-1}(t,r)\widetilde{\varepsilon}(\tau(t),R(t,r)).
	\end{equation*}
	Here we multiple $\widetilde{\varepsilon}$ by a factor of $R^{-1}$ in order to get a differential operator without first order derivative in $R$ variable. By the chain rule, we have
	\begin{equation*}
		\begin{aligned}
			\partial_t \varepsilon(t,r)=&-\lambda(t)R^{-1}(t,r)(\partial_\tau\widetilde{\varepsilon})(\tau(t),R(t,r))+\frac{\dot{\lambda}(t)}{\lambda(t)}(\partial_R\widetilde{\varepsilon})(\tau(t),R(t,r))\\&-\frac{\dot{\lambda}(t)}{\lambda(t)}R^{-1}(t,r)\widetilde{\varepsilon}(\tau(t),R(t,r)),\quad \dot{\lambda}(t):=\frac{d\lambda(t)}{dt},
		\end{aligned}
	\end{equation*}
	and
	\begin{equation*}
		\begin{aligned}
		\partial_t^2\varepsilon(t,r)=&+\lambda^2(t)R^{-1}(t,r)(\partial_\tau^2\widetilde{\varepsilon})(\tau(t),R(t,r))+\left(\frac{\dot{\lambda}(t)}{\lambda(t)}\right)^2(R\partial_R^2\widetilde{\varepsilon})(\tau(t),R(t,r))\\&-2\dot{\lambda}(t)(\partial_R\partial_\tau\widetilde{\varepsilon})(\tau(t),R(t,r))+\dot{\lambda}(t)R^{-1}(t,r)(\partial_\tau\widetilde{\varepsilon})(\tau(t),R(t,r))\\&-\left(\left(\frac{\dot{\lambda}(t)}{\lambda(t)}\right)^2-\partial_t\left(\frac{\dot{\lambda}(t)}{\lambda(t)}\right)\right)(\partial_R\widetilde{\varepsilon})(\tau(t),R(t,r))\\&+\left(\left(\frac{\dot{\lambda}(t)}{\lambda(t)}\right)^2-\partial_t\left(\frac{\dot{\lambda}(t)}{\lambda(t)}\right)\right)R^{-1}(t,r)\widetilde{\varepsilon}(\tau(t),R(t,r)).
		\end{aligned}
	\end{equation*}
	Similarily, we have
	\begin{equation*}
		\partial_r \varepsilon(t,r)=\lambda(t)R^{-1}(t,r)(\partial_R\widetilde{\varepsilon})(\tau(t),R(t,r))-\lambda(t)R^{-2}(t,r)\widetilde{\varepsilon}(\tau(t),R(t,r)),
	\end{equation*}
	and
	\begin{equation*}
		\begin{aligned}
			\partial_r^2\varepsilon(t,r)=&+\lambda^2(t)R^{-1}(t,r)(\partial_R^2\widetilde{\varepsilon})(\tau(t),R(t,r))-2\lambda^2(t)R^{-2}(t,r)(\partial_R\widetilde{\varepsilon})(\tau(t),R(t,r))\\&+2\lambda^2(t)R^{-3}(t,r)\widetilde{\varepsilon}(\tau(t),R(t,r)).
		\end{aligned}
	\end{equation*}
	Then (\ref{eq:perturbed}) becomes
	\begin{equation*}
		\begin{aligned}
		&+(\partial_\tau^2\widetilde{\varepsilon})(\tau(t),R(t,r))+\left(\frac{\dot{\lambda}(t)}{\lambda^2(t)}\right)^2(R^2\partial_R^2\widetilde{\varepsilon})(\tau(t),R(t,r))\\&-2\frac{\dot{\lambda}(t)}{\lambda^2(t)}(R\partial_R\partial_\tau\widetilde{\varepsilon})(\tau(t),R(t,r))+\frac{\dot{\lambda}(t)}{\lambda^2(t)}(\partial_\tau\widetilde{\varepsilon})(\tau(t),R(t,r))\\&-\left(\left(\frac{\dot{\lambda}(t)}{\lambda^2(t)}\right)^2-\lambda^{-2}(t)\partial_t\left(\frac{\dot{\lambda}(t)}{\lambda(t)}\right)\right)(R\partial_R\widetilde{\varepsilon})(\tau(t),R(t,r))\\&+\left(\left(\frac{\dot{\lambda}(t)}{\lambda^2(t)}\right)^2-\lambda^{-2}(t)\partial_t\left(\frac{\dot{\lambda}(t)}{\lambda(t)}\right)\right)\widetilde{\varepsilon}(\tau(t),R(t,r))\\&-(\partial_R^2\widetilde{\varepsilon})(\tau(t),R(t,r))+2R^{-1}(t,r)(\partial_R\widetilde{\varepsilon})(\tau(t),R(t,r))-2R^{-2}(t,r)\widetilde{\varepsilon}(\tau(t),R(t,r))\\&-2R^{-1}(t,r)(\partial_R\widetilde{\varepsilon})(\tau(t),R(t,r))+2R^{-2}(t,r)\widetilde{\varepsilon}(\tau(t),R(t,r))\\&+\alpha R^{-2}(t,r)\widetilde{\varepsilon}(\tau(t),R(t,r))-5W_\alpha^4(R)\widetilde{\varepsilon}(\tau(t),R(t,r))\\&=\lambda^{-2}(t)R(t,r)\left(N_{2k-1}(R^{-1}\widetilde{\varepsilon})+e_{2k-1}\right).
		\end{aligned}
	\end{equation*}
	Denote
	\begin{equation*}
		\mathcal{D}_1=\partial_\tau+\frac{\lambda_{\tau}}{\lambda}R\partial_R,\quad \lambda_\tau=\frac{d\lambda(\tau)}{d\tau},
	\end{equation*}
	where $\lambda(\tau)=(\nu \tau)^{1+\nu^{-1}}$, and
	\begin{equation*}
		\mathcal{L}=-\partial_R^2+\frac{\alpha}{R^2}-5W_\alpha^4(R).
	\end{equation*}
	After simplification, we get
	\begin{equation*}
		\mathcal{D}_1^2\widetilde{\varepsilon}-\frac{\lambda_{\tau}}{\lambda}\mathcal{D}_1\widetilde{\varepsilon}+\mathcal{L}\widetilde{\varepsilon}=\lambda^{-2}R\left(N_{2k-1}(R^{-1}\widetilde{\varepsilon})+e_{2k-1}\right)+\partial_\tau\left(\frac{\lambda_\tau}{\lambda}\right)\widetilde{\varepsilon}.
	\end{equation*}
	We may modify it a little bit further to absorb the last term $\partial_\tau(\lambda_{\tau}/\lambda)\widetilde{\varepsilon}$. Let 
	\begin{equation*}
		\mathcal{D}=\mathcal{D}_1-\frac{\lambda_\tau}{\lambda}=\partial_\tau+\frac{\lambda_{\tau}}{\lambda}(R\partial_R-1),
	\end{equation*}
	finally the perturbed equation becomes
	\begin{equation}\label{eq:perturb_symp}
		\mathcal{D}^2\widetilde{\varepsilon}+\frac{\lambda_{\tau}}{\lambda}\mathcal{D}\widetilde{\varepsilon}+\mathcal{L}\widetilde{\varepsilon}=\lambda^{-2}R\left(N_{2k-1}(R^{-1}\widetilde{\varepsilon})+e_{2k-1}\right).
	\end{equation}
	
	\par We remark that although we successfully eliminated the time-dependent potential, the cost we pay is that the $\partial_\tau$ operator is ``dilated", i.e. it becomes $\mathcal{D}$ with an additional $R\partial_R-1$ term.
	\par In the following section, we analyze various spectral properties of $\mathcal{L}$ and establish the spectral transform $\mathcal{F}\colon L^2((0,+\infty))\to L^2(\sigma(\mathcal{L}),d\rho)$, where $d\rho$ is the spectral measure of $\mathcal{L}$, which diagonalizes $\mathcal{L}$, so we can use the Fourier method to solve this ``dilated" wave equation.

	\section{Spectral analysis of the linearized operator}\label{sec:spec}
	In this section, we study the spectrum of the operator
	\begin{equation*}
		\mathcal{L}=\mathcal{L}^0-5W_\alpha^4(R),\quad \mathcal{L}^0=-\partial_R^2+\frac{\alpha}{R^2}
	\end{equation*}
	on the half line $(0,+\infty)$, and establish the \textit{distorted Hankel transform} for $\mathcal{L}$. (Recall that the Hankel transform diagonalizes $\mathcal{L}^0$.)
	\par $\mathcal{L}$ is self-adjoint on the domain
	\begin{equation*}
		\begin{aligned}
			D(\mathcal{L})&=\{f\in L^2((0,+\infty))\mid f,f'\in AC_{\mathrm{loc}}((0,+\infty)),\ f(0)=0,\ \mathcal{L}f\in L^2((0,+\infty))\},& 0<\beta<2,\\
			D(\mathcal{L})&=\{f\in L^2((0,+\infty))\mid f,f'\in AC_{\mathrm{loc}}((0,+\infty)),\ \mathcal{L}f\in L^2((0,+\infty))\},& 2\leq\beta<4.\\
		\end{aligned}
	\end{equation*}
	There is no boundary condition when $2\leq\beta<4$ at $R=0$ since $\mathcal{L}$ is in the limit point case. In the following proposition, we collect some basic properties of $\mathcal{L}$, whose proofs are all standard.

	\begin{proposition}
		The spectrum of $\mathcal{L}$, which we denote by $\sigma(\mathcal{L})$, is given by
		\begin{equation*}
			\sigma(\mathcal{L})=\sigma_{ac}(\mathcal{L})\cup \sigma_{pp}(\mathcal{L}),
		\end{equation*}
		where
		\begin{equation*}
			\sigma_{ac}(\mathcal{L})=\sigma_{ess}(\mathcal{L})=[0,+\infty),\quad \sigma_{pp}(\mathcal{L})=\{\xi_d<0,0\ (\text{only for $\beta>2$})\}
		\end{equation*}
		are absolutely continuous spectrum and point spectrum (the set of eigenvalues) respectively. The negative eigenfunction $\phi_d(R)$ decays exponentially fast as $R\to\infty$.
		\par Furthermore, $\mathcal{L}$ is in the limit circle case at $0$ when $0<\beta<2$, is in the limit point case at $0$ when $\beta\geq2$. $\mathcal{L}$ is always in the limit point case at $+\infty$ for all $\beta>0$.
	\end{proposition}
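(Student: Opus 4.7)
The statement is a standard result about the spectral structure of a one-dimensional Schrödinger operator with a Hardy-type singularity at $R=0$ and a short-range perturbation, so my plan is to verify each claim using classical Weyl--Sturm--Liouville theory together with the explicit structure of $W_\alpha$ recorded in Section~\ref{sec:appro}. Throughout I will rely on $W_\alpha^4(R)=O(R^{2\beta-2})$ near $0$ (milder than the Hardy term $\alpha R^{-2}$ since $\beta>0$) and $W_\alpha^4(R)=O(R^{-2\beta-2})$ near $+\infty$ (short range).

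For the deficiency indices I would start with a Frobenius analysis at $R=0$. The indicial equation is $s(s-1)=\alpha=(\beta^2-1)/4$, giving a fundamental system with leading behavior $R^{(1\pm\beta)/2}$. Both solutions are $L^2$ near the origin iff $1-\beta>-1$, i.e.\ iff $\beta<2$ (limit circle); for $\beta\ge 2$ only the $R^{(1+\beta)/2}$ branch is $L^2$, so $\mathcal{L}$ is limit point at $0$. Near $R=+\infty$ the equation $\mathcal{L}u=\lambda u$ is a short-range perturbation of $-\partial_R^2$, and a Jost-type Volterra construction produces exactly one $L^2$ solution for $\lambda$ in the resolvent set, yielding the limit-point alternative at infinity. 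For the essential spectrum I would compare with $\mathcal{L}^0=-\partial_R^2+\alpha R^{-2}$ and verify that multiplication by $-5W_\alpha^4$ is $\mathcal{L}^0$-compact (bounded and decaying at both endpoints), so Weyl's theorem yields $\sigma_{ess}(\mathcal{L})=\sigma_{ess}(\mathcal{L}^0)=[0,+\infty)$. Purity of the absolutely continuous spectrum on $(0,+\infty)$, together with the absence of embedded positive eigenvalues, follows from Kato's classical short-range criterion $R\,W_\alpha^4(R)\to 0$, so $\sigma_{ac}(\mathcal{L})=[0,+\infty)$.

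For the point spectrum I would exploit the explicit zero-energy solution already used in the proof of Lemma~\ref{lem:R}, namely $\phi(R)=\beta R^{(\beta+1)/2}(1-R^{2\beta})(1+R^{2\beta})^{-3/2}$, coming from scaling invariance. It behaves like $R^{(\beta+1)/2}$ near $0$ and like $R^{(1-\beta)/2}$ near infinity, so $\phi\in L^2$ precisely when $\beta>2$, providing the eigenvalue $0$ in the limit-point regime (where no boundary condition at the origin is imposed). Existence of a strictly negative eigenvalue $\xi_d$ is obtained by a trial function argument producing a negative Rayleigh quotient (essentially using that $W_\alpha$ saturates the elliptic equation, so the linearization has a negative direction); uniqueness follows from Sturm oscillation, since $\phi$ has exactly one interior zero (at $R=1$) and therefore sits at the first excited level, forcing a single nodeless ground state $\phi_d$ at some $\xi_d<0$ with no eigenvalue below. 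The exponential decay of $\phi_d$ at $+\infty$ is immediate from the asymptotic ODE $-\phi_d''=(\xi_d+o(1))\phi_d$, whose $L^2$ solution decays like $e^{-\sqrt{-\xi_d}\,R}$.

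The main obstacle is to make the Sturm oscillation count rigorous in the limit-circle range $0<\beta<2$: one must verify that the zero-energy solution $\phi$ represents the distinguished self-adjoint boundary condition at $R=0$ encoded by $D(\mathcal{L})$--namely vanishing along the regular $R^{(1+\beta)/2}$ branch--so that its single interior zero correctly identifies it as the first excited state and correspondingly places $\phi_d$ as the ground state. A parallel but cleaner check handles the limit-point range $\beta\ge 2$, where $\phi$ is automatically in the domain when $\beta>2$ and is a genuine eigenfunction at energy $0$; once these boundary-condition identifications are in place, the spectral description follows from the standard Weyl and oscillation machinery.
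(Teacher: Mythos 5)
Your proposal follows the same route as the paper: Weyl's theorem for the essential spectrum, Sturm oscillation applied to the explicit zero-energy solution $\phi_0(R)=\beta R^{(\beta+1)/2}(1-R^{2\beta})(1+R^{2\beta})^{-3/2}$ to pin down the single negative eigenvalue, the $L^2$ criterion $\beta>2$ for $0\in\sigma_{pp}(\mathcal{L})$, Kato's criterion for absence of positive eigenvalues, and integrability of the potential at infinity for purity of the a.c.\ spectrum. The paper simply states these as citations (to Teschl, Dunford--Schwartz, Reed--Simon, Weidmann) where you fill in the underlying arguments; the one caveat you flag about matching the oscillation count to the boundary condition in the limit-circle regime $0<\beta<2$ is exactly the point the paper's choice of $D(\mathcal{L})$ (requiring $f(0)=0$) handles.
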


	\begin{proof}
		The essential spectrum follows from \cite[Lemma 9.35]{Tes14}. The statement of negative eigenvalue follows from the fact that $\phi_0(R)$ has one positive zero and Strum oscillation theorem \cite[Chapter XIII.7, Theorem 55, Corollary 56]{DS88}. When $\beta\leq 2$, $\phi_0\notin L^2((0,+\infty))$ hence $0\notin\sigma_{pp}(\mathcal{L})$. When $\beta>2$, $\phi_0\in L^2((0,+\infty))$ hence $0\in\sigma_{pp}(\mathcal{L})$. No positive eigenvalue follows from Kato's theorem \cite[Theorem XIII.58]{RS78}. The continuous part of the spectrum is purely absolutely continuous due to the integrability of $\frac{\alpha}{R^2}-5W_\alpha^4(R)$ at $R=\infty$ \cite[Theorem 15.3]{Wei06}. 
	\end{proof}

	For each $z\in\mathbb{C}$, there is a fundamental system of solutions $\phi(R,z)$ and $\theta(R,z)$ for $\mathcal{L}-z$ with the following asymptotic behavior as $R\to 0$:
	\begin{equation*}
		\phi(R,z)\sim \beta R^{\frac{\beta+1}{2}},\quad \theta(R,z)\sim \beta^{-2}R^{\frac{-\beta+1}{2}}.
	\end{equation*}
	If $0<\beta<2$, $\mathcal{L}$ is in the limit circle case at $R=0$ and $\phi(R,z),\ \theta(R,z)$ are both square integrable near $R=0$. If $2\leq \beta<4$, $\mathcal{L}$ is in the limit point case at $R=0$ and $\phi(R,z)$ is the only solution (modulo constant multiples) that belongs to $L^2([0,1])$, which we call the Weyl-Titchmarsh solution of $\mathcal{L}-z$ at $R=0$.
	\par Since $\mathcal{L}$ is always in the limit point case at $R=\infty$, there exists a unique $m(z)\in\mathbb{C}$ such that $\theta(R,z)+m(z)\phi(R,z)$ belongs to $L^2([1,\infty])$ for each $z\in\mathbb{C}$, $\mathrm{Im}(z)>0$. We call $m(z)$ the Weyl-Titchmarsh function of $\mathcal{L}-z$ at $R=\infty$. After normalization, we denote by $\psi^+(R,z)$ the Weyl-Titchmarsh solution of $\mathcal{L}-z$ at $R=\infty$ with $\psi^+(R,z)\sim z^{-\frac{1}{4}}e^{iz^{\frac{1}{2}}R}$. This is a multiple of the Jost solution, which is the fixed point of 
	\begin{equation*}
		f_+(R,z)=e^{iz^{\frac{1}{2}}R}+\int_{R}^{\infty}\frac{\sin(z^{\frac{1}{2}}(R-R'))}{z^{\frac{1}{2}}}V(R')f_+(R',z)dR',
	\end{equation*}
	where $V(R)=-\frac{\alpha}{R^2}+5W_\alpha^4(R)$. Further, we let $\psi^+(R,\xi)=\psi^+(R,\xi+i0)$ for $\xi>0$ and $\psi^-(R,\xi)=\overline{\psi^+(R,\xi)}$, then $\psi^+(R,\xi)$ and $\psi^-(R,\xi)$ forms a fundamental system of solutions of $\mathcal{L}-\xi$ with asymptotics $\psi^{\pm}(R,\xi)\sim \xi^{-\frac{1}{4}}e^{\pm i\xi^{\frac{1}{2}}R}$.

	\begin{proposition}[distorted Hankel transform]
		The spectral measure of $\mathcal{L}$ is given by
		\begin{equation*}
			\begin{aligned}
				d\rho&=\delta_{\xi_d}+\rho(\xi)d\xi,& 0<\beta<2,\\
				d\rho&=\delta_{\xi_d}+\delta_{0}+\rho(\xi)d\xi,& 2\leq\beta<4,\\
			\end{aligned}
		\end{equation*}
		where
		\begin{equation*}
			\rho(\xi)=\frac{1}{\pi}m(\xi+i0),\quad \xi> 0.
		\end{equation*}
		
		The distorted Hankel transform $\mathcal{F}$ defined as
		\begin{equation*}
			\mathcal{F}f(\xi)=\lim\limits_{r\to\infty}\int_{0}^{r}\phi(R,\xi)f(R)dR,\quad \xi\in\sigma(\mathcal{L}),
		\end{equation*}
		is a unitary operator from $L^2((0,+\infty))$ to $L^2(\sigma(\mathcal{L}),\rho)$ and its inverse is given by 
		\begin{equation*}
			\begin{aligned}
				\mathcal{F}^{-1}g(R)&=g(\xi_d)\phi_d(R)+\lim\limits_{\mu\to\infty}\int_{0}^{\mu}\phi(R,\xi)g(\xi)\rho(\xi)d\xi,& 0<\beta<2,\\
				\mathcal{F}^{-1}g(R)&=g(\xi_d)\phi_d(R)+g(0)\phi_0(R)+\lim\limits_{\mu\to\infty}\int_{0}^{\mu}\phi(R,\xi)g(\xi)\rho(\xi)d\xi,& 2\leq \beta<4.\\
			\end{aligned}
		\end{equation*}
		The limits in the above expressions refer to $L^2$ limits in the corresponding spaces.
	\end{proposition}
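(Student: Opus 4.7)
The plan is to prove this via standard Weyl--Kodaira--Titchmarsh theory, building the spectral measure from the Green's function and then deducing unitarity from Stone's formula. First I would construct the resolvent kernel: since $\mathcal{L}$ is self-adjoint on the stated domain with $\phi(R,z)$ as the (unique up to scaling) $L^2$-at-$0$ solution (selected by the boundary condition when $0<\beta<2$, and automatic in the limit point case when $\beta\geq 2$), and $\psi^+(R,z)$ as the unique $L^2$-at-$\infty$ Weyl solution for $\operatorname{Im} z>0$, we can write
\begin{equation*}
(\mathcal{L}-z)^{-1}f(R)=\int_0^\infty G(R,R',z)f(R')\,dR',\quad G(R,R',z)=\frac{\phi(R_<,z)\psi^+(R_>,z)}{W(\phi,\psi^+)(z)}.
\end{equation*}
Expanding $\psi^+$ in the fundamental system $\{\phi,\theta\}$ gives $\psi^+(R,z)=c(z)\bigl(\theta(R,z)+m(z)\phi(R,z)\bigr)$ for some nonvanishing $c(z)$, and $W(\phi,\psi^+)(z)=c(z)W(\phi,\theta)=c(z)$ after our normalization of $\phi,\theta$ (since $W(\phi,\theta)=1$ is forced by the asymptotics at $R=0$).

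Next, I would apply Stone's formula
\begin{equation*}
\tfrac{1}{2}\bigl(E([a,b])+E((a,b))\bigr)=\lim_{\varepsilon\downarrow 0}\frac{1}{2\pi i}\int_a^b\bigl[(\mathcal{L}-\xi-i\varepsilon)^{-1}-(\mathcal{L}-\xi+i\varepsilon)^{-1}\bigr]\,d\xi
\end{equation*}
to pair the resolvent with the spectral projection. Since $\phi(R,\xi)$ and $\theta(R,\xi)$ are real for $\xi\in\mathbb{R}$, the only $\varepsilon\to 0$ imaginary contribution on $\xi>0$ comes from $\operatorname{Im} m(\xi+i0)$, giving
\begin{equation*}
\operatorname{Im} G(R,R',\xi+i0)=\phi(R,\xi)\phi(R',\xi)\operatorname{Im} m(\xi+i0),
\end{equation*}
from which the absolutely continuous density $\rho(\xi)=\pi^{-1}\operatorname{Im} m(\xi+i0)$ reads off. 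That $\sigma_{ac}(\mathcal{L})=[0,\infty)$ in the previous proposition guarantees $\rho(\xi)>0$ there. For the pure point part, the resolvent has simple poles exactly at $\xi_d$ (and at $0$ when $\beta>2$), whose residues equal the rank-one projections onto the $L^2$-normalized eigenfunctions; these produce the Dirac atoms $\delta_{\xi_d}$ (and $\delta_0$).

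With the spectral measure identified, Parseval follows from plugging Stone's formula into $\|f\|_{L^2}^2=\langle E(\mathbb{R})f,f\rangle$: on the continuous spectrum we pick up $\int_0^\infty|\mathcal{F}f(\xi)|^2\rho(\xi)\,d\xi$, and on the point spectrum the expansion coefficients $\langle\phi_d,f\rangle$, $\langle\phi_0,f\rangle$ (the latter only when the $\phi_0$-atom is present). This proves $\mathcal{F}$ is an isometry on a dense set (e.g.\ $C_c^\infty(0,\infty)$) and hence extends to all of $L^2$. Surjectivity and the inversion formula then follow by a standard duality/density argument: for $g\in L^2(\sigma(\mathcal{L}),d\rho)$ of compact support in $\xi$, the candidate preimage $\mathcal{F}^{-1}g$ given in the statement lies in $L^2((0,\infty))$ by Parseval applied in reverse, and one verifies $\mathcal{F}\mathcal{F}^{-1}g=g$ by Fubini using the generalized eigenfunction relation $\mathcal{L}\phi(\cdot,\xi)=\xi\phi(\cdot,\xi)$.

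The main obstacle is the careful handling of the endpoint at $R=0$, where the role of $\phi(R,z)$ changes between the limit circle case ($0<\beta<2$, where a boundary condition is required to make $\mathcal{L}$ self-adjoint and to single out $\phi$ among the two $L^2$-at-$0$ candidates) and the limit point case ($2\leq\beta<4$, where no boundary condition is needed and $\phi$ is forced by $L^2$-integrability). In both cases one must show that $W(\phi,\psi^+)(z)$ is nonvanishing for $\operatorname{Im} z>0$ (equivalent to $z\notin\sigma_{pp}(\mathcal{L})$), justify the limiting absorption $m(\xi+i\varepsilon)\to m(\xi+i0)$ for $\xi>0$ (which follows from the existence of the Jost solution and the absence of embedded eigenvalues, established via Kato's theorem in the earlier proposition), and confirm that the zero energy $\xi=0$ contributes a discrete atom precisely when $\phi_0\in L^2$, i.e.\ when $\beta>2$; this last case requires showing that $\xi=0$ is a proper eigenvalue rather than merely a threshold resonance, which follows from the decay rate of $\phi_0$ asserted in item (iv) of the introduction.
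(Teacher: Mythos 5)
Your proposal is correct and is precisely the standard Weyl--Kodaira--Titchmarsh argument via the Green's function and Stone's formula; the paper does not prove this proposition at all but simply cites \cite{GZ06}, \cite{Tes14}, \cite{KST12}, \cite{KT11}, which contain exactly the argument you outline. One small point worth flagging: you correctly write $\rho(\xi)=\pi^{-1}\operatorname{Im} m(\xi+i0)$, whereas the paper's statement reads $\rho(\xi)=\pi^{-1}m(\xi+i0)$ without the $\operatorname{Im}$; this is almost certainly a typo in the paper, as confirmed by the later identity $\rho(\xi)=\pi^{-1}|a(\xi)|^{-2}$, which is manifestly real and positive.
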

	
	\begin{proof}
		See, for example \cite{GZ06}, \cite{Tes14}, \cite{KST12} and \cite{KT11}.
	\end{proof}

	\begin{proposition}\label{prop:phi}
		For any $z\in\mathbb{C}$, we have the following absolutely convergent expansion for $\phi(R,z)$:
		\begin{equation}\label{eq:phi}
			\phi(R,z)=\phi_0(R)+R^{\frac{\beta+1}{2}}\sum_{j=1}^{\infty}(R^2z)^j\phi_j(R^{2\beta}),
		\end{equation}
		where $\phi_j$'s ($j\geq 1$) are holomorphic in $U=\{u\in\mathbb{C}\mid \mathrm{Re}(u)>-\frac{1}{2}\}$ and satisfy the bounds for $u\in U$
		\begin{equation*}
			\begin{aligned}
				|\phi_j(u)|&\leq\frac{C^{j+1}}{(j-1)!}\left<u\right>^{-\frac{1}{2}},& 0<\beta<2,\\
				|\phi_j(u)|&\leq\frac{C^{j+1}}{(j-1)!}\left<u\right>^{-\frac{1}{2}}\log(2+|u|),& \beta=2,\\
				|\phi_j(u)|&\leq\frac{C^{j+1}}{(j-1)!}\left<u\right>^{-\frac{1}{\beta}},& \beta>2.\\
			\end{aligned}
		\end{equation*}
		Furthermore, we have
		\begin{equation*}
			\phi_1(u)=\left\{\begin{aligned}
			&\frac{\beta}{2(\beta+2)}(1+o(1)),& u\to 0,\quad &\beta>0,\\
			&\frac{\beta}{2(2-\beta)}u^{-\frac{1}{2}}(1+o(1)),& u\to \infty,\quad& 0<\beta<2,\\
			&\frac{1}{4}u^{-\frac{1}{2}}\log(u)(1+o(1)),& u\to\infty,\quad &\beta=2.\\
			&cu^{-\frac{1}{\beta}}(1+o(1)),& u\to\infty,\quad &\beta>2,\\
			\end{aligned}\right.
		\end{equation*}
		with $c\neq 0$.
	\end{proposition}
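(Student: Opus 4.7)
The plan is to expand $\phi(R,z)=\sum_{j\geq 0}z^j\psi_j(R)$ as a formal power series in $z$ and match termwise with $(\mathcal{L}-z)\phi(R,z)=0$. This reduces the problem to the cascade $\mathcal{L}\psi_0=0$, $\mathcal{L}\psi_j=\psi_{j-1}$ for $j\geq 1$. The boundary behaviour $\phi(R,z)\sim\beta R^{(\beta+1)/2}$ at $R=0$ is enforced by taking $\psi_0=\phi_0$, i.e.\ the zero mode from Lemma~\ref{lem:R}, and imposing vanishing Cauchy data at $R=0$ for all higher $\psi_j$. Each $\psi_j$ is then produced by the Green's function of $\mathcal{L}$ built from the explicit fundamental system $\phi_0(R),\theta_0(R)$ of Lemma~\ref{lem:R}:
\begin{equation*}
\psi_j(R)=\phi_0(R)\int_0^R\theta_0(R')\psi_{j-1}(R')\,dR'-\theta_0(R)\int_0^R\phi_0(R')\psi_{j-1}(R')\,dR'.
\end{equation*}

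Next I would write $\phi_0(R)=R^{(\beta+1)/2}f(R^{2\beta})$ and $\theta_0(R)=R^{(1-\beta)/2}g(R^{2\beta})$ with $f(u)=\beta(1-u)(1+u)^{-3/2}$ and $g(u)=\beta^{-2}(1-6u+u^2)(1+u)^{-3/2}$, and prove by induction that $\psi_j$ has the form $R^{(\beta+1)/2+2j}\phi_j(R^{2\beta})$. Substituting this ansatz in the Green's-function formula, performing the change of variables $s=(R'/R)^2$ inside each integral (so that $us^\beta=R'^{2\beta}$), and exploiting the algebraic cancellations of exponents of $R$ collapses the recursion to
\begin{equation*}
\phi_{j+1}(u)=\tfrac{1}{2}f(u)\int_0^1 s^{j}(g\phi_j)(us^\beta)\,ds-\tfrac{1}{2}g(u)\int_0^1 s^{\beta/2+j}(f\phi_j)(us^\beta)\,ds.
\end{equation*}
Holomorphy on $U=\{\mathrm{Re}(u)>-1/2\}$ is then automatic at each step because $f,g$ extend holomorphically to $\{\mathrm{Re}(u)>-1\}$ and $U$ is stable under $u\mapsto us^\beta$ for $s\in(0,1)$.

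The weighted bounds on $\phi_j$ follow by induction from the two pointwise estimates $|f(u)|\lesssim\langle u\rangle^{-1/2}$, $|g(u)|\lesssim\langle u\rangle^{1/2}$, combined with a Frullani-type change of variables $t=us^\beta$ applied to the integrals $\int_0^1 s^{\beta/2+j}\langle us^\beta\rangle^{-\kappa}\,ds$. For $0<\beta<2$ this analysis is non-borderline and produces a clean gain of $1/j$ at infinity per iteration, exactly what is needed to propagate $|\phi_j(u)|\lesssim C^{j+1}(j-1)!^{-1}\langle u\rangle^{-1/2}$. For $\beta=2$ the same substitution lands in the critical regime and generates a $\log\langle u\rangle$ correction, which must then be carried through all higher $j$. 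For $\beta>2$ the zero mode $\phi_0$ becomes $L^2$ at infinity and the $\theta_0\int\phi_0\psi_{j-1}$ piece dominates, forcing the weaker decay $\langle u\rangle^{-1/\beta}$, consistent with the growth $\theta_0(R)\sim R^{(\beta+1)/2}$. Finally, the asymptotics of $\phi_1$ follow directly from the base case $\phi_0=f$ of the recursion: the value at $u=0$ comes from $f(0)=\beta$, $g(0)=\beta^{-2}$ and $\int_0^1 s^{\beta/2}\,ds=2/(\beta+2)$, giving $\phi_1(0)=\tfrac{1}{2}-\tfrac{1}{\beta+2}=\tfrac{\beta}{2(\beta+2)}$, while the three large-$u$ profiles come from the leading asymptotics $f(u)\sim-\beta u^{-1/2}$, $g(u)\sim\beta^{-2}u^{1/2}$, $(fg)(u)\to-\beta^{-1}$, $f^2(u)\sim\beta^2 u^{-1}$ plugged into the three-regime Frullani evaluation. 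The main technical obstacle is precisely this three-regime bookkeeping, especially at $\beta=2$ where polynomial bounds degenerate and the logarithmic loss must be tracked carefully in order to preserve the factorial constant $C^{j+1}/(j-1)!$.
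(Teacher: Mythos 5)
Your proposal is correct and follows essentially the same route as the paper: expand $\phi(R,z)$ in powers of $R^2z$, set up the Green's-function recursion for the coefficients via the fundamental system $\phi_0,\theta_0$, pass to the variable $u=R^{2\beta}$, and prove holomorphy and the factorial-decay bounds by induction on the resulting integral operator. Your normalization of the integral to $[0,1]$, which exposes the factor $s^j$ responsible for the $1/j$ gain and reduces holomorphy to the invariance of $U$ under $u\mapsto us^\beta$, is a cosmetically cleaner reorganization of the paper's formula, which keeps the integral over $(0,R)$ in the rescaled variable and checks holomorphy by verifying trivial monodromy and a removable singularity at $u=0$.
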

	
	\begin{proof}
		We make the following ansatz for $\phi(R,z)$:
		\begin{equation*}
			\phi(R,z)=R^{\frac{\beta+1}{2}}\sum_{j=0}^{\infty}(R^2z)^jf_j(R),
		\end{equation*}
		where $f_0(R)=R^{\frac{1+\beta}{2}}\phi_0(R)$. Substituting into $(L-z)\phi(R,z)=0$ formally, we obtain
		\begin{equation*}
			L(R^{\frac{\beta+1}{2}}R^{2j}f_j(R))=R^{\frac{\beta+1}{2}}R^{2(j+1)}f_{j-1}(R),\quad j\geq1.
		\end{equation*}
		By the variation of parameter formula, we get the following expression for $f_j$ in terms of $f_{j-1}$:
		\begin{equation*}
			\begin{aligned}
				&\quad f_j(R)\\&=\int_{0}^{R}\frac{R'^{\frac{\beta+1}{2}+2(j-1)}}{R^{\frac{\beta+1}{2}+2j}}\left(\phi_0(R)\theta_0(R')-\phi_0(R')\theta_0(R)\right)f_{j-1}(R')dR'\\
				%&=\int_{0}^{R}\frac{R'^{\frac{\beta+1}{2}+2(j-1)}}{R^{\frac{\beta+1}{2}+2j}}\frac{R^{\frac{\beta+1}{2}}(1-R^{2\beta})R'^{\frac{-\beta+1}{2}}(1-6R'^{2\beta}+R'^{4\beta})-R'^{\frac{\beta+1}{2}}(1-R'^{2\beta})R^{\frac{-\beta+1}{2}}(1-6R^{2\beta}+R^{4\beta})}{\beta(1+R^{2\beta})^{\frac{3}{2}}(1+R'^{2\beta})^{\frac{3}{2}}}f_{j-1}(R')dR'\\
				&=\int_{0}^{R}\frac{R^{-2j}(1-R^{2\beta})R'^{1+2(j-1)}(1-6R'^{2\beta}+R'^{4\beta})-R'^{1+\beta+2(j-1)}(1-R'^{2\beta})R^{-\beta-2j}(1-6R^{2\beta}+R^4\beta)}{\beta(1+R^{2\beta})^{\frac{3}{2}}(1+R'^{2\beta})^{\frac{3}{2}}}\\&\quad \quad\quad \times f_{j-1}(R')dR'.\\
			\end{aligned}
		\end{equation*}
		Define $\phi_j(R)=f_j(R^{\frac{1}{2\beta}})$ for $R>0$ and $j\geq0$, we have for $R>0$
		\begin{equation*}
		\begin{aligned}
			\phi_j(R)&=\int_{0}^{R}\frac{R^{-\frac{j}{\beta}}(1-R)R'^{\frac{j}{\beta}-1}(1-6R'+R'^2)-R'^{\frac{j}{\beta}-\frac{1}{2}}(1-R')R^{-\frac{1}{2}-\frac{j}{\beta}}(1-6R+R^2)}{2\beta^2(1+R)^{\frac{3}{2}}(1+R')^{\frac{3}{2}}}\phi_{j-1}(R')dR'\\
			&=\int_{0}^{R}\frac{R'^{\frac{j}{\beta}-1}}{R^{\frac{j}{\beta}+\frac{1}{2}}}\frac{R^{\frac{1}{2}}(1-R)(1-6R'+R'^2)-R'^{\frac{1}{2}}(1-R')(1-6R+R^2)}{2\beta^2(1+R)^{\frac{3}{2}}(1+R')^{\frac{3}{2}}}\phi_{j-1}(R')dR'.
		\end{aligned}
		\end{equation*}
		Since by induction hypothesis, $\phi_{j-1}(u)$ is holomorphic in $U$, it's clear that $\phi_j$ can be extended to a holomorphic function in any simply connected domain of $U$ not containing $0$, say $U-\mathbb{R}_{\leq0}$. In fact, $\phi_j$ can be extended to a holomorphic function in $U-\{0\}$. To show this, consider the difference of $\phi_j(-R+i0)$ and $\phi_j(-R-i0)$ for $0<R<\frac{1}{2}$, that is
		\begin{equation*}
		\begin{aligned}
			&\phi_j(-R+i0)-\phi_j(-R-i0)\\&=\int_{0}^{R}\left(1-\frac{e^{2\pi i(\frac{j}{\beta}-1)}}{e^{2\pi i(\frac{j}{\beta}+\frac{1}{2})}}\times e^{\pi i}\right)\\&\quad \times\frac{(-R)^{\frac{j}{\beta}-1}}{(-R)^{\frac{j}{\beta}+\frac{1}{2}}}\frac{(-R)^{\frac{1}{2}}(1+R)(1+6R'+R'^2)-(-R')^{\frac{1}{2}}(1+R')(1+6R+R^2)}{2\beta^2(1-R)^{\frac{3}{2}}(1-R')^{\frac{3}{2}}}\phi_{j-1}(-R')dR'\\&=0.\\
		\end{aligned}
		\end{equation*}
		Furthermore, for $|u|\ll1$, we have
		\begin{equation*}
			|\phi_j(u)|\lesssim \int_{0}^{|u|}\left(\frac{|v|^{\frac{j}{\beta}-1}}{|u|^{\frac{j}{\beta}}}+\frac{|v|^{\frac{j}{\beta}-\frac{1}{2}}}{|u|^{\frac{j}{\beta}+\frac{1}{2}}}\right)d|v|\lesssim 1,
		\end{equation*}
		so the singularity at $0$ is removable, i.e. $\phi_j$ can be extended to a holomorphic function in the entire $U$.
		\par To prove the bounds in the statement, we first estimate the integal kernel as 
		\begin{equation*}
			\begin{aligned}
				&\left|\frac{R^{\frac{1}{2}}(1-R)(1-6R'+R'^2)-R'^{\frac{1}{2}}(1-R')(1-6R+R^2)}{2\beta^2(1+R)^{\frac{3}{2}}(1+R')^{\frac{3}{2}}}\right|\mathbf{1}_{|R'|<|R|}\\&\lesssim\frac{|R^\frac{1}{2}-R'^{\frac{1}{2}}|+|R^{\frac{3}{2}}-R'^{\frac{3}{2}}|+|R|^{\frac{1}{2}}|R'|^{\frac{1}{2}}|R^{\frac{1}{2}}-R'^{\frac{1}{2}}|}{|1+R|^{\frac{3}{2}}|1+R'|^{\frac{3}{2}}}\mathbf{1}_{|R'|<|R|}\\&\quad +\frac{|R||R'||R^{\frac{1}{2}}-R'^{\frac{1}{2}}|+|R|^{\frac{1}{2}}|R'|^{\frac{1}{2}}|R^2-R'^2|+|R|^{\frac{3}{2}}|R'|^{\frac{3}{2}}|R^{\frac{1}{2}}-R'^{\frac{1}{2}}|}{|1+R|^{\frac{3}{2}}|1+R'|^{\frac{3}{2}}}\mathbf{1}_{|R'|<|R|}\\&\lesssim |R|^{\frac{1}{2}},
			\end{aligned}
		\end{equation*}
		where we have used $|1+R|\geq\frac{1}{2}$, $|1+R'|\geq\frac{1}{2}$, and 
		\begin{equation*}
			\frac{|R|}{|1+R|}\leq 1+\frac{1}{|1+R|}\lesssim 1,\quad \frac{|R'|}{|1+R'|}\leq 1+\frac{1}{|1+R'|}\lesssim 1.
		\end{equation*}
		Hence the integral kernel satisfies the bound
		\begin{equation*}
			\left|\frac{R'^{\frac{j}{\beta}-1}}{R^{\frac{j}{\beta}+\frac{1}{2}}}\frac{R^{\frac{1}{2}}(1-R)(1-6R'+R'^2)-R'^{\frac{1}{2}}(1-R')(1-6R+R^2)}{2\beta^2(1+R)^{\frac{3}{2}}(1+R')^{\frac{3}{2}}}\right|\mathbf{1}_{|R'|<|R|}\leq C_0\frac{|R'|^{\frac{j}{\beta}-1}}{|R|^{\frac{j}{\beta}}}
		\end{equation*}
		for some constant $C_0>0$.
		\par By the inductive hypothesis, $|\phi_{j-1}(u)|\leq \frac{C^{j}}{(j-2)!}$, so we have
		\begin{equation*}
		\begin{aligned}
			|\phi_j(u)|&\leq\int_{0}^{|u|}C_0\frac{|v|^{\frac{j}{\beta}-1}}{|u|^{\frac{j}{\beta}}}|\phi_{j-1}(v)|d|v|\\
			&\leq \frac{ C_0C^{j}}{(j-2)!}\int_{0}^{|u|}\frac{|v|^{\frac{j}{\beta}-1}}{|u|^{\frac{j}{\beta}}}d|v|=\frac{C_0\beta C^{j-1}}{j!}.
		\end{aligned}
		\end{equation*}
		On the other hand, when $0<\beta<2$, we have $|\phi_{j-1}(u)|\leq\frac{C^j}{(j-2)!}|u|^{-\frac{1}{2}}$, hence
		\begin{equation*}
		\begin{aligned}
			|\phi_j(u)|&\leq \frac{ C_0C^{j}}{(j-2)!}\int_{0}^{|u|}\frac{|v|^{\frac{j}{\beta}-1}}{|u|^{\frac{j}{\beta}}}\left<v\right>^{-\frac{1}{2}}d|v|\\&\leq \frac{ C_0C^{j}}{(j-2)!}\int_{0}^{|u|}\frac{|v|^{\frac{j}{\beta}-\frac{3}{2}}}{|u|^{\frac{j}{\beta}}}d|v|\\&\leq \frac{\beta C_0\beta C^{j-1}}{(j-1)!}|u|^{-\frac{1}{2}}.
		\end{aligned}
		\end{equation*}
		Taking $C$ large enough, we have
		\begin{equation*}
			|\phi_j(u)|\leq\frac{\beta C_0'\beta C^{j}}{(j-1)!}\left<u\right>^{-\frac{1}{2}}\leq\frac{C^{j}}{(j-1)!}\left<u\right>^{-\frac{1}{2}}.
		\end{equation*}
		\par If $\beta=2$, we have for $|u|\leq1$ that
		\begin{equation*}
			|\phi_1(u)|\leq C_0\int_{0}^{|u|}\frac{|v|^{-\frac{1}{2}}}{|u|^{\frac{1}{2}}}|\phi_0(v)|d|v|\leq 2C_0C.
		\end{equation*}
		For $|u|>1$, we have
		\begin{equation*}
			\begin{aligned}
				|\phi_1(u)|&\leq C_0C\int_{0}^{1}\frac{|v|^{-\frac{1}{2}}}{|u|^{\frac{1}{2}}}d|v|+C_0C\int_{1}^{|u|}\frac{|v|^{-1}}{|u|^{\frac{1}{2}}}d|v|\\&\leq 2C_0C|u|^{-\frac{1}{2}}+C_0C|u|^{-\frac{1}{2}}\log(|u|).
			\end{aligned}
		\end{equation*}
		Combining with the estimate for small $u$, we get
		\begin{equation*}
		|\phi_1(u)|\leq C^2\left<u\right>^{-\frac{1}{2}}\log(2+|u|).
		\end{equation*}
		For $j\geq2$, we use the inductive hypothesis $|\phi_{j-1}(u)|\leq\frac{C^{j}}{(j-2)!}\left<u\right>^{-\frac{1}{2}}\log(2+|u|)$ to get for $|u|\leq1$ that
		\begin{equation*}
			\begin{aligned}
				|\phi_j(u)|&\leq\frac{C_0C^{j}}{(j-2)!}\int_{0}^{|u|}\frac{|v|^{\frac{j}{2}-1}}{|u|^{\frac{j}{2}}}\left<v\right>^{-\frac{1}{2}}\log(2+|v|)d|v|\\
				&\leq 	\frac{C_0C^{j}}{(j-2)!}\int_{0}^{|u|}\frac{|v|^{\frac{j}{2}-1}}{|u|^{\frac{j}{2}}}\log(2+|u|)d|v|\\&\leq \frac{2C_0C^{j}}{(j-1)!}\log(2+|u|),
			\end{aligned}
		\end{equation*}
		and for $|u|>1$
		\begin{equation*}
			\begin{aligned}
				|\phi_j(u)|&\leq\frac{C_0C^{j}}{(j-2)!}\int_{0}^{|u|}\frac{|v|^{\frac{j}{2}-1}}{|u|^{\frac{j}{2}}}\left<v\right>^{-\frac{1}{2}}\log(2+|v|)d|v|\\
				&\leq \frac{C_0C^{j}}{(j-2)!}\int_{0}^{|u|}\frac{|v|^{\frac{j}{2}-\frac{3}{2}}}{|u|^{\frac{j}{2}}}\log(2+|u|)d|v|\\&\leq \frac{2C_0C^{j}}{(j-1)!}|u|^{-\frac{1}{2}}\log(2+|u|).
			\end{aligned}
			\end{equation*}
		Hence, we get for $j\geq2$,
		\begin{equation*}
			|\phi_j(u)|\leq\frac{C_0'C^{j}}{(j-1)!}\left<u\right>^{-\frac{1}{2}}\log(2+|u|)\leq\frac{C^{j+1}}{(j-1)!}\left<u\right>^{-\frac{1}{2}}\log(2+|u|).
		\end{equation*}
		\par Finally, we deal with the case $\beta>2$. For $|u|\leq 1$, we have
		\begin{equation*}
			|\phi_1(u)|\leq C_0\int_{0}^{|u|}\frac{|v|^{\frac{1}{\beta}-1}}{|u|^{\frac{1}{\beta}}}|\phi_0(v)|d|v|\leq \beta C_0C.
		\end{equation*}
		For $|u|>1$, we have
		\begin{equation*}
			\begin{aligned}
				|\phi_1(u)|&\leq C_0\int_{0}^{1}\frac{|v|^{\frac{1}{\beta}-1}}{|u|^{\frac{1}{\beta}}}d|v|+C_0C\int_{1}^{|u|}\frac{|v|^{\frac{1}{\beta}-\frac{3}{2}}}{|u|^{\frac{1}{\beta}}}d|v|\\
				&\leq \beta C_0C|u|^{-\frac{1}{\beta}}+\frac{2\beta}{2-\beta}C_0C |u|^{-\frac{1}{2}}-\frac{2\beta}{2-\beta}C_0C|u|^{-\frac{1}{\beta}},\\
				&\leq C_0'C|u|^{-\frac{1}{\beta}}.\\
			\end{aligned}
		\end{equation*}
		Combining with the estimate for small $u$, we get
		\begin{equation*}
			|\phi_1(u)|\leq C^2\left<u\right>^{-\frac{1}{\beta}}.
		\end{equation*}
		For $j\geq 2$, we use the imductive hypothesis $|\phi_{j-1}(u)|\leq\frac{C^j}{(j-2)!}\left<u\right>^{-\frac{1}{\beta}}$ to get for $|u|\leq 1$ that
		\begin{equation*}
			\begin{aligned}
				|\phi_j(u)|&\leq\frac{C_0C^j}{(j-2)!}\int_{0}^{|u|}\frac{|v|^{\frac{j}{\beta}-1}}{|u|^{\frac{j}{\beta}}}\left<v\right>^{-\frac{1}{\beta}}d|v|\\
				&\leq\frac{C_0C^j}{(j-2)!}\int_{0}^{|u|}\frac{|v|^{\frac{j}{\beta}-1}}{|u|^{\frac{j}{\beta}}}d|v|\\
				&\leq\frac{\beta C_0C^j}{(j-1)!},
			\end{aligned}
		\end{equation*}
		and for $|u|>1$ that
		\begin{equation*}
			\begin{aligned}
				|\phi_j(u)|&\leq\frac{C_0C^j}{(j-2)!}\int_{0}^{|u|}\frac{|v|^{\frac{j}{\beta}-1}}{|u|^{\frac{j}{\beta}}}\left<v\right>^{-\frac{1}{\beta}}d|v|\\
				&\leq \frac{C_0C^j}{(j-2)!}\int_{0}^{1}\frac{|v|^{\frac{j}{\beta}-1}}{|u|^{\frac{j}{\beta}}}d|v| +\frac{C_0C^j}{(j-2)!}\int_{1}^{|u|}\frac{|v|^{\frac{j-1}{\beta}-1}}{|u|^{\frac{j}{\beta}}}d|v|\\
				&\leq \frac{\beta C_0C^j}{j(j-2)!}|u|^{-\frac{j}{\beta}}+\frac{\beta C_0C^j}{(j-1)!}|u|^{-\frac{1}{\beta}}-\frac{\beta C_0C^j}{(j-1)!}|u|^{-\frac{j}{\beta}}\\
				&\leq \frac{C_0'C^j}{(j-1)!}|u|^{-\frac{1}{\beta}}.\\
			\end{aligned}
		\end{equation*}
		Hence, we get for $j\geq2$ that
		\begin{equation*}
			|\phi_j(u)|\leq \frac{C^{j+1}}{(j-1)!}\left<u\right>^{-\frac{1}{\beta}}.
		\end{equation*}
		
		\par We remark that the bounds on $\phi_j$ also imply that the formal series $\sum_{j=0}^{\infty}(R^2z)^jf_j(R)=\sum_{j=0}^{\infty}(R^2z)^j\phi_j(R^{2\beta})$ is absolutely convergent for $R>0$, so one can differentiate term by term at any times.
	\end{proof}
	\begin{remark}
		There is also a similar expansion for $\theta(R,\xi)$, but we don't need the explicit expression.
	\end{remark}

	\begin{corollary}\label{cor:small}
		If $R>0$, $R^2|z|\leq\delta$ and $|\mathrm{Im}(z)|\lesssim 1$, we have for all $k,\ell\geq0$ that
		\begin{equation*}
			\begin{aligned}
				|(R\partial_R)^k\partial_z^\ell\phi(R,z)|&\leq C_k R^{\frac{\beta+1}{2}+2\ell}\left<R^{2\beta}\right>^{-\frac{1}{2}},& 0<\beta<2,\\
				|(R\partial_R)^k\partial_z^\ell\phi(R,z)|&\leq C_k R^{\frac{3}{2}+2\ell}\log(2+R)\left<R\right>^{-2},& \beta=2,\\
				|(R\partial_R)^k\partial_z^\ell\phi(R,z)|&\leq C_k R^{\frac{\beta+1}{2}+2\ell}\left<R^{2\beta}\right>^{-\frac{1}{\beta}},& \beta>2.\\
			\end{aligned}
		\end{equation*}
		\par If $\delta$ is small enough (but not zero), we also have
		\begin{equation*}
			\begin{aligned}
				|(R\partial_R)^k\partial_z^\ell\phi(R,z)|&\geq C_k'R^{\frac{\beta+1}{2}}\left<R^{2\beta}\right>^{-\frac{1}{2}},& 0<\beta<2,\\
				|(R\partial_R)^k\partial_z^\ell\phi(R,z)|&\geq C_k' R^{\frac{3}{2}+2\ell}\log(2+R)\left<R\right>^{-2},&\beta=2,\\
				|(R\partial_R)^k\partial_z^\ell\phi(R,z)|&\geq C_k'R^{\frac{\beta+1}{2}+2\ell}\left<R^{2\beta}\right>^{-\frac{1}{\beta}},&\beta>2.\\
			\end{aligned}
		\end{equation*}
	\end{corollary}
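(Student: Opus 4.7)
The plan is to work directly with the absolutely convergent expansion
$$\phi(R,z)=\phi_0(R)+R^{\frac{\beta+1}{2}}\sum_{j=1}^{\infty}(R^2z)^j\phi_j(R^{2\beta})$$
from Proposition \ref{prop:phi}, differentiate termwise in both $R$ and $z$, and sum the result using the stated bounds on the $\phi_j$ together with Cauchy-type estimates on their derivatives. The hypothesis $R^2|z|\le\delta$ keeps the parameter $R^2z$ inside a bounded disk, so the tail of the series is geometrically small in $\delta$, and both the upper and lower bounds reduce to understanding the leading term plus a controllable remainder.

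For the upper bound, $\partial_z^\ell$ annihilates $\phi_0$ whenever $\ell\ge 1$, leaving $\sum_{j\ge\ell}\tfrac{j!}{(j-\ell)!}R^{\frac{\beta+1}{2}+2j}z^{j-\ell}\phi_j(R^{2\beta})$, while for $\ell=0$ the term $\phi_0$ is kept. Each application of $R\partial_R$ either differentiates the monomial $R^{\frac{\beta+1}{2}+2j}$ or produces a factor $R^{2\beta}\phi_j'(R^{2\beta})$, so after $k$ applications we obtain a finite linear combination of terms $R^{\frac{\beta+1}{2}+2j}u^m\phi_j^{(m)}(u)|_{u=R^{2\beta}}$ with $m\le k$ and coefficients polynomial of degree $\le k$ in $j$. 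Since each $\phi_j$ is holomorphic on $U=\{\mathrm{Re}\,u>-\tfrac{1}{2}\}$ with the bound of Prop \ref{prop:phi}, Cauchy's integral formula on a disk of radius $\asymp\max(1,|u|/2)\subset U$ gives $|u^m\phi_j^{(m)}(u)|\lesssim_m\tfrac{C^{j+1}}{(j-1)!}\langle u\rangle^{-\sigma(\beta)}$, with an extra $\log(2+|u|)$ when $\beta=2$ and $\sigma(\beta)$ equal to $\tfrac{1}{2}$ for $\beta\le 2$ and $\tfrac{1}{\beta}$ for $\beta>2$. Combining with $R^{2j}|z|^{j-\ell}=R^{2\ell}(R^2|z|)^{j-\ell}\le R^{2\ell}\delta^{j-\ell}$ reduces everything to a convergent series $\sum_{j\ge\ell}\tfrac{C^{j+1}j^{k+1}}{(j-1-\ell)!}\delta^{j-\ell}$, and the asserted upper bound follows.

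For the lower bound I would isolate the leading contribution to $\partial_z^\ell(R\partial_R)^k\phi(R,z)$, namely the $j=\max(\ell,1)$ term. Writing $(R\partial_R)^k[R^{\frac{\beta+1}{2}+2j}\phi_j(R^{2\beta})]=R^{\frac{\beta+1}{2}+2j}\bigl[(\tfrac{\beta+1}{2}+2j)^k\phi_j(R^{2\beta})+\text{lower-order in }k\bigr]$, the leading coefficient $(\tfrac{\beta+1}{2}+2\ell)^k$ is strictly positive, so the leading term is comparable to $R^{\frac{\beta+1}{2}+2\ell}\phi_\ell(R^{2\beta})$. The explicit asymptotics of $\phi_1$ at $u\to 0$ and $u\to\infty$ in Prop \ref{prop:phi} supply two-sided control, and the iteration formula derived in the proof of that proposition, $\phi_j(u)=\int_0^u K(u,v)\phi_{j-1}(v)\,dv$, propagates this control to every $\phi_\ell$. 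The tail $j>\max(\ell,1)$ is bounded above by $O(\delta)$ times the leading term via the same Cauchy-estimate summation as in the upper-bound case, so for $\delta$ sufficiently small the leading term dominates.

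The main obstacle is the nondegeneracy of the leading coefficient uniformly in $R$. Note that $\phi_0$ itself, being (a constant multiple of) $R\cdot\tfrac{d}{d\lambda}|_{\lambda=1}\lambda^{1/2}W_\alpha(\lambda R)$, has an isolated zero at a finite $R_*>0$, so the bare lower bound on $\phi_0$ fails there. For the $\ell=0$ case one must therefore combine the leading term with the first correction $z\cdot R^{\frac{\beta+1}{2}+2}\phi_1(R^{2\beta})$ (whose leading constant $\tfrac{\beta}{2(\beta+2)}$ is nonzero) and verify that at every admissible $(R,z)$ at least one of the two contributions realizes the claimed lower size; this is the place where the smallness of $\delta$, rather than its mere boundedness, is essential. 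Once uniform nondegeneracy of the leading block is arranged, the geometric tail bound established for the upper estimate closes the argument.
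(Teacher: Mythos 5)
Your upper-bound argument matches the paper's: differentiate the expansion from Proposition~\ref{prop:phi} termwise, control $u^m\phi_j^{(m)}(u)$ by Cauchy estimates on $U$, and sum the tail geometrically using $R^2|z|\le\delta$. That part is sound.

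The lower-bound part has a genuine gap, and in fact your proposed remedy runs in the wrong direction. You correctly observe that $\phi_0(R)\propto R^{(\beta+1)/2}(1-R^{2\beta})(1+R^{2\beta})^{-3/2}$ vanishes at $R=1$, and propose to compensate with the $j=1$ term $zR^{(\beta+1)/2+2}\phi_1(R^{2\beta})$, saying that at every admissible $(R,z)$ at least one of the two contributions realizes the claimed size, with smallness of $\delta$ being ``essential.'' But the $j\ge1$ contribution satisfies
\[
\bigl|z\,R^{\frac{\beta+1}{2}+2}\phi_1(R^{2\beta})\bigr|
= (R^2|z|)\cdot R^{\frac{\beta+1}{2}}|\phi_1(R^{2\beta})|
\le \delta\cdot R^{\frac{\beta+1}{2}}\langle R^{2\beta}\rangle^{-\frac12},
\]
which is $\delta$ \emph{times} the target bound, not comparable to it. Taking $\delta$ smaller only shrinks the correction; it cannot rescue the estimate near $R=1$, and at $z=0$ (which satisfies $R^2|z|\le\delta$) the correction vanishes identically while $\phi(1,0)=\phi_0(1)=0$. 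So for $k=\ell=0$ the inequality as written fails, and no combination of the two terms realizes it.

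The same problem persists for $k\ge1$, and here a second imprecision in your argument surfaces. You write
$(R\partial_R)^k[R^{\frac{\beta+1}{2}+2j}\phi_j(R^{2\beta})]=R^{\frac{\beta+1}{2}+2j}[(\frac{\beta+1}{2}+2j)^k\phi_j+\text{lower order}]$,
but the terms produced by $R\partial_R$ hitting $\phi_j(R^{2\beta})$ (namely $2\beta u\phi_j'(u)$, etc.) are not ``lower order'' — the Cauchy estimate bounds them from above only and says nothing about cancellation. And they do cancel: a direct computation for $\beta=1$ gives $(R\partial_R)\phi_0(R)=R(1+R^2)^{-5/2}(1-5R^2)$, which vanishes at $R=5^{-1/2}$, so the ``leading coefficient is strictly positive'' argument cannot produce the claimed pointwise lower bound for $k=1$, $\ell=0$ either. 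A correct treatment of the lower half of the Corollary has to work around these finitely many zeros of $(R\partial_R)^k\phi_0$ — e.g.\ by restricting to the regimes $R\ll1$ and $R\gg1$ where the bound is actually needed in Proposition~\ref{prop:a}, or by fixing $R^2|z|$ away from $0$ \emph{and} choosing the evaluation point $R$ to avoid those zeros — rather than by adding the small correction term.
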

		
	\begin{proof}
		Differentiate (\ref{eq:phi}) by $R\partial_R$, we get
		\begin{equation*}
			\begin{aligned}
				(R\partial_R)\phi(R,z)&=(R\partial_R)\phi_0(R)+R^{\frac{\beta+1}{2}}\sum_{j=1}^{\infty}(2j+\frac{1-3\beta}{2})(R^2z)^{j}\phi_j(R^{2\beta})\\&\quad +R^{\frac{1-3\beta}{2}}\sum_{j=1}^{\infty}2\beta(R^2z)^jR^{2\beta}(u\phi_j(u))'(R^{2\beta}).
			\end{aligned}
		\end{equation*}
		\par When $0<\beta<2$, we have for $j\geq 1$ that $|\phi_j(u)|\leq\frac{C^{j+1}}{(j-1)!}\left<u\right>^{-\frac{1}{2}}$, so
		\begin{equation*}
			\left|R^{\frac{\beta+1}{2}}\sum_{j=1}^{\infty}(2j+\frac{1-3\beta}{2})(R^2z)^j\phi_j(R^{2\beta})\right|\leq C\delta R^{\frac{\beta+1}{2}}\left<R^{2\beta}\right>^{-\frac{1}{2}}.
		\end{equation*}
		To bound the term involving $(u\phi_j(u))'$, we use the gradient estimate for holomorphic function $(u\phi_j(u))'$ in $U$: for $r>0$, we have (here we use our asymption that $|\mathrm{Im}(z)|\lesssim 1$)
		\begin{equation*}
			|(u\phi_j(u))'(r)|\leq \frac{\max\limits_{u\in\partial B(r,r)}|u\phi_j(u)|}{r}\leq  \frac{C'C^{j+1}}{(j-1)!}\left<r\right>^{-\frac{1}{2}},
		\end{equation*}
		which yields
		\begin{equation*}
			\left|R^{\frac{1-3\beta}{2}}\sum_{j=1}^{\infty}2\beta(R^2z)^jR^{2\beta}(u\phi_j(u))'(R^{2\beta})\right|\leq C\delta R^{\frac{\beta+1}{2}}\left<R^{2\beta}\right>^{-\frac{1}{2}}.
		\end{equation*}
		Together with the same bound for $(R\partial_R)\phi_0$, the desired result follows.
		\par When $\beta=2$, we have for $j\geq1$ that $|\phi_j(u)|\leq\frac{C^{j+1}}{(j-1)!}\log(2+|u|)\left<u\right>^{-\frac{1}{2}}$, so
		\begin{equation*}
			|(u\phi_j(u))'(r)|\leq \frac{\max\limits_{u\in\partial B(r,r)}|u\phi_j(u)|}{r}\leq \frac{C'C^{j+1}}{(j-1)!}\log(2+r) \left<r\right>^{-\frac{1}{2}}.
		\end{equation*}
		The remaining part and the $\beta>2$ case are both similar.
		\par For higher order derivatives, use the fact that $(u\partial_u)^k(u\phi_j(u))$ have the same estimate on $(0,+\infty)$ as $u\phi_j(u)$ does for all $k\geq0$. For $\partial_z$ and mixed derivatives, the change is obivious.
	\end{proof}

	Although the expansion in Proposition \ref{prop:phi} is valid for all $z\in\mathbb{C}$, we can not use it to estimate the behavior of $\phi(z)$ when $R^2|z|\geq\delta^2$. Instead, we examine the large $\xi$ behavior of $\phi(R,\xi)$ via the Jost solution.

	\par In the next proposition, we assume $\alpha\neq 0$ and the case $\alpha=0$ can be found in \cite{KST09a}.
	
	\begin{proposition}\label{prop:psi}
		Let $\alpha\neq 0$. For any $\xi>0$, the Jost solution $\psi^+(\cdot,\xi)$ is of the form
		\begin{equation*}\label{eq:psi}
			\psi^+(R,\xi)=\xi^{-\frac{1}{4}}e^{iR\xi^{\frac{1}{2}}}\sigma(R\xi^{\frac{1}{2}},R),\quad R\xi^{\frac{1}{2}}\geq1,
		\end{equation*}
		where $\sigma$ is a zeroth order symbol in $(q,R)$ and admits the asymptotic expansion
		\begin{equation*}
			\sigma(q,R)\sim \sum_{j=0}^{\infty}q^{-j}\psi_j^+(R),\quad \psi_0^+(R)=1,\quad \psi_!^+(R)=\frac{i\alpha}{2}+O(\max\{R^{2\beta},1\}\left<R\right>^{-4\beta}),
		\end{equation*}
		with $\psi_j^+$'s being zeroth order symbols in $R$, i.e.
		\begin{equation*}
			\sup_{R>0}|(R\partial_R)^k\psi_j^+(R)|<+\infty,
		\end{equation*}
		in the sense that for all nonnegative integers $k,\ell$ and $j_0$, we have
		\begin{equation*}
			\sup_{R>0}\left|(R\partial_R)^k(q\partial_q)^\ell\left(\sigma(q,R)-\sum_{j=0}^{j_0}q^{-j}\psi_j^+(R)\right)\right|\leq C_{k,\ell,j_0}q^{-j_0-1}
		\end{equation*}
		for all $q\geq\delta^{\frac{1}{2}}$, where $\delta>0$ can be chosen to be arbitrarily small.
	\end{proposition}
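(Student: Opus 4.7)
The plan is to start from the standard Volterra integral equation for the Jost solution. Writing $\psi^+ = \xi^{-1/4} f_+$, the normalized Jost function $f_+$ satisfies
\begin{equation*}
f_+(R,\xi) = e^{iR\xi^{1/2}} + \int_R^\infty \frac{\sin\bigl(\xi^{1/2}(R-R')\bigr)}{\xi^{1/2}} V(R') f_+(R',\xi)\,dR',
\end{equation*}
where $V(R) = -\alpha/R^2 + 5 W_\alpha^4(R)$. Substituting the ansatz $f_+(R,\xi) = e^{iR\xi^{1/2}} \sigma(R\xi^{1/2}, R)$ and expanding $\sin$ in exponentials produces a Volterra equation for $\sigma(q, R)$ whose kernel is bounded by $|R'-R|$ uniformly in $\xi$. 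On the region $q = R\xi^{1/2} \geq \delta^{1/2}$, I would set up a contraction mapping argument on the Banach space of zeroth-order symbols $\sigma$ with $\sup |(R\partial_R)^k (q\partial_q)^\ell \sigma| < \infty$. The required smallness is obtained from the decay $W_\alpha^4 \in L^1([1,\infty))$ together with an integration by parts that absorbs the singular piece $-\alpha/R^2$ into boundary terms, producing a gain of $\xi^{-1/2}$. This yields existence of $\sigma$ and the fact that it is a zeroth-order symbol in $(q, R)$.

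To extract the asymptotic expansion, I would work directly with the ODE $\mathcal{L}\psi = \xi\psi$. Writing $\psi^+ = \xi^{-1/4} e^{iR\xi^{1/2}} a(R, \xi)$, the function $a$ satisfies $a'' + 2i\xi^{1/2} a' = Va$, and a formal series $a \sim \sum_{j \geq 0} \xi^{-j/2} a_j(R)$ leads to $a_0 \equiv 1$ and the transport equations
\begin{equation*}
2i\,a_{j+1}'(R) = V(R)\,a_j(R) - a_j''(R), \qquad j \geq 0,
\end{equation*}
together with the boundary condition $a_{j+1}(\infty) = 0$. These uniquely determine $a_{j+1}(R) = -\tfrac{1}{2i}\int_R^\infty (V a_j - a_j'')(R')\,dR'$. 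Setting $\psi_j^+(R) := R^j a_j(R)$ converts the expansion in $\xi^{-1/2}$ into one in $q^{-1} = R^{-1} \xi^{-1/2}$, as claimed. In particular, $a_1(R) = -\tfrac{1}{2i}\int_R^\infty V(R')\,dR' = \tfrac{i\alpha}{2R} - \tfrac{5i}{2}\int_R^\infty W_\alpha^4(R')\,dR'$, so that $\psi_1^+(R) = \tfrac{i\alpha}{2} + O\bigl(\max\{R^{2\beta},1\}\langle R\rangle^{-4\beta}\bigr)$ follows directly from the pointwise bound $W_\alpha^4(R') \lesssim \min\{R'^{2\beta-2}, R'^{-2\beta-2}\}$.

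The full symbol bounds on $\psi_j^+$ and the uniform remainder estimate are then obtained by combining the two steps. Truncating the formal series at order $j_0$ reduces the problem to a perturbative Volterra equation for the error, whose source is of size $q^{-j_0 - 1}$; the contraction argument from the first step produces
\begin{equation*}
\sup_{R>0}\left|(R\partial_R)^k(q\partial_q)^\ell \Bigl(\sigma(q,R) - \sum_{j=0}^{j_0} q^{-j} \psi_j^+(R)\Bigr)\right| \leq C_{k,\ell,j_0}\,q^{-j_0-1}.
\end{equation*}

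The principal difficulty I expect is proving that each $\psi_j^+(R) = R^j a_j(R)$ is genuinely a zeroth-order symbol uniformly down to $R = 0$. The recursion naively accumulates singularities from the $-\alpha/R^2$ term in $V$, producing factors $R^{-1}, R^{-2}, \ldots$ near the origin, which must be exactly compensated by the prefactor $R^j$. Making this cancellation rigorous amounts to rewriting the transport equations in terms of $R\partial_R$ and exploiting that $R^2 V(R)$ is a bounded smooth function of $R^{2\beta}$; this encodes the invariance of $\mathcal{L}$ under a distinguished scaling, and it is exactly this invariance that guarantees the successive integrations produce only bounded (and $R\partial_R$-stable) corrections after the $R^j$ rescaling. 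Once this structural lemma is in place, the derivative bounds, the precise form of $\psi_1^+$, and the uniformity of the remainder are all routine.
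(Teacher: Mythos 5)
Your proposal recovers the paper's skeleton: conjugate away the phase, write the formal $\xi^{-1/2}$-expansion, derive transport equations for the coefficients, rescale $\psi_j^+ := R^j a_j$ to turn the $\xi^{-1/2}$-expansion into a $q^{-1}$-expansion, and estimate the remainder. Your diagnosis of the crux --- iterating against $-\alpha/R^2$ produces $R^{-j}$ growth that the $R^j$ rescaling must exactly absorb, controlled by the scale invariance that makes $R^2V(R)$ a bounded symbol --- is precisely the content of the paper's inductive bound $|(R\partial_R)^kf_j(R)|\lesssim R^{-j}$.

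Two differences from the paper are worth noting. First, the paper's remainder argument is not a contraction. It Borel-sums the formal series into a symbol $\sigma_{\mathrm{ap}}(q,R)$ whose residual $e(\xi,R)$ decays to all orders, then writes the difference $\sigma_1 = \sigma - \sigma_{\mathrm{ap}}$ as a first-order ODE system for $(\sigma_1, R\partial_R\sigma_1)$ and applies Gronwall, obtaining $|v(R)|\lesssim\int_R^\infty(R'/R)^C R'|e(R')|\,dR'$; the polynomially growing factor $(R'/R)^C$, caused by the $1/R$-singular coefficient matrix, is absorbed only because the Borel error decays to order $j>C$ for any $C$. Second, your Volterra contraction as sketched leans on the kernel bound $|\sin(\xi^{1/2}(R-R'))/\xi^{1/2}|\le|R-R'|$ together with an unelaborated integration by parts for the $-\alpha/R^2$ piece; that integration by parts is unnecessary if you instead use $|\sin(\cdot)/\xi^{1/2}|\le\xi^{-1/2}$, which gives $\int_R^\infty|K(R,R')|\,dR'\lesssim\xi^{-1/2}/R=q^{-1}\le\delta^{-1/2}$ and closes directly --- arguably a cleaner route that also lets you truncate the series and avoid Borel. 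Finally, a sign slip: with your $V=-\alpha/R^2+5W_\alpha^4$, the conjugated ODE is $a''+2i\xi^{1/2}a'=-Va$, so the transport equation should read $2ia_{j+1}'=-Va_j-a_j''$; your displayed equation carries the opposite sign, although the formula you then state for $a_1$ is the correct one and matches the paper's.
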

	
	\begin{proof}
		Let
		\begin{equation*}
			\sigma(\xi,R)=\psi^+(R,\xi)\xi^{\frac{1}{4}}e^{-iR\xi^{\frac{1}{2}}},
		\end{equation*}
		which satisfies the conjugated equation $(\xi^{-\frac{1}{4}}e^{iR\xi^{\frac{1}{2}}})^{-1}(L-\xi)(\xi^{-\frac{1}{4}}e^{iR\xi^{\frac{1}{2}}})\sigma=0$, i.e.
		\begin{equation*}
			\left(-\partial_R^2-2i\xi^{\frac{1}{2}}\partial_R+\frac{\alpha}{R^2}-5W_\alpha^4(R)\right)\sigma(\xi,R)=0.
		\end{equation*}
		We make the following ansatz for $\sigma(\xi,R)$:
		\begin{equation*}
			\sigma(\xi,R)=\sum_{j=0}^{\infty}\xi^{-\frac{j}{2}}f_j(R),
		\end{equation*}
		which yields the iterative relation (with vanishing Cauchy data at $R=\infty$)
		\begin{equation*}
			2i\partial_R f_j=\left(-\partial_R^2+\frac{\alpha}{R^2}-5W_\alpha^4(R)\right)f_{j-1},\quad f_0=1.
		\end{equation*}
		Integrating on both sides from $R$ to $\infty$ gives
		\begin{equation*}
			f_j(R)=\frac{i}{2}\partial_R f_{j-1}+\frac{i}{2}\int_{R}^{\infty}\left(\frac{\alpha}{R'^2}-\frac{15\beta^2R'^{2\beta-2}}{(1+R'^{2\beta})^2}\right)f_{j-1}(R')dR'.
		\end{equation*}
		From this, we see $f_1(R)=\frac{i\alpha}{2}R^{-1}+O(\max\{R^{-1+2\beta},1\})$ as $R\to0$ and $f_1(R)=\frac{i\alpha}{2}R^{-1}+O(R^{-1-2\beta})$ as $R\to\infty$. After differentiation, we get
		\begin{equation*}
			\partial_R f_j=\frac{i}{2}\partial_R^2f_{j-1}-\frac{i}{2}\left(\frac{\alpha}{R^2}-\frac{15\beta^2R^{2\beta-2}}{(1+R^{2\beta})^2}\right)f_{j-1}(R).
		\end{equation*}
		From this, we see $\partial_Rf_1=-\frac{i\alpha}{2}R^{-2}+O(R^{-2+2\beta})$ as $R\to 0$ and $\partial_Rf_1=-\frac{i\alpha}{2}R^{-2}+O(R^{-2-2\beta})$ as $R\to \infty$. The behavior of higher order derivatives can be estabilshed similarly, which turns out to be
		\begin{equation*}
			\sup_{R>0}|(R\partial_R)^kf_1|< C_{1,k}R^{-1}.
		\end{equation*}
		By induction, one may check that we have
		\begin{equation*}
			\sup_{R>0}|(R\partial_R)^kf_j|\leq C_{j,k}R^{-j},
		\end{equation*}
		which implies that $\psi^+_j(R):=R^jf_j(R)$ are zeroth order symbols for all $j\geq1$. As is well-known in symbolic calculus (Borel's theorem), there is a symbol $\sigma_{\mathrm{ap}}(q,R)$, unique upto a residual factor, admiting the asympototic series
		\begin{equation*}
			\sigma_{\mathrm{ap}}(q,R)\sim \sum_{j=0}^{\infty}q^{-j}\psi^+_j(R).
		\end{equation*}
		By definition, the error 
		\begin{equation*}
			e(\xi,R):=\left(-\partial_R^2-2i\xi^{\frac{1}{2}}\partial_R+\frac{\alpha}{R^2}-5W_\alpha^4(R)\right)\sigma_{\mathrm{ap}}(R\xi^{\frac{1}{2}},R)
		\end{equation*}
		of $\sigma_{\mathrm{ap}}(R\xi^{\frac{1}{2}},R)$ satisfies
		\begin{equation*}
		|(R\partial_R)^k(\xi\partial_\xi)^\ell e|\leq c_{k,\ell,j}(R\xi^{\frac{1}{2}})^{-j}R^{-2}
		\end{equation*}
		for all nonnegative intergers $k,\ell$ and $j$. It remains to prove that $\sigma_1(\xi,R)=\sigma(\xi,R)-\sigma_{\mathrm{ap}}(R\xi^{\frac{1}{2}},R)$ is residual. Note that $\sigma_1(\xi,R)$ is zero at $R=\infty$ and satisfies
		\begin{equation*}
			\left(-\partial_R^2-2i\xi^{\frac{1}{2}}\partial_R+\frac{\alpha}{R^2}-5W_\alpha^4(R)\right)\sigma_1(\xi,R)=-e(\xi,R).
		\end{equation*}
		We write the above equation as a first order system for $(v_1,v_2)=(\sigma_1,(R\partial_R)\sigma_1)$:
		\begin{equation*}
			\partial_R\begin{pmatrix}
				v_1\\v_2
			\end{pmatrix}=\begin{pmatrix}
				0&\frac{1}{R}\\
				\frac{\alpha}{R}-\frac{15\beta^2R^{2\beta-1}}{(1+R^{2\beta})^2}&\frac{1}{R}-2i\xi^{\frac{1}{2}}
			\end{pmatrix}\begin{pmatrix}
				v_1\\v_2
			\end{pmatrix}+\begin{pmatrix}
				0\\Re
			\end{pmatrix}.
		\end{equation*}
		Multiplying the above equation by $(\overline{v_1},\overline{v_2})$ and then taking the real part, we get
		\begin{equation*}
			\frac{d|v|^2}{dR}=\left(\frac{\alpha+1}{R}-\frac{15\beta^2R^{2\beta-1}}{(1+R^{2\beta})^2}\right)2\mathrm{Re}(v_1\overline{v_2})+\frac{2}{R}|v_2|^2+2R\mathrm{Re}(e\overline{v_2}),
		\end{equation*}
		which gives
		\begin{equation*}
			\frac{d|v|}{dR}\geq -C(R^{-1}|v|^2+R|e||v|).
		\end{equation*}
		By Gronwall's inequality,
		\begin{equation*}
			|v(R)|\leq C\int_{R}^{\infty}\left(\frac{R'}{R}\right)^CR'|e(R')|dR'.
		\end{equation*}
		Recall that for large $j$ we have
		\begin{equation*}
			|e(\xi,R)|\lesssim \xi^{-\frac{j}{2}}R^{-j-2},
		\end{equation*}
		which implies
		\begin{equation*}
			|v(R)|\lesssim \xi^{-\frac{j}{2}}R^{-j}=(R\xi^{\frac{1}{2}})^{-j}.
		\end{equation*}
		To estimate the derivatives of $v$, we apply the operator $R\partial_R$ and commute it with the matrix to get
		\begin{equation*}
			\begin{aligned}
				\partial_R(R\partial_R)\begin{pmatrix}
				v_1\\v_2
				\end{pmatrix}&=\begin{pmatrix}
				0&\frac{1}{R}\\
				\frac{\alpha}{R}-\frac{15\beta^2R^{2\beta-1}}{(1+R^{2\beta})^2}&\frac{1}{R}-2i\xi^{\frac{1}{2}}
				\end{pmatrix}(R\partial_R)\begin{pmatrix}
				v_1\\v_2
				\end{pmatrix}\\&\quad +\begin{pmatrix}
				0&-\frac{1}{R}\\
				-\frac{\alpha}{R}-\frac{15\beta^2(2\beta-1)}{(1+R^{2\beta})^2}+\frac{60\beta^3R^{4\beta-1}}{(1+R^{2\beta})^3} & -\frac{1}{R}
				\end{pmatrix}\begin{pmatrix}
				v_1\\v_2
				\end{pmatrix}+\begin{pmatrix}
				0\\R\partial_R (Re)
				\end{pmatrix}.
			\end{aligned}
		\end{equation*}
		The right hand side is bounded by $\xi^{-\frac{j}{2}}R^{-j-1}$ from the previous step, hence a similar calculation yields
		\begin{equation*}
			|(R\partial_R)v|\lesssim (R\xi^{\frac{1}{2}})^{-j}.
		\end{equation*}
		For $\xi\partial_\xi$ derivatives, we proceed similarly to get the commuted equation
		\begin{equation*}
			\partial_R(\xi\partial_\xi)\begin{pmatrix}
				v_1\\v_2
			\end{pmatrix}=\begin{pmatrix}
					0&\frac{1}{R}\\
				\frac{\alpha}{R}-\frac{15\beta^2R^{2\beta-1}}{(1+R^{2\beta})^2}&\frac{1}{R}-2i\xi^{\frac{1}{2}}
			\end{pmatrix}(\xi\partial_\xi)\begin{pmatrix}
				v_1\\v_2
			\end{pmatrix}+\begin{pmatrix}
				0&0\\
				0&-i\xi^{\frac{1}{2}}
			\end{pmatrix}\begin{pmatrix}
				v_1\\v_2
			\end{pmatrix}+\begin{pmatrix}
				0\\\xi\partial_\xi(Re)
			\end{pmatrix}.
		\end{equation*}
		This time, the right hand side is bounded by $\xi^{\frac{1}{2}}\cdot (R\xi^{\frac{1}{2}})^{-j-1}+\xi^{-\frac{j}{2}}R^{-j-1}\approx \xi^{-\frac{j}{2}}R^{-j-1}$ from the proven bound for $v$ with $j$ replaced by $j+1$, so the similar result holds for $(\xi\partial_\xi)v$.
		\par Finally, higher order derivatives are estimated by induction using the above argument at each step.
	\end{proof}
	\begin{remark}
		We have implictly used the fact that 
		\begin{equation*}
			\Phi\colon (0,+\infty)\times(0,+\infty)\to (0,+\infty)\times(0,+\infty),\quad (\xi,R)\to (R\xi^{\frac{1}{2}},R)
		\end{equation*}
		is a bijective smooth map whose Jacobi matrix at each $(\xi,R)\in (0,+\infty)\times (0,+\infty)$,
		\begin{equation*}
			d\Phi(\xi,R)=\begin{pmatrix}
				\frac{1}{2}R\xi^{-\frac{1}{2}}& \xi^{\frac{1}{2}}\\
				0&1
			\end{pmatrix}
		\end{equation*}
		is nonsingular. So by the inverse function theorem, $\Phi$ is a diffeomophism. As a result, any smooth function of $\xi$ and $R$ can be seen as a smooth function of $q=R\xi^{\frac{1}{2}}$ and $R$ via composing with $\Phi$. Moreover, since
		\begin{equation*}
			\xi\partial_\xi|_{R}=\frac{1}{2}q\partial_q|_{R},\quad R\partial_R|_{\xi}=q\partial_q|_{R}+R\partial_R|_{q},
		\end{equation*}
		the $(\xi\partial_\xi|_{R},R\partial_R|_{\xi})$ derivatives and $(q\partial_q|_R,R\partial_R|_q)$ derivatives are comparable.
	\end{remark}

	\begin{corollary}\label{cor:large}
		If $R^2\xi\geq \delta^2$ for some fixed $\delta$ as in Corollary \ref{cor:small} and Corollary \ref{cor:large}, we have for all $k,\ell\geq0$ that
		\begin{equation*}
			|(R\partial_R)^k\partial_\xi^\ell\psi^+(R\xi^{\frac{1}{2}},R)|\leq C_k R^k\xi^{-\frac{1}{4}+\frac{k-\ell}{2}}.
		\end{equation*}
	\end{corollary}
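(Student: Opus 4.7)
The plan is to substitute the representation from Proposition \ref{prop:psi},
\begin{equation*}
\psi^+(R,\xi) = \xi^{-1/4}\, e^{iR\xi^{1/2}}\,\sigma(R\xi^{1/2},R),
\end{equation*}
valid in the regime $q := R\xi^{1/2}\geq\delta^{1/2}$ guaranteed by the hypothesis $R^2\xi\geq\delta^2$, and then apply the general Leibniz rule to split $(R\partial_R)^k\partial_\xi^\ell$ across the three factors. Since $|e^{iR\xi^{1/2}}|=1$ and $\sigma$ is a zeroth order symbol on $\{q\geq\delta^{1/2}\}\times(0,\infty)$, the only genuine source of growth under differentiation is the oscillatory phase.

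First I would compute the action of each derivative on the three factors in isolation. For the prefactor $\xi^{-1/4}$ the calculation is trivial: $R\partial_R$ kills it and each $\partial_\xi$ multiplies it by a scalar times $\xi^{-1}$. For the phase, one has $R\partial_R|_\xi(iR\xi^{1/2})=iq$ and $\partial_\xi(iR\xi^{1/2})=\tfrac{iq}{2\xi}$, so by Fa\`a di Bruno
\begin{equation*}
(R\partial_R)^k\partial_\xi^\ell e^{iR\xi^{1/2}} = e^{iR\xi^{1/2}}\,P_{k,\ell}(q,\xi^{-1}),
\end{equation*}
where $P_{k,\ell}$ is a polynomial whose leading monomial is $(iq)^k\bigl(iq/(2\xi)\bigr)^\ell$. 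For the symbol factor, the remark following Proposition \ref{prop:psi} identifies $\xi\partial_\xi|_R=\tfrac{1}{2}q\partial_q|_R$ and $R\partial_R|_\xi=q\partial_q|_R+R\partial_R|_q$, so every application of $R\partial_R|_\xi$ to $\sigma$ keeps it uniformly bounded on $\{q\geq\delta^{1/2}\}$, while each $\partial_\xi$ applied to $\sigma$ produces a bounded quantity times a factor of $\xi^{-1}$.

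Finally I would collect the cross terms produced by Leibniz. Each such term distributes the $k$ copies of $R\partial_R$ and $\ell$ copies of $\partial_\xi$ among the three factors, and every piece is controlled by the bounds above. Using $|e^{iR\xi^{1/2}}|=1$, the dominant contribution comes from placing all derivatives on the phase, producing
\begin{equation*}
\xi^{-1/4}\cdot q^{k+\ell}\cdot\xi^{-\ell} \;=\; R^{k+\ell}\,\xi^{(k-\ell)/2-1/4},
\end{equation*}
while the remaining terms always come with strictly smaller powers of $q$ paired with strictly more negative powers of $\xi$, hence are dominated by the main term in the regime $q\geq\delta$. The whole argument is essentially chain-rule bookkeeping; the main obstacle, if any, is purely combinatorial, namely to check that no cross term generated by letting some of the $\partial_\xi$'s hit $\xi^{-1/4}$ or $\sigma$ (rather than the phase) beats the stated bound. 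Once Proposition \ref{prop:psi} and the $(\xi\partial_\xi,R\partial_R)$--$(q\partial_q,R\partial_R)$ correspondence are in hand, there is no genuine analytic difficulty.
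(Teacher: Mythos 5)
Your proposal follows essentially the same route as the paper: substitute the symbol factorization from Proposition \ref{prop:psi}, distribute the $(R\partial_R)^k\partial_\xi^\ell$ across the three factors via Leibniz, and observe that derivatives landing on the phase dominate while the zeroth-order symbol bounds on $\sigma$ (together with the $(\xi\partial_\xi,R\partial_R)\leftrightarrow(q\partial_q,R\partial_R)$ dictionary) tame the rest. Note that your bookkeeping actually yields $R^{k+\ell}\xi^{-1/4+(k-\ell)/2}$ rather than the $R^k$ written in the corollary's statement; the $R^{k+\ell}$ exponent is the correct one, consistent with the analogous estimate for $\phi(R,\xi)$ stated in the next corollary, so you have in effect caught a typo in the paper.
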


	\begin{proof}
		This follows directly from 
		\begin{equation*}
			R\partial_R\psi^+(R,\xi)=\xi^{-\frac{1}{4}}e^{iR\xi^{\frac{1}{2}}}\left(iR\xi^{\frac{1}{2}}\sigma(R\xi^{\frac{1}{2}},R)+(q\partial_q\sigma)(R\xi^{\frac{1}{2}},R)+(R\partial_R\sigma)(R\xi^{\frac{1}{2}},R)\right),
		\end{equation*}
		\begin{equation*}
			\partial_\xi\psi^+(R,\xi)=\xi^{-\frac{1}{4}}e^{iR\xi^{\frac{1}{2}}}\left(-\frac{1}{4}\xi^{-1}\sigma(R\xi^{\frac{1}{2}},R)+\frac{i}{2}R\xi^{-\frac{1}{2}}\sigma(R\xi^{\frac{1}{2}},R)+\frac{1}{2}\xi^{-1}(q\partial_q)\sigma(R\xi^{\frac{1}{2}},R)\right),
		\end{equation*}
		and the fact that $\sigma(q,R)$ is a zeroth order symbol.
	\end{proof}

	\begin{proposition}\label{prop:a}
		(a) We have
			\begin{equation*}
			\phi(R,\xi)=a(\xi)\psi^+(R,\xi)+\overline{a(\xi)}\psi^-(R,\xi),
			\end{equation*}
			where $a$ is a smooth function, always nonzero, and has size
			\begin{equation*}
			|a(\xi)|\approx\left\{\begin{aligned}
			&\xi^{\frac{\beta}{4}},& \xi\to 0,\quad& 0<\beta<2,\\
			&\xi^{\frac{1}{2}}|\log(\xi)|,& \xi\to 0,\quad &\beta=2,\\
			&\xi^{1-\frac{\beta}{4}},&\xi\to 0,\quad &\beta>2,\\
			&\xi^{-\frac{\beta}{4}},&\xi\to \infty,\quad &\beta>0.\\
			\end{aligned}\right.
			\end{equation*}
			Moreover, it satisfies the symbol type bounds
			\begin{equation*}
			|(\xi\partial_\xi)^ka(\xi)|\leq c_k|a(\xi)|,\quad \forall k\geq 0.
			\end{equation*}
			\par (b) The spectral measure $\rho(\xi)d\xi$ has density
			\begin{equation*}
			\rho(\xi)=\frac{1}{\pi}|a(\xi)|^{-2}
			\end{equation*}
			and therefore satisfies
			\begin{equation*}
			\rho(\xi)\approx\left\{\begin{aligned}
			&\xi^{-\frac{\beta}{2}},&\xi\to 0,\quad& 0<\beta\leq 2,\\
			&\frac{1}{\xi(\log(\xi))^2},& \xi\to 0,\quad &\beta=2,\\
			&\xi^{\frac{\beta}{2}-2},& \xi\to 0,\quad &\beta>2,\\
			&\xi^{\frac{\beta}{2}},& \xi\to \infty,\quad& \beta>0.\\
			\end{aligned}\right.
			\end{equation*}
			Moreover, $\rho$ also satisfies the symbol type bounds
			\begin{equation*}
				|(\xi\partial_\xi)^k\rho(\xi)|\leq C_k|\rho(\xi)|,\quad \forall k\geq 0.
			\end{equation*}
	\end{proposition}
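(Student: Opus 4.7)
The strategy is to derive a Wronskian expression for $a(\xi)$ and to evaluate it at a matching point $R_\xi := \delta^{1/2}\xi^{-1/2}$, chosen so that $R_\xi^2\xi = \delta$ is a fixed small constant. This choice puts us simultaneously in the regime $R^2\xi \le \delta$ of Corollary \ref{cor:small} (valid for the convergent series of $\phi$ given by Proposition \ref{prop:phi}) and in the regime $R^2\xi \ge \delta^2$ of Corollary \ref{cor:large} (valid for the Jost representation of $\psi^\pm$ from Proposition \ref{prop:psi}). Since $\phi(R,\xi)$ has real Cauchy data at $R=0$ and $\mathcal{L}-\xi$ has real coefficients, $\phi$ is real-valued, while $\psi^-=\overline{\psi^+}$ form a fundamental system of solutions of $(\mathcal{L}-\xi)u=0$ with Wronskian $W(\psi^+,\psi^-)=-2i$ (computed directly from the asymptotics $\psi^\pm\sim\xi^{-1/4}e^{\pm i\xi^{1/2}R}$). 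Therefore $\phi=a\psi^++\bar a\psi^-$ with
\begin{equation*}
a(\xi)=\tfrac{i}{2}\,W(\phi,\psi^-)(R,\xi)=\tfrac{i}{2}\bigl[\phi\,\partial_R\psi^--\partial_R\phi\cdot\psi^-\bigr]\Big|_{R=R_\xi},
\end{equation*}
an expression independent of $R$ (hence we are free to evaluate at $R_\xi$).

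\textbf{Asymptotic analysis.} For $\xi\to\infty$ the point $R_\xi$ is small: Corollary \ref{cor:small} yields $\phi(R_\xi,\xi)\approx\beta R_\xi^{(\beta+1)/2}\approx\xi^{-(\beta+1)/4}$ and $\partial_R\phi(R_\xi,\xi)\approx\xi^{-(\beta-1)/4}$, while Corollary \ref{cor:large} gives $|\psi^-(R_\xi,\xi)|\approx\xi^{-1/4}$ and $|\partial_R\psi^-(R_\xi,\xi)|\approx\xi^{1/4}$; multiplying yields $|a(\xi)|\approx\xi^{-\beta/4}$. For $\xi\to 0$ the point $R_\xi$ is large, so we insert the series of Proposition \ref{prop:phi}: the contribution $\phi_0(R_\xi)$ has size $\approx\xi^{(\beta-1)/4}$ (using $\phi_0(R)\sim -\beta R^{(1-\beta)/2}$ at infinity), while the $j\ge 1$ terms $R_\xi^{(\beta+1)/2}(R_\xi^2\xi)^j\phi_j(R_\xi^{2\beta})=\delta^{j} R_\xi^{(\beta+1)/2}\phi_j(R_\xi^{2\beta})$, when evaluated using the sharp behavior of $\phi_1$ as $u\to\infty$ given in Proposition \ref{prop:phi}, contribute respectively at size $\xi^{(\beta-1)/4}$ (for $0<\beta<2$), $\xi^{1/4}|\log\xi|$ (for $\beta=2$), and $\xi^{(3-\beta)/4}$ (for $\beta>2$). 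Multiplying by $|\partial_R\psi^-|\approx\xi^{1/4}$ and checking the three cases produces the stated behaviors of $|a(\xi)|$ as $\xi\to 0$. Nonvanishing of $a(\xi)$ for all $\xi>0$ is automatic: if $a(\xi)=0$, then $\phi\equiv 0$, contradicting its nonzero Cauchy data; for smoothness and nonvanishing, the leading nonzero coefficients identified above (with the explicit $\phi_1$ asymptotics) suffice to give matching lower bounds once $\delta$ is taken small enough that the remainder series has a factor $O(\delta)$ smaller than the leading term.

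\textbf{Symbol bounds and the spectral density.} The symbol bounds $|(\xi\partial_\xi)^k a(\xi)|\le c_k|a(\xi)|$ follow by commuting $\xi\partial_\xi$ through the Wronskian formula evaluated at $R=R_\xi$; since $\xi\partial_\xi R_\xi=-\tfrac{1}{2}R_\xi$, the chain rule generates $R\partial_R$ and $\xi\partial_\xi$ derivatives of $\phi$ and $\psi^-$, each of which is controlled by Corollary \ref{cor:small} and Corollary \ref{cor:large} by factors comparable to the undifferentiated quantity. For part (b), the identity $\rho(\xi)=\pi^{-1}|a(\xi)|^{-2}$ is the classical Weyl--Titchmarsh formula in this setup: one writes $\psi^+=-\tfrac{1}{2ia}(\phi-\bar a\psi^-)\cdot(\cdots)$ to identify the Weyl $m$-function $m(\xi+i0)$ and take $\rho(\xi)=\tfrac{1}{\pi}\operatorname{Im}m(\xi+i0)$, which upon using $W(\psi^+,\psi^-)=-2i$ yields precisely $|a|^{-2}/\pi$. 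The asymptotics and symbol bounds for $\rho$ then follow algebraically from those of $a$ proved above. The main obstacle is the case analysis as $\xi\to 0$, especially for $\beta>2$ where it is the $j=1$ term of the series (rather than $\phi_0$) that dominates, and for $\beta=2$ where logarithmic corrections propagate; identifying the correct leading coefficient and verifying non-cancellation — hence the two-sided bounds $|a(\xi)|\approx\cdots$ rather than merely an upper bound — is the most delicate bookkeeping in the argument.
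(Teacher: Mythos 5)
Your overall strategy is correct and closely parallels the paper: both hinge on a Wronskian formula for $a(\xi)$ evaluated at a point of size $\approx\xi^{-1/2}$ chosen so that the convergent-series estimates (Corollary \ref{cor:small}) and the Jost-symbol estimates (Corollary \ref{cor:large}) are simultaneously in force. Your derivation of the upper bounds matches the paper essentially line for line, including the correct three-way case split as $\xi\to 0$ and the use of the $\phi_1$ asymptotics to see which term dominates $\phi(R_\xi,\xi)$. The identity $\rho=\pi^{-1}|a|^{-2}$ and the symbol bounds are handled the same way in both.

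The gap is in the lower bound. You obtain $|a|$ by evaluating the Wronskian $W(\phi,\psi^-)=\phi\,\partial_R\psi^--\partial_R\phi\cdot\psi^-$ at $R_\xi$ and then need the two products not to cancel to leading order. You flag this (``verifying non-cancellation \dots is the most delicate bookkeeping'') but do not actually carry it out; the remark that the $j\ge 2$ terms of the series are $O(\delta)$ smaller controls the series for $\phi$ alone, not the difference of the two Wronskian terms, which are of the \emph{same} size $\xi^{-\beta/4}$ (resp.\ $\xi^{\beta/4}$, etc.). The paper avoids this entirely with the observation that $\phi$ is real, so $\phi=2\operatorname{Re}\bigl(a\psi^+\bigr)$, hence $(R\partial_R)\phi=2\operatorname{Re}\bigl(a\,(R\partial_R)\psi^+\bigr)$ and therefore
\begin{equation*}
|a(\xi)|\ \ge\ \frac{|(R\partial_R)\phi(R,\xi)|}{2\,|(R\partial_R)\psi^+(R,\xi)|}.
\end{equation*}
Evaluating this at $R=\delta\xi^{-1/2}$ and inserting the lower bounds from Corollary \ref{cor:small} together with the upper bounds from Corollary \ref{cor:large} yields the matching lower bound for $|a|$ with no cancellation analysis at all. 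If you wish to keep your single-point Wronskian approach, you would need to note explicitly that, with $\phi$ and $\partial_R\phi$ real at $R_\xi$, the leading contribution to $\psi^-(R_\xi,\xi)$ is real while that to $\partial_R\psi^-(R_\xi,\xi)$ is purely imaginary (both up to $O(\delta^{1/2})$ corrections from the symbol $\sigma$), so the two Wronskian terms sit in nearly orthogonal directions in $\mathbb{C}$ and cannot cancel; this does work, but it is more bookkeeping than the inequality above. Either fix closes the argument.
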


	\begin{proof}
		(a) As is shown in \cite{KST09a}, we have
		\begin{equation}\label{eq:upper}
			a(\xi)=-\frac{i}{2}W(\phi(R,\xi),\psi^+(R,\xi)),
		\end{equation}
		and
		\begin{equation}\label{eq:lower}
			a(\xi)\geq\frac{|(R\partial_R)\phi(R,\xi)|}{2|(R\partial_R)\psi^+(R,\xi)|}.
		\end{equation}
		\par For upper bounds of $a$, we evaluate (\ref{eq:upper}) at $R=\xi^{-\frac{1}{2}}$, which gives
		\begin{equation*}
			a(\xi)=-\frac{i\xi^{\frac{1}{2}}}{2}\left(\phi(\xi^{-\frac{1}{2}},\xi)(R\partial_R\psi^+)(\xi^{-\frac{1}{2}},\xi)-(R\partial_R\phi)(\xi^{-\frac{1}{2}},\xi)\psi^+(\xi^{-\frac{1}{2}},\xi)\right).
		\end{equation*}
		By Corollary \ref{cor:small}, we have
		\begin{equation*}
			\begin{aligned}
				|(R\partial_R)^k\phi(\xi^{-\frac{1}{2}},\xi)|&\lesssim \xi^{-\frac{\beta+1}{4}}\left<\xi^{-\beta}\right>^{-\frac{1}{2}}=\xi^{\frac{\beta-1}{4}}\left<\xi^\beta\right>^{-\frac{1}{2}},& k=0,1,\quad & 0<\beta<2,\\
				|(R\partial_R)^k\phi(\xi^{-\frac{1}{2}},\xi)|&\lesssim \xi^{-\frac{3}{4}}\log(2+\xi^{-\frac{1}{2}})\left<\xi^{-2}\right>^{-\frac{1}{2}}=\frac{\xi^{\frac{1}{4}}}{1+\xi}\log(2+\xi^{-\frac{1}{2}}),& k=0,1,\quad &\beta=2,\\
				|(R\partial_R)^k\phi(\xi^{-\frac{1}{2}},\xi)|&\lesssim \xi^{-\frac{\beta+1}{4}}\left<\xi^{-\beta}\right>^{-\frac{1}{\beta}}=\xi^{\frac{-\beta+3}{4}}\left<\xi^\beta\right>^{-\frac{1}{\beta}},& k=0,1,\quad & \beta>2.\\
			\end{aligned}
		\end{equation*}
		By Corollary \ref{cor:large}, we have
		\begin{equation*}
			|(R\partial_R)^k\psi^+(\xi^{-\frac{1}{2}},\xi)|\lesssim\xi^{-\frac{1}{4}},\quad k=0,1,\quad \beta>0.
		\end{equation*}
		As a result, we get
		\begin{equation*}
			\begin{aligned}
				|a(\xi)|&\lesssim \xi^{\frac{1}{2}}\cdot \xi^{\frac{\beta-1}{4}}\left<\xi^\beta\right>^{-\frac{1}{2}}\cdot \xi^{-\frac{1}{4}}\lesssim \xi^{-\frac{\beta}{4}},&\xi\to\infty,\quad& \beta>0,\\
				|a(\xi)|&\lesssim \xi^{\frac{1}{2}}\cdot \xi^{\frac{\beta-1}{4}}\left<\xi^\beta\right>^{-\frac{1}{2}}\cdot \xi^{-\frac{1}{4}}\lesssim\xi^{\frac{\beta}{4}},& \xi\to 0,\quad& 0<\beta<2,\\
				|a(\xi)|&\lesssim \xi^{\frac{1}{2}}\cdot\xi^{\frac{1}{4}}|\log(\xi)|\cdot\xi^{-\frac{1}{4}}\lesssim\xi^{\frac{1}{2}}|\log(\xi)|,&\xi\to 0,\quad &\beta=2,\\
				|a(\xi)|&\lesssim \xi^{\frac{1}{2}}\cdot \xi^{\frac{-\beta+3}{4}}\left<\xi^\beta\right>^{-\frac{1}{\beta}}\cdot \xi^{-\frac{1}{4}}\lesssim \xi^{1-\frac{\beta}{4}},&\xi\to 0,\quad& \beta>2.\\
			\end{aligned}
		\end{equation*}
		\par For lower bounds of $a$, we evaluate (\ref{eq:lower}) at $R=\delta\xi^{-\frac{1}{2}}$ for $\delta$ small enough, such that inequalities in Corollary \ref{cor:small} holds, i.e.
		\begin{equation*}
			\begin{aligned}
				|(R\partial_R\phi)(\delta\xi^{-\frac{1}{2}},\xi)|&\geq C\xi^{-\frac{\beta+1}{4}}\left<\xi^{-\beta}\right>^{-\frac{1}{2}},& 0<\beta<2,\\
				|(R\partial_R\phi)(\delta\xi^{-\frac{1}{2}},\xi)|&\geq C\xi^{-\frac{3}{4}}\log(2+\delta\xi^{-\frac{1}{2}})\left<\xi^{-2}\right>^{-\frac{1}{2}},&\beta=2,\\
				|(R\partial_R\phi)(\delta\xi^{-\frac{1}{2}},\xi)|&\geq C\xi^{-\frac{\beta+1}{4}}\left<\xi^{-\beta}\right>^{-\frac{1}{\beta}},& \beta>2.\\
			\end{aligned}
		\end{equation*}
		As a result, we get
		\begin{equation*}
			\begin{aligned}
				|a(\xi)|&\geq C\frac{\xi^{-\frac{\beta+1}{4}}\left<\xi^{-\beta}\right>^{-\frac{1}{2}}}{\xi^{-\frac{1}{4}}}\geq C\xi^{-\frac{\beta}{4}},&\xi\to\infty,\quad& \beta>0, \\
				|a(\xi)|&\geq C\frac{\xi^{-\frac{\beta+1}{4}}\left<\xi^{-\beta}\right>^{-\frac{1}{2}}}{\xi^{-\frac{1}{4}}}\geq C\xi^{\frac{\beta}{4}},&\xi\to 0,\quad& 0<\beta<2,\\
				|a(\xi)|&\geq C\frac{\xi^{-\frac{3}{4}}\log(2+\delta\xi^{-\frac{1}{2}})\left<\xi^{-2}\right>^{-\frac{1}{2}}}{\xi^{-\frac{1}{4}}}\geq C\xi^{\frac{1}{2}}|\log(\xi)|,& \xi\to 0,\quad& \beta=2,\\
				|a(\xi)|&\geq C\frac{\xi^{-\frac{\beta+1}{4}}\left<\xi^{-\beta}\right>^{-\frac{1}{\beta}}}{\xi^{-\frac{1}{4}}}\geq C\xi^{1-\frac{\beta}{4}},& \xi\to 0,\quad& \beta>2.\\
			\end{aligned}
		\end{equation*}
		This concludes the proof of (a).
		\par (b) This part follows from (a) and 
		\begin{equation*}
			\rho(\xi)=\frac{1}{\pi}|a(\xi)|^{-2}
		\end{equation*}
		(this is also shown in \cite{KST09a}) directly.
	\end{proof}

		\par The following corollary is a combination of Corollary \ref{cor:small}, Corollary \ref{cor:large} and Proposition \ref{prop:a}, which gives a better estimate on $\phi(R,\xi)$ and its derivatives. This will also become useful when we study the decay of the integral kernel of the error operator $\mathcal{K}$ in Section \ref{sec:trans}.
	\begin{corollary}
		For $0<\beta<2$, the following estimates hold for all $k,\ell\geq 0$:
		\begin{equation*}
		\begin{aligned}
		|(R\partial_R)^k\partial_\xi^\ell\phi(R,\xi)|&\lesssim R^{\frac{\beta+1}{2}+2\ell},&\xi\geq 1,\quad & R^2\xi\leq\delta^2\\
		|(R\partial_R)^k\partial_\xi^\ell\phi(R,\xi)|&\lesssim R^{k+\ell}\xi^{-\frac{\beta+1}{4}+\frac{k-\ell}{2}},&\xi\geq 1,\quad & R^2\xi\geq\delta^2\\
		|(R\partial_R)^k\partial_\xi^\ell\phi(R,\xi)|&\lesssim R^{\frac{\beta+1}{2}+2\ell},& 0<\xi\leq 1,\quad & R^2\xi\leq\delta^2,& 0<R\leq 1,\\
		|(R\partial_R)^k\partial_\xi^\ell\phi(R,\xi)|&\lesssim R^{\frac{-\beta+1}{2}+2\ell},& 0<\xi\leq 1,\quad & R^2\xi\leq\delta^2,& R\geq 1,\\
		|(R\partial_R)^k\partial_\xi^\ell\phi(R,\xi)|&\lesssim R^{k+\ell}\xi^{\frac{\beta-1}{4}+\frac{k-\ell}{2}},& 0<\xi\leq 1,\quad & R^2\xi\geq\delta^2.\\
		\end{aligned}
		\end{equation*}
		\par For $\beta=2$, the estimates become as (omited items are the same as in the $0<\beta<2$ case)
		\begin{equation*}
		\begin{aligned}
		|(R\partial_R)^k\partial_\xi^\ell\phi(R,\xi)|&\lesssim R^{-\frac{1}{2}+2\ell}\log(1+R),& 0<\xi\leq 1,\quad&R^2\xi\leq\delta^2,\quad R\geq 1,\\
		|(R\partial_R)^k\partial_\xi^\ell\phi(R,\xi)|&\lesssim R^{k+\ell}\xi^{\frac{1}{4}+\frac{k-\ell}{2}}|\log(\xi/2)|,& 0<\xi\leq 1,\quad & R^2\xi\geq\delta^2.\\
		\end{aligned}
		\end{equation*}
		\par For $\beta>2$, the estimates become as (omited items are the same as in the $0<\beta<2$ case)
		\begin{equation*}
		\begin{aligned}
		|(R\partial_R)^k\partial_\xi^\ell\phi(R,\xi)|&\lesssim R^{\frac{\beta-3}{2}+2\ell},& 0<\xi\leq 1,\quad & R^2\xi\leq\delta^2,\quad R\geq 1,\\
		|(R\partial_R)^k\partial_\xi^\ell\phi(R,\xi)|&\lesssim R^{k+\ell}\xi^{\frac{-\beta+3}{4}+\frac{k-\ell}{2}},& 0<\xi\leq 1,\quad & R^2\xi\geq\delta^2.\\
		\end{aligned}
		\end{equation*}
	\end{corollary}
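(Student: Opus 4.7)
The plan is to prove the corollary by splitting into the two regimes $R^2\xi \leq \delta^2$ and $R^2\xi \geq \delta^2$, invoking a different representation of $\phi(R,\xi)$ in each case.

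In the regime $R^2\xi \leq \delta^2$, I use the absolutely convergent series expansion of Proposition \ref{prop:phi} and reduce to Corollary \ref{cor:small} specialized to the real value $z=\xi$. Each application of $\partial_\xi$ acts on the series $\sum_j (R^2\xi)^j\phi_j(R^{2\beta})$ term by term and produces an extra factor of the form $jR^2\xi^{-1}$; absorbing the resulting $\xi^{-\ell}$ into the uniform envelope $\langle R^{2\beta}\rangle^{-1/2}$ (or its $\beta=2$, $\beta>2$ analogs) accounts for the factor $R^{2\ell}$ appearing in every $\partial_\xi^\ell$ bound. The trichotomy $0<\beta<2$, $\beta=2$, $\beta>2$ in the statement mirrors the three cases in Proposition \ref{prop:phi}; the $\log(1+R)$ factor at $\beta=2$ is inherited from the $\log(2+|u|)$ in the estimate for $\phi_j(u)$ evaluated at $u=R^{2\beta}$. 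The subcase $\xi\leq 1$, $R\geq 1$ follows by rewriting $R^{(\beta+1)/2+2\ell}\langle R^{2\beta}\rangle^{-1/2}$ as $R^{(1-\beta)/2+2\ell}$ for large $R$ when $0<\beta<2$, with the analogous simplifications for $\beta=2$ and $\beta>2$.

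In the regime $R^2\xi \geq \delta^2$, I invoke the connection formula from Proposition \ref{prop:a},
\[
\phi(R,\xi) = a(\xi)\psi^+(R,\xi) + \overline{a(\xi)}\psi^-(R,\xi),
\]
and apply the Leibniz rule to $(R\partial_R)^k\partial_\xi^\ell\phi$. Since $a$ satisfies $|\partial_\xi^{m}a|\lesssim \xi^{-m}|a(\xi)|$ (a reformulation of the symbol bound in Proposition \ref{prop:a}) and $\psi^\pm$ satisfies the bounds of Corollary \ref{cor:large}, each Leibniz term with $m'$ derivatives on $\psi^\pm$ and $\ell-m'$ on $a$ is controlled by a multiple of $|a(\xi)|R^{k+\ell}\xi^{-1/4+(k-\ell)/2}(R\xi^{1/2})^{-(\ell-m')}$. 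The constraint $R\xi^{1/2}\geq\delta$ bounds the last factor by a constant depending only on $\ell$, so summing the finitely many Leibniz terms yields $|(R\partial_R)^k\partial_\xi^\ell\phi|\lesssim |a(\xi)|\,R^{k+\ell}\xi^{-1/4+(k-\ell)/2}$. Substituting the asymptotics of $|a(\xi)|$ from Proposition \ref{prop:a} ($|a|\approx\xi^{-\beta/4}$ at $\xi\geq 1$ gives the exponent $-(\beta+1)/4$; $|a|\approx\xi^{\beta/4}$, $\xi^{1/2}|\log\xi|$, $\xi^{1-\beta/4}$ at small $\xi$ give $(\beta-1)/4$, $1/4$ with $|\log\xi|$, and $(-\beta+3)/4$ respectively) produces exactly the exponents stated in the corollary.

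The whole proof is a bookkeeping exercise once the three preceding results are in hand; no new ODE analysis is required. The only point requiring mild care is the matching of the two regimes at $R^2\xi\approx\delta^2$, where the two estimates must agree in order; this is automatic from the continuous dependence of both bounds on $(R,\xi)$, and the logarithmic factors at $\beta=2$ are consistent because $\log(2+R)|_{R\approx\xi^{-1/2}}$ is comparable to $|\log(\xi/2)|$. I do not expect any single step to be a real obstacle, though enumerating the several subcases without conflating exponents demands some discipline.
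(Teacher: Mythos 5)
Your proposal is correct and is essentially the argument the paper intends: the paper's terse ``direct computation using the fact that $a(\xi)$ is symbol-like'' is exactly your two-regime proof (the small-$R^2\xi$ corollary for $R^2\xi\leq\delta^2$; the connection formula $\phi=a\psi^++\overline{a}\psi^-$, the symbol bounds on $a$, the Jost-solution bounds on $\psi^\pm$, and the Leibniz rule for $R^2\xi\geq\delta^2$) written out in full, including the correct observation that the residual factor $(R\xi^{1/2})^{-(\ell-m')}$ is bounded by $\delta^{-(\ell-m')}$ in the outer region. One small imprecision in your description of the inner-region mechanism: each $\partial_\xi$ applied to the series $\sum_j(R^2\xi)^j\phi_j(R^{2\beta})$ brings down a factor $jR^2$ (not $jR^2\xi^{-1}$), so after $\ell$ derivatives the prefactor $R^{2\ell}$ appears directly with no $\xi^{-\ell}$ left over to absorb -- your stated conclusion is nevertheless the right one.
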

	
	\begin{proof}
		Just a direct computation using the fact that $a(\xi)$ is symbol-like at $\xi=0$ and $\xi=\infty$.
	\end{proof}

	\section{The transference identity}\label{sec:trans}
		
	In order to solve the perturbed equation using Fourier method, we apply the distorted Hankel transform $\mathcal{F}$, which digonalize the operator $\mathcal{L}$. However, as we have seen in Section \ref{sec:perturbed}, the time-derivative $\partial_\tau$ is ``transported" by $2\frac{\lambda_\tau}{\lambda}R\partial_R$, which is not digonalized under $\mathcal{F}$. Inspired by the fact that $R\partial_R-2\xi\partial_\xi$ vanishes $e^{iR\xi^{\frac{1}{2}}}$, we define the error operator $\mathcal{K}$ to satisfy (the $\partial_\xi$ derivative acts on $\widehat{u}|_{\mathbb{R}_{>0}}$ only)
	\begin{equation}\label{eq:trans}
		\widehat{R\partial_R u}=-2\xi\partial_\xi\widehat{u}+\mathcal{K}\widehat{u}.
	\end{equation}
	(\ref{eq:trans}) is called the \textit{transference identity}, since it transfers $R\partial_R$ derivative in the physics space to $-2\xi\partial_\xi$ in the frequency space. Since the spectral measure of $\mathcal{L}$ has two or three parts, it's useful to write a function $\widehat{u}$ in $L^2(\mathbb{R},d\rho)$ as a vector $(\widehat{u}(\xi_{d}),\widehat{u}|_{\mathbb{R}_{>0}})$ for $0<\beta\leq 2$, or $(\widehat{u}(\xi_{d}),\widehat{u}(0),\widehat{u}|_{\mathbb{R}_{>0}})$ for $\beta>2$. We also write the operator $\mathcal{K}$ as a matrix
	\begin{equation*}
		\mathcal{K}=\begin{pmatrix}
			\mathcal{K}_{dd} & \mathcal{K}_{dc}\\
			\mathcal{K}_{cd} & \mathcal{K}_{cc}\\
		\end{pmatrix}\quad \text{for 0<$\beta\leq 2$},\quad \text{or}\quad \mathcal{K}=\begin{pmatrix}
		\mathcal{K}_{dd} & \mathcal{K}_{d0} & \mathcal{K}_{dc}\\
		\mathcal{K}_{0d} & \mathcal{K}_{00} & \mathcal{K}_{0c}\\
		\mathcal{K}_{cd} & \mathcal{K}_{c0} & \mathcal{K}_{cc}\\
		\end{pmatrix} \quad \text{for $\beta$>2}.
	\end{equation*}
	A direct computation obtains for $f\in C^\infty_0((0,+\infty))$ that
	\begin{equation*}
		\begin{aligned}
			\mathcal{K}_{dd}&=\left<R\partial_R\phi_d(R),\phi_d(R)\right>_{L^2_R},\\
			\mathcal{K}_{d0}&=\left<R\partial_R\phi_0(R),\phi_d(R)\right>_{L^2_R},\\
			\mathcal{K}_{dc}f&=\left<\int_{0}^{\infty}f(\xi)R\partial_R\phi(R,\xi)\rho(\xi)d\xi,\phi_d(R)\right>_{L^2_R},\\
			\mathcal{K}_{0d}&=\left<R\partial_R\phi_d(R),\phi_0(R)\right>_{L^2_R},\\
			\mathcal{K}_{00}&=\left<R\partial_R\phi_0(R),\phi_0(R)\right>_{L^2_R},\\
			\mathcal{K}_{0c}f&=\left<\int_{0}^{\infty}f(\xi)R\partial_R\phi(R,\xi)\rho(\xi)d\xi,\phi_0(R)\right>_{L^2_R},\\
			\mathcal{K}_{cd}(\eta)&=\left<R\partial_R\phi_d(R),\phi(R,\eta)\right>_{L^2_R},\\
			\mathcal{K}_{c0}(\eta)&=\left<R\partial_R\phi_0(R),\phi(R,\eta)\right>_{L^2_R},\\
			\mathcal{K}_{cc}f(\eta)&=\left<\int_{0}^{\infty}f(\xi)R\partial_R\phi(R,\xi)\rho(\xi)d\xi,\phi(R,\eta)\right>_{L^2_R}+\left<\int_{0}^{\infty}2\xi\partial_\xi f(\xi)\phi(R,\xi)\rho(\xi)d\xi,\phi(R,\eta)\right>_{L^2_R},\\
		\end{aligned}
	\end{equation*}
	where the inner product is understood as
	\begin{equation*}
		\left<f(R),\phi(R,\xi)\right>_{L^2_R}:=\lim\limits_{r\to\infty}\int_{0}^{r}f(R)\phi(R,\xi)dR.
	\end{equation*}
	When $f(R)\phi(R,\xi)$ is integrable, this is just the usual $L^2$ inner product.
	
	Integrating by parts with repect to $R$ in the expressions for $\mathcal{K}_{dd}$ and $\mathcal{K}_{00}$, we get
	\begin{equation*}
		\mathcal{K}_{dd}=-\frac{1}{2}\left<\phi_d(R),\phi_d(R)\right>_{L^2_R}=-\frac{1}{2},\quad \mathcal{K}_{00}=-\frac{1}{2}\left<\phi_0(R),\phi_0(R)\right>_{L^2_R}=-\frac{1}{2}.
	\end{equation*}
	Similarily, using the fact that $\phi_d(R)$ and $\phi_0(R)$ are orthogonal in $L^2_R$ we get
	\begin{equation*}
		\mathcal{K}_{d0}=-\mathcal{K}_{0d}\in\mathbb{R}.
	\end{equation*}
	\iffalse
	\begin{remark}
		Note that for $\xi,\eta\in\sigma(\mathcal{L})$, $\phi(R,\xi)\phi(R,\eta)$ may not be integrable on $(0,+\infty)$ (for example, if $\xi,\eta>0$), instead we have
		\begin{equation*}
			\left<\phi(R,\xi),\phi(R,\eta)\right>_{L^2_R}:=\lim\limits_{r\to+\infty}\int_{0}^{r}\phi(R,\xi)\phi(R,\eta) dR=\delta_{\{\xi=\eta\}}.
		\end{equation*}
	\end{remark}
	\fi
	Since the double integral in the expression of $\mathcal{K}_{dc}f$ is absolutely convergent, we can change the order of integration to get
	\begin{equation*}
		\mathcal{K}_{dc}f=\int_{0}^{\infty}K_{dc}(\xi)f(\xi)d\xi,
	\end{equation*}
	where
	\begin{equation*}
		K_{dc}(\xi)=\rho(\xi)\left<R\partial_R\phi(R,\xi),\phi_d(R)\right>_{L^2_R}=-\rho(\xi)\left<R\partial_R\phi_d(R),\phi(R,\xi)\right>_{L^2_R}.
	\end{equation*}
	\par For $\mathcal{K}_{0c}f$, we can not change the order of integration directly because of the slow decay of $\phi_0(R)$ as $R\to+\infty$. Instead, we first do an integration by parts
	\begin{equation*}
		\begin{aligned}
			\mathcal{K}_{0c}f&=\left<\int_{0}^{\infty}\xi^{-1}f(\xi)R\partial_R\mathcal{L}\phi(R,\xi)\rho(\xi)d\xi,\phi_0(R)\right>_{L^2_R}\\
			&=\left<\int_{0}^{\infty}\xi^{-1}f(\xi)R\partial_R\phi(R,\xi)\rho(\xi)d\xi,\mathcal{L}\phi_0(R)\right>_{L^2_R}+\left<\int_{0}^{\infty}\xi^{-1}f(\xi)[R\partial_R,\mathcal{L}]\phi(R,\xi)\rho(\xi)d\xi,\phi_0(R)\right>_{L^2_R}\\
			&=-2\mathcal{K}_{0c}f + \int_{0}^{\infty}3K_{0c}(\xi)f(\xi)d\xi,\\
		\end{aligned}
	\end{equation*}
	where
	\begin{equation*}
		K_{0c}(\xi)=-\frac{2}{3}\xi^{-1}\rho(\xi)\left<U(R)\phi(R,\xi),\phi_0(R)\right>_{L^2_R}.
	\end{equation*}
	\par For $\mathcal{K}_{c0}$, we use the similar method
	\begin{equation*}
		\begin{aligned}
			\eta\mathcal{K}_{c0}(\eta)&=\lim\limits_{r\to+\infty}\int_{0}^{r}R\partial_R\phi_0(R)\mathcal{L}\phi(R,\eta)dR\\
			&=\lim\limits_{r\to+\infty}\int_{0}^{r}R\partial_R\mathcal{L}\phi_0(R)\phi(R,\eta)dR + \lim\limits_{r\to+\infty}\int_{0}^{r}[\mathcal{L},R\partial_R]\phi_0(R)\phi(R,\eta)dR\\
			&=\left<U(R)\phi_0(R),\phi(R,\eta))\right>_{L^2_R}.\\
		\end{aligned}
	\end{equation*}
	
	\par Finally, an integration by parts with respect to $\xi$ in the expression for $\mathcal{K}_{cc}$ yields
	\begin{equation}\label{eq:K_cc}
		\mathcal{K}_{cc}f(\eta)=\left<\int_{0}^{\infty}f(\xi)\left(R\partial_R-2\xi\partial_\xi\right)\phi(R,\xi)\rho(\xi)d\xi,\phi(R,\eta)\right>_{L^2_R}-2\left(1+\frac{\eta\rho'(\eta)}{\rho(\eta)}\right)f(\eta).
	\end{equation}

	\par Since as a priori, we have 
	\begin{equation*}
		\mathcal{K}_{cc}\colon C_0^\infty((0,+\infty))\to C^\infty((0,+\infty)),
	\end{equation*}
	by the Schwartz kernel theorem, there is a distribution $K_{cc}$ (called the Schwartz kernel of $\mathcal{K}_{cc}$) on $(0,+\infty)\times (0,+\infty)$ such that
	\begin{equation*}
		\mathcal{K}_{cc}f(\eta)=\int_{0}^{\infty}K_{cc}(\eta,\xi)f(\xi)d\xi.
	\end{equation*}

	\begin{proposition}\label{prop:bound}
		The Schwartz kernel $K_{cc}$ of the operator $\mathcal{K}_{cc}$ is of the form
		\begin{equation}\label{eq:form_of_K}
			K_{cc}(\eta,\xi)=\left(\frac{3}{2}+\frac{\eta\rho'(\eta)}{\rho(\eta)}\right)\delta(\xi-\eta)+\mathrm{p.v.}\frac{\rho(\xi)}{\xi-\eta}F(\xi,\eta),
		\end{equation}
		where $F(\xi,\eta)$ is of class $C^1$ in $(0,+\infty)\times(0,+\infty)$, of class $C^2$ in $(0,+\infty)\times(0,+\infty)-\{\xi=\eta\}$ and is symmertic in $\xi$ and $\eta$. Moreover, if $\beta>\frac{1}{2}$, then $F(\xi,\eta)$ is $C^2$ in $(0,+\infty)\times(0,+\infty)$.
	\end{proposition}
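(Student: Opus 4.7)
I would reduce the determination of $K_{cc}$ to a twisted eigenvalue identity for $\mathcal{L}$, exploiting that the vector field $R\partial_R-2\xi\partial_\xi$ annihilates the leading WKB phase $\xi^{-1/4}e^{iR\xi^{1/2}}$ in the Jost expansion of $\phi(R,\xi)$. This is precisely why $(R\partial_R-2\xi\partial_\xi)\phi$ is ``small enough'' to be inverted by $(\mathcal{L}-\xi)^{-1}$, ultimately producing the $1/(\xi-\eta)$ kernel after applying $\mathcal{F}$. A direct computation gives the commutator $[R\partial_R,\mathcal{L}]=-2\mathcal{L}-V(R)$ with $V(R):=10W_\alpha^4+20RW_\alpha^3W_\alpha'$, whose asymptotics are $V=O(R^{2\beta-2})$ at $R=0$ and $V=O(R^{-2\beta-2})$ at $R=\infty$. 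Applying this to $\phi(R,\xi)$ and combining with $(\mathcal{L}-\xi)\partial_\xi\phi=\phi$ yields the key identity
\begin{equation*}
(\mathcal{L}-\xi)\bigl[(R\partial_R-2\xi\partial_\xi)\phi(R,\xi)\bigr]=V(R)\phi(R,\xi).
\end{equation*}

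\textbf{Definition and regularity of $F$.} Set $F(\xi,\eta):=-\int_0^\infty V(R)\phi(R,\xi)\phi(R,\eta)\,dR$. Absolute convergence follows from the bounds of Corollary \ref{cor:small} and Corollary \ref{cor:large} together with the decay of $V$; symmetry in $(\xi,\eta)$ is manifest. For the regularity claims I would differentiate under the integral: each $\partial_\xi$ or $\partial_\eta$ derivative pulls down a factor of $R$ at infinity (from $\partial_\xi\psi^+\sim(iR/2)\xi^{-3/4}e^{iR\xi^{1/2}}$ via Proposition \ref{prop:psi}), so $V(R)R^{k+\ell}$ must be integrable there. Since $V\sim R^{-2\beta-2}$, this forces $k+\ell<2\beta+1$, yielding joint $C^1$ for every $\beta>0$ and joint $C^2$ precisely when $\beta>\tfrac12$. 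Off the diagonal $\{\xi=\eta\}$, the phases $\pm\xi^{1/2}\pm\eta^{1/2}$ are all nonzero; a single integration by parts in $R$ converts $R^{k+\ell}$ into $R^{k+\ell-1}$ and upgrades the regularity to $C^2$ (in fact $C^\infty$) for every admissible $\beta$.

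\textbf{Extraction of the kernel.} Applying $\mathcal{F}$ to the key identity and using that $\mathcal{F}$ diagonalizes $\mathcal{L}$ gives, in a distributional sense on the spectral side, $(\eta-\xi)\mathcal{F}[(R\partial_R-2\xi\partial_\xi)\phi(\cdot,\xi)](\eta)=-F(\xi,\eta)$. Inverting the factor $(\eta-\xi)$ via the standard distributional identity $-1/(\eta-\xi)=\mathrm{p.v.}\,1/(\xi-\eta)+c(\xi)\delta(\xi-\eta)$, substituting into \eqref{eq:K_cc}, and multiplying by the spectral weight $\rho(\xi)$, produces precisely the stated form of $K_{cc}$. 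The $\delta$-coefficient $\tfrac32+\eta\rho'(\eta)/\rho(\eta)$ arises as the sum of the $-2(1+\eta\rho'(\eta)/\rho(\eta))$ term already present in \eqref{eq:K_cc} and the contribution $\rho(\xi)c(\xi)$ from the principal-value division.

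\textbf{Main obstacle.} The most delicate step is pinning down the coefficient $c(\xi)$, equivalently verifying the numerical value $\tfrac32$. My strategy is to work with a truncated Wronskian identity derived from the key identity by integrating from $R=0$ to $R=N$, then let $N\to\infty$ in the distributional sense against a test function of $\xi$. The boundary Wronskian at $R=N$ oscillates at the four frequencies $\pm\xi^{1/2}\pm\eta^{1/2}$: the three non-stationary ones average to zero as $N\to\infty$, while the stationary configuration at $\xi=\eta$ produces the $\delta$-contribution whose amplitude requires careful bookkeeping using the Jost decomposition $\phi=a\psi^++\bar a\psi^-$ of Proposition \ref{prop:a} and the symbol structure of Proposition \ref{prop:psi}. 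The identity $|a(\xi)|^2=\pi^{-1}\rho(\xi)^{-1}$ should then simplify the resulting expression into the claimed form.
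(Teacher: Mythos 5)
Your proposal is correct and follows essentially the same route as the paper: both exploit the commutator identity $[\mathcal{L},R\partial_R]=2\mathcal{L}+U(R)$ (your $V=U$) to obtain $(\eta-\xi)K_{cc}(\eta,\xi)=\pm\rho(\xi)F(\xi,\eta)$ with $F(\xi,\eta)=\mp\int_0^\infty U(R)\phi(R,\xi)\phi(R,\eta)\,dR$, read off the $C^1$/$C^2$ thresholds from the $O(R^{k+\ell})$ growth of $\partial_\xi^k\partial_\eta^\ell\phi$ against the $O(R^{-2\beta-2})$ decay of $U$, and defer the precise $\delta$-coefficient to the Wronskian bookkeeping of KST09a. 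The only minor stylistic difference is that the paper first pairs $(R\partial_R-2\xi\partial_\xi)\phi$ against a test function $f\in C_0^\infty$ so that every integration by parts is manifestly legitimate, whereas you phrase the key identity pointwise in $\xi$ (and invoke the spectral transform distributionally); and for the off-diagonal smoothness you appeal to non-stationary phase, while the paper iterates the commutator $[\mathcal{L},U]$ to the same effect — neither difference changes the substance of the argument.
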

	
	\begin{proof}
		We first establish the off-diagonal behavior of $\mathcal{K}_{cc}$. Let $f\in C^\infty_0((0,+\infty))$ and let
		\begin{equation*}
			u(R)=\int_{0}^{\infty}f(\xi)\left(R\partial_R-2\xi\partial_\xi\right)\phi(R,\xi)\rho(\xi)d\xi
		\end{equation*}
		as in (\ref{eq:K_cc}). Since we have restrict $\xi$ to be in a compact subset of $(0,+\infty)$, $R\to 0$ and $R\to\infty$ is equivalent to $R^2\xi\to 0$ and $R^2\xi\to\infty$. So for small $R$, we have $|\left(R\partial_R-2\xi\partial_\xi\right)\phi(R,\xi)|\lesssim R^{\frac{\beta+1}{2}}$, hence $|u(R)|\lesssim R^{\frac{\beta+1}{2}}$. For large $R$, $u(R)$ behaves like a Schwartz function because of the oscillation of $\phi(R,\xi)$. Note that $\phi(R,\eta)$ also behaves like $R^{\frac{\beta+1}{2}}$ at $R=0$ and is bounded with bounded derivatives at $R=\infty$, the following integration by parts is verified:
		\begin{equation*}
			\eta\left<u(R),\phi(R,\eta)\right>_{L^2_R}=\left<u(R),\mathcal{L}\phi(R,\eta)\right>_{L^2_R}=\left<\mathcal{L}u(R),\phi(R,\eta)\right>_{L^2_R}.
		\end{equation*}
		Here,
		\begin{equation*}
			\begin{aligned}
				\mathcal{L}u(R)&=\int_{0}^{\infty}f(\xi)\left(R\partial_R-2\xi\partial_\xi\right)\xi\phi(R,\xi)\rho(\xi)d\xi+\int_{0}^{\infty}f(\xi)[\mathcal{L},R\partial_R]\phi(R,\xi)\rho(\xi)d\xi\\
				&=\int_{0}^{\infty}\xi f(\xi)\left(R\partial_R-2\xi\partial_\xi\right)\phi(R,\xi)\rho(\xi)d\xi\\
				&\quad -2\int_{0}^{\infty}\xi f(\xi)\phi(R,\xi)\rho(\xi)d\xi+\int_{0}^{\infty}f(\xi)[\mathcal{L},R\partial_R]\phi(R,\xi)\rho(\xi)d\xi,\\
			\end{aligned}
		\end{equation*}
		with
		\begin{equation*}
			[\mathcal{L},R\partial_R]=2\mathcal{L}+U(R).
		\end{equation*}
		Thus,
		\begin{equation*}
			\mathcal{L}u(R)=\int_{0}^{\infty}\xi f(\xi)\left(R\partial_R-2\xi\partial_\xi\right)\phi(R,\xi)\rho(\xi)d\xi+\int_{0}^{\infty}f(\xi)U(R)\phi(R,\xi)\rho(\xi)d\xi.
		\end{equation*}
		Hence, we obtain
		\begin{equation*}
			\eta\mathcal{K}_{cc}f(\eta)-\mathcal{K}_{cc}(\xi f)(\eta)=\left<\int_{0}^{\infty}f(\xi)U(R)\phi(R,\xi)\rho(\xi)d\xi,\phi(R,\eta)\right>_{L^2_R}.
		\end{equation*}
		Since the double integral on the RHS is absolutely convergent, we may change the order of integration to get
		\begin{equation*}
			(\eta-\xi)K_{cc}(\eta,\xi)=\rho(\xi)\left<U(R)\phi(R,\xi),\phi(R,\eta)\right>_{L^2_R}.
		\end{equation*}
		Let $F(\xi,\eta)=\left<U(R)\phi(R,\xi),\phi(R,\eta)\right>_{L^2_R}$, which gives the desired form of $K_{cc}$ as in (\ref{eq:form_of_K}) (of course, the coefficient in front of $\delta(\xi-\eta)$ has to be determined).
		\par Tthe $\delta$ measure component of $K_{cc}$ can be determined similarly as in \cite{KST09a}. 
	\end{proof}

	We now derive various bounds for $F(\xi,\eta)$ and it's partial derivatives in different regions. Due to the symmetry of $F(\xi,\eta)$, we will only consider the bounds below the diagonal $\{\xi\geq\eta\}$. 
	
	\subsection{Bounds for $F(\xi,\eta)$}
	Recall that $F(\xi,\eta)$ is given by the expression
	\begin{equation*}
		F(\xi,\eta)=\int_{0}^{\infty}U(R)\phi(R,\xi)\phi(R,\eta)dR.
	\end{equation*}

	\subsubsection{Bounds for $F(\xi,\eta)$ when $1\lesssim\frac{\xi}{4}\leq\eta\leq\xi$.}
	In this region, we have
		\begin{equation*}
			\begin{aligned}
				|F(\xi,\eta)|&\lesssim \int_{0}^{\xi^{-1/2}}R^{2\beta-2}R^{\frac{\beta+1}{2}}R^{\frac{\beta+1}{2}}dR+\int_{\xi^{-1/2}}^{\eta^{-1/2}}R^{2\beta-2}R^{\frac{\beta+1}{2}}\xi^{-\frac{\beta+1}{4}}dR\\
				&\quad +\int_{\eta^{-1/2}}^{1}R^{2\beta-2}\xi^{-\frac{\beta+1}{4}}\eta^{-\frac{\beta+1}{4}}dR+\int_{1}^{\infty}R^{-2\beta-2}\xi^{-\frac{\beta+1}{4}}\eta^{-\frac{\beta+1}{4}}dR\\
				%&\lesssim \xi^{-\frac{3\beta}{2}}+\xi^{-\frac{\beta+1}{4}}\left|\eta^{\frac{1-5\beta}{4}}-\xi^{\frac{1-5\beta}{4}}\right|+\xi^{-\frac{\beta+1}{4}}\eta^{-\frac{\beta+1}{4}}\left|1-\eta^{\frac{1-2\beta}{2}}\right|+\xi^{-\frac{\beta+1}{4}}\eta^{-\frac{\beta+1}{4}}\\
				&\lesssim \xi^{-\frac{3\beta}{2}}+\xi^{-\frac{\beta+1}{4}}\eta^{\frac{1-5\beta}{4}}+\xi^{-\frac{\beta+1}{4}}\eta^{-\frac{\beta+1}{4}}\\
				&\lesssim \xi^{-\frac{3\beta}{2}}+\xi^{-\frac{\beta+1}{2}},\\
			\end{aligned}
		\end{equation*}
		which seperates into three cases:
		\begin{equation*}
			\begin{aligned}
				|F(\xi,\eta)|&\lesssim\xi^{-\frac{3\beta}{2}},& 0<\beta\leq\frac{1}{2},\\
				|F(\xi,\eta)|&\lesssim\xi^{-\frac{3}{4}}\log(\xi),& \beta=\frac{1}{2},\\
				|F(\xi,\eta)|&\lesssim\xi^{-\frac{\beta+1}{2}},& \frac{1}{2}<\beta<4.\\
			\end{aligned}
		\end{equation*}
		
		\subsubsection{Bounds for $F(\xi,\eta)$ when $1\lesssim\eta\leq\frac{\xi}{2}$.} In this region, a similar calculation gives
		\begin{equation*}
			\begin{aligned}
				|F(\xi,\eta)|&\lesssim\xi^{-\frac{3\beta}{2}},& 0<\beta<\frac{1}{5},\\
				|F(\xi,\eta)|&\lesssim\xi^{-\frac{3}{10}}\log(\xi/\eta),& \beta=\frac{1}{5},\\
				|F(\xi,\eta)|&\lesssim\xi^{-\frac{\beta+1}{4}}\eta^{\frac{1-5\beta}{4}},& \frac{1}{2}<\beta<\frac{1}{2},\\
				|F(\xi,\eta)|&\lesssim\xi^{-\frac{3}{8}}\eta^{-\frac{3}{8}}\log(\xi/\eta),& \beta=\frac{1}{2},\\
				|F(\xi,\eta)|&\lesssim\xi^{-\frac{\beta+1}{4}}\eta^{-\frac{\beta+1}{4}},& \frac{1}{2}<\beta<4.\\
			\end{aligned}
		\end{equation*}

		\subsubsection{Bounds for $F(\xi,\eta)$ when $0<\eta\lesssim 1\lesssim\xi$.} In this region, we have
		\begin{equation*}
			\begin{aligned}
				|F(\xi,\eta)|&\lesssim \int_{0}^{\xi^{-1/2}}R^{2\beta-2}R^{\frac{\beta+1}{2}}R^{\frac{\beta+1}{2}}dR+\int_{\xi^{-1/2}}^{1}R^{2\beta-2}R^{\frac{\beta+1}{2}}\xi^{-\frac{\beta+1}{4}}dR\\
				&\quad +\int_{1}^{\eta^{-1/2}}R^{-2\beta-2}R^{\frac{-\beta+1}{2}}\xi^{-\frac{\beta+1}{4}}dR+\int_{\eta^{-1/2}}^{\infty}R^{-2\beta-2}\xi^{-\frac{\beta+1}{4}}\eta^{\frac{\beta-1}{4}}dR\\
				%&\lesssim \xi^{-\frac{3\beta}{2}}+\xi^{-\frac{\beta+1}{4}}\left|1-\xi^{\frac{1-5\beta}{4}}\right|+\xi^{-\frac{\beta+1}{4}}\left|\eta^{\frac{5\beta+1}{4}}-1\right|+\xi^{-\frac{\beta+1}{4}}\eta^{\frac{\beta-1}{4}}\eta^{\frac{1+2\beta}{2}}\\
				%&\lesssim \xi^{-\frac{3\beta}{2}}+\xi^{-\frac{\beta+1}{4}}+\xi^{-\frac{\beta+1}{4}}\eta^{\frac{5\beta+1}{4}}\\
				&\lesssim \xi^{-\frac{3\beta}{2}}+\xi^{-\frac{\beta+1}{4}},\\
			\end{aligned}
		\end{equation*}
		for $0<\beta<2$,
		\begin{equation*}
			\begin{aligned}
				|F(\xi,\eta)|&\lesssim \int_{0}^{\xi^{-1/2}}R^{2\beta-2}R^{\frac{\beta+1}{2}}R^{\frac{\beta+1}{2}}dR+\int_{\xi^{-1/2}}^{1}R^{2\beta-2}R^{\frac{\beta+1}{2}}\xi^{-\frac{\beta+1}{4}}dR\\
				&\quad +\int_{1}^{\eta^{-1/2}}R^{-2\beta-2}R^{\frac{-\beta+1}{2}}\log(R)\xi^{-\frac{\beta+1}{4}}dR+\int_{\eta^{-1/2}}^{\infty}R^{-2\beta-2}\xi^{-\frac{\beta+1}{4}}\eta^{\frac{\beta-1}{4}}|\log(\eta)|dR\\
				&\lesssim \xi^{-\frac{\beta+1}{4}},\\
			\end{aligned}
		\end{equation*}
		for $\beta=2$, and
		\begin{equation*}
			\begin{aligned}
				|F(\xi,\eta)|&\lesssim \int_{0}^{\xi^{-1/2}}R^{2\beta-2}R^{\frac{\beta+1}{2}}R^{\frac{\beta+1}{2}}dR+\int_{\xi^{-1/2}}^{1}R^{2\beta-2}R^{\frac{\beta+1}{2}}\xi^{-\frac{\beta+1}{4}}dR\\
				&\quad +\int_{1}^{\eta^{-1/2}}R^{-2\beta-2}R^{\frac{\beta-3}{2}}\xi^{-\frac{\beta+1}{4}}dR+\int_{\eta^{-1/2}}^{\infty}R^{-2\beta-2}\xi^{-\frac{\beta+1}{4}}\eta^{\frac{-\beta+3}{4}}dR\\
				%&\lesssim \xi^{-\frac{3\beta}{2}}+\xi^{-\frac{\beta+1}{4}}
				&\lesssim\xi^{-\frac{\beta+1}{4}},\\
			\end{aligned}
		\end{equation*}
		for $\beta>2$. More precisely, we have
		\begin{equation*}
			\begin{aligned}
				|F(\xi,\eta)|&\lesssim \xi^{-\frac{3\beta}{2}},& 0<\beta<\frac{1}{5},\\
				|F(\xi,\eta)|&\lesssim \xi^{-\frac{3}{10}}\log(\xi),& \beta=\frac{1}{5},\\
				|F(\xi,\eta)|&\lesssim \xi^{-\frac{\beta+1}{4}},&\beta>\frac{1}{5}.\\
			\end{aligned}
		\end{equation*}

		\subsubsection{Bounds for $F(\xi,\eta)$ when $0<\eta\leq\xi\lesssim 1$.} In this region, we have
		\begin{equation*}
			\begin{aligned}
				|F(\xi,\eta)|&\lesssim \int_{0}^{1}R^{2\beta-2}R^{\frac{\beta+1}{2}}R^{\frac{\beta+1}{2}}dR+\int_{1}^{\xi^{-1/2}}R^{-2\beta-2}R^{\frac{-\beta+1}{2}}R^{\frac{-\beta+1}{2}}dR\\
				&\quad +\int_{\xi^{-1/2}}^{\eta^{-1/2}}R^{-2\beta-2}R^{\frac{-\beta+1}{2}}\xi^{\frac{\beta-1}{4}}dR+\int_{\eta^{-1/2}}^{\infty}R^{-2\beta-2}\xi^{\frac{\beta-1}{4}}\eta^{\frac{\beta-1}{4}}dR\\
				%&\lesssim 1+\left|\xi^{\frac{3\beta}{2}}-1\right|+\xi^{\frac{\beta-1}{4}}\left|\eta^{\frac{5\beta+1}{4}}-\xi^{\frac{5\beta+1}{4}}\right|+\xi^{\frac{\beta-1}{4}}\eta^{\frac{\beta-1}{4}}\eta^{\frac{1+2\beta}{2}}\\
				%&\lesssim 1+\xi^{\frac{\beta-1}{4}}\eta^{\frac{5\beta+1}{4}}\\
				&\lesssim 1.\\
			\end{aligned}
		\end{equation*}
		However, it turns out that this constant bound is far from being optimal. In fact, by direct integration, we have $F(0,0)=0$. Furthermore, since when $\xi\leq 1$, we have
		\begin{equation*}
			\begin{aligned}
				|\phi(R,\xi)|&\lesssim\max\{\xi^{\frac{\beta-1}{4}},\xi^{\frac{3-\beta}{4}}\}\mathbf{1}_{R^2\xi\geq 1}+R^{\frac{\beta+1}{2}}\max\left\{\left<R^{2\beta}\right>^{-\frac{1}{2}},\left<R^{2\beta}\right>^{-\frac{1}{\beta}}\right\}\mathbf{1}_{R^2\xi\leq1}\\&\lesssim R^{\frac{\beta+1}{2}}\mathbf{1}_{R<1}+\max\{R^{\frac{-\beta+1}{2}},R^{\frac{\beta-3}{2}},1\}\mathbf{1}_{R\geq1},
			\end{aligned}
		\end{equation*}
		which implies
		\begin{equation*}
			\begin{aligned}
				|U(R)\phi(R,\xi)\phi(R,\eta)|&\lesssim |U(R)|\left(R^{\beta+1}\mathbf{1}_{R<1}+\max\{R^{-\beta+1},R^{\frac{\beta-3}{2}},1\}\mathbf{1}_{R\geq 1}\right)\\&\lesssim R^{3\beta-1}\mathbf{1}_{R<1}+\max\{R^{-3\beta-1},R^{-\beta-5},R^{-2\beta-2}\}\mathbf{1}_{R\geq 1},	
			\end{aligned}
		\end{equation*}
		where the RHS is integrable on $(0,+\infty)$. By the dominated convergence theorem, we get the continuity of $F(\xi,\eta)$ at $(0,0)$. We will use the bound for $\partial_\xi F(\xi,\eta)$ and $\partial_\eta F(\xi,\eta)$ (derived in subsequent), the fact that $F(0,0)=0$ and the continuity of $F(\xi,\eta)$ at $(0,0)$ to get a better bound for $F(\xi,\eta)$ when $\xi+\eta\lesssim1$:
		\begin{equation*}
		\begin{aligned}
			F(\xi,\eta)&=\lim\limits_{(\xi,\eta)\to(0^+,0^+)}F(\xi,\eta)+\int_{0}^{1}\left(\xi \partial_\xi F(t\xi,t\eta) +\eta \partial_\eta F(t\xi,t\eta)\right) dt
			\\&=\int_{0}^{1}\left(\xi \partial_\xi F(t\xi,t\eta) +\eta \partial_\eta F(t\xi,t\eta)\right) dt.
		\end{aligned}
		\end{equation*}
		As a result, we find better bounds for $F(\xi,\eta)$ compared to constant bound.
		\begin{equation*}
			\begin{aligned}
				|F(\xi,\eta)|&\lesssim \int_{0}^{1} \left(\xi^{\frac{\beta+1}{4}}\eta^{\frac{5\beta-1}{4}}+ \xi^{\frac{\beta-1}{4}}\eta^{\frac{5\beta+1}{4}}\right)t^{\frac{3\beta}{2}-1}dt\lesssim \xi^{\frac{\beta+1}{4}}\eta^{\frac{5\beta-1}{4}},& 0<\beta<\frac{1}{5},\\
				|F(\xi,\eta)|&\lesssim \int_{0}^{1}\left(\xi^{\frac{3}{10}}\log(\xi/\eta)+\xi^{-\frac{1}{5}}\eta^{\frac{1}{2}}\right)t^{-\frac{7}{10}}dt\lesssim \xi^{\frac{3}{10}}\log(\xi/\eta),& \beta=\frac{1}{5},\\
				|F(\xi,\eta)|&\lesssim \int_{0}^{1} \left(\xi^{\frac{3\beta}{2}}+ \xi^{\frac{\beta-1}{4}}\eta^{\frac{5\beta+1}{4}}\right)t^{\frac{3\beta}{2}-1}dt\lesssim \xi^{\frac{3\beta}{2}},&\frac{1}{5}<\beta<\frac{3}{5},\\
				|F(\xi,\eta)|&\lesssim \int_{0}^{1}\left(\xi^{\frac{9}{10}}+\xi^{-\frac{1}{10}}\eta\log(\xi/\eta)\right)t^{-\frac{1}{10}}dt\lesssim \xi^{\frac{9}{10}}\log(\xi/\eta),& \beta=\frac{3}{5},\\
				|F(\xi,\eta)|&\lesssim \int_{0}^{1}\left(\xi^{\frac{3\beta}{2}}+\xi^{\frac{3\beta}{2}-1}\eta\right)t^{\frac{3\beta}{2}-1}dt\lesssim \xi^{\frac{3\beta}{2}},& \frac{3}{5}<\beta<\frac{2}{3},\\
				|F(\xi,\eta)|&\lesssim \int_{0}^{1}\left(-t\xi\log(t\xi/2)-t\eta\log(t\xi/2)\right)dt\lesssim -\xi\log(\xi/2),& \beta=\frac{2}{3},\\
				|F(\xi,\eta)|&\lesssim \int_{0}^{1}\left(\xi+\eta\right)dt\lesssim \xi,&\beta>\frac{2}{3}.\\
			\end{aligned}
		\end{equation*}

		\subsection{Bounds for $\nabla F(\xi,\eta)$.}
		The expressions of $\nabla F(\xi,\eta)$ are given by
		\begin{equation*}
			\begin{aligned}
				\partial_\xi F(\xi,\eta) &= \int_{0}^{\infty}U(R)\partial_\xi\phi(R,\xi)\phi(R,\eta)dR,\\
				\partial_\eta F(\xi,\eta) &= \int_{0}^{\infty}U(R)\phi(R,\xi)\partial_\eta\phi(R,\eta)dR.\\
			\end{aligned}
		\end{equation*}
		Through integration by parts, we get
		\begin{equation*}
			\begin{aligned}
				\eta\partial_\xi F(\xi,\eta)&=\left<U(R)\partial_\xi\phi(R,\xi),\mathcal{L}\phi(R,\eta)\right>_{L^2_R}\\&=\xi\partial_\xi F(\xi,\eta)+F(\xi,\eta)+\left<[\mathcal{L},U(R)]\partial_\xi\phi(R,\xi),\phi(R,\eta)\right>_{L^2_R},
			\end{aligned}
		\end{equation*}
		where 
		\begin{equation*}
			[\mathcal{L},U(R)]=2U'(R)\partial_R+U''(R).
		\end{equation*}
		Similarly, we have
		\begin{equation*}
			(\xi-\eta)\partial_\eta F(\xi,\eta)=\int_{0}^{\infty}\phi(R,\xi)[\mathcal{L},U(R)]\partial_\eta \phi(R,\eta)dR + F(\xi,\eta).
		\end{equation*}
		We will use last two expressions to get the off-diagonal estimate of $\nabla F(\xi,\eta)$.

		\subsubsection{Bounds for $\nabla F(\xi,\eta)$ when $1\lesssim \frac{\xi}{4}\leq\eta\leq\xi$.} In this region ,we have
		\begin{equation*}
			\begin{aligned}
				|\partial_\xi F(\xi,\eta)|&\lesssim \int_{0}^{\xi^{-\frac{1}{2}}}R^{2\beta-2}R^{\frac{\beta+5}{2}}R^{\frac{\beta+1}{2}}dR+\int_{\xi^{-\frac{1}{2}}}^{\eta^{-\frac{1}{2}}}R^{2\beta-2}R\xi^{-\frac{\beta+3}{4}}R^{\frac{\beta+1}{2}}dR\\
				&\quad +\int_{\eta^{-\frac{1}{2}}}^{1}R^{2\beta-2}R\xi^{-\frac{\beta+3}{4}}\eta^{-\frac{\beta+1}{4}}dR+\int_{1}^{\infty}R^{-2\beta-2}R\xi^{-\frac{\beta+3}{4}}\eta^{-\frac{\beta+1}{4}}dR\\
				%&\lesssim \xi^{-\frac{3\beta}{2}-1}+\xi^{-\frac{\beta+3}{4}}\left|\eta^{-\frac{5\beta+1}{4}}-\xi^{-\frac{5\beta+1}{4}}\right|+\xi^{-\frac{\beta+3}{4}}\eta^{-\frac{\beta+1}{4}}\\
				%&\lesssim \xi^{-\frac{3\beta}{2}-1}+\xi^{-\frac{\beta+3}{4}}\eta^{-\frac{\beta+1}{4}}\\
				%&\lesssim \xi^{-\frac{\beta+3}{4}}\eta^{-\frac{\beta+1}{4}}\\
				&\lesssim \xi^{-\frac{\beta}{2}-1}.\\
			\end{aligned}
		\end{equation*}
		The same estimate holds for $\partial_\eta F(\xi,\eta)$ in this region.

		\iffalse
		\par As a direct consequence of the above bounds, we get the following when $1\lesssim\frac{\xi}{4}\leq\eta\leq\xi$:
		\begin{equation*}
			\begin{aligned}
				\left|\frac{F(\xi,\eta)-F(\eta,\eta)}{\xi-\eta}\right|&\leq \int_{0}^{1}|\partial_\xi F(\eta+t(\xi-\eta),\eta)|dt\\
				&\lesssim \int_{0}^{1}\xi^{-\frac{\beta}{2}-1}dt \lesssim \xi^{-\frac{\beta}{2}-1}.\\
			\end{aligned}
		\end{equation*}
		\fi

		\iffalse
		\par \underline{\textit{(2.1.2) Bound for $\partial_\eta F(\xi,\eta)$ when $\xi\geq\eta\geq1$}.} We have
		\begin{equation*}
			\begin{aligned}
				|\partial_\eta F(\xi,\eta)|&\lesssim \int_{0}^{\xi^{-\frac{1}{2}}}R^{2\beta-2}R^{\frac{\beta+1}{2}}R^{\frac{\beta+5}{2}}dR+\int_{\xi^{-\frac{1}{2}}}^{\eta^{-\frac{1}{2}}}R^{2\beta-2}\xi^{-\frac{\beta+1}{4}}R^{\frac{\beta+5}{2}}dR\\
				&\quad +\int_{\eta^{-\frac{1}{2}}}^{\infty}|U(R)|\xi^{-\frac{\beta+1}{4}}R\eta^{-\frac{\beta+3}{4}}dR\\
				&\lesssim \xi^{-\frac{3\beta}{2}-1}+\xi^{-\frac{\beta+1}{4}}\left|\eta^{-\frac{5\beta+3}{4}}-\xi^{-\frac{5\beta+3}{4}}\right|+\xi^{-\frac{\beta+1}{4}}\eta^{-\frac{\beta+3}{4}}\\
				&\lesssim \xi^{-\frac{3\beta}{2}-1}+\xi^{-\frac{\beta+1}{4}}\eta^{-\frac{\beta+3}{4}}\\
				&\lesssim \xi^{-\frac{\beta+1}{4}}\eta^{-\frac{\beta+3}{4}}.\\
			\end{aligned}
		\end{equation*}
		\fi

		\subsubsection{Bounds for $\nabla F(\xi,\eta)$ when $1\lesssim\eta\leq\frac{\xi}{2}$.} In this region, we have
		\begin{equation*}
			\begin{aligned}
				|(\eta-\xi)\partial_\xi F(\xi,\eta)|&\lesssim \int_{0}^{\xi^{-\frac{1}{2}}}R^{2\beta-4}R^{\frac{\beta+5}{2}}R^{\frac{\beta+1}{2}}dR+\int_{\xi^{-\frac{1}{2}}}^{\eta^{-\frac{1}{2}}}R^{2\beta-4}R^2\xi^{-\frac{\beta+1}{4}}R^{\frac{\beta+1}{2}}dR\\
				&\quad +\int_{\eta^{-\frac{1}{2}}}^{1}R^{2\beta-4}R^2\xi^{-\frac{\beta+1}{4}}\eta^{-\frac{\beta+1}{4}}dR+\int_{1}^{\infty}R^{-2\beta-4}R^2\xi^{-\frac{\beta+1}{4}}\eta^{-\frac{\beta+1}{4}}dR+|F(\xi,\eta)|\\
				%&\lesssim \xi^{-\frac{3\beta}{2}}+\xi^{-\frac{\beta+1}{4}}\left|\eta^{\frac{1-5\beta}{4}}-\xi^{\frac{1-5\beta}{4}}\right|+\xi^{-\frac{\beta+1}{4}}\eta^{-\frac{\beta+1}{4}}\left|1-\eta^{\frac{1-2\beta}{2}}\right|+\xi^{-\frac{\beta+1}{4}}\eta^{-\frac{\beta+1}{4}}+|F(\xi,\eta)|\\
				&\lesssim \xi^{-\frac{3\beta}{2}}+\xi^{-\frac{\beta+1}{4}}\eta^{\frac{1-5\beta}{4}}+\xi^{-\frac{\beta+1}{4}}\eta^{-\frac{\beta+1}{4}},\\
			\end{aligned}
		\end{equation*}
		hence $(\eta-\xi)\partial_\xi F(\xi,\eta)$ has the same bound as $F(\xi,\eta)$. Similarly, one can show that $\partial_\eta F(\xi,\eta)$ also has the same bound.

		\iffalse
		More precisely, we have
		\begin{equation*}
		\begin{aligned}
		|(\eta-\xi)\partial_\xi F(\xi,\eta)|&\lesssim\xi^{-\frac{3\beta}{2}},\quad 0<\beta<\frac{1}{5},\\
		|(\eta-\xi)\partial_\xi F(\xi,\eta)|&\lesssim\xi^{-\frac{3}{10}}\log(\xi/\eta),\quad \beta=\frac{1}{5},\\
		|(\eta-\xi)\partial_\xi F(\xi,\eta)|&\lesssim\xi^{-\frac{\beta+1}{4}}\eta^{\frac{1-5\beta}{4}},\quad \frac{1}{2}<\beta<\frac{1}{2},\\
		|(\eta-\xi)\partial_\xi F(\xi,\eta)|&\lesssim\xi^{-\frac{3}{8}}\eta^{-\frac{3}{8}}\log(\xi/\eta),\quad \beta=\frac{1}{2},\\
		|(\eta-\xi)\partial_\xi F(\xi,\eta)|&\lesssim\xi^{-\frac{\beta+1}{4}}\eta^{-\frac{\beta+1}{4}},\quad \frac{1}{2}<\beta<4.\\
		\end{aligned}
		\end{equation*}
		\fi

		\subsubsection{Bounds for $\nabla F(\xi,\eta)$ when $0<\eta\lesssim 1\lesssim\xi$.}

		In this region, we have
		\begin{equation*}
			\begin{aligned}
				|(\eta-\xi)\partial_\xi F(\xi,\eta)|&\lesssim \int_{0}^{\xi^{-\frac{1}{2}}}R^{2\beta-4}R^{\frac{\beta+5}{2}}R^{\frac{\beta+1}{2}}dR+\int_{\xi^{-\frac{1}{2}}}^{1}R^{2\beta-4}R^2\xi^{-\frac{\beta+1}{4}}R^{\frac{\beta+1}{2}}dR\\
				&\quad +\int_{1}^{\eta^{-\frac{1}{2}}}R^{-2\beta-4}R^2\xi^{-\frac{\beta+1}{4}}R^{\frac{-\beta+1}{2}}dR+\int_{\eta^{-\frac{1}{2}}}^{\infty}R^{-2\beta-4}R^2\xi^{-\frac{\beta+1}{4}}\eta^{\frac{\beta-1}{4}}dR\\&\quad +|F(\xi,\eta)|\\
				%&\lesssim \xi^{-\frac{3\beta}{2}}+\xi^{-\frac{\beta+1}{4}}\left|1-\xi^{\frac{1-5\beta}{4}}\right|+\xi^{-\frac{\beta+1}{4}}\left|\eta^{\frac{1+5\beta}{4}}-1\right|+\xi^{-\frac{\beta+1}{4}}\eta^{\frac{\beta-1}{4}}\eta^{\frac{1+2\beta}{2}}+|F(\xi,\eta)|\\
				%&\lesssim \xi^{-\frac{3\beta}{2}}+\xi^{-\frac{\beta+1}{4}}+|F(\xi,\eta)|\\
				&\lesssim \xi^{-\frac{3\beta}{2}}+\xi^{-\frac{\beta+1}{4}},\\
			\end{aligned}
		\end{equation*}
		for $0<\beta<2$,
		\begin{equation*}
			\begin{aligned}
				|(\eta-\xi)\partial_\xi F(\xi,\eta)|&\lesssim \int_{0}^{\xi^{-\frac{1}{2}}}R^{2\beta-4}R^{\frac{\beta+5}{2}}R^{\frac{\beta+1}{2}}dR+\int_{\xi^{-\frac{1}{2}}}^{1}R^{2\beta-4}R^2\xi^{-\frac{\beta+1}{2}}R^{\frac{\beta+1}{2}}dR\\
				&\quad +\int_{1}^{\eta^{-\frac{1}{2}}}R^{-2\beta-4}R^2\xi^{-\frac{\beta+1}{4}}R^{\frac{-\beta+1}{2}}\log(R)dR\\&\quad +\int_{\eta^{-\frac{1}{2}}}^{\infty}R^{-2\beta-4}R^2\xi^{-\frac{\beta+1}{4}}\eta^{\frac{\beta-1}{4}}|\log(\eta)|dR\\&\quad +|F(\xi,\eta)|\\
				&\lesssim \xi^{-\frac{\beta+1}{4}},\\
			\end{aligned}
		\end{equation*}
		for $\beta=2$, and
		\begin{equation*}
			\begin{aligned}
				|(\eta-\xi)\partial_\xi F(\xi,\eta)|&\lesssim \int_{0}^{\xi^{-\frac{1}{2}}}R^{2\beta-4}R^{\frac{\beta+5}{2}}R^{\frac{\beta+1}{2}}dR+\int_{\xi^{-\frac{1}{2}}}^{1}R^{2\beta-4}R^2\xi^{-\frac{\beta+1}{4}}R^{\frac{\beta+1}{2}}dR\\
				&\quad +\int_{1}^{\eta^{-\frac{1}{2}}}R^{-2\beta-4}R^2\xi^{-\frac{\beta+1}{4}}R^{\frac{\beta-3}{2}}dR+\int_{\eta^{-\frac{1}{2}}}^{\infty}R^{-2\beta-4}R^2\xi^{-\frac{\beta+1}{4}}\eta^{\frac{-\beta+3}{4}}dR\\&\quad +|F(\xi,\eta)|\\
				&\lesssim \xi^{-\frac{\beta+1}{4}},\\
			\end{aligned}
		\end{equation*}
		for $\beta>2$. That is to say, $(\eta-\xi)\partial_\xi F(\xi,\eta)$ has the same bound as $F(\xi,\eta)$ in this region. Similarly, one can show that $(\eta-\xi)\partial_\eta F(\xi,\eta)$ also has the same bound.

		\iffalse
		Hence, we have
		\begin{equation*}
			\begin{aligned}
				|\partial_\xi F(\xi,\eta)|&\lesssim \xi^{-\frac{3\beta}{2}-1},\quad 0<\beta<\frac{1}{5},\\
				|\partial_\xi F(\xi,\eta)|&\lesssim \xi^{-\frac{13}{10}}\log(\xi),\quad \beta=\frac{1}{5},\\
				|\partial_\xi F(\xi,\eta)|&\lesssim \xi^{-\frac{\beta+5}{4}},\quad \beta>\frac{1}{5}.\\
			\end{aligned}
		\end{equation*}
		\fi

		\subsubsection{Bounds for $\nabla F(\xi,\eta)$ when $0<\eta\leq\xi\lesssim 1$.} In this region, we have
		\begin{equation*}
			\begin{aligned}
				|\partial_\xi F(\xi,\eta)|&\lesssim \int_{0}^{1}R^{2\beta-2}R^{\frac{\beta+5}{2}}R^{\frac{\beta+1}{2}}dR+\int_{1}^{\xi^{-\frac{1}{2}}}R^{-2\beta-2}R^{\frac{-\beta+5}{2}}R^{\frac{-\beta+1}{2}}dR\\
				&\quad +\int_{\xi^{-\frac{1}{2}}}^{\eta^{-\frac{1}{2}}}R^{-2\beta-2}R\xi^{\frac{\beta-3}{4}}R^{\frac{-\beta+1}{2}}dR+\int_{\eta^{-\frac{1}{2}}}^{\infty}R^{-2\beta-2}R\xi^{\frac{\beta-3}{4}}\eta^{\frac{\beta-1}{4}}dR\\
				%&\lesssim 1+\left|\xi^{\frac{3\beta}{2}-1}-1\right|+\xi^{\frac{\beta-3}{4}}\left|\eta^{\frac{5\beta-1}{4}}-\xi^{\frac{5\beta-1}{4}}\right|+\xi^{\frac{\beta-3}{4}}\eta^{\frac{\beta-1}{4}}\eta^\beta\\
				&\lesssim 1+\xi^{\frac{3\beta}{2}-1}+\xi^{\frac{\beta-3}{4}}\eta^{\frac{5\beta-1}{4}},\\
			\end{aligned}
		\end{equation*}
		for $0<\beta<2$, 
		\begin{equation*}
			\begin{aligned}
				|\partial_\xi F(\xi,\eta)|&\lesssim \int_{0}^{1}R^{2\beta-2}R^{\frac{\beta+5}{2}}R^{\frac{\beta+1}{2}}dR+\int_{1}^{\xi^{-\frac{1}{2}}}R^{-2\beta-2}R^{\frac{-\beta+5}{2}}R^{\frac{-\beta+1}{2}}(\log(R))^2dR\\
				&\quad +\int_{\xi^{-\frac{1}{2}}}^{\eta^{-\frac{1}{2}}}R^{-2\beta-2}R\xi^{\frac{\beta-3}{4}}|\log(\xi)|R^{\frac{-\beta+1}{2}}\log(R)dR\\&\quad +\int_{\eta^{-\frac{1}{2}}}^{\infty}R^{-2\beta-2}R\xi^{\frac{\beta-3}{4}}|\log(\xi)|\eta^{\frac{\beta-1}{4}}|\log(\eta)|dR\\
				&\lesssim 1,\\
			\end{aligned}
		\end{equation*}
		for $\beta=2$, and
		\begin{equation*}
			\begin{aligned}
				|\partial_\xi F(\xi,\eta)|&\lesssim \int_{0}^{1}R^{2\beta-2}R^{\frac{\beta+5}{2}}R^{\frac{\beta+1}{2}}dR+\int_{1}^{\xi^{-\frac{1}{2}}}R^{-2\beta-2}R^{\frac{\beta+1}{2}}R^{\frac{\beta-3}{2}}dR\\
				&\quad +\int_{\xi^{-\frac{1}{2}}}^{\eta^{-\frac{1}{2}}}R^{-2\beta-2}R\xi^{\frac{-\beta+1}{4}}R^{\frac{\beta-3}{2}}dR+\int_{\eta^{-\frac{1}{2}}}^{\infty}R^{-2\beta-2}R\xi^{\frac{-\beta+1}{4}}\eta^{\frac{-\beta+3}{4}}dR\\
				&\lesssim 1,\\
			\end{aligned}
		\end{equation*}
		for $\beta>2$. More precisely, we have
		\begin{equation*}
			\begin{aligned}
				|\partial_\xi F(\xi,\eta)|&\lesssim \xi^{\frac{\beta-3}{4}}\eta^{\frac{5\beta-1}{4}},& 0<\beta<\frac{1}{5},\\
				|\partial_\xi F(\xi,\eta)|&\lesssim \xi^{-\frac{7}{10}}\log(\xi/\eta),& \beta=\frac{1}{5},\\
				|\partial_\xi F(\xi,\eta)|&\lesssim \xi^{\frac{3\beta}{2}-1},& \frac{1}{5}<\beta<\frac{2}{3},\\
				|\partial_\xi F(\xi,\eta)|&\lesssim -\log(\xi/2),& \beta=\frac{2}{3},\\
				|\partial_\xi F(\xi,\eta)|&\lesssim 1,& \beta>\frac{2}{3}.\\
			\end{aligned}
		\end{equation*}
		
		Similarly, one can show that $\partial_\eta F(\xi,\eta)$ satisfies
		
		\iffalse
		\par \underline{\textit{(2.3.2) Bound for $\partial_\eta F(\xi,\eta)$ when $1\geq\xi\geq\eta>0$}.} We have
		\begin{equation*}
			\begin{aligned}
				|\partial_\eta F(\xi,\eta)|&\lesssim \int_{0}^{1}R^{2\beta-2}R^{\frac{\beta+1}{2}}R^{\frac{\beta+5}{2}}dR+\int_{1}^{\xi^{-\frac{1}{2}}}R^{-2\beta-2}R^{\frac{-\beta+1}{2}}R^{\frac{-\beta+5}{2}}dR\\
				&\quad +\int_{\xi^{-\frac{1}{2}}}^{\eta^{-\frac{1}{2}}}R^{-2\beta-2}\xi^{\frac{\beta-1}{4}}R^{\frac{-\beta+5}{2}}dR+\int_{\eta^{-\frac{1}{2}}}^{\infty}R^{-2\beta-2}\xi^{\frac{\beta-1}{4}}R\eta^{\frac{\beta-3}{4}}dR\\
				&\lesssim 1+\left|\xi^{\frac{3\beta}{2}-1}-1\right|+\xi^{\frac{\beta-1}{4}}\left|\eta^{\frac{5\beta-3}{2}}-\xi^{\frac{5\beta-3}{4}}\right|+\xi^{\frac{\beta-1}{4}}\eta^{\frac{\beta-3}{4}}\eta^\beta\\
				&\lesssim 1+\xi^{\frac{3\beta}{2}-1}+\xi^{\frac{\beta-1}{4}}\eta^{\frac{5\beta-3}{4}},\quad 0<\beta<2.\\
				\end{aligned}
		\end{equation*}
		
		More precisely, we have
		\fi

		\begin{equation*}
			\begin{aligned}
				|\partial_\eta F(\xi,\eta)|&\lesssim \xi^{\frac{\beta-1}{4}}\eta^{\frac{5\beta-3}{4}},& 0<\beta<\frac{3}{5},\\
				|\partial_\eta F(\xi,\eta)|&\lesssim \xi^{-\frac{1}{10}}\log(\xi/\eta),&\beta=\frac{3}{5},\\
				|\partial_\eta F(\xi,\eta)|&\lesssim \xi^{\frac{3\beta}{2}-1},& \frac{3}{5}<\beta<\frac{2}{3},\\
				|\partial_\eta F(\xi,\eta)|&\lesssim -\log(\xi/2),&\beta=\frac{2}{3},\\
				|\partial_\eta F(\xi,\eta)|&\lesssim 1,&\beta>\frac{2}{3}.\\
			\end{aligned}
		\end{equation*}

		\subsection{Bounds for $\nabla^2 F(\xi,\eta)$.}

		Note that only when $\beta>\frac{1}{2}$, $U(R)\partial_\xi^2\phi(R,\xi)\phi(R,\eta)$ is integrable at $R=+\infty$, so we can verify that 
		\begin{equation}\label{eq:21}
			\partial_\xi^2 F(\xi,\eta)=\left<U(R)\partial_\xi^2\phi(R,\xi),\phi(R,\eta)\right>_{L^2_R}
		\end{equation}
		using the dominated convergence theorem. For general $\beta>0$, we can use the following indetity
		\begin{equation*}
			(\eta-\xi)\partial_\xi F(\xi,\eta) = F(\xi,\eta) + \left<[\mathcal{L},U(R)]\partial_\xi\phi(R,\xi),\phi(R,\eta)\right>_{L^2_R},
		\end{equation*}
		and the fact that $[\mathcal{L},U(R)]\partial_\xi^2\phi(R,\xi)\phi(R,\eta)$ is integrable at $R=+\infty$, which implies that away from the diagonal $\{\xi=\eta\}$, $\partial_\xi^2 F(\xi,\eta)$ actually exists:
		\begin{equation*}
			(\eta-\xi)\partial_\xi^2 F(\xi,\eta)=2\partial_\xi F(\xi,\eta)+\left<[\mathcal{L},U(R)]\partial_\xi^2\phi(R,\xi),\phi(R,\eta)\right>_{L^2_R}.
		\end{equation*}
		Similarly, we have
		\begin{equation*}
			(\xi-\eta)\partial_\eta^2 F(\xi,\eta)=2\partial_\eta F(\xi,\eta)+\left<[\mathcal{L},U(R)]\partial_\eta^2\phi(R,\eta),\phi(R,\xi)\right>_{L^2_R},
		\end{equation*}
		and
		\begin{equation*}
			(\eta-\xi)\partial_{\xi\eta}^2 F(\xi,\eta)=\partial_\eta F(\xi,\eta)-\partial_\xi F(\xi,\eta)+\left<[\mathcal{L},U(R)]\partial_\xi\phi(R,\xi),\partial_\eta\phi(R,\eta)\right>_{L^2_R}.
		\end{equation*}
		Moreover, another integration by parts gives
		\begin{equation}\label{eq:22}
			(\eta-\xi)^2\partial_\xi^2 F(\xi,\eta)=4(\eta-\xi)\partial_\xi F(\xi,\eta)-2F(\xi,\eta)+\left<[\mathcal{L},[\mathcal{L},U(R)]]\partial_\xi^2\phi(R,\xi),\phi(R,\eta)\right>_{L^2_R},
		\end{equation}
		where
		\begin{equation*}
			[\mathcal{L},[\mathcal{L},U(R)]]=4U''(R)\partial_R^2+4U'''(R)\partial_R+U''''(R)+2R^{-1}U'(R)\left(2\alpha R^{-2}+R\partial_R(5W_\alpha^4(R))\right).
		\end{equation*}

		\subsubsection{Bounds for $\nabla^2F(\xi,\eta)$ when $1\lesssim\frac{\xi}{4}\leq\eta\leq\xi$.} In this region, using (\ref{eq:21}), we have

		\begin{equation*}
		\begin{aligned}
		|\partial_\xi^2 F(\xi,\eta)|&\lesssim \int_{0}^{\xi^{-\frac{1}{2}}}R^{2\beta-2}R^{\frac{\beta+9}{2}}R^{\frac{\beta+1}{2}}dR+\int_{\xi^{-\frac{1}{2}}}^{\eta^{-\frac{1}{2}}}R^{2\beta-2}R^{2}\xi^{-\frac{\beta+5}{4}}R^{\frac{\beta+1}{2}}dR\\
		&\quad +\int_{\eta^{-\frac{1}{2}}}^{1}R^{2\beta-2}R^2\xi^{-\frac{\beta+5}{4}}\eta^{-\frac{\beta+1}{4}}dR+\int_{1}^{\infty}R^{-2\beta-2}R^2\xi^{-\frac{\beta+5}{4}}\eta^{-\frac{\beta+1}{4}}dR\\
		%&\lesssim \xi^{-\frac{3\beta}{2}-2} +\xi^{-\frac{\beta+3}{2}}\\
		&\lesssim \xi^{-\frac{\beta+3}{2}}.\\
		\end{aligned}
		\end{equation*}
		
		Using (\ref{eq:22}), we have
		\begin{equation*}
		\begin{aligned}
		|(\eta-\xi)^2\partial_\xi^2 F(\xi,\eta)|&\lesssim \int_{0}^{\xi^{-\frac{1}{2}}}R^{2\beta-6}R^{\frac{\beta+9}{2}}R^{\frac{\beta+1}{2}}dR+\int_{\xi^{-\frac{1}{2}}}^{\eta^{-\frac{1}{2}}}R^{2\beta-6}R^4\xi^{-\frac{\beta+1}{4}}R^{\frac{\beta+1}{2}}dR\\
		&\quad +\int_{\eta^{-\frac{1}{2}}}^{1}R^{2\beta-6}R^4\xi^{-\frac{\beta+1}{4}}\eta^{-\frac{\beta+1}{4}}dR+\int_{1}^{\infty}R^{-2\beta-6}R^4\xi^{-\frac{\beta+1}{4}}\eta^{-\frac{\beta+1}{4}}dR\\
		&\quad +4|(\eta-\xi)\partial_\xi F(\xi,\eta)|+2|F(\xi,\eta)|\\
		%&\lesssim \xi^{-\frac{3\beta}{2}}+\xi^{-\frac{\beta+1}{4}}\left|\eta^{\frac{1-5\beta}{4}}-\xi^{\frac{1-5\beta}{4}}\right|+\xi^{-\frac{\beta+1}{4}}\eta^{-\frac{\beta+1}{4}}\left|1-\eta^{\frac{1-2\beta}{2}}\right|+\xi^{-\frac{\beta+1}{4}}\eta^{-\frac{\beta+1}{4}}\\
		%&\quad +4|(\eta-\xi)\partial_\xi F(\xi,\eta)|+2|F(\xi,\eta)|\\
		%&\lesssim \xi^{-\frac{3\beta}{2}}+\xi^{-\frac{\beta+1}{4}}\eta^{\frac{1-5\beta}{4}}+\xi^{-\frac{\beta+1}{4}}\eta^{-\frac{\beta+1}{4}}+4|(\eta-\xi)\partial_\xi F(\xi,\eta)|+2|F(\xi,\eta)|\\
		&\lesssim \xi^{-\frac{3\beta}{2}}+\xi^{-\frac{\beta+1}{2}}.\\
		\end{aligned}
		\end{equation*}
		More precisely,
		\begin{equation*}
		\begin{aligned}
		|(\eta-\xi)^2\partial_\xi^2 F(\xi,\eta)|&\lesssim \xi^{-\frac{3\beta}{2}},& 0<\beta<\frac{1}{2},\\
		|(\eta-\xi)^2\partial_\xi^2 F(\xi,\eta)|&\lesssim \xi^{-\frac{3}{4}}\log(\xi),& \beta=\frac{1}{2},\\
		|(\eta-\xi)^2\partial_\xi^2 F(\xi,\eta)|&\lesssim \xi^{-\frac{\beta+1}{2}},& \frac{1}{2}<\beta<4.\\
		\end{aligned}
		\end{equation*}

		\iffalse
		\begin{equation*}
			\begin{aligned}
			|(\eta-\xi)\partial_\xi ^2 F(\xi,\eta)|&\lesssim \int_{0}^{\xi^{-\frac{1}{2}}}R^{2\beta-4}R^{\frac{\beta+9}{2}}R^{\frac{\beta+1}{2}}dR+\int_{\xi^{-\frac{1}{2}}}^{\eta^{-\frac{1}{2}}}R^{2\beta-4}R^3\xi^{-\frac{\beta+3}{4}}R^{\frac{\beta+1}{2}}dR\\
			&\quad +\int_{\eta^{-\frac{1}{2}}}^{1}R^{2\beta-4}R^3\xi^{-\frac{\beta+3}{4}}\eta^{-\frac{\beta+1}{4}}dR+\int_{1}^{\infty}R^{-2\beta-4}R^3\xi^{-\frac{\beta+3}{4}}\eta^{-\frac{\beta+1}{4}}dR\\
			&\quad +2|\partial_\xi F(\xi,\eta)|\\
			%&\lesssim \xi^{-\frac{3\beta}{2}-1}+\xi^{-\frac{\beta+3}{4}}\left|\eta^{-\frac{5\beta+1}{4}}-\xi^{-\frac{5\beta+1}{4}}\right|+\xi^{-\frac{\beta+3}{4}}\eta^{-\frac{\beta+1}{4}}\left|1-\eta^{-\beta}\right|+\xi^{-\frac{\beta+3}{4}}\eta^{-\frac{\beta+1}{4}}\\
			%&\lesssim \xi^{-\frac{3\beta}{2}-1}+\xi^{-\frac{\beta+3}{4}}\eta^{-\frac{5\beta+1}{4}}+\xi^{-\frac{\beta+3}{4}}\eta^{-\frac{\beta+1}{4}}\\
			&\lesssim \xi^{-\frac{\beta}{2}-1},\\
			\end{aligned}
		\end{equation*}
		\fi

		Note that for $\beta>\frac{1}{2}$, the estimate derived from (\ref{eq:21}) would be better when $|\xi^{\frac{1}{2}}-\eta^{\frac{1}{2}}|\lesssim 1$, while the estimate from (\ref{eq:22}) would be better when $|\xi^{\frac{1}{2}}-\eta^{\frac{1}{2}}|\gtrsim 1$. Combinaing them, we get
		\begin{equation*}
			|\partial_\xi^2 F(\xi,\eta)|\lesssim \frac{\xi^{-\frac{\beta+3}{2}}}{1+|\xi^{\frac{1}{2}}-\eta^{\frac{1}{2}}|^2}.
		\end{equation*}

		Similarly, we have
		\begin{equation*}
			|\partial_\xi^2 F(\xi,\eta)|\lesssim \frac{\xi^{-\frac{\beta+3}{2}}}{1+|\xi^{\frac{1}{2}}-\eta^{\frac{1}{2}}|^2},\quad|\partial^2_{\xi\eta} F(\xi,\eta)|\lesssim \frac{\xi^{-\frac{\beta+3}{2}}}{1+|\xi^{\frac{1}{2}}-\eta^{\frac{1}{2}}|^2}. 
		\end{equation*}

		\subsubsection{Bounds for $\nabla^2 F(\xi,\eta)$ when $0<\frac{\xi}{4}\leq\eta\leq\xi\lesssim 1$.} 
		
		In this region, using (\ref{eq:21}), we have
		\begin{equation*}
			\begin{aligned}
				|\partial_\xi^2 F(\xi,\eta)|&\lesssim \int_{0}^{1}R^{2\beta-2}R^{\frac{\beta+9}{2}}R^{\frac{\beta+1}{2}}dR+\int_{1}^{\xi^{-\frac{1}{2}}}R^{-2\beta-2}R^{\frac{-\beta+9}{2}}R^{\frac{-\beta+1}{2}}dR\\
				&\quad +\int_{\xi^{-\frac{1}{2}}}^{\eta^{-\frac{1}{2}}}R^{-2\beta-2}R^2\xi^{\frac{\beta-5}{4}}R^{\frac{-\beta+1}{2}}dR+\int_{\eta^{-\frac{1}{2}}}^{\infty}R^{-2\beta-2}R^2\xi^{\frac{\beta-5}{4}}\eta^{\frac{\beta-1}{4}}dR\\
				&\lesssim 1+\xi^{\frac{3\beta}{2}-2}|\log(\xi)|,\\
			\end{aligned}
		\end{equation*}
		for $\frac{1}{2}<\beta<2$,
		\begin{equation*}
			\begin{aligned}
				|\partial_\xi^2 F(\xi,\eta)|&\lesssim \int_{0}^{1}R^{2\beta-2}R^{\frac{\beta+9}{2}}R^{\frac{\beta+1}{2}}dR+\int_{1}^{\xi^{-\frac{1}{2}}}R^{-2\beta-2}R^{\frac{-\beta+9}{2}}R^{\frac{-\beta+1}{2}}(\log(R))^2dR\\
				&\quad +\int_{\xi^{-\frac{1}{2}}}^{\eta^{-\frac{1}{2}}}R^{-2\beta-2}R^2\xi^{\frac{\beta-5}{4}}|\log(\xi)|R^{\frac{-\beta+1}{2}}\log(R)dR\\&\quad+\int_{\eta^{-\frac{1}{2}}}^{\infty}R^{-2\beta-2}R^2\xi^{\frac{\beta-5}{4}}|\log(\xi)|\eta^{\frac{\beta-1}{4}}|\log(\eta)|dR\\
				&\lesssim 1,\\
			\end{aligned}
		\end{equation*}
		for $\beta=2$, and
		\begin{equation*}
			\begin{aligned}
				|\partial_\xi^2 F(\xi,\eta)|&\lesssim \int_{0}^{1}R^{2\beta-2}R^{\frac{\beta+9}{2}}R^{\frac{\beta+1}{2}}dR+\int_{1}^{\xi^{-\frac{1}{2}}}R^{-2\beta-2}R^{\frac{\beta+5}{2}}R^{\frac{\beta-3}{2}}dR\\
				&\quad +\int_{\xi^{-\frac{1}{2}}}^{\eta^{-\frac{1}{2}}}R^{-2\beta-2}R^2\xi^{\frac{-\beta-1}{4}}R^{\frac{\beta-3}{2}}dR+\int_{\eta^{-\frac{1}{2}}}^{\infty}R^{-2\beta-2}R^2\xi^{\frac{-\beta-1}{4}}\eta^{\frac{-\beta+3}{4}}dR\\
				&\lesssim 1,\\
			\end{aligned}
		\end{equation*}
		for $\beta>2$.

		Till now, we conclude the derivation of bounds for $F$, $\nabla F$ and $\nabla^2 F$ in various regions. Based on these bounds, we come to study the $L^p$ boundedness of $\mathcal{K}$ and $[\xi\partial_\xi,\mathcal{K}]$.

	\begin{definition}
		For $s\geq0$, we define the weighted $L^2$ norm of $f\in L^2(\mathbb{R},d\rho)$ as
		\begin{equation*}
		\|f\|_{L^{2,s}_\rho}^2:=|f(\xi_d)|^2+|f(0)|^2+\int_{0}^{\infty}\left<\xi\right>^{2s} |f(\xi)|^2\rho(\xi)d\xi.
		\end{equation*}
		The weighted $L^2$ space is then defined as
		\begin{equation*}
		L^{2,s}_\rho:=\left\{f\in L^2(\mathbb{R},d\rho)\colon \|f\|_{L^{2,s}_\rho}<+\infty\right\}.
		\end{equation*}
	\end{definition}

	\begin{proposition}
		Let $-\frac{1}{4}+\frac{1}{16}<\alpha<\frac{15}{4}$, i.e. $\frac{1}{2}<\beta<4$. The following two operators are bounded
		\begin{equation*}
			\mathcal{K}\colon L^{2,s}_\rho \to L^{2,s}_\rho,\quad [\mathcal{K},\xi\partial_\xi]\colon L^{2,s}_\rho \to L^{2,s}_\rho,
		\end{equation*}
		for any $0\leq s <\min\{\frac{5\beta}{4}+\frac{1}{2},\frac{3}{4}\}$.
	\end{proposition}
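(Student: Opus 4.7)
The plan is to prove both boundedness statements by decomposing $\mathcal{K}$ into its matrix components and treating each block separately. The finite-dimensional entries $\mathcal{K}_{dd}$, $\mathcal{K}_{d0}$, $\mathcal{K}_{0d}$, $\mathcal{K}_{00}$ have already been computed explicitly as bounded scalars, so they define bounded rank-at-most-one maps on $L^{2,s}_\rho$ for every $s\geq 0$; their contribution to the commutator with $\xi\partial_\xi$ vanishes on the discrete components. For the cross terms $\mathcal{K}_{dc}$, $\mathcal{K}_{cd}$, $\mathcal{K}_{0c}$, $\mathcal{K}_{c0}$, I would use the explicit formulae derived above: exponential decay of $\phi_d$ combined with the pointwise bounds on $\phi(R,\xi)$ from Corollary \ref{cor:small} and Corollary \ref{cor:large} shows that the kernels $K_{dc}(\xi)$ and $\mathcal{K}_{cd}(\eta)$ are Schwartz-like in $\xi$ and $\eta$, yielding trivial boundedness on $L^{2,s}_\rho$. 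The $\phi_0$-cross terms, present only when $\beta>2$, need the integration by parts identity $\eta\,\mathcal{K}_{c0}(\eta) = \langle U(R)\phi_0(R),\phi(R,\eta)\rangle_{L^2_R}$ derived above to trade the slow polynomial decay of $\phi_0$ for the faster decay of $U$, after which the same argument applies.

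The main work lies in bounding $\mathcal{K}_{cc}$. By Proposition \ref{prop:bound} its Schwartz kernel splits into a diagonal multiplier $\bigl(\tfrac{3}{2}+\tfrac{\eta\rho'(\eta)}{\rho(\eta)}\bigr)\delta(\xi-\eta)$, whose boundedness is immediate from the symbol bounds on $\rho$ given in Proposition \ref{prop:a}, plus a principal value part with kernel $\mathrm{p.v.}\,\tfrac{\rho(\xi)}{\xi-\eta}F(\xi,\eta)$. For the latter I would conjugate by $\rho(\xi)^{1/2}\langle\xi\rangle^{s}$ to reduce to a bounded operator on unweighted $L^2(d\xi)$, then split the $(\xi,\eta)$-plane into a near-diagonal region $|\xi-\eta|\leq \max(\xi,\eta)/4$ and its complement. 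On the off-diagonal region I would apply Schur's test with the non-derivative bounds on $F$ collected above: the permissible range $s<\min\{5\beta/4+1/2,\,3/4\}$ arises exactly from forcing the Schur integrals to converge near $\xi=\eta=0$ (this gives the $5\beta/4+1/2$ threshold via $|F(\xi,\eta)|\lesssim \xi^{(\beta+1)/4}\eta^{(5\beta-1)/4}$ for small $\beta$) and in the mixed regime $0<\eta\lesssim 1\lesssim \xi$ (this gives $3/4$ via $|F|\lesssim \xi^{-(\beta+1)/4}$ with weight $\rho(\xi)\approx \xi^{\beta/2}$). On the near-diagonal region I would exploit the symmetry $F(\xi,\eta)=F(\eta,\xi)$ and its $C^1$ regularity to write $F(\xi,\eta)=F(\eta,\eta)+O(|\xi-\eta|)$, so that the principal value reduces to a (weighted) Hilbert transform modulated by a smooth symbol plus a bounded remainder.

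For the commutator $[\mathcal{K},\xi\partial_\xi]$, the reduction to the $cc$-block gives an operator whose Schwartz kernel is $\xi\partial_\xi K_{cc}(\eta,\xi)+\eta\partial_\eta K_{cc}(\eta,\xi)$. Differentiating the explicit form of $K_{cc}$ from Proposition \ref{prop:bound} produces a kernel with exactly the same structure but with $F$ replaced by a linear combination of $F$, $\xi\partial_\xi F$, and $\eta\partial_\eta F$, together with bounded contributions from $\xi\partial_\xi\bigl(\tfrac{\eta\rho'}{\rho}\bigr)$. The first-derivative bounds collected above are essentially of the same size as those of $F$ multiplied by $\xi^{-1}$ in the near-diagonal regime, so the same Schur plus near-diagonal cancellation argument applies.

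The main obstacle throughout is the strong singularity of the potential $U(R)=-\alpha R^{-2}+5W_\alpha^4(R)$ at $R=0$, which prevents $K_{cc}(\eta,\xi)$ from having arbitrary off-diagonal decay and forces a delicate case analysis of the bounds on $F$, $\nabla F$, and $\nabla^2 F$ depending on $\beta$. It is precisely at this point that the lower bound $\beta>1/2$ (equivalently $\alpha>-\tfrac{1}{4}+\tfrac{1}{16}$) enters: only then is $F$ of class $C^2$ throughout $(0,\infty)\times(0,\infty)$, guaranteeing that the principal value structure survives one differentiation and that the off-diagonal decay $|\partial_\xi^2 F(\xi,\eta)|\lesssim \xi^{-(\beta+3)/2}/(1+|\xi^{1/2}-\eta^{1/2}|^2)$ in the large-diagonal regime is integrable against the weight $\rho(\xi)^{1/2}\langle\xi\rangle^s$. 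I expect the book-keeping of these derivative bounds in the near-diagonal sector, simultaneously with the admissibility threshold for $s$, to be the most technical portion of the proof.
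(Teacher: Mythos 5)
Your proposal follows essentially the same route as the paper: block decomposition of $\mathcal{K}$, conjugation by $\langle\xi\rangle^{s}\sqrt{\rho}$ to reduce $\mathcal{K}_{cc}$ to a singular integral on unweighted $L^2$, a near-diagonal/off-diagonal split with Schur (Hilbert--Schmidt) estimates away from the diagonal and a weighted Hilbert transform plus difference-quotient argument near it, and for the commutator the same scheme with $F$ replaced by $\xi\partial_\xi F+\eta\partial_\eta F+(\xi\rho'/\rho)F$ using the off-diagonal decay of $\partial_\xi^2F$ and the $C^2$ regularity guaranteed by $\beta>\tfrac12$. One small book-keeping slip: the constraint $s<\tfrac34$ is forced in the sector $\xi\lesssim 1\lesssim\eta$ (where the output weight $\langle\eta\rangle^{s}\sqrt{\rho(\eta)}\approx\eta^{s+\beta/4}$ makes the $\eta$-integral marginal), not $\eta\lesssim 1\lesssim\xi$, and the $\tfrac{5\beta}{4}+\tfrac12$ threshold arises at large rather than small $\eta$ (and is vacuous once $\beta>\tfrac12$), but this does not affect the validity of the overall argument.
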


	\begin{proof}
		We devide this proof into several parts.
		\par \underline{\textbf{(1) Boundedness of $\mathcal{K}_{cc}\colon L^{2,\alpha}_{\rho}((0,+\infty))\to L^{2,\alpha}_{\rho}((0,+\infty))$}.} This is equivalent to prove that the integral kernel
		\begin{equation*}
			\widetilde{K}_0(\eta,\xi):=\left<\eta\right>^{s}\left<\xi\right>^{-s}\frac{\sqrt{\rho(\xi)\rho(\eta)}}{\xi-\eta}F(\xi,\eta)=:\frac{\widetilde{F}(\xi,\eta)}{\xi-\eta}.
		\end{equation*}
		induces a bounded operator on $L^2((0,+\infty))$.
		\par For this purpose, we devide the double space $(0,+\infty)\times (0,+\infty)$ into several regions as follows. The strategy is then to prove the boundedness of kernel aftering multiplying the indicator functions of these region seperately.
		\par Let $A=[0,1]\times[0,1]$ be the unit square and $D=\{(\xi,\eta)\mid 2\eta\leq\xi\leq\frac{\eta}{2}\}$ be a ``linear region" near the diagonal $\{\xi=\eta\}$. 
		%For the boundedness of the communtor $[\mathcal{K}_{cc},\xi\partial_\xi]$, it turns out that we need a region that is closer to the diagonal than the linear region, that is $E=\{(\xi,\eta)\mid (1+\eta^{\frac{1}{2}})^2\leq\xi\leq(\eta^{\frac{1}{2}}-1)^2\}$. 
		For those regions that are far away from the diagonal, we denote them by $F_1=\{(\xi,\eta)\mid \xi\geq1, 1\leq\eta\leq\frac{\xi}{2}\}$, $B_1=\{(\xi,\eta)\mid \xi\geq1, \eta\leq1\}$, as well as their transpose, i.e. $F_2=\{(\xi,\eta)\mid \eta\geq1, 1\leq\xi\leq\frac{\eta}{2}\}$, $B_2=\{(\xi,\eta)\mid \eta\geq1, \xi\leq1\}$. We also let $E=\{(\xi,\eta)\mid 0<\eta\leq\xi\leq1\}$.
		
		\par \underline{\textit{(1.1) Boundness of $\widetilde{K}_0\mathbf{1}_{D\cap A^c}$}.} A Hilbert transform type operator is extracted through
		\begin{equation*}
			\widetilde{K}_0(\xi,\eta)\mathbf{1}_{D\cap A^c}=\frac{\widetilde{F}(\eta,\eta)}{\xi-\eta}\mathbf{1}_{D\cap A^c}+\frac{\widetilde{F}(\xi,\eta)-\widetilde{F}(\eta,\eta)}{\xi-\eta}\mathbf{1}_{D\cap A^c}.
		\end{equation*}
		By the estimates given in Proposition \ref{prop:bound}, we have
		\begin{equation*}
			|\widetilde{F}(\eta,\eta)\mathbf{1}_{D\cap A^c}|\lesssim \eta^{\frac{\beta}{2}}\max\{\eta^{-\frac{3\beta}{2}},\eta^{-\frac{\beta+1}{2}}\log(\eta)\}\mathbf{1}_{D\cap A^c}\lesssim\max\{\eta^{-\beta},\eta^{-\frac{\beta+1}{2}}\log(\eta)\}\mathbf{1}_{D\cap A^c},
		\end{equation*}
		so the first term $\frac{\widetilde{F}(\eta,\eta)}{\xi-\eta}\mathbf{1}_{D\cap A^c}$ is bounded from Hilbert transfrom. For the second term with a difference quotient, we decompose further it into 
		\begin{equation*}
			\frac{\widetilde{F}(\xi,\eta)-\widetilde{F}(\eta,\eta)}{\xi-\eta}=\frac{\left<\eta\right>^s\left<\xi\right>^{-s}\sqrt{\rho(\xi)\rho(\eta)}-\rho(\eta)}{\xi-\eta}F(\xi,\eta)+\frac{F(\xi,\eta)-F(\eta,\eta)}{\xi-\eta}\rho(\eta).
		\end{equation*}
		Using the bound of $\partial_\xi F(\xi,\eta)$,
		\begin{equation*}
			\left|\frac{F(\xi,\eta)-F(\eta,\eta)}{\xi-\eta}\right|\rho(\eta)\mathbf{1}_{D\cap A^c}\lesssim \eta^{-\frac{\beta}{2}-1}\eta^{\frac{\beta}{2}}\mathbf{1}_{D\cap A^c}\lesssim \eta^{-1}\mathbf{1}_{D\cap A^c},
		\end{equation*}
		and similarly we have
		\begin{equation*}
			\begin{aligned}
				\left|\frac{\left<\eta\right>^s\left<\xi\right>^{-s}\sqrt{\rho(\xi)\rho(\eta)}-\rho(\eta)}{\xi-\eta}\right|F(\xi,\eta)\mathbf{1}_{D\cap A^c}&\lesssim \eta^{\frac{\beta}{2}-1}\max\{\eta^{-\frac{3\beta}{2}},\eta^{-\frac{\beta+1}{2}}\log(\eta)\}\mathbf{1}_{D\cap A^c}\\&\lesssim \max\{\eta^{-\beta-1},\eta^{-\frac{3}{2}}\log(\eta)\}\mathbf{1}_{D\cap A^c}.\\
			\end{aligned}
		\end{equation*}
		As a result, we have
		\begin{equation*}
			\left|\frac{\widetilde{F}(\xi,\eta)-\widetilde{F}(\eta,\eta)}{\xi-\eta}\right|\mathbf{1}_{D\cap A^c}\lesssim \eta^{-1}\mathbf{1}_{D\cap A^c},
		\end{equation*}
		which is bounded by Schur's lemma. This concludes the proof of the boundedness of $\widetilde{K}_0\mathbf{1}_{D\cap A^c}$.
		\par \underline{\textit{(1.2) Boundness of $\widetilde{K}_0\mathbf{1}_{F}$}.} For $\widetilde{K}_0\mathbf{1}_{F_1}$, we have
		\begin{equation*}
			\begin{aligned}
				|\widetilde{K}_0(\eta,\xi)\mathbf{1}_{F_1}|&\lesssim \xi^{-1}\xi^{\frac{\beta}{4}}\eta^{\frac{\beta}{4}}\xi^{-\frac{3\beta}{2}}\mathbf{1}_{F_1}\lesssim \xi^{-\frac{5\beta}{4}-1}\eta^{\frac{\beta}{4}}\mathbf{1}_{F_1},& 0<\beta<\frac{1}{5},\\
				|\widetilde{K}_0(\eta,\xi)\mathbf{1}_{F_1}|&\lesssim \xi^{-1}\xi^{\frac{1}{20}}\eta^{\frac{1}{20}}\xi^{-\frac{3}{10}}\log(\xi/\eta)\mathbf{1}_{F_1}\lesssim \xi^{-\frac{5}{4}}\eta^{\frac{1}{20}}\log(\xi)\mathbf{1}_{F_1},& \beta=\frac{1}{5},\\
				|\widetilde{K}_0(\eta,\xi)\mathbf{1}_{F_1}|&\lesssim \xi^{-1}\xi^{\frac{\beta}{4}}\eta^{\frac{\beta}{4}}\xi^{-\frac{\beta+1}{4}}\eta^{\frac{1-5\beta}{4}}\mathbf{1}_{F_1}\lesssim \xi^{-\frac{5}{4}}\eta^{\frac{1}{4}-\beta}\mathbf{1}_{F_1},& \frac{1}{5}<\beta<\frac{1}{2},\\
				|\widetilde{K}_0(\eta,\xi)\mathbf{1}_{F_1}|&\lesssim \xi^{-1}\xi^{\frac{1}{8}}\eta^{\frac{1}{8}}\xi^{-\frac{3}{8}}\eta^{-\frac{3}{8}}\log(\xi/\eta)\mathbf{1}_{F_1}\lesssim \xi^{-\frac{5}{4}}\eta^{-\frac{1}{4}}\log(\xi)\mathbf{1}_{F_1},& \beta=\frac{1}{2},\\
				|\widetilde{K}_0(\eta,\xi)\mathbf{1}_{F_1}|&\lesssim \xi^{-1}\xi^{\frac{\beta}{4}}\eta^{\frac{\beta}{4}}\xi^{-\frac{\beta+1}{4}}\eta^{-\frac{\beta+1}{4}}\mathbf{1}_{F_1}\lesssim \xi^{-\frac{5}{4}}\eta^{-\frac{1}{4}}\mathbf{1}_{F_1},& \beta>\frac{1}{2},\\
			\end{aligned}
		\end{equation*}
		and in all cases it is bounded by Schur's lemma. Similarly for $\widetilde{K}_0\mathbf{1}_{F_2}$, these estimates become
		\begin{equation*}
		\begin{aligned}
			|\widetilde{K}_0(\eta,\xi)\mathbf{1}_{F_2}|&\lesssim \eta^s\xi^{-s}\eta^{-1}\xi^{\frac{\beta}{4}}\eta^{\frac{\beta}{4}}\eta^{-\frac{3\beta}{2}}\mathbf{1}_{F_2}\lesssim \eta^{-\frac{5\beta}{4}-1+s}\xi^{\frac{\beta}{4}-s}\mathbf{1}_{F_2},& 0<\beta<\frac{1}{5},\\
			|\widetilde{K}_0(\eta,\xi)\mathbf{1}_{F_2}|&\lesssim \eta^s\xi^{-s}\eta^{-1}\xi^{\frac{1}{20}}\eta^{\frac{1}{20}}\xi^{-\frac{3}{10}}\log(\eta/\xi)\mathbf{1}_{F_2}\lesssim \eta^{-\frac{5}{4}+s}\xi^{\frac{1}{20}-s}\log(\eta)\mathbf{1}_{F_2},&\beta=\frac{1}{5},\\
			|\widetilde{K}_0(\eta,\xi)\mathbf{1}_{F_2}|&\lesssim \eta^s\xi^{-s}\eta^{-1}\xi^{\frac{\beta}{4}}\eta^{\frac{\beta}{4}}\eta^{-\frac{\beta+1}{4}}\xi^{\frac{1-5\beta}{4}}\mathbf{1}_{F_2}\lesssim \eta^{-\frac{5}{4}+s}\xi^{\frac{1}{4}-\beta-s}\mathbf{1}_{F_2},& \frac{1}{5}<\beta<\frac{1}{2},\\
			|\widetilde{K}_0(\eta,\xi)\mathbf{1}_{F_2}|&\lesssim \eta^s\xi^{-s}\eta^{-1}\xi^{\frac{1}{8}}\eta^{\frac{1}{8}}\eta^{-\frac{3}{8}}\xi^{-\frac{3}{8}}\log(\eta/\xi)\mathbf{1}_{F_2}\lesssim \eta^{-\frac{5}{4}+s}\xi^{-\frac{1}{4}-s}\log(\eta)\mathbf{1}_{F_2},&\beta=\frac{1}{2},\\
			|\widetilde{K}_0(\eta,\xi)\mathbf{1}_{F_2}|&\lesssim \eta^s\xi^{-s}\eta^{-1}\xi^{\frac{\beta}{4}}\eta^{\frac{\beta}{4}}\eta^{-\frac{\beta+1}{4}}\xi^{-\frac{\beta+1}{4}}\mathbf{1}_{F_2}\lesssim \eta^{-\frac{5}{4}+s}\xi^{-\frac{1}{4}-s}\mathbf{1}_{F_2},& \beta>\frac{1}{2},\\
		\end{aligned}
		\end{equation*}
		and also in all cases it is Hilbert-Schmidt if $0\leq s<\min\{\frac{5\beta}{4}+\frac{1}{2},\frac{3}{4}\}$.

		\par \underline{\textit{(1.3) Boundness of $\widetilde{K}_0\mathbf{1}_{B}$}.} For $\widetilde{K}_0\mathbf{1}_{B_1}$, we have
		\begin{equation*}
			\begin{aligned}
				|\widetilde{K}_0(\eta,\xi)\mathbf{1}_{B_1}|&\lesssim \xi^{-s}\xi^{-1}\xi^{\frac{\beta}{4}}\eta^{-\frac{\beta}{4}}\xi^{-\frac{3\beta}{2}}\mathbf{1}_{B_1}\lesssim \xi^{-\frac{5\beta}{4}-1-s}\eta^{-\frac{\beta}{4}}\mathbf{1}_{B_1},&0<\beta<\frac{1}{5},\\
				|\widetilde{K}_0(\eta,\xi)\mathbf{1}_{B_1}|&\lesssim \xi^{-s}\xi^{-1}\xi^{\frac{1}{20}}\eta^{-\frac{1}{20}}\xi^{-\frac{3}{10}}\log(\xi)\mathbf{1}_{B_1}\lesssim \xi^{-\frac{5}{4}-s}\eta^{-\frac{1}{20}}\log(\xi)\mathbf{1}_{B_1},&\beta=\frac{1}{5},\\
				|\widetilde{K}_0(\eta,\xi)\mathbf{1}_{B_1}|&\lesssim \xi^{-s}\xi^{-1}\xi^{\frac{\beta}{4}}\eta^{-\frac{\beta}{4}}\xi^{-\frac{\beta+1}{4}}\mathbf{1}_{B_1}\lesssim \xi^{-\frac{5}{4}-s}\eta^{-\frac{\beta}{4}}\mathbf{1}_{B_1},& \frac{1}{5}<\beta<2,\\
				|\widetilde{K}_0(\eta,\xi)\mathbf{1}_{B_1}|&\lesssim \xi^{-s}\xi^{-1}\xi^{\frac{\beta}{4}}\eta^{-\frac{\beta}{4}}|\log(\eta)|^{-1}\xi^{-\frac{\beta+1}{4}}\mathbf{1}_{B_1}\lesssim \xi^{-\frac{5}{4}-s}\eta^{-\frac{1}{2}}|\log(\eta)|^{-1}\mathbf{1}_{B_1},&\beta=2,\\
				|\widetilde{K}_0(\eta,\xi)\mathbf{1}_{B_1}|&\lesssim \xi^{-s}\xi^{-1}\xi^{\frac{\beta}{4}}\eta^{\frac{\beta}{4}-1}\xi^{-\frac{\beta+1}{4}}\mathbf{1}_{B_1}\lesssim \xi^{-\frac{5}{4}-s}\eta^{\frac{\beta}{4}-1}\mathbf{1}_{B_1},& 2<\beta<4,\\
			\end{aligned}
		\end{equation*}
		which in all cases turns out to be Hilbert-Schmidt. Similarly, the estimates for $\widetilde{K}_0\mathbf{1}_{B_2}$ are
		\begin{equation*}
			\begin{aligned}
				|\widetilde{K}_0(\eta,\xi)\mathbf{1}_{B_2}|&\lesssim \eta^{s}\eta^{-1}\eta^{\frac{\beta}{4}}\xi^{-\frac{\beta}{4}}\eta^{-\frac{3\beta}{2}}\mathbf{1}_{B_2}\lesssim \eta^{-\frac{5\beta}{4}-1+s}\xi^{-\frac{\beta}{4}}\mathbf{1}_{B_2},& 0<\beta<\frac{1}{5},\\
				|\widetilde{K}_0(\eta,\xi)\mathbf{1}_{B_2}|&\lesssim \eta^{s}\eta^{-1}\eta^{\frac{1}{20}}\xi^{-\frac{1}{20}}\eta^{-\frac{3}{10}}\log(\eta)\mathbf{1}_{B_2}\lesssim \eta^{-\frac{5}{4}+s}\xi^{-\frac{1}{20}}\log(\eta)\mathbf{1}_{B_2},& \beta=\frac{1}{5},\\
				|\widetilde{K}_0(\eta,\xi)\mathbf{1}_{B_2}|&\lesssim \eta^{s}\eta^{-1}\eta^{\frac{\beta}{4}}\xi^{-\frac{\beta}{4}}\eta^{-\frac{\beta+1}{4}}\mathbf{1}_{B_2}\lesssim \eta^{-\frac{5}{4}+s}\xi^{-\frac{\beta}{4}}\mathbf{1}_{B_2},& \frac{1}{5}<\beta<2,\\
				|\widetilde{K}_0(\eta,\xi)\mathbf{1}_{B_2}|&\lesssim \eta^{s}\eta^{-1}\eta^{\frac{1}{2}}\xi^{-\frac{1}{2}}|\log(\xi)|^{-1}\eta^{-\frac{\beta+1}{4}}\mathbf{1}_{B_2}\lesssim \eta^{-\frac{5}{4}+s}\xi^{-\frac{1}{2}}|\log(\xi)|^{-1}\mathbf{1}_{B_2},& \beta=2,\\
				|\widetilde{K}_0(\eta,\xi)\mathbf{1}_{B_2}|&\lesssim \eta^{s}\eta^{-1}\eta^{\frac{1}{2}}\xi^{\frac{\beta}{4}-1}\eta^{-\frac{\beta+1}{4}}\mathbf{1}_{B_2}\lesssim \eta^{-\frac{5}{4}+s}\xi^{\frac{\beta}{4}-1}\mathbf{1}_{B_2},& 2<\beta<4,\\
			\end{aligned}
		\end{equation*}
		which in all cases turns out also to be Hilbert-Schmidt if $0\leq s<\min\{\frac{5\beta}{4}+\frac{1}{2},\frac{3}{4}\}$.

		\par \underline{\textit{(1.4) Boundness of $\widetilde{K}_0\mathbf{1}_{D\cap A}$}.} We have
		\begin{equation*}
			\widetilde{K}_0(\xi,\eta)\mathbf{1}_{D\cap A}=\frac{\widetilde{F}(\eta,\eta)}{\xi-\eta}\mathbf{1}_{D\cap A}+\frac{\widetilde{F}(\xi,\eta)-\widetilde{F}(\eta,\eta)}{\xi-\eta}\mathbf{1}_{D\cap A},
		\end{equation*}
		where 
		\begin{equation*}
			\begin{aligned}
				|\widetilde{F}(\eta,\eta)\mathbf{1}_{D\cap A}|&\lesssim \eta^{-\frac{\beta}{2}}\eta^{\frac{3\beta}{2}}\mathbf{1}_{D\cap A}\lesssim\eta^{\beta}\mathbf{1}_{D\cap A},& 0<\beta<\frac{2}{3},\\
				|\widetilde{F}(\eta,\eta)\mathbf{1}_{D\cap A}|&\lesssim \eta^{-\frac{1}{3}}\eta|\log(\eta/2)|\mathbf{1}_{D\cap A}\lesssim \eta^{\frac{2}{3}}|\log(\eta/2)|\mathbf{1}_{D\cap A},& \beta=\frac{2}{3},\\
				|\widetilde{F}(\eta,\eta)\mathbf{1}_{D\cap A}|&\lesssim \eta^{-\frac{\beta}{2}}\eta\mathbf{1}_{D\cap A}\lesssim\eta^{1-\frac{\beta}{2}}\mathbf{1}_{D\cap A},& \frac{2}{3}<\beta<2,\\
				|\widetilde{F}(\eta,\eta)\mathbf{1}_{D\cap A}|&\lesssim \eta^{-1}|\log(\eta)|^{-2}\eta\mathbf{1}_{D\cap A}\lesssim|\log(\eta)|^{-2}\mathbf{1}_{D\cap A},&\beta=2,\\
				|\widetilde{F}(\eta,\eta)\mathbf{1}_{D\cap A}|&\lesssim \eta^{\frac{\beta}{2}-2}\eta\mathbf{1}_{D\cap A}\lesssim\eta^{\frac{\beta}{2}-1}\mathbf{1}_{D\cap A},& \beta>2,\\
			\end{aligned}
		\end{equation*}
		so the first term $\frac{\widetilde{F}(\eta,\eta)}{\xi-\eta}\mathbf{1}_{D\cap A}$ is bounded from Hilbert transform. For the second term $\frac{\widetilde{F}(\xi,\eta)-\widetilde{F}(\eta,\eta)}{\xi-\eta}\mathbf{1}_{D\cap A}$, we splits it further into two parts
		\begin{equation*}
			\frac{\widetilde{F}(\xi,\eta)-\widetilde{F}(\eta,\eta)}{\xi-\eta}\mathbf{1}_{D\cap A}=\frac{\left<\eta\right>^s\left<\xi\right>^{-s}\sqrt{\rho(\xi)\rho(\eta)}-\rho(\eta)}{\xi-\eta}F(\xi,\eta)\mathbf{1}_{D\cap A}+\rho(\eta)\frac{F(\xi,\eta)-F(\eta,\eta)}{\xi-\eta}\mathbf{1}_{D\cap A},
		\end{equation*} 
		with the first term behaving like
		\begin{equation*}
			\begin{aligned}
				\left|\frac{\left<\eta\right>^s\left<\xi\right>^{-s}\sqrt{\rho(\xi)\rho(\eta)}-\rho(\eta)}{\xi-\eta}F(\xi,\eta)\right|\mathbf{1}_{D\cap A}&\lesssim\eta^{-\frac{\beta}{2}-1}\eta^{\frac{3\beta}{2}}\mathbf{1}_{D\cap A}\lesssim\eta^{\beta-1}\mathbf{1}_{D\cap A},& 0<\beta<2,\\
				\left|\frac{\left<\eta\right>^s\left<\xi\right>^{-s}\sqrt{\rho(\xi)\rho(\eta)}-\rho(\eta)}{\xi-\eta}F(\xi,\eta)\right|\mathbf{1}_{D\cap A}&\lesssim\eta^{-2}|\log(\eta)|^{-2}\eta|\log(\eta/2)|\mathbf{1}_{D\cap A}\\&\lesssim \eta^{-1}|\log(\eta)|^{-1}\mathbf{1}_{D\cap A},& \beta=2,\\
				\left|\frac{\left<\eta\right>^s\left<\xi\right>^{-s}\sqrt{\rho(\xi)\rho(\eta)}-\rho(\eta)}{\xi-\eta}F(\xi,\eta)\right|\mathbf{1}_{D\cap A}&\lesssim\eta^{\frac{\beta}{2}-3}\eta\mathbf{1}_{D\cap A}\lesssim \eta^{\frac{\beta}{2}-2}\mathbf{1}_{D\cap A},& \beta>2.\\
			\end{aligned}
		\end{equation*}
		while the second term behaves like
		\begin{equation*}
			\begin{aligned}
				\rho(\eta)\left|\frac{F(\xi,\eta)-F(\eta,\eta)}{\xi-\eta}\right|\mathbf{1}_{D\cap A}&\lesssim \eta^{-\frac{\beta}{2}}\eta^{\frac{3\beta}{2}-1}\mathbf{1}_{D\cap A}\lesssim\eta^{\beta-1}\mathbf{1}_{D\cap A},& 0<\beta<\frac{2}{3},\\
				\rho(\eta)\left|\frac{F(\xi,\eta)-F(\eta,\eta)}{\xi-\eta}\right|\mathbf{1}_{D\cap A}&\lesssim \eta^{-\frac{1}{3}}|\log(\eta/2)|\mathbf{1}_{D\cap A}\lesssim \eta^{-\frac{1}{3}}|\log(\eta/2)|\mathbf{1}_{D\cap A},& \beta=\frac{2}{3},\\
				\rho(\eta)\left|\frac{F(\xi,\eta)-F(\eta,\eta)}{\xi-\eta}\right|\mathbf{1}_{D\cap A}&\lesssim \eta^{-\frac{\beta}{2}}\mathbf{1}_{D\cap A},& \frac{2}{3}<\beta<2,\\
				\rho(\eta)\left|\frac{F(\xi,\eta)-F(\eta,\eta)}{\xi-\eta}\right|\mathbf{1}_{D\cap A}&\lesssim \eta^{-1}(\log(\eta))^{-2}\mathbf{1}_{D\cap A},& \beta=2,\\
				\rho(\eta)\left|\frac{F(\xi,\eta)-F(\eta,\eta)}{\xi-\eta}\right|\mathbf{1}_{D\cap A}&\lesssim \eta^{\frac{\beta}{2}-2}\mathbf{1}_{D\cap A},& \beta>2,\\
			\end{aligned}
		\end{equation*}
		which in all cases shows that $\frac{\widetilde{F}(\xi,\eta)-\widetilde{F}(\eta,\eta)}{\xi-\eta}\mathbf{1}_{D\cap A}$ is bounded by Schur's lemma.
		\par \underline{\textit{(1.5) Boundness of $\widetilde{K}_0\mathbf{1}_{D^c\cap A}$}.} It's enough to prove the boundedness for $\widetilde{K}_0\mathbf{1}_{E}$, for which we have the following estimate
		\begin{equation*}
			\begin{aligned}
				|\widetilde{K}_0(\eta,\xi)\mathbf{1}_{E}|&\lesssim \xi^{-1}\xi^{-\frac{\beta}{4}}\eta^{-\frac{\beta}{4}}\xi^{\frac{\beta+1}{4}}\eta^{\frac{5\beta-1}{4}}\mathbf{1}_{E}\lesssim \xi^{-\frac{3}{4}}\eta^{\beta-\frac{1}{4}}\mathbf{1}_{E},& 0<\beta<\frac{1}{5},\\
				|\widetilde{K}_0(\eta,\xi)\mathbf{1}_{E}|&\lesssim \xi^{-1}\xi^{-\frac{1}{20}}\xi^{\frac{3}{10}}\eta^{-\frac{1}{20}}\log(\xi/\eta)\mathbf{1}_{E}\lesssim \xi^{-\frac{3}{4}}\eta^{-\frac{1}{20}}\log(\xi/\eta)\mathbf{1}_{E},& \beta=\frac{1}{5},\\
				|\widetilde{K}_0(\eta,\xi)\mathbf{1}_{E}|&\lesssim \xi^{-1}\xi^{-\frac{\beta}{4}}\eta^{-\frac{\beta}{4}}\xi^{\frac{3\beta}{2}}\mathbf{1}_{E}\lesssim \xi^{\frac{5\beta}{4}-1}\eta^{-\frac{\beta}{4}}\mathbf{1}_{E},& \frac{1}{5}<\beta<\frac{2}{3},\\
				|\widetilde{K}_0(\eta,\xi)\mathbf{1}_{E}|&\lesssim \xi^{-1}\xi^{-\frac{1}{6}}\eta^{-\frac{1}{6}}\xi\log(\xi/\eta)\mathbf{1}_{E}\lesssim \xi^{-\frac{1}{6}}\eta^{-\frac{1}{6}}\log(\xi/\eta)\mathbf{1}_{E},& \beta=\frac{2}{3},\\
				|\widetilde{K}_0(\eta,\xi)\mathbf{1}_{E}|&\lesssim \xi^{-1}\xi^{-\frac{\beta}{4}}\eta^{-\frac{\beta}{4}}\xi\mathbf{1}_{E}\lesssim \xi^{-\frac{\beta}{4}}\eta^{-\frac{\beta}{4}}\mathbf{1}_{E},& \frac{2}{3}<\beta<2,\\
				|\widetilde{K}_0(\eta,\xi)\mathbf{1}_{E}|&\lesssim \xi^{-1}\xi^{-\frac{1}{2}}|\log(\xi)|^{-1}\eta^{-\frac{1}{2}}|\log(\eta)|^{-1}\xi\mathbf{1}_{E}\\&\lesssim \xi^{-\frac{1}{2}}|\log(\xi)|^{-1}\eta^{-\frac{1}{2}}|\log(\eta)|^{-1}\mathbf{1}_{E},&\beta=2,\\
				|\widetilde{K}_0(\eta,\xi)\mathbf{1}_{E}|&\lesssim \xi^{-1}\xi^{\frac{\beta}{4}-1}\eta^{\frac{\beta}{4}-1}\xi\mathbf{1}_{E}\lesssim \xi^{\frac{\beta}{4}-1}\eta^{\frac{\beta}{4}-1}\mathbf{1}_{E},& 2<\beta<4,\\
			\end{aligned}
		\end{equation*}
		which proves the boundedness of $\widetilde{K}_0\mathbf{1}_{E}$ since it's a Hilbert-Schmidt kernel. Untill now, we have established the boundedness of $\mathcal{K}_{cc}$ as an operator on $L^{2,\alpha}_\rho$.

		\par \underline{\textbf{(2) Boundedness of $[\mathcal{K}_{cc},\xi\partial_\xi]\colon L^{2,\alpha}_{\rho}((0,+\infty))\to L^{2,\alpha}_{\rho}((0,+\infty))$}.} A direct computation shows that the Schwartz kernel of this operator is of the form $\mathrm{p.v.}\frac{\rho(\xi)}{\xi-\eta}F^{\mathrm{com}}(\xi,\eta)$ modulo some delta distributions sitting on the diagonal $\{\xi=\eta\}$, where
		\begin{equation*}
			F^{\mathrm{com}}(\xi,\eta)=\frac{\xi\rho'(\xi)}{\rho(\xi)}F(\xi,\eta)+\xi\partial_\xi F(\xi,\eta)+\eta \partial_\eta F(\xi,\eta).
		\end{equation*}
		To prove the boundedness, it's equivalent to prove that the integral kernel (in the principal value sense)
		\begin{equation*}
			\widetilde{K}_{c}^{\mathrm{com}}(\eta,\xi):=\left<\eta\right>^s\left<\xi\right>^{-s}\frac{\sqrt{\rho(\xi)\rho(\eta)}}{\xi-\eta}F^{\mathrm{com}}(\xi,\eta)=:\frac{\widetilde{F}^{\mathrm{com}}(\xi,\eta)}{\xi-\eta}
		\end{equation*}
		induces a bounded operator on $L^2((0,+\infty))$. Moreover, since $\rho$ has symbol-like behavior near $0$ and $\infty$, we may omit the term $\frac{\xi\rho'(\xi)}{\rho(\xi)}F(\xi,\eta)$ in $F^{\mathrm{com}}(\xi,\eta)$.

		\par \underline{\textit{(2.1) Boundedness of $\widetilde{K}_c^{\mathrm{com}}\mathbf{1}_{D\cap A^c}$}.} 
		
		As in \textit{part (2.1)}, after extracting the Hilbert transfrom type operator $\frac{\widetilde{F}^{\mathrm{com}}(\eta,\eta)}{\xi-\eta}$ with
		\begin{equation*}
			|\widetilde{F}^{\mathrm{com}}(\eta,\eta)|\lesssim \eta^{\frac{\beta}{2}}\eta\eta^{-\frac{\beta}{2}-1}\lesssim 1,
		\end{equation*}
		we only have to deal with the difference qoutient $\frac{\widetilde{F}^{\mathrm{com}}(\xi,\eta)-\widetilde{F}^{\mathrm{com}}(\eta,\eta)}{\xi-\eta}\mathbf{1}_{D\cap A^c}$. To use derivative bound, it suffices to consider the terms where $\partial_\xi$ falls on $\partial_\xi F(\xi,\eta)$ and $\partial_\eta F(\xi,\eta)$, since other terms are better. We have, for example,
		\begin{equation*}
			\rho(\xi)\xi\partial_\xi^2F(\xi,\eta)\mathbf{1}_{D\cap A^c}\lesssim \frac{\xi^{-\frac{1}{2}}}{1+|\xi^{\frac{1}{2}}-\eta^{\frac{1}{2}}|^2}\mathbf{1}_{D\cap A^c}.
		\end{equation*}
		Since
		\begin{equation*}
			\begin{aligned}
			\int_{0}^\infty \frac{\xi^{-\frac{1}{2}}}{1+|\xi^{\frac{1}{2}}-\eta^{\frac{1}{2}}|^2}\mathbf{1}_{D\cap A^c}d\eta&\lesssim \int_{\xi/4}^{(\xi^{\frac{1}{2}}-1)^2}\frac{\xi^{-\frac{1}{2}}}{(\xi^{\frac{1}{2}}-\eta^{\frac{1}{2}})^2}d\eta +\int_{(\xi^{\frac{1}{2}}-1)^2}^{\xi}\xi^{-\frac{1}{2}}d\eta\\
			&\lesssim \int_{1/4}^{(1-\xi^{-\frac{1}{2}})^2}\frac{\xi^{-\frac{1}{2}}}{(1-u^{\frac{1}{2}})^2}du+1\\
			&\lesssim\xi^{-\frac{1}{2}}(2\xi^{-\frac{1}{2}}-\xi^{-1})^{-1}+1\lesssim 1.\\
			\end{aligned}
		\end{equation*}
		we get the desired boundedness by Schur's lemma.

		\par \underline{\textit{(2.2) Boundedness of $\widetilde{K}_c^{\mathrm{com}}\mathbf{1}_{D\cap A}$}.} 
		After exacting $\frac{\widetilde{F}^{\mathrm{com}}(\eta,\eta)}{\xi-\eta}$ with $\widetilde{F}^{\mathrm{com}}(\eta,\eta)$ having the same bounds as $\widetilde{F}(\eta,\eta)$, it suffices to check
		\begin{equation*}
			\begin{aligned}
				\rho(\xi)\xi\partial_\xi^2F(\xi,\eta)\mathbf{1}_{D\cap A}&\lesssim \xi^{-\frac{\beta}{2}}\xi\xi^{\frac{3\beta}{2}-2}\lesssim \xi^{\beta-1}, &\frac{1}{2}<\beta<\frac{4}{3},\\
				\rho(\xi)\xi\partial_\xi^2F(\xi,\eta)\mathbf{1}_{D\cap A}&\lesssim \xi^{-\frac{2}{3}}\xi|\log(\xi)|\lesssim \xi^{\frac{1}{3}}|\log(\xi)|, &\beta=\frac{4}{3},\\
				\rho(\xi)\xi\partial_\xi^2F(\xi,\eta)\mathbf{1}_{D\cap A}&\lesssim \xi^{-\frac{\beta}{2}}\xi\lesssim\xi^{1-\frac{\beta}{2}} , &\frac{4}{3}<\beta<2,\\
				\rho(\xi)\xi\partial_\xi^2F(\xi,\eta)\mathbf{1}_{D\cap A}&\lesssim \xi^{-1}|\log(\xi)|^{-2}\xi\lesssim |\log(\xi)|^{-2}, &\beta=2,\\
				\rho(\xi)\xi\partial_\xi^2F(\xi,\eta)\mathbf{1}_{D\cap A}&\lesssim \xi^{\frac{\beta}{2}-2}\xi\lesssim \xi^{\frac{\beta}{2}-1}, &\beta>2.\\
			\end{aligned}
		\end{equation*}
		which in all cased turns out to be bounded by Schur's lemma.

		\par \underline{\textit{(2.3) Boundedness of $\widetilde{K}_c^{\mathrm{com}}\mathbf{1}_{D^c}$}.} Outside $D$, $\xi\partial_\xi F(\xi,\eta)$ and $\eta\partial_\eta F(\xi,\eta)$ have the same bounds as $F(\xi,\eta)$, hence the boundedness follows.

		\par This concludes the proof of the boundedness of $[\mathcal{K}_{cc},\xi\partial_\xi]$.

		\par \underline{\textbf{(3) Boundedness of $\mathcal{K}_{cd}$, $\mathcal{K}_{c0}$, $\mathcal{K}_{dc}$ and $\mathcal{K}_{0c}$.}} 
		To prove the boundedness of $\mathcal{K}_{dc}$ and $\mathcal{K}_{0c}$, we have to show that $\widetilde{K_{dc}}(\xi)$ and $\widetilde{K_{0c}}(\xi)$ belong to $L^2((0,+\infty))$, where
		\begin{equation*}
			\widetilde{K_{dc}}(\xi):=-\frac{2}{3}\left<\xi\right>^{-s}\frac{\sqrt{\rho(\xi)}}{\xi-\xi_d}\left<U(R)\phi(R,\xi),\phi_d(R)\right>_{L^2_R}=-\frac{2}{3}\left<\xi\right>^{-s}\frac{\sqrt{\rho(\xi)}}{\xi-\xi_d}F(\xi,\xi_d),
		\end{equation*}
		and
		\begin{equation*}
			\widetilde{K_{0c}}(\xi):=-\frac{2}{3}\left<\xi\right>^{-s}\frac{\sqrt{\rho(\xi)}}{\xi}\left<U(R)\phi(R,\xi),\phi_0(R))\right>_{L^2_R}=-\frac{2}{3}\left<\xi\right>^{-s}\frac{\sqrt{\rho(\xi)}}{\xi}F(\xi,0).
		\end{equation*}
		These hold for all $s\geq 0$.
		
		To prove the boundedness of $\mathcal{K}_{cd}$ and $\mathcal{K}_{c0}$, we have to show that $\widetilde{\mathcal{K}_{cd}}$ and $\widetilde{\mathcal{K}_{c0}}$ belong to $L^2((0,+\infty))$, where
		\begin{equation*}
			\widetilde{\mathcal{K}_{cd}}(\xi):=\left<\xi\right>^s\frac{\sqrt{\rho(\xi)}}{\xi-\xi_d}\left<U(R)\phi(R,\xi),\phi_d(R)\right>_{L^2_R}=\left<\xi\right>^s\frac{\sqrt{\rho(\xi)}}{\xi-\xi_d}F(\xi,\xi_d),
		\end{equation*}
		and
		\begin{equation*}
			\widetilde{\mathcal{K}_{c0}}(\xi):=\left<\xi\right>^s\frac{\sqrt{\rho(\xi)}}{\xi}\left<U(R)\phi(R,\xi),\phi_0(R)\right>_{L^2_R}=\left<\xi\right>^s\frac{\sqrt{\rho(\xi)}}{\xi}F(\xi,0).
		\end{equation*}
		These hold for $0\leq s<\min\{\frac{5\beta}{4}+\frac{1}{2},\frac{3}{4}\}$.
		
		We leave all the details to the reader.

		\par \underline{\textbf{(4) Boundedness of $\xi\partial_\xi\mathcal{K}_{cd}$, $\xi\partial_\xi\mathcal{K}_{c0}$, $\mathcal{K}_{dc}\xi\partial_\xi$ and $\mathcal{K}_{0c}\xi\partial_\xi$.}} As in \textbf{part (3)}, the corresponding functions are essentially obtained by replacing $F(\xi,\xi_d)$ and $F(\xi,0)$ by $\xi\partial_\xi F(\xi,\xi_d)$ and $\xi\partial_\xi F(\xi,0)$ respectively. Since $\xi\partial_\xi F$ and $F$ have the same bound in this region, the desired result follows from \textbf{part (3)}.	Again, we leave all the details to the reader.	
	\end{proof}

	\section{The final equation}\label{sec:final}

	To solve the equation (\ref{eq:perturb_symp}) in Fourier side, we need to calculate the commutator of $\mathcal{D}$ and $\mathcal{F}$. Recall that from the last two sections, we have
	\begin{equation*}
		\mathcal{F}\mathcal{L}=\xi\mathcal{F},\quad \mathcal{F}R\partial_R = -2\xi\partial_\xi \mathcal{F}+\mathcal{K}\mathcal{F}.
	\end{equation*}
	Hence,
	\begin{equation*}
		\mathcal{F}\mathcal{D}=\left(\partial_\tau-2\frac{\lambda_{\tau}}{\lambda}\xi\partial_\xi-\frac{\lambda_\tau}{\lambda}\right)\mathcal{F}+\frac{\lambda_{\tau}}{\lambda}\mathcal{K}\mathcal{F}=:\widehat{\mathcal{D}}\mathcal{F}+\frac{\lambda_{\tau}}{\lambda}\mathcal{K}\mathcal{F},
	\end{equation*}
	and
	\begin{equation*}
		\begin{aligned}
			\mathcal{F}\mathcal{D}^2&=\left(\widehat{\mathcal{D}}+\frac{\lambda_{\tau}}{\lambda}\mathcal{K}\right)^2\mathcal{F}=\widehat{\mathcal{D}}^2\mathcal{F}+\widehat{\mathcal{D}}\frac{\lambda_{\tau}}{\lambda}\mathcal{K}\mathcal{F}+\frac{\lambda_{\tau}}{\lambda}\mathcal{K}\widehat{\mathcal{D}}\mathcal{F}+(\frac{\lambda_{\tau}}{\lambda})^2\mathcal{K}^2\mathcal{F}\\
			&=\widehat{\mathcal{D}}^2\mathcal{F}+2\frac{\lambda_{\tau}}{\lambda}\mathcal{K}\widehat{\mathcal{D}}\mathcal{F}+\frac{\lambda_{\tau}}{\lambda}[\widehat{\mathcal{D}},\mathcal{K}]\mathcal{F}+(\frac{\lambda_{\tau}}{\lambda})^2(\mathcal{K}^2 +c_\nu\mathcal{K})\mathcal{F}\\
			&=\widehat{\mathcal{D}}^2\mathcal{F}+2\frac{\lambda_{\tau}}{\lambda}\mathcal{K}\widehat{\mathcal{D}}\mathcal{F}+2(\frac{\lambda_{\tau}}{\lambda})^2[\mathcal{K},\xi\partial_\xi]\mathcal{F}+(\frac{\lambda_{\tau}}{\lambda})^2(\mathcal{K}^2+c_\nu\mathcal{K})\mathcal{F},\\
		\end{aligned}
	\end{equation*}
	where $c_\nu=-\frac{2+\nu^{-1}}{1+\nu^{-1}}$.
	\par Denote $(\mathcal{F}\widetilde{\varepsilon})(\tau,\cdot)=\underline{\mathbf{x}}(\tau)=(x_d(\tau),x_0(\tau),x(\tau,\xi)|_{\xi>0})$, then after applying $\mathcal{F}$ on both sides of (\ref{eq:perturb_symp}), we get the final equation as
	\begin{equation*}
		\begin{aligned}
			\left(\widehat{\mathcal{D}}^2+\frac{\lambda_{\tau}}{\lambda}\widehat{\mathcal{D}}+\xi\right)\underline{\mathbf{x}}=&-(\frac{\lambda_{\tau}}{\lambda})^2\left(\mathcal{K}^2+\mathcal{K}+c_\nu\mathcal{K}+2[\mathcal{K},\xi\partial_\xi]\right)\underline{\mathbf{x}}-2\frac{\lambda_{\tau}}{\lambda}\mathcal{K}\widehat{\mathcal{D}}\underline{\mathbf{x}}\\
			&+\lambda^{-2}\mathcal{F}R\left(N_{2k-1}(R^{-1}\mathcal{F}^{-1}\underline{\mathbf{x}})+e_{2k-1}\right).
		\end{aligned}
	\end{equation*}
	We emphasis again that $\partial_\xi$ derivative only acts on the continuous part of $\underline{\mathbf{x}}$, i.e. $\underline{\mathbf{x}}|_{\mathbb{R}_{>0}}$. 
	
	\begin{remark}[on extending $u_{2k-1}$ and $e_{2k-1}$]
		Note that $u_{2k-1}(t,r)$ and $e_{2k-1}(t,r)$ from Section \ref{sec:appro} are only defined inside the light-cone. We have to extend them to $\mathbb{R}^3$ for all small $t$ to apply the distorted Hankel transform. This can be done while preserving the regularity and smallness.
	\end{remark}

	From now on, we work on solving the equation
	\begin{equation}\label{eq:final}
		(\widehat{\mathcal{D}}^2+\frac{\lambda_{\tau}}{\lambda}\widehat{\mathcal{D}}+\xi)\underline{\mathbf{x}}=\underline{\mathbf{y}}.
	\end{equation}
	The function space we are going to work on is defined as 
	\begin{definition}
		For any $N\geq0$ and $s\geq0$, we define the $\tau$-weighted space $L^{\infty,N}L^{2,s}_\rho$ to be the space of functions with finite $L^{\infty,N}L^{2,s}_\rho$ norm:
		\begin{equation*}
			\|\underline{\mathbf{x}}\|_{L^{\infty,N}L^{2,s}_\rho}:=\sup_{\tau\geq \tau_0}\tau^N\|\underline{\mathbf{x}}\|_{L^{2,s}_\rho}.
		\end{equation*}
	\end{definition}

	\par For the continuous part, by imposing the zero data at $\tau=\infty$,  we can solve (\ref{eq:final}) explicitly as follows. Let $Sf(\tau,\xi)=f(\tau,\lambda^{-2}(\tau)\xi)$ and $Mf(\tau,\xi)=\lambda^{-1}(\tau)f(\tau,\xi)$. Then, we have
	\begin{equation*}
		M^{-1}\left(\partial_\tau-2\frac{\lambda_{\tau}}{\lambda}\xi\partial_\xi\right)M\underline{\mathbf{x}}=\widehat{\mathcal{D}}\underline{\mathbf{x}},
	\end{equation*}
	and
	\begin{equation*}
		S^{-1}\partial_\tau S\underline{\mathbf{x}}=\left(\partial_\tau-2\frac{\lambda_{\tau}}{\lambda}\xi\partial_\xi\right)\underline{\mathbf{x}}.
	\end{equation*}
	Hence, we get
	\begin{equation*}
		\widehat{\mathcal{D}}\underline{\mathbf{x}}=(SM)^{-1}\partial_\tau (SM) \underline{\mathbf{x}},
	\end{equation*}
	and
	\begin{equation*}
		\left(\widehat{\mathcal{D}}^2+\frac{\lambda_{\tau}}{\lambda}\widehat{\mathcal{D}}+\xi\right)\underline{\mathbf{x}}=(SM)^{-1}\left(\partial_\tau^2+\frac{\lambda_{\tau}}{\lambda}\partial_\tau+\lambda^{-2}(\tau)\xi\right)(SM)\underline{\mathbf{x}}.
	\end{equation*}
	One may check that $\sin(\xi^{\frac{1}{2}}\omega(\tau))$ and $\cos(\xi^{\frac{1}{2}}\omega(\tau))$ form a fundamental system of solutions of $\partial_\tau^2+\frac{\lambda_{\tau}}{\lambda}\partial_\tau+\lambda^{-2}(\tau)\xi=0$, where $\omega(\tau)=\int_{1}^{\tau}\lambda^{-1}(\sigma)d\sigma=\nu^{-\nu^{-1}}(1-\tau^{-\nu^{-1}})$. As a result, we obatin an explicit formula:
	\begin{equation}\label{eq:x}
		\begin{aligned}
			x(\tau,\xi)&=\int_{\tau}^{\infty}\xi^{-\frac{1}{2}}\sin\left(\lambda(\tau)\xi^{\frac{1}{2}}(\omega(\sigma)-\omega(\tau))\right)y(\sigma,\frac{\lambda^{2}(\tau)}{\lambda^2(\sigma)}\xi)d\sigma\\
			&=:\int_{\tau}^{\infty}H_c(\tau,\sigma;\xi)y(\sigma,\xi)d\sigma.\\
		\end{aligned}
	\end{equation}
	Here, for each fiexed $\tau$ and $\sigma$, we interpret $H_c(\tau,\sigma;\xi)$ as an operator in $\xi$.
	\begin{proposition}
		Let $\sigma\geq\tau$, we have the following estimates for the kernel $H_c(\tau,\sigma;\cdot)$:
		\begin{equation*}
			\|H_c(\tau,\sigma;\cdot)\|_{L^{2,s}_\rho\to L^{2,s+\frac{1}{2}}_\rho}\lesssim \tau(\frac{\sigma}{\tau})^C,
		\end{equation*}
		and
		\begin{equation*}
		\|\widehat{\mathcal{D}}H_c(\tau,\sigma,\cdot)\|_{L^{2,s}_\rho\to L^{2,s}_{\rho}}\lesssim(\frac{\sigma}{\tau})^C.
		\end{equation*}
		where $C=C(s,\beta,\nu)$ is a (large) constant.
		\par Moreover, let $\underline{\mathbf{x}}$ and $\underline{\mathbf{y}}$ be as in (\ref{eq:final}), then we have for $N$ large enough that
		\begin{equation*}
			\|x\|_{L^{\infty,N-2}L^{2,s+\frac{1}{2}}_\rho}\leq\frac{C'}{N} \|y\|_{L^{\infty,N}L^{2,s}_\rho},
		\end{equation*}
		and
		\begin{equation*}
			\|\widehat{\mathcal{D}}x\|_{L^{\infty,N-1}L^{2,s}_\rho}\leq \frac{C'}{N}\|y\|_{L^{\infty,N}L^{2,s}_{\rho}}.
		\end{equation*}
		where $C'=C'(s,\beta,\nu)$ is a (large) constant independent of $N$.
	\end{proposition}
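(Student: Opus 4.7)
The plan is to reduce everything to pointwise bounds on the explicit kernel and then carry out the time integration by hand. The first observation is that $\widehat{\mathcal{D}}$ is conjugated to a plain $\partial_\tau$ by $SM$, so the effect of $\widehat{\mathcal{D}}$ on $H_c$ can be read off the ``sine'' ansatz. Since $H_c(\tau,\tau;\xi)=0$ there is no boundary contribution when differentiating under the integral, and a direct calculation using that the fundamental solutions $\sin(\xi^{1/2}\omega(\tau)),\cos(\xi^{1/2}\omega(\tau))$ of $\partial_\tau^2+\tfrac{\lambda_\tau}{\lambda}\partial_\tau+\lambda^{-2}\xi$ have Wronskian $-\xi^{1/2}\lambda^{-1}(\tau)$ yields
\begin{equation*}
\widehat{\mathcal{D}}H_c(\tau,\sigma;\xi)\,y(\sigma,\cdot)(\xi)
= -\cos\!\bigl(\lambda(\tau)\xi^{1/2}(\omega(\sigma)-\omega(\tau))\bigr)\,y\!\bigl(\sigma,\tfrac{\lambda^2(\tau)}{\lambda^2(\sigma)}\xi\bigr).
\end{equation*}
So the two operators $H_c$ and $\widehat{\mathcal{D}}H_c$ differ only by replacing $\xi^{-1/2}\sin(\cdot)$ with $-\cos(\cdot)$.

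The core estimate is then the bound on the rescaling operator $T_\mu f(\xi)=f(\mu\xi)$ between weighted $L^2_\rho$ spaces, where $\mu=\mu(\tau,\sigma)=\lambda^2(\tau)/\lambda^2(\sigma)=(\tau/\sigma)^{2+2\nu^{-1}}\le 1$. After the substitution $\xi'=\mu\xi$ one needs a pointwise estimate of the form
\begin{equation*}
\sup_{\xi'>0}\frac{\langle\xi'/\mu\rangle^{2s'}\,\rho(\xi'/\mu)}{\mu\,\langle\xi'\rangle^{2s}\,\rho(\xi')}\,\lesssim\,\mu^{-C_0(s,s',\beta)},
\end{equation*}
which I would prove by splitting into the three regimes $\xi'\ge1$, $\mu\le\xi'\le1$, and $\xi'\le\mu$, and in each regime inserting the asymptotics of $\rho$ from Proposition~\ref{prop:a} (with the obvious modifications at $\beta=2$). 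This gives $\|\widehat{\mathcal{D}}H_c(\tau,\sigma;\cdot)\|_{L^{2,s}_\rho\to L^{2,s}_\rho}\lesssim\mu^{-C/2}=(\sigma/\tau)^{C}$ for a $C=C(s,\beta,\nu)$ after using $|\cos|\le 1$.

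For $H_c$ itself the same substitution turns the argument of the sine into $\lambda(\sigma)\xi'^{1/2}(\omega(\sigma)-\omega(\tau))$, so bounding $|\sin|\le\min(1,|\cdot|)$ yields
\begin{equation*}
\xi^{-1}\sin^2\!\bigl(\lambda(\tau)\xi^{1/2}(\omega(\sigma)-\omega(\tau))\bigr)\;\lesssim\;a\,\xi'^{-1/2},\qquad a:=\lambda(\sigma)(\omega(\sigma)-\omega(\tau)).
\end{equation*}
Since $\omega(\sigma)-\omega(\tau)=\int_\tau^\sigma s^{-1-\nu^{-1}}\,ds\lesssim \tau^{-\nu^{-1}}$ and $\lambda(\sigma)\approx\sigma^{1+\nu^{-1}}$, one has $a\lesssim \tau(\sigma/\tau)^{1+\nu^{-1}}$, which is exactly the promised factor $\tau(\sigma/\tau)^{C}$; the extra $\xi'^{-1/2}$ absorbs one power of $\xi'^{1/2}$ from the weight, accounting for the Sobolev upgrade $s\mapsto s+\tfrac12$. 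Combined with the $\rho$-scaling lemma above, this closes the kernel estimates.

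The passage to the function space bounds is then just Minkowski together with the $L^\infty L^{2,s}_\rho$ definition. Indeed,
\begin{equation*}
\|x(\tau)\|_{L^{2,s+1/2}_\rho}
\;\le\;\int_\tau^\infty \tau\Bigl(\tfrac{\sigma}{\tau}\Bigr)^{C}\sigma^{-N}\,d\sigma\,\|y\|_{L^{\infty,N}L^{2,s}_\rho}
\;=\;\frac{\tau^{2-N}}{N-C-1}\,\|y\|_{L^{\infty,N}L^{2,s}_\rho},
\end{equation*}
provided $N>C+1$, which gives the stated bound with $C'/N$. The analogous computation with $\widehat{\mathcal{D}}H_c$ yields the second inequality. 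The only genuine obstacle is the case analysis for the $\rho$-scaling, which is delicate because $\rho$ has different power-law behavior at $0$ in the three ranges $0<\beta<2$, $\beta=2$, $\beta>2$, and the logarithmic case $\beta=2$ requires absorbing $|\log\mu|$ into a slightly larger $C$; everything else is routine.
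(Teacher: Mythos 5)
Your proposal is essentially the same argument as the paper's: both rest on the change of variables $\xi'=\mu\xi$ with $\mu=\lambda^2(\tau)/\lambda^2(\sigma)$, the symbol bounds on $\rho$ from Proposition~\ref{prop:a} to control the dilation operator, the bound $|\sin|\le\min(1,|\cdot|)$ for the explicit kernel, and a straightforward $\sigma$-integration to pass to the $\tau$-weighted spaces. The one place you go further than the text is the explicit identification of $\widehat{\mathcal{D}}H_c$ as the cosine kernel with no boundary term (the paper merely asserts this ``can be checked similarly''); your computation there is correct, and it makes the reduction of the second operator bound to the first completely transparent. One small imprecision worth fixing if you write this up: the claim that the factor $\xi'^{-1/2}$ ``absorbs one power of $\xi'^{1/2}$ from the weight'' is only literally true for $\xi'\gtrsim 1$; for $\xi'\lesssim 1$ you need to invoke the other branch $\xi'^{-1}\sin^2(a\xi'^{1/2})\le a^2$ of the $\min$ bound, which is what makes $\langle\xi'\rangle\cdot\xi'^{-1}\sin^2$ uniformly bounded by $\max(1,a^2)$. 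The paper sidesteps this by phrasing its pointwise bound directly as $\lesssim\tau\langle\xi\rangle^{-1/2}$ rather than as a product of a $\tau$-factor and a bare power of $\xi$, but either formulation closes the argument.
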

	
	\begin{proof}
		For any $y\in L^{2,s+\frac{1}{2}}_{\rho}((0,+\infty))$, we have
		\begin{equation*}
			\begin{aligned}
				&\quad \|H_c(\tau,\sigma;\cdot)y(\sigma,\cdot)\|_{L^{2,s+\frac{1}{2}}_\rho}^2\\
				&=\int_{0}^{\infty}\left<\xi\right>^{2s+1}\left|\xi^{-\frac{1}{2}}\sin\left(\lambda(\tau)\xi^{\frac{1}{2}}(\omega(\sigma)-\omega(\tau))\right)\right|^2\left|y(\sigma,\frac{\lambda^2(\tau)}{\lambda^2(\sigma)}\xi)\right|^2\rho(\xi)d\xi\\
				&=\int_{0}^{\infty}\left<\frac{\lambda^2(\sigma)}{\lambda^2(\tau)}\xi\right>^{2s+1}\left<\xi\right>^{-2s}\frac{\rho(\frac{\lambda^2(\sigma)}{\lambda^2(\tau)}\xi)}{\rho(\xi)}\frac{\lambda^2(\sigma)}{\lambda^2(\tau)}\\&\quad \times\left|\frac{\lambda(\tau)}{\lambda(\sigma)}\xi^{-\frac{1}{2}}\sin\left(\lambda(\sigma)\xi^{\frac{1}{2}}(\omega(\sigma)-\omega(\tau))\right)\right|^2\left<\xi\right>^{2s}|y(\sigma,\xi)|^2\rho(\xi)d\xi\\
			\end{aligned}
		\end{equation*}
		Together with the facts that for $\sigma\geq\tau$,
		\begin{equation*}
			\left|\frac{\lambda(\tau)}{\lambda(\sigma)}\xi^{-\frac{1}{2}}\sin\left(\lambda(\sigma)\xi^{\frac{1}{2}}(\omega(\sigma)-\omega(\tau))\right)\right|\leq\min\left\{\lambda(\tau)\int_{\tau}^{\infty}\lambda^{-1}(u)du,\frac{\lambda(\tau)}{\lambda(\sigma)}\xi^{-\frac{1}{2}}\right\}\lesssim\tau\left<\xi\right>^{-\frac{1}{2}},
		\end{equation*}
		\begin{equation*}
			\left<\frac{\lambda^2(\sigma)}{\lambda^2(\tau)}\xi\right>^{2s+1}\left<\xi\right>^{-2s}\lesssim (\frac{\lambda^2(\sigma)}{\lambda^2(\tau)})^{2s+1}\left<\xi\right>,
		\end{equation*}
		and
		\begin{equation*}
			\frac{\rho(\frac{\lambda^2(\sigma)}{\lambda^2(\tau)}\xi)}{\rho(\xi)}\lesssim (\frac{\lambda^2(\sigma)}{\lambda^2(\tau)})^{\frac{\beta}{2}},
		\end{equation*}
		we obtain
		\begin{equation*}
			\|H_c(\tau,\sigma;\cdot)y(\sigma,\cdot)\|_{L^{2,s+\frac{1}{2}}_\rho}^2 \lesssim \int_{0}^{\infty} \tau^2(\frac{\lambda^2(\sigma)}{\lambda^2(\tau)})^{2s+2+\frac{\beta}{2}} \left<\xi\right>^{2s}|y(\sigma,\xi)|^2\rho(\xi)d\xi.
		\end{equation*}
		This gives the desired bound of $H(\tau,\sigma;\cdot)$, where one can choose $C=(2s+2+\frac{\beta}{2})(1+\nu^{-1})$.
		
		\par To get the bound on $\tau$-weighted spaces, we use (\ref{eq:x}) and take $N$ to be large enough:
		\begin{equation*}
			\begin{aligned}
				\tau^{N-2}\|x(\tau)\|_{L^{2,s+\frac{1}{2}}_\rho}&\lesssim \int_{\tau}^{\infty}\tau^{N-1-C}\sigma^{C-N}\left(\sigma^N\|y(\sigma)\|_{L^{2,s}_{\rho}}\right) d\sigma\\
				&\lesssim \frac{1}{N-C-1}\|y\|_{L^{\infty,N}L^{2,s}_{\rho}}\lesssim \frac{1}{N}\|y\|_{L^{\infty,N}L^{2,s}_{\rho}}.\\
			\end{aligned}
		\end{equation*}
		
		\par The statement of $\widehat{\mathcal{D}}H(\tau,\sigma,\cdot)$ can be checked similarly via applying $\partial_\tau-2\frac{\lambda_{\tau}}{\lambda}\xi\partial_\xi-\frac{\lambda_{\tau}}{\lambda}$ onto the expression of $x_d$.
	\end{proof}

	\par For $\xi=0$ part, we can also explicitly solve (\ref{eq:final}) as
	\begin{equation*}
		x_0(\tau)=\int_{\tau}^{\infty}\lambda(\tau)(\omega(\sigma)-\omega(\tau))y_0(\sigma)d\sigma,
	\end{equation*}
	with the property that for $N$ large enough,
	\begin{equation*}
		\tau^{N-2}|x_0(\tau)|\lesssim \int_{0}^{\infty}\tau^{N-1+\nu^{-1}}\tau^{-\nu^{-1}}\sigma^{-N}\|y_0\|_{L^{\infty,N}}d\sigma\lesssim \frac{1}{N}\|y_0\|_{L^{\infty,N}}.
	\end{equation*}
	
	\begin{proposition}
		Let $\underline{\mathbf{x}}$ and $\underline{\mathbf{y}}$ be as in (\ref{eq:final}), then we have for $N$ large enough that
		\begin{equation*}
			\|x_0\|_{L^\infty,N-2}\leq \frac{C}{N}\|y_0\|_{L^{\infty,N}},
		\end{equation*}
		and
		\begin{equation*}
			\|\widehat{\mathcal{D}}x_0\|_{L^{\infty,N-1}}\leq \frac{C}{N}\|y_0\|_{L^{\infty,N}}.
		\end{equation*}
		where $C$ is a constant independent of $N$.
	\end{proposition}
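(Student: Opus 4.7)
The plan is to exploit the explicit formula
\begin{equation*}
x_0(\tau) = \int_\tau^\infty \lambda(\tau)\bigl(\omega(\sigma) - \omega(\tau)\bigr) y_0(\sigma)\, d\sigma
\end{equation*}
displayed just before the statement. The argument will be a direct kernel estimate, substantially simpler than the continuous case since no spectral weights $\left<\xi\right>^s\rho(\xi)$ enter.

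For the first bound I will control the kernel $\lambda(\tau)(\omega(\sigma)-\omega(\tau))$ uniformly for $\sigma \geq \tau$. Using $\omega(\tau) = \nu^{-\nu^{-1}}(1-\tau^{-\nu^{-1}})$,
\begin{equation*}
0 \leq \omega(\sigma) - \omega(\tau) = \nu^{-\nu^{-1}}\bigl(\tau^{-\nu^{-1}} - \sigma^{-\nu^{-1}}\bigr) \leq \nu^{-\nu^{-1}}\tau^{-\nu^{-1}},
\end{equation*}
and $\lambda(\tau) = (\nu\tau)^{1+\nu^{-1}}$, so $|\lambda(\tau)(\omega(\sigma)-\omega(\tau))| \lesssim \tau$ uniformly in $\sigma \geq \tau$. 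Inserting this bound gives
\begin{equation*}
\tau^{N-2}|x_0(\tau)| \lesssim \tau^{N-1} \|y_0\|_{L^{\infty,N}} \int_\tau^\infty \sigma^{-N}\, d\sigma \lesssim \frac{1}{N}\|y_0\|_{L^{\infty,N}}
\end{equation*}
for $N$ large, and taking the supremum in $\tau$ yields the first inequality. This is exactly the scheme already sketched in the hint immediately after the proposition.

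For the second bound the observation that simplifies everything is that on the zero-eigenvalue component $\widehat{\mathcal{D}}$ reduces to $\partial_\tau - \lambda_\tau/\lambda$, because the transport term $-2(\lambda_\tau/\lambda)\xi\partial_\xi$ vanishes at $\xi=0$ and $x_0$ depends only on $\tau$. Differentiating the explicit formula and using $\omega'(\tau)=\lambda^{-1}(\tau)$ (the boundary contribution at $\sigma=\tau$ vanishes because $\omega(\tau)-\omega(\tau)=0$) yields
\begin{equation*}
\partial_\tau x_0 = \frac{\lambda_\tau}{\lambda} x_0 - \int_\tau^\infty y_0(\sigma)\, d\sigma, \qquad \widehat{\mathcal{D}} x_0(\tau) = -\int_\tau^\infty y_0(\sigma)\, d\sigma.
\end{equation*}
This is consistent with the fact that setting $u = \widehat{\mathcal{D}} x_0$ reduces the original second-order ODE to $\partial_\tau u = y_0$ with $u(\infty)=0$. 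Then
\begin{equation*}
\tau^{N-1}|\widehat{\mathcal{D}} x_0(\tau)| \leq \tau^{N-1}\|y_0\|_{L^{\infty,N}}\int_\tau^\infty \sigma^{-N}\,d\sigma \lesssim \frac{1}{N}\|y_0\|_{L^{\infty,N}},
\end{equation*}
which is the second estimate.

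I do not anticipate a genuine analytic obstacle here; the content is the elementary integration of an explicit kernel together with one algebraic cancellation in $\widehat{\mathcal{D}} x_0$. The only point that merits a line of justification is the vanishing boundary condition at $\tau=\infty$ used both in deriving the representation formula for $x_0$ and in identifying $\widehat{\mathcal{D}} x_0$, but this is already built into the ansatz underlying the whole parametrix, which selects the unique solution decaying at infinity.
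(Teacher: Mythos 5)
Your proof is correct and follows exactly the same route the paper does: the first estimate is the direct kernel bound sketched immediately before the proposition, and the second is obtained, as the paper instructs, by applying $\partial_\tau - \lambda_\tau/\lambda$ to the explicit formula; your observation that the boundary term vanishes and that $\lambda\omega' = 1$ collapses $\widehat{\mathcal{D}}x_0$ to $-\int_\tau^\infty y_0$ is precisely the intended computation.
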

	
	\begin{proof}
		The bounds of $x_0$ has been proved. The bound of $\widehat{\mathcal{D}}x_0$ can be checked directly via applying $\partial_\tau-\frac{\lambda_{\tau}}{\lambda}$ onto the expression of $x_0$.
	\end{proof}
	
	\par For $\xi=\xi_d$ part, it seems that we can't explicitly solve (\ref{eq:final}), but we may put some terms from LHS of (\ref{eq:final}) into the RHS to get
	\begin{equation}\label{eq:x_d}
		x_d(\tau) = \int_{\tau}^{\infty}\frac{|\xi_d|^{-\frac{1}{2}}}{2}e^{-|\xi_d|^{\frac{1}{2}}(\sigma-\tau)} \widetilde{y_d}(\sigma)d\sigma + \int_{1}^{\tau}\frac{|\xi_d|^{-\frac{1}{2}}}{2}e^{-|\xi_d|^{\frac{1}{2}}(\tau-\sigma)} \widetilde{y_d}(\sigma)d\sigma,
	\end{equation}
	where
	\begin{equation*}
		\widetilde{y_d}(\tau)=y_d(\tau)-\partial_\tau(\frac{\lambda_{\tau}}{\lambda})x_d(\tau)-\frac{\lambda_{\tau}}{\lambda}\partial_\tau x_d(\tau).
	\end{equation*}
	For the first term in the RHS of (\ref{eq:x_d}),
	\begin{equation*}
		\tau^N\left|\int_{\tau}^{\infty}\frac{|\xi_d|^{-\frac{1}{2}}}{2}e^{-|\xi_d|^{\frac{1}{2}}(\sigma-\tau)} \widetilde{y_d}(\sigma)d\sigma\right|\lesssim \int_{\tau}^{\infty}\tau^N\sigma^{-N}e^{-|\xi_d|^{\frac{1}{2}}(\sigma-\tau)}\|\widetilde{y_d}\|_{L^{\infty,N}}d\sigma\lesssim \|\widetilde{y_d}\|_{L^{\infty,N}}.
	\end{equation*}
	For the second term in the RHS of (\ref{eq:x_d}), we split it further into two parts:
	\begin{equation*}
		\tau^N\left|\int_{1}^{\frac{\tau}{2}}\frac{|\xi_d|^{-\frac{1}{2}}}{2}e^{-|\xi_d|^{\frac{1}{2}}(\sigma-\tau)} \widetilde{y_d}(\sigma)d\sigma\right|\lesssim \int_{1}^{\frac{\tau}{2}}\tau^{N}e^{-|\xi_d|^{\frac{1}{2}}\tau/2}\|\widetilde{y_d}\|_{L^{\infty,N}}d\sigma\lesssim \|\widetilde{y_d}\|_{L^{\infty,N}},
	\end{equation*}
	and
	\begin{equation*}
		\tau^N\left|\int_{\frac{\tau}{2}}^{\tau}\frac{|\xi_d|^{-\frac{1}{2}}}{2}e^{-|\xi_d|^{\frac{1}{2}}(\sigma-\tau)} \widetilde{y_d}(\sigma)d\sigma\right|\lesssim\int_{\frac{\tau}{2}}^{\tau}2^N e^{-|\xi_d|^{\frac{1}{2}}(\tau-\sigma)}\|\widetilde{y_d}\|_{L^{\infty,N}}d\sigma\lesssim 2^N\|\widetilde{y_d}\|_{L^{\infty,N}}.
	\end{equation*}
	As a conclusion, we obtain
	\begin{equation*}
		\tau^N|x_d(\tau)|\leq C_N\|\widetilde{y_d}\|_{L^{\infty,N}},
	\end{equation*}
	where $C_N$ is a constant depending on $N$ (increases as $N$ increases).
	
	\begin{proposition}
		Let $\underline{\mathbf{x}}$ and $\underline{\mathbf{y}}$ be as in (\ref{eq:final}), then we have
		\begin{equation*}
			\|x_d\|_{L^{\infty,N}}\leq C_N'\left(\|y_d\|_{L^{\infty,N}}+\|x_d\|_{L^{\infty,N-2}}+\|\widehat{\mathcal{D}}x_d\|_{L^{\infty,N-1}}\right),
		\end{equation*}
		and
		\begin{equation*}
			\|\widehat{\mathcal{D}}x_d\|_{L^{\infty,N}}\leq C_N'\left(\|y_d\|_{L^{\infty,N}}+\|x_d\|_{L^{\infty,N-2}}+\|\widehat{\mathcal{D}}x_d\|_{L^{\infty,N-1}}\right),
		\end{equation*}
		where $C_N'$ is a constant depending on $N$ (increases as $N$ increases).
		\par Hence, by taking $\tau_0$ in the definition of space $L^{\infty,N}L^{2,s}$ large enough (increases as $N$ increases), we have
		\begin{equation*}
			\|x_d\|_{L^{\infty,N-2}}+\|\widehat{\mathcal{D}}x_d\|_{L^{\infty,N-1}}\leq \frac{C}{N}\|y_d\|_{L^{\infty,N}},
		\end{equation*}
		where $C$ is a constant independent of $N$.
	\end{proposition}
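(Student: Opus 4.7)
\medskip

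\noindent\textbf{Proof proposal.} The representation formula \eqref{eq:x_d} has already been combined with the elementary exponential estimates preceding the proposition to yield the pointwise bound $\tau^N|x_d(\tau)|\leq C_N\|\widetilde{y_d}\|_{L^{\infty,N}}$. My plan is to exploit this together with the explicit shape of $\widetilde{y_d}(\tau)=y_d(\tau)-\partial_\tau(\lambda_\tau/\lambda)x_d(\tau)-(\lambda_\tau/\lambda)\partial_\tau x_d(\tau)$. Since $\lambda(\tau)=(\nu\tau)^{1+\nu^{-1}}$, a direct computation gives the symbol-type bounds $|\lambda_\tau/\lambda|\lesssim\tau^{-1}$ and $|\partial_\tau(\lambda_\tau/\lambda)|\lesssim\tau^{-2}$. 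Moreover, the discrete eigenfunction carries no $\xi$-variable, so $\widehat{\mathcal{D}}x_d=\partial_\tau x_d-(\lambda_\tau/\lambda)x_d$, which I will use to convert every $\partial_\tau x_d$ inside $\widetilde{y_d}$ into a $\widehat{\mathcal{D}}x_d$ plus a lower-order term in $x_d$.

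Combining these observations yields the key self-referential estimate
\begin{equation*}
\|\widetilde{y_d}\|_{L^{\infty,N}}\;\leq\;\|y_d\|_{L^{\infty,N}}+C\bigl(\|x_d\|_{L^{\infty,N-2}}+\|\widehat{\mathcal{D}}x_d\|_{L^{\infty,N-1}}\bigr),
\end{equation*}
because the $\tau^{-2}$ factor in front of $x_d$ absorbs two powers of $\tau$ from the weight, and the $\tau^{-1}$ factor in front of $\widehat{\mathcal{D}}x_d$ absorbs one. Inserting this into the already-established pointwise bound gives the first inequality of the proposition. For the second inequality, I differentiate the representation \eqref{eq:x_d} in $\tau$; denoting the two integrals by $A(\tau)$ and $B(\tau)$, the boundary contributions cancel and one finds $\partial_\tau x_d(\tau)=|\xi_d|^{1/2}(A(\tau)-B(\tau))$, to which the same $\tau^N$-weighted estimate applies verbatim. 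Converting back via $\widehat{\mathcal{D}}x_d=\partial_\tau x_d-(\lambda_\tau/\lambda)x_d$ then yields the second inequality, with the $(\lambda_\tau/\lambda)x_d$ correction contributing an acceptable $\|x_d\|_{L^{\infty,N-1}}\lesssim\|x_d\|_{L^{\infty,N-2}}$ term (for $\tau\geq\tau_0\geq 1$).

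To upgrade these two inequalities to the final smallness bound, I will use the trivial scaling
\begin{equation*}
\|x_d\|_{L^{\infty,N-2}}\leq\tau_0^{-2}\|x_d\|_{L^{\infty,N}},\qquad \|\widehat{\mathcal{D}}x_d\|_{L^{\infty,N-1}}\leq\tau_0^{-1}\|\widehat{\mathcal{D}}x_d\|_{L^{\infty,N}},
\end{equation*}
valid because the $L^{\infty,M}$-norm is a sup over $\tau\geq\tau_0$. Adding the two inequalities of the proposition and absorbing the $C_N'\tau_0^{-1}\|\widehat{\mathcal{D}}x_d\|_{L^{\infty,N}}$ and $C_N'\tau_0^{-2}\|x_d\|_{L^{\infty,N}}$ terms to the left-hand side requires $\tau_0^{-1}C_N'\leq\tfrac14$; this forces the choice $\tau_0\gtrsim C_N'$, legitimate since $\tau_0$ may grow with $N$. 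One then obtains $\|x_d\|_{L^{\infty,N}}+\|\widehat{\mathcal{D}}x_d\|_{L^{\infty,N}}\lesssim C_N'\|y_d\|_{L^{\infty,N}}$, and applying the scaling inequalities once more converts the $L^{\infty,N}$ control on the left into an $L^{\infty,N-2}$ (respectively $L^{\infty,N-1}$) bound with a gain of $\tau_0^{-2}$ (respectively $\tau_0^{-1}$). Choosing $\tau_0$ slightly larger, of order $N\cdot C_N'$, produces the final $C/N$ factor uniformly in $N$.

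The only conceptual obstacle is the circular character of the representation: $\widetilde{y_d}$ itself depends on $x_d$ and $\partial_\tau x_d$, so the pointwise bound does not immediately close. The resolution, as above, is the mismatch in weights: the circular terms appear at two lower powers of $\tau$, which is precisely enough to let the $\tau_0$-absorption argument run. All remaining work is book-keeping of constants and verifying that $\partial_\tau A(\tau)$ and $\partial_\tau B(\tau)$ obey the same sup bound as $A(\tau)$ and $B(\tau)$, which is immediate from differentiating under the integral sign.
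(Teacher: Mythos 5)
Your proof follows the same route as the paper: bound $\widetilde{y_d}$ in terms of $y_d$, $x_d$ and $\widehat{\mathcal{D}}x_d$ via the symbol-type bounds $|\lambda_\tau/\lambda|\lesssim\tau^{-1}$, $|\partial_\tau(\lambda_\tau/\lambda)|\lesssim\tau^{-2}$, plug this into the already-derived pointwise bound $\tau^N|x_d(\tau)|\leq C_N\|\widetilde{y_d}\|_{L^{\infty,N}}$, obtain the analogous bound for $\widehat{\mathcal{D}}x_d$ by applying $\partial_\tau-\lambda_\tau/\lambda$ to the representation formula, and finally absorb the self-referential terms by taking $\tau_0$ large. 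Your differentiation of the two integrals and the cancellation of boundary terms are also correct.

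There is, however, one genuine slip in your second paragraph. You assert that the $(\lambda_\tau/\lambda)x_d$ correction contributes a term $\|x_d\|_{L^{\infty,N-1}}\lesssim\|x_d\|_{L^{\infty,N-2}}$ for $\tau\geq\tau_0\geq 1$. This inequality is backwards: on $\{\tau\geq\tau_0\geq 1\}$ the weight $\tau^{N-1}$ dominates $\tau^{N-2}$, so in fact $\|x_d\|_{L^{\infty,N-1}}\geq\|x_d\|_{L^{\infty,N-2}}$, and the $L^{\infty,N-1}$ quantity cannot be controlled by the $L^{\infty,N-2}$ one. The correct move, and the one the paper singles out, is the scaling in the other direction, $\|x_d\|_{L^{\infty,N-1}}\leq\tau_0^{-1}\|x_d\|_{L^{\infty,N}}$, followed by substituting the already-proven first inequality $\|x_d\|_{L^{\infty,N}}\leq C_N'\bigl(\|y_d\|_{L^{\infty,N}}+\|x_d\|_{L^{\infty,N-2}}+\|\widehat{\mathcal{D}}x_d\|_{L^{\infty,N-1}}\bigr)$. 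That gives the second inequality of the proposition with a constant of the form $C_N'(1+\tau_0^{-1}C_N')$. You already introduce exactly this $\tau_0$-scaling tool in your third paragraph; you just need to invoke it one step earlier, in place of the false inequality. With that correction the remainder of your absorption argument, including the choice $\tau_0\gtrsim N\,C_N'$ to extract the final $C/N$ factor, goes through as written.
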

	
	\begin{proof}
		Use the facts that
		\begin{equation*}
			\|\widetilde{y_d}\|_{L^{\infty,N}}\lesssim \|y_d\|_{L^{\infty,N}}+\|x_d\|_{L^{\infty,N-2}}+\|\widehat{\mathcal{D}}x_d\|_{L^{\infty,N-1}}
		\end{equation*}
		and
		\begin{equation*}
			\|x_d\|_{L^{\infty,N-1}}\leq \tau_0^{-1}\|x_d\|_{L^{\infty,N}}.
		\end{equation*}
		\par The bound of $\widehat{\mathcal{D}}x_d$ can be checked directly via applying $\partial_\tau-\frac{\lambda_{\tau}}{\lambda}$ onto the expression of $x_d$.
	\end{proof}

	\begin{remark}
		It's an interesting open problem to solve $\underline{\mathbf{x}}(\tau)$ with prescribed scattering data at $\tau=+\infty$, which is impossible in our framework since any non-trivial free wave won't live in the $\tau$-weighted space we are going to introduce for $N$ very large.
	\end{remark}

	\par Now we denote by $\mathcal{H}$ the solution operator (parametrix) for the system (\ref{eq:final}). As a conclusion, we obtain the following.
	
	\begin{proposition}\label{prop:sol_op}
		Let $s\geq0$ and $N$ be large enough. Then there exists $\tau_0=\tau_0(N)\geq1$ (increases as $N$ increases) depending on $N$, such that
		\begin{equation*}
			\|\mathcal{H}\underline{\mathbf{y}}\|_{L^{\infty,N-2}L^{2,s+1/2}_\rho}+\left\|\widehat{\mathcal{D}}\mathcal{H}\underline{\mathbf{y}}\right\|_{L^{\infty,N-1}L^{2,s}_\rho}\leq \frac{C}{N}\|\underline{\mathbf{y}}\|_{L^{\infty,N}L^{2,s}_\rho},
		\end{equation*}
			where the constant $C$ is independent of $N$ (but may depend on $s,\nu$ and $\beta$).
	\end{proposition}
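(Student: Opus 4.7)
The plan is to assemble the estimate by decomposing $\underline{\mathbf{y}}=(y_d,y_0,y|_{(0,\infty)})$ and $\underline{\mathbf{x}}=\mathcal{H}\underline{\mathbf{y}}=(x_d,x_0,x|_{(0,\infty)})$ according to the three pieces of the spectral decomposition. Since the spectral measure is a sum of (at most) two point masses and an absolutely continuous part, the weighted norm splits as
\begin{equation*}
\|\underline{\mathbf{x}}\|_{L^{2,s}_\rho}^{2}=|x_d|^{2}+|x_0|^{2}+\int_{0}^{\infty}\langle\xi\rangle^{2s}|x(\xi)|^{2}\rho(\xi)\,d\xi,
\end{equation*}
and the same identity holds pointwise in $\tau$, so taking the supremum in $\tau^{N}(\cdot)$ gives an equivalent splitting of the $L^{\infty,N}L^{2,s}_\rho$ norm into the three pieces $\|x_d\|_{L^{\infty,N}}$, $\|x_0\|_{L^{\infty,N}}$, and $\|x\|_{L^{\infty,N}L^{2,s}_\rho}$. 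On each piece the final equation \eqref{eq:final} decouples (the operator $\widehat{\mathcal{D}}^{2}+\tfrac{\lambda_\tau}{\lambda}\widehat{\mathcal{D}}+\xi$ is diagonal under $\mathcal{F}$), so the parametrix $\mathcal{H}$ acts independently in each component.

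For the continuous and zero-eigenvalue pieces, I would simply invoke the two propositions already proved above, which give exactly
\begin{equation*}
\|x\|_{L^{\infty,N-2}L^{2,s+1/2}_\rho}+\|\widehat{\mathcal{D}}x\|_{L^{\infty,N-1}L^{2,s}_\rho}\lesssim\tfrac{1}{N}\|y\|_{L^{\infty,N}L^{2,s}_\rho},
\end{equation*}
and the analogous bound in the $\xi=0$ slot, with constants independent of $N$. No further work is needed there: the $\tfrac{1}{N}$ gain comes from integrating $\tau^{N-1-C}\sigma^{C-N}$ against a $\sigma^{N}$-weighted input, which produces $\tfrac{1}{N-C-1}$, and choosing $N>C+2$ yields the clean $\tfrac{C'}{N}$.

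The main obstacle, and the only place where care is needed, is the discrete-eigenvalue component $x_d$. The explicit formula \eqref{eq:x_d} carries a forced term $\widetilde{y_d}=y_d-\partial_\tau(\tfrac{\lambda_\tau}{\lambda})x_d-\tfrac{\lambda_\tau}{\lambda}\partial_\tau x_d$, so the naive estimate only yields
\begin{equation*}
\|x_d\|_{L^{\infty,N}}+\|\widehat{\mathcal{D}}x_d\|_{L^{\infty,N}}\leq C_N'\bigl(\|y_d\|_{L^{\infty,N}}+\|x_d\|_{L^{\infty,N-2}}+\|\widehat{\mathcal{D}}x_d\|_{L^{\infty,N-1}}\bigr),
\end{equation*}
in which $C_N'$ grows with $N$. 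Here I would use the Banach-space trick highlighted in the preceding discussion: passing from weight $N$ to weight $N-1$ (or $N-2$) on the domain $[\tau_0,\infty)$ costs a factor of $\tau_0^{-1}$ (resp.\ $\tau_0^{-2}$), because $\|f\|_{L^{\infty,N-1}}\leq\tau_0^{-1}\|f\|_{L^{\infty,N}}$. Consequently, by choosing $\tau_0=\tau_0(N)$ so large that $C_N'\tau_0^{-1}\ll 1$, the two right-hand terms involving $x_d$ and $\widehat{\mathcal{D}}x_d$ can be absorbed into the left-hand side, leaving the clean estimate $\|x_d\|_{L^{\infty,N-2}}+\|\widehat{\mathcal{D}}x_d\|_{L^{\infty,N-1}}\leq\tfrac{C}{N}\|y_d\|_{L^{\infty,N}}$ with a constant $C$ that no longer depends on $N$. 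This is exactly the content of the third proposition above.

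Finally, I would add together the three bounds on the three spectral components, use the equivalence between the full $L^{\infty,N}L^{2,s}_\rho$ norm and the sum of the three component norms, and observe that the cumulative $\tau_0$ required is the maximum of the three thresholds, which is the $\tau_0(N)$ appearing in the statement. This gives the stated bound with a constant $C=C(s,\nu,\beta)$ independent of $N$.
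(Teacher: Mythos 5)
Your proposal is correct and follows the same route the paper takes: Proposition \ref{prop:sol_op} is simply the conjunction of the three preceding propositions for the continuous, zero-eigenvalue, and discrete-eigenvalue components, assembled via the orthogonal splitting of $L^{2,s}_\rho$ induced by the spectral measure, with $\tau_0(N)$ chosen large enough to perform the absorption in the $x_d$ slot. The paper itself offers no separate proof of this proposition precisely because it is this routine assembly, which you have spelled out accurately.
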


	\section{The nonlinear terms}\label{sec:nonlinear}
	
	To proceed the contraction mapping argument, we also need to study the mapping property of the nonlinear terms $\underline{\mathbf{x}}\mapsto\lambda^{-2}\mathcal{F}R\left(N_{2k-1}(R^{-1}\mathcal{F}^{-1}\underline{\mathbf{x}})\right)$. In fact, there is a linear term $(u_{2k-1}^4-u_0^4)\widetilde{\varepsilon}$ also being included in $N_{2k-1}(\widetilde{\varepsilon})$ due to our notation. However, as $(u_{2k-1}-u_0)\in\frac{\lambda^{\frac{1}{2}}(t)}{(t\lambda(t))^2}S^{\frac{\beta+3}{2}}(\max\{R^{\frac{-\beta+3}{2}}(\log(R))^{p},R^{\frac{\beta-1}{2}}(\log(R))^{p'}\},\mathcal{Q}_\beta)$, this linear term gains a twice order decay $\tau^{-2}$ as $\tau\to\infty$, which is safe. For other terms, there are \textit{nonlinear}, which also guarantees the desired decay in $\tau$.
	
	\par Now we introduce the notion of $\alpha$-Sobolev space, which is the Sobolev space associated with $\mathcal{L}_\alpha=-\Delta+\frac{\alpha}{r^2}$ instead of $-\Delta$. For $\alpha>-\frac{1}{4}$, the operator $\mathcal{L}_\alpha$ is positive, so we define the homogeneous $s$-th order $\alpha$-Sobolev space $\dot{H}_\alpha^s(\mathbb{R}^3)$ for $s\geq 0$ to have the norm
	\begin{equation*}
		\|u\|_{\dot{H}_\alpha^s(\mathbb{R}^3)} :=\|\mathcal{L}_\alpha^{\frac{s}{2}} u\|_{L^2(\mathbb{R}^3)}.
	\end{equation*}
	Similarly, the $s$-th order inhomogeneous $\alpha$-Sobolev space is defined to have the norm
	\begin{equation*}
		\|u\|_{H^s_\alpha(\mathbb{R}^3)}:=\|(1+\mathcal{L}_\alpha)^{\frac{s}{2}}u\|_{L^2(\mathbb{R}^3)}.
	\end{equation*}

	\iffalse
	\par There is another way to define $\dot{H}^s_\alpha(\Omega)$. Note that the operator $\mathcal{L}_\alpha=-\partial_R^2+\frac{\alpha}{R^2}$ admits the decomposition $\mathcal{L}_\alpha=A^*A$, where $A$ and $A^*$ are first-order differential operators:
	\begin{equation}
		A=\partial_R+\frac{\beta-1}{2R},\quad A^*=-\partial_R+\frac{\beta-1}{2R}.
	\end{equation}
	Therefore, for $m\in\mathbb{Z}_{\geq 0}$, the $m$-th order homogenegous $\alpha$-Sobolev space is defined to have the norm (where $A^0=\mathrm{Id}$)
	\begin{equation}
		\|u\|_{\dot{H}_\alpha^m(\Omega)}':=\|A^mu\|_{L^2(\Omega)}.
	\end{equation}
	For non-integer order $s\geq0$, we may define $\dot{H}^{s}_\alpha$ using interpolation.
	\par When evaluating on $\mathbb{R}^3$, we have $\|\mathcal{L}_\alpha^{\frac{1}{2}}u\|_{L^2(\mathbb{R}^3)}=\|Au\|_{L^2(\mathbb{R}^3)}$, hence $\dot{H}_\alpha^s(\mathbb{R}^3)'=\dot{H}_\alpha^s(\mathbb{R}^3)$ for all $s\geq 0$.
	\fi

	\begin{remark}
		Using the sharp hardy inequality
		\begin{equation*}
			\frac{1}{4}\int_{\mathbb{R}^3} \frac{u^2}{r^2} dx \leq \int_{\mathbb{R}^3} |\nabla u|^2 dx,\quad u\in C^\infty_0(\mathbb{R}^3),
		\end{equation*}
		  one immediately see that $\dot{H}_\alpha^1(\mathbb{R}^3)=\dot{H}^1(\mathbb{R}^3)$ and the two norms are equivalent. More generally, we have the following
	\end{remark}

	\begin{lemma}[equivalence of Sobolev spaces]\label{lem:equiv2}
		For $\alpha>-\frac{1}{4}$ and $0\leq s<\min\{1+\frac{\beta}{2},\frac{3}{2}\}$, we have 
		\begin{equation*}
		\|u\|_{\dot{H}_\alpha^s(\mathbb{R}^3)} \approx \|u\|_{\dot{H}^s(\mathbb{R}^3)},
		\end{equation*}
		and 
		\begin{equation*}
		\|u\|_{H_\alpha^s(\mathbb{R}^3)} \approx \|u\|_{H^s(\mathbb{R}^3)},
		\end{equation*}
		for $u\in C^\infty_0(\mathbb{R}^3)$. As a result, we have $\dot{H}_\alpha^s(\mathbb{R}^3)=\dot{H}^s(\mathbb{R}^3)$ and $H_\alpha^s(\mathbb{R}^3)=H^s(\mathbb{R}^3)$ for $0\leq s<\min\{1+\frac{\beta}{2},\frac{3}{2}\}$.
	\end{lemma}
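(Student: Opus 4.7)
The plan is to first reduce the inhomogeneous statement to the homogeneous one. Writing $(1+\mathcal{L}_\alpha)^{s/2}$ and $(1-\Delta)^{s/2}$ through their spectral calculus, we have $\|u\|_{H_\alpha^s}^2 \approx \|u\|_{L^2}^2 + \|u\|_{\dot H_\alpha^s}^2$ and the analogous identity for $H^s$, so the inhomogeneous equivalence is a consequence of the homogeneous one combined with the trivial identity $L^2_\alpha = L^2$. Thus it suffices to prove $\|\mathcal{L}_\alpha^{s/2} u\|_{L^2} \approx \|(-\Delta)^{s/2} u\|_{L^2}$ for $u\in C_0^\infty(\mathbb{R}^3)$.

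For the range $0\leq s\leq 1$ I would proceed by interpolation. The case $s=0$ is the definition, and the case $s=1$ is exactly the sharp Hardy inequality already cited above: since $\alpha>-\frac14$, one has
\begin{equation*}
\|u\|_{\dot H_\alpha^1}^2 = \int_{\mathbb{R}^3}\left(|\nabla u|^2+\frac{\alpha}{r^2}u^2\right)dx \approx \int_{\mathbb{R}^3}|\nabla u|^2\,dx = \|u\|_{\dot H^1}^2,
\end{equation*}
where the lower bound follows from $\alpha/r^2 \geq -(1-\varepsilon)/(4r^2)$ combined with Hardy, and the upper bound is immediate if $\alpha\geq 0$ or, if $-\frac14<\alpha<0$, again from Hardy. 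Stein's complex interpolation between $s=0$ and $s=1$ then covers all intermediate $s$.

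The more delicate range is $1<s<\min\{1+\beta/2,\tfrac32\}$, where I would invoke the heat kernel comparison of Killip--Miao--Visan--Zhang--Zheng \cite{KMVZZ18}. Their main estimate gives the two-sided pointwise bound
\begin{equation*}
p_t^\alpha(x,y) \approx \Bigl(1\wedge \tfrac{\sqrt t}{|x|}\Bigr)^{\!\sigma}\Bigl(1\wedge \tfrac{\sqrt t}{|y|}\Bigr)^{\!\sigma} t^{-3/2} e^{-c|x-y|^2/t},
\end{equation*}
where $\sigma = (\beta-1)/2$ in the regime relevant for us. Inserting this into the subordination formula
\begin{equation*}
\mathcal{L}_\alpha^{-s/2} = \frac{1}{\Gamma(s/2)}\int_0^\infty t^{s/2-1}e^{-t\mathcal{L}_\alpha}\,dt
\end{equation*}
yields pointwise control on the Riesz kernel of $\mathcal{L}_\alpha^{-s/2}$. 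The $L^2$-boundedness of $(-\Delta)^{s/2}\mathcal{L}_\alpha^{-s/2}$ and its reverse is then obtained via a Mikhlin-type multiplier argument, or equivalently by using the $\mathcal{L}_\alpha$-adapted Littlewood--Paley theory developed in \cite{KMVZZ18}. The reverse bound is symmetric.

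The main obstacle, and the origin of the threshold $s<1+\beta/2$, lies in this last step: to compare $(-\Delta)^{s/2}$ and $\mathcal{L}_\alpha^{s/2}$ one must control the singular factor $r^{-(1-\beta)/2}$ produced by the heat kernel against derivatives of order $s$, and the integrability of $r^{(\beta-1)/2-s}$ near the origin forces $s<1+\beta/2$. The companion restriction $s<\frac32$ reflects the obstruction that radial $\dot H^s(\mathbb{R}^3)$ functions only admit a meaningful trace at the origin for $s\geq \frac32$, beyond which the Dirichlet-type boundary condition implicit in the Friedrichs extension of $\mathcal{L}_\alpha$ begins to distinguish the two spaces. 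Since both of these structural features are already resolved in \cite{KMVZZ18}, our proof amounts to checking that our parameters fall within the applicability range of their theorem and invoking it.
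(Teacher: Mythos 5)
Your proof is correct, and like the paper's it ultimately rests on the homogeneous Sobolev equivalence theorem of Killip--Miao--Visan--Zhang--Zheng \cite[Theorem 1.2]{KMVZZ18}; the paper simply cites that theorem for the homogeneous statement and reduces the inhomogeneous case to it via $H^s_\alpha = \dot H^s_\alpha \cap L^2$ with equivalent norms, which is the same reduction you carry out spectrally in your opening paragraph. The one genuine addition in your write-up is the observation that for $0\le s\le 1$ one can bypass \cite{KMVZZ18} entirely: the endpoint $s=1$ is the sharp Hardy inequality, $s=0$ is trivial, and interpolation in the Hilbert scale of powers of a positive self-adjoint operator covers all intermediate exponents. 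That is a cleaner, more self-contained route in that regime. However, the paper applies this lemma with Sobolev exponent $2s+1$ close to the threshold $\min\{1+\beta/2,\,3/2\}$, which exceeds $1$, so the harder range still requires the full strength of the cited theorem, as you acknowledge. In short, the two proofs are substantively the same, both deferring to \cite{KMVZZ18} for the decisive step; your sketch of the heat-kernel and subordination mechanism is a useful commentary on why the theorem holds and where the two thresholds come from, but it is an unpacking of the reference rather than an alternative argument.
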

	
	\begin{proof}
		The statement of homogeneous Sobolev spaces is exactly \cite[Theorem 1.2]{KMVZZ18}. The statement of inhomogenous Sobolev spaces follows from the homogeneous one and the fact that
		\begin{equation*}
			H_\alpha^s(\mathbb{R}^3)=\dot{H}_\alpha^s(\mathbb{R}^3)\cap L^2(\mathbb{R}^3),\quad H^s(\mathbb{R}^3)=\dot{H}^s(\mathbb{R}^3)\cap L^2(\mathbb{R}^3)
		\end{equation*}
		with equivalent norms.
	\end{proof}

	\par As being said, we first relate the $L^{2,s}_{d\rho}$ spaces in frequency space to the $\alpha$-Sobolev spaces $H^{2s}_{\alpha}(\mathbb{R}^3)$ in physics space.
	
	\begin{lemma}\label{lem:equiv1}
		For any $\alpha>-\frac{1}{4}+\frac{1}{25}$ and $0\leq s\leq 1$, we have
		\begin{equation*}
			\|\underline{\mathbf{x}}\|_{L^{2,s}_{d\rho}}\approx \|R^{-1}\mathcal{F}^{-1}\underline{\mathbf{x}}\|_{H^{2s}_{\alpha}(\mathbb{R}^3)}.
		\end{equation*}
	\end{lemma}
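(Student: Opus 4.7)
The plan is to transport both sides of the claimed equivalence to a common half-line picture using the standard identification of radial functions on $\mathbb{R}^3$ with functions of $R \in (0,\infty)$ via $u(r) = r^{-1} v(r)$. First, for any radial $u$, the change of variables gives $\|u\|_{L^2(\mathbb{R}^3)}^2 = 4\pi\|v\|_{L^2(0,\infty)}^2$, and a direct computation yields $\mathcal{L}_\alpha u = r^{-1}\mathcal{L}_\alpha^0 v$ where $\mathcal{L}_\alpha^0 = -\partial_R^2 + \alpha R^{-2}$ is understood as the Friedrichs extension on the half-line. Hence $u \mapsto v = Ru$ is (up to a constant factor) a unitary equivalence intertwining $\mathcal{L}_\alpha$ with $\mathcal{L}_\alpha^0$, and by $L^2$-functional calculus
$$\|u\|_{H_\alpha^{2s}(\mathbb{R}^3)}^2 \approx \|(1+\mathcal{L}_\alpha^0)^s v\|_{L^2(0,\infty)}^2, \qquad v = \mathcal{F}^{-1}\underline{\mathbf{x}}.$$
So the problem is reduced to showing that $\|\underline{\mathbf{x}}\|_{L^{2,s}_\rho} \approx \|(1+\mathcal{L}_\alpha^0)^s v\|_{L^2(0,\infty)}$.

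Next, I would interpret the left-hand side spectrally. Since $\mathcal{F}$ is unitary from $L^2(0,\infty)$ to $L^2(\sigma(\mathcal{L}),d\rho)$ and diagonalizes $\mathcal{L} = \mathcal{L}_\alpha^0 - 5W_\alpha^4(R)$, decomposing $v = v_d + v_c$ into its projections onto the (finite-dimensional) discrete subspace spanned by $\phi_d$ (and $\phi_0$ when $\beta > 2$) and the absolutely continuous subspace gives
$$\|\underline{\mathbf{x}}\|_{L^{2,s}_\rho}^2 \;\approx\; \|v_d\|_{L^2}^2 + \|(1+\mathcal{L})^{s}v_c\|_{L^2(0,\infty)}^2.$$
Since $\phi_d$ is exponentially decaying and $\phi_0$ (when present) lies in $L^2$ with polynomial decay, both are smooth with finite $H^{2s}_\alpha$ norm, so the discrete contributions match up on both sides by finite-dimensional comparability. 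What remains is to verify the equivalence
$$\|(1+\mathcal{L}_\alpha^0)^s v\|_{L^2(0,\infty)} \;\approx\; \|(1+\mathcal{L})^s v\|_{L^2(0,\infty)}$$
on the continuous subspace, where now both operators are positive and differ by the potential $V(R) := 5W_\alpha^4(R)$, which satisfies $V \lesssim R^{2\beta - 2}$ as $R \to 0$ and $V \lesssim R^{-2\beta-2}$ as $R \to \infty$.

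The endpoint $s=0$ is immediate. For $s=1$ (hence $2s = 2$), the equivalence reduces to showing that multiplication by $V$ is a relatively bounded perturbation of $(1+\mathcal{L}_\alpha^0)$ with small relative bound, i.e.\ $\|Vv\|_{L^2} \lesssim \|v\|_{L^2} + \epsilon \|\mathcal{L}_\alpha^0 v\|_{L^2}$. Splitting $V = V\chi_{R \geq 1} + V\chi_{R<1}$, the large-$R$ piece is bounded and contributes trivially, while the singular piece requires the fractional Hardy inequality associated with $\mathcal{L}_\alpha$ (from \cite{KMVZZ18}), namely $\| |x|^{-\sigma} u\|_{L^2(\mathbb{R}^3)} \lesssim \|\mathcal{L}_\alpha^{\sigma/2} u\|_{L^2(\mathbb{R}^3)}$ for $\sigma$ in an appropriate range that holds precisely when $\alpha > -\tfrac14 + \tfrac1{25}$. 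Transported to the half-line, this controls $\|R^{-\sigma}v\|_{L^2}$ by $\|(\mathcal{L}_\alpha^0)^{\sigma/2} v\|_{L^2}$, and choosing $\sigma = 2 - 2\beta$ (or any admissible value $\geq 2-2\beta$) allows one to dominate $\int R^{4\beta -4}|v|^2$ by $\|(1+\mathcal{L}_\alpha^0)v\|_{L^2}^2$ with room to spare. Intermediate $0 < s < 1$ is then obtained by complex interpolation between the $s=0$ and $s=1$ endpoints applied to the bounded inverse operators $(1+\mathcal{L}_\alpha^0)^s (1+\mathcal{L})^{-s}$ and its inverse.

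The main obstacle is the fractional Hardy step for the range $0 < s < 1$: one must ensure that the singular potential $V \sim R^{2\beta - 2}$ near the origin is a form-small perturbation of $\mathcal{L}_\alpha^0$ not only at the integer level $s = 1$ but uniformly along the interpolation scale. This is exactly where the restriction $\alpha > -\tfrac14 + \tfrac1{25}$ is used: it gives just enough room in the fractional Hardy inequality for $\mathcal{L}_\alpha$ to dominate the singularity of $V$, with constants stable under interpolation. Once this is in place, the three norm equivalences (3D radial $H^{2s}_\alpha$ to half-line $(1+\mathcal{L}_\alpha^0)^s$-norm, half-line $(1+\mathcal{L}_\alpha^0)^s$-norm to $(1+\mathcal{L})^s$-norm, and $(1+\mathcal{L})^s$-norm to $L^{2,s}_\rho$) compose to give the statement.
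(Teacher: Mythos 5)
Your proposal is correct and follows the same overall strategy as the paper: pass to the half-line via the unitary $u \mapsto Ru$, reduce the claim to a comparison between the $\mathcal{L}_\alpha^0$-Sobolev norm and the $\mathcal{L}$-Sobolev norm on $(0,\infty)$, settle $s=0,1$ directly, and interpolate. The treatment of the potential at $s=1$ via the fractional Hardy inequality for $\mathcal{L}_\alpha$, with the singular piece handled by choosing $\sigma=2-2\beta$ (which is exactly where the requirement $\beta>\tfrac{2}{5}$, i.e.\ $\alpha>-\tfrac14+\tfrac1{25}$, enters), is the same key step as in the paper.

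There is, however, one genuine divergence worth flagging in the $s=1$ reverse bound. You propose establishing the equivalence in both directions from a single relative-boundedness estimate $\|Vv\|\lesssim\|v\|+\epsilon\|\mathcal{L}_\alpha^0 v\|$, which in particular gives the lower bound $\|\mathcal{L}_\alpha^0 v\|\lesssim\|\mathcal{L}v\|+\|v\|$ by absorption. This is valid, but note that the absorption step bounds $\|\mathcal{L}_\alpha^0 v\|$ by a quantity that includes a piece of $\|\mathcal{L}_\alpha^0 v\|$ itself, so to avoid circularity you need the a priori finiteness of $\|\mathcal{L}_\alpha^0 v\|$, i.e.\ a density argument (e.g.\ work first with $v$ in $C^\infty_0(0,\infty)$ and pass to the limit). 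The paper sidesteps this by proving the reverse direction through a direct Fourier-side estimate: it bounds $\|(\widetilde{\mathcal{L}}-\mathcal{L}_\alpha)R^{-1}\mathcal{F}^{-1}\underline{\mathbf{x}}\|_{L^2}$ by computing $\|U(R)\phi(R,\xi)\|_{L^2_R}$ region-by-region in $\xi$ and applying Cauchy--Schwarz against $\rho(\xi)d\xi$, which gives the bound purely in terms of $\|\underline{\mathbf{x}}\|_{L^{2,1}_{d\rho}}$ with no self-reference. Your absorption route is conceptually cleaner; the paper's is more computational but avoids any a priori regularity discussion. Also, the step where you write $\|\underline{\mathbf{x}}\|_{L^{2,s}_\rho}^2\approx\|v_d\|^2+\|(1+\mathcal{L})^s v_c\|^2$ and then compare to $\|(1+\mathcal{L}_\alpha^0)^s v\|^2$ is fine, but you should make explicit the (easy) triangle-inequality handling of the cross terms: $v_d$ and $v_c$ are orthogonal with respect to $\mathcal{L}$ but not with respect to $\mathcal{L}_\alpha^0$, so the decomposition does not commute with $(1+\mathcal{L}_\alpha^0)^s$; one instead bounds $\|(1+\mathcal{L}_\alpha^0)^s v_d\|$ by a constant times $\|v_d\|$ (since $\phi_d$, and $\phi_0$ when present, lie in $H^{2s}_\alpha$) and treats the continuous part separately.
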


	\begin{proof}
		If $\alpha=0$, this has been proved in \cite{KST09a} (without restriction on $s$), so in the following we assume $\alpha\neq0$. Also, $s=0$ case is obvious.
		\par Since $\mathcal{F}\colon L^2((0,+\infty))\to L^2(\mathbb{R},d\rho)$ is an unitary operator, we have for $s=1$ that
		\begin{equation*}
			\begin{aligned}
				\|\underline{\mathbf{x}}\|_{L^{2,1}_{d\rho}}\approx \|\underline{\mathbf{x}}\|_{L^2_{d\rho}}+\|\xi\underline{\mathbf{x}}\|_{L^2_{d\rho}}&=\|\mathcal{F}^{-1}\underline{\mathbf{x}}\|_{L^2_{R}}+\|\mathcal{L}\mathcal{F}^{-1}\underline{\mathbf{x}}\|_{L^2_{R}}\\&\approx\|R^{-1}\mathcal{F}^{-1}\underline{\mathbf{x}}\|_{L^2(\mathbb{R}^3)}+\|R^{-1}\mathcal{L}\mathcal{F}^{-1}\underline{\mathbf{x}}\|_{L^2(\mathbb{R}^3)}.
			\end{aligned}
		\end{equation*}
		Let
		\begin{equation*}
			\widetilde{\mathcal{L}}=R^{-1}\mathcal{L}R=-\partial_R^2-\frac{2}{R}\partial_R+\frac{\alpha}{R^2}-5W_\alpha^4(R)=\mathcal{L}_\alpha-\frac{15\beta^2 R^{2\beta-2}}{(1-R^{2\beta})^2},
		\end{equation*}
		where $\mathcal{L}_\alpha=-\Delta_{\mathbb{R}^3}+\frac{\alpha}{R^2}$ (when applied to radial functions in $\mathbb{R}^3$), then we have
		\begin{equation*}
			\|\underline{\mathbf{x}}\|_{L^{2,1}_{d\rho}}\approx \|R^{-1}\mathcal{F}^{-1}\underline{\mathbf{x}}\|_{L^2(\mathbb{R}^3)}+\|\widetilde{\mathcal{L}}R^{-1}\mathcal{F}^{-1}\underline{\mathbf{x}}\|_{L^2(\mathbb{R}^3)}.
		\end{equation*}
		\par If $\beta>1$, then $\left|\frac{15\beta^2 R^{2\beta-2}}{(1-R^{2\beta})^2}\right|\lesssim 1$, hence
		\begin{equation*}
			\|(\widetilde{\mathcal{L}}-\mathcal{L}_\alpha)R^{-1}\mathcal{F}^{-1}\underline{\mathbf{x}}\|_{L^2(\mathbb{R}^3)}\lesssim \|R^{-1}\mathcal{F}^{-1}\underline{\mathbf{x}}\|_{L^2(\mathbb{R}^3)},
		\end{equation*}
		from which the desired equivalence follows.
		\par If $\frac{2}{5}<\beta<1$, $\left|\frac{15\beta^2 R^{2\beta-2}}{(1-R^{2\beta})^2}\right|\lesssim |U(R)|$ (recall that $U(R)\in S^{2\beta-2}(R^{-2\beta-2})$), then by Hardy's inequality for $\mathcal{L}_\alpha$ (see \cite[Proposition 3.2]{KMVZZ18}, here we need $\beta>\frac{2}{5}$), we have
		\begin{equation*}
			\|(\widetilde{\mathcal{L}}-\mathcal{L}_\alpha)R^{-1}\mathcal{F}^{-1}\underline{\mathbf{x}}\|_{L^2(\mathbb{R}^3)}\lesssim \|R^{2\beta-2}R^{-1}\mathcal{F}^{-1}\underline{\mathbf{x}}\|_{L^2(\mathbb{R}^3)}\lesssim \|\mathcal{L}_\alpha^{1-\beta}R^{-1}\mathcal{F}^{-1}\underline{\mathbf{x}}\|_{L^2(\mathbb{R}^3)}.
		\end{equation*}
		This proves
		\begin{equation*}
			\|\underline{\mathbf{x}}\|_{L^{2,1}_{d\rho}}\lesssim \|R^{-1}\mathcal{F}^{-1}\underline{\mathbf{x}}\|_{H^{2}_\alpha(\mathbb{R}^3)}.
		\end{equation*}
		On the otherhand, 
		\begin{equation*}
			\|(\widetilde{\mathcal{L}}-\mathcal{L}_\alpha)R^{-1}\mathcal{F}^{-1}\underline{\mathbf{x}}\|_{L^2(\mathbb{R}^3)}\lesssim \|U(R)\phi_d(R)\|_{L^2_R}|x_d|+\int_{0}^{\infty}\|U(R)\phi(R,\xi)\|_{L^2_R}|x(\xi)|\rho(\xi)d\xi.
		\end{equation*}
		Since $\phi_d(R)\sim R^{\frac{\beta+1}{2}}$ as $R\to 0$ and has exponential decay as $R\to\infty$, $U(R)\phi_d(R)$ is square-integrable when $\beta>\frac{2}{5}$. For $\xi\geq 1$, we estimate $\|U(R)\phi(R,\xi)\|_{L^2_R}$ as follows
		\begin{equation*}
			\begin{aligned}
				\|U(R)\phi(R,\xi)\|_{L^2_R}^2&\lesssim \int_{0}^{\xi^{-\frac{1}{2}}}(R^{2\beta-2}R^{\frac{\beta+1}{2}})^2dR + \int_{\xi^{-\frac{1}{2}}}^{1}(R^{2\beta-2}\xi^{-\frac{\beta+1}{4}})^2dR + \int_{1}^{\infty}(R^{-2\beta-2}\xi^{-\frac{\beta+1}{4}})^2dR\\
				%&\lesssim \xi^{\frac{2-5\beta}{2}} + \xi^{-\frac{\beta+1}{2}}\left|1-\xi^{\frac{3-4\beta}{2}}\right|+\xi^{-\frac{\beta+1}{2}}\\
				&\lesssim \xi^{\frac{2-5\beta}{2}}+\xi^{-\frac{\beta+1}{2}}+\xi^{-\frac{7}{8}}\log(\xi).\\
			\end{aligned}
		\end{equation*}
		Hence, by Cauchy-Schwarz inequality we get
		\begin{equation*}
			\int_{1}^{\infty}\|U(R)\phi(R,\xi)\|_{L^2_R}|x(\xi)|\rho(\xi)d\xi\lesssim \left(\int_{1}^{\infty}|x(\xi)|^2\xi^{2}\rho(\xi)d\xi\right)^{\frac{1}{2}}.
		\end{equation*}
		For $\xi<1$, we have
		\begin{equation*}
			\begin{aligned}
				\|U(R)\phi(R,\xi)\|^2_{L^2_R}&\lesssim \int_{0}^{1}(R^{2\beta-2}R^{\frac{\beta+1}{2}})^2dR + \int_{1}^{\xi^{-\frac{1}{2}}}(R^{-2\beta-2}R^{\frac{-\beta+1}{2}})^2dR + \int_{1}^{\infty}(R^{-2\beta-2}\xi^{\frac{\beta-1}{4}})^2dR\\
				%&\lesssim 1+ \left|\xi^{\frac{5\beta+2}{2}}-1\right|+\xi^{\frac{\beta-1}{2}}\\
				&\lesssim \xi^{\frac{\beta-1}{2}}.\\
			\end{aligned}
		\end{equation*}
		Hence, by Cauchy-Schwarz inequality we get
		\begin{equation*}
			\int_{0}^{1}\|U(R)\phi(R,\xi)\|_{L^2_R}|x(\xi)|\rho(\xi)d\xi\lesssim \left(\int_{0}^{1}|x(\xi)|^2\rho(\xi)d\xi\right)^{\frac{1}{2}}.
		\end{equation*}
		In conclusion, we have
		\begin{equation*}
			\|(\widetilde{\mathcal{L}}-\mathcal{L}_\alpha)R^{-1}\mathcal{F}^{-1}\underline{\mathbf{x}}\|_{L^2(\mathbb{R}^3)}\lesssim \|\underline{\mathbf{x}}\|_{L^{2,1}_{d\rho}},
		\end{equation*}
		which implies
		\begin{equation*}
			\|R^{-1}\mathcal{F}^{-1}\underline{\mathbf{x}}\|_{H^2_\alpha(\mathbb{R}^3)}\lesssim \|\underline{\mathbf{x}}\|_{L^{2,1}_{d\rho}},
		\end{equation*}
		and concludes the proof for $s=1$.
		\par For non-integer $0<s<1$, we use interpolation. First, we consider the map $\underline{\mathbf{x}}\mapsto R^{-1}\mathcal{F}^{-1}\underline{\mathbf{x}}$ and obtain the bound
		\begin{equation*}
			\|R^{-1}\mathcal{F}^{-1}\underline{\mathbf{x}}\|_{H^{2s}_\alpha(\mathbb{R}^3)}\lesssim \|\underline{\mathbf{x}}\|_{L^{2,s}_{d\rho}}.
		\end{equation*}
		The converse bound is obtained by considering the map $u\mapsto \mathcal{F}RS(u)$, where $S(u)$ denotes the spherical average of $u$ in $\mathbb{R}^3$.
	\end{proof}

	\begin{remark}
		The restrictions $\alpha>-\frac{1}{4}+\frac{1}{25}$ and $0\leq s\leq 1$ originate in the avalibility of Hardy's inequality for $\mathcal{L}_\alpha$. By taking $\beta$ large enough, there will be more $s$ admissble. However, it's enough for us to only consider $0\leq s\leq 1$.
	\end{remark}

\iffalse
	\subsection{Littlewood-Paley theory for $\mathcal{L}_\alpha$}

	\begin{theorem}[Littlewood-Paley theorem for $\mathcal{L}_\alpha$]
		Fix $s\geq 0$. Let $1<p<\infty$ when $\alpha\geq 0$ and $\frac{6}{5+\beta}<p<\frac{6}{1-\beta}$ when $-\frac{1}{4}<\alpha<0$. Then for any $f\in C^\infty_0(\mathbb{R}^3)$,
		\begin{equation}
			\|f\|_{L^p(\mathbb{R}^3)}\approx \left\|\left(\sum_{k\geq 0}|P^\alpha_k f|^2\right)^{\frac{1}{2}}\right\|_{L^p(\mathbb{R}^3)}.
		\end{equation}
	\end{theorem}

	\begin{theorem}[Product estimate]
		Let $s\geq 0$ and $1<p<\infty$. Then we have for any $f,g\in C^\infty_0(\mathbb{R}^3)$,
		\begin{equation}
			\|fg\|_{H^{s,p}_\alpha} \leq C\left(\|f\|_{L^{q_1}}\|g\|_{H^{s,q_2}_\alpha}+\|g\|_{L^{r_1}}\|f\|_{H^{s,r_2}_\alpha}\right),
		\end{equation}
		where $q_1,r_1\in (,\infty]$, $q_2,r_2\in (1,\infty)$ and
		\begin{equation}
			\frac{1}{p}=\frac{1}{q_1}+\frac{1}{q_2}=\frac{1}{r_1}+\frac{1}{r_2}
		\end{equation}
	\end{theorem}
\fi

	\begin{proposition}\label{prop:nonlinear}
		Let $\alpha>-\frac{1}{4}+\frac{1}{16}$ and $N\geq0$ be large enough. Then the map
		\begin{equation*}
			\underline{\mathbf{x}}\mapsto \lambda^{-2}\mathcal{F}R\left(N_{2k-1}(R^{-1}\mathcal{F}^{-1}\underline{\mathbf{x}})\right)
		\end{equation*}
		is locally Lipschitz from $L^{\infty,N-2}L_\rho^{2,s+1/2}$ to $L^{\infty,N}L^{2,s}_\rho$ for $\frac{1}{8}<s<\min\{\frac{\beta}{4},\frac{1}{4}\}$. 
	\end{proposition}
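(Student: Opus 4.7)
\smallskip

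\noindent\textbf{Proof proposal.} The plan is to pass from the frequency side to $\mathbb{R}^3$ via the two Sobolev equivalences (Lemma~\ref{lem:equiv1} and Lemma~\ref{lem:equiv2}), and then bound each piece of $N_{2k-1}$ by fractional Leibniz / Sobolev embedding on $\mathbb{R}^3$. Set $\widetilde{\varepsilon}=\mathcal{F}^{-1}\underline{\mathbf{x}}$ and $\varepsilon = R^{-1}\widetilde{\varepsilon}$, viewed as a radial function on $\mathbb{R}^3$. Since $R\cdot N_{2k-1}(R^{-1}\widetilde{\varepsilon})$ multiplied by $R^{-1}$ is precisely $N_{2k-1}(\varepsilon)$, Lemma~\ref{lem:equiv1} gives
\begin{equation*}
\|\underline{\mathbf{x}}\|_{L^{2,s+1/2}_\rho}\approx \|\varepsilon\|_{H^{2s+1}_\alpha(\mathbb{R}^3)},\qquad \|\mathcal{F}RN_{2k-1}(R^{-1}\widetilde{\varepsilon})\|_{L^{2,s}_\rho}\approx \|N_{2k-1}(\varepsilon)\|_{H^{2s}_\alpha(\mathbb{R}^3)}.
\end{equation*}
The hypothesis $\tfrac14+\tfrac{1}{16}>-\alpha$, i.e.\ $\beta>\tfrac12$, together with $s<\min\{\tfrac14,\tfrac\beta 4\}$ ensures $2s+1<\min\{1+\tfrac\beta2,\tfrac32\}$, so Lemma~\ref{lem:equiv2} identifies $H^{2s+1}_\alpha=H^{2s+1}$ and $H^{2s}_\alpha=H^{2s}$ with equivalent norms. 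The problem is thus reduced to a product estimate
\begin{equation*}
\|N_{2k-1}(\varepsilon)\|_{H^{2s}(\mathbb{R}^3)}\lesssim \mathcal{A}(\tau)\,\|\varepsilon\|_{H^{2s+1}(\mathbb{R}^3)}+\text{higher order in }\|\varepsilon\|_{H^{2s+1}},
\end{equation*}
where $\mathcal{A}(\tau)=O(\tau^{-2})$ supplies the missing two powers of $\tau$ needed to go from $L^{\infty,N-2}$ inputs to an $L^{\infty,N}$ output.

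Next I bound each term of $N_{2k-1}(\varepsilon)=5(u_{2k-1}^4-u_0^4)\varepsilon+10u_{2k-1}^3\varepsilon^2+10u_{2k-1}^2\varepsilon^3+5u_{2k-1}\varepsilon^4+\varepsilon^5$ in $H^{2s}$. The pure quintic is the sharpest: by Kato--Ponce and Sobolev embedding,
\begin{equation*}
\|\varepsilon^5\|_{H^{2s}}\lesssim \|\varepsilon\|_{L^{12}}^4\|\varepsilon\|_{H^{2s,6}}\lesssim \|\varepsilon\|_{H^{2s+1}}^5,
\end{equation*}
where $H^{2s+1}\hookrightarrow L^{12}$ requires $\tfrac{2s+1}{3}\ge \tfrac{5}{12}$, i.e.\ $s\ge\tfrac18$, giving precisely the lower bound of the proposition; and $H^{2s+1}\hookrightarrow H^{2s,6}$ is the standard embedding with no loss. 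The quartic, cubic and quadratic terms are treated by the same fractional Leibniz with an additional multiplicative factor of $\|u_{2k-1}\|_{L^\infty_t W^{2s,p}}$ for appropriate $p$; here the explicit expansions from Section~\ref{sec:appro} and the $\mathcal{Q}_\beta$-type regularity near $r\approx t$ give uniform-in-$\tau$ bounds on these multipliers, and each factor of $\varepsilon$ contributes a $\tau^{-(N-2)}$ decay, so combined with the $\lambda^{-2}\approx \tau^{-2(1+\nu^{-1})}$ prefactor they yield much more decay than the $\tau^{-N}$ required for any large $N$. The linear piece $(u_{2k-1}^4-u_0^4)\varepsilon$ is handled by factoring
$(u_{2k-1}^4-u_0^4)=(u_{2k-1}-u_0)(u_{2k-1}^3+\cdots+u_0^3)$ and noting that $u_{2k-1}-u_0=\sum_{j\geq 1}v_j$ gains at least a factor $(t\lambda(t))^{-2}\approx \tau^{-2}$, while the product $u_0^j(u_{2k-1}-u_0)$ is seen from the near-origin expansion $v_1\in S^{(\beta+3)/2}$ to remain bounded (indeed vanishing as $R^{2\beta}$ near $R=0$), so that $(u_{2k-1}^4-u_0^4)$ is $O(\tau^{-2})$ as an $L^\infty$-multiplier on $H^{2s+1}$, closing the estimate at the required weight.

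The local Lipschitz property is then obtained by polarization: for each monomial, $a^j-b^j=(a-b)\sum_{i=0}^{j-1}a^{j-1-i}b^i$, and one runs the same fractional Leibniz / Sobolev embedding chain on $a-b$ with the remaining factors controlled in $\|\cdot\|_{H^{2s+1}}$. The resulting Lipschitz constant on a ball of radius $r$ in $L^{\infty,N-2}L^{2,s+1/2}_\rho$ is bounded by $C(r^{4}+\tau_0^{-2})$ times a constant depending on $k,\nu,\beta$, which can be made arbitrarily small by choosing $\tau_0$ large (to be fed into the fixed-point argument of Section~\ref{sec:final}).

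The main obstacle, and where one must be most careful, is the linear term $(u_{2k-1}^4-u_0^4)\varepsilon$: one needs both the smallness $\tau^{-2}$ and a uniform $H^{2s}\to H^{2s}$ multiplier bound, and the latter is delicate because $u_0^3$ is singular at $R=0$ when $\beta<1$ and $u_{2k-1}$ only has restricted $\mathcal{Q}_\beta$-type regularity across the light cone $r\approx t$ (the extension of $u_{2k-1}$ and $e_{2k-1}$ outside the light cone, mentioned in the remark before Section~\ref{sec:nonlinear}, must be arranged so as to preserve this regularity). Once this multiplier bound is established---exploiting the vanishing of $v_j$ at $R=0$ at the right order---the remaining estimates are routine, and the constraints $\tfrac18<s<\min\{\tfrac14,\tfrac\beta4\}$ are sharp in the sense that they respectively saturate the Sobolev embedding in the quintic estimate and the validity range of Lemma~\ref{lem:equiv2}.
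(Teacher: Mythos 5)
Your reduction via Lemma~\ref{lem:equiv1} and Lemma~\ref{lem:equiv2} to a product estimate in $H^{2s}(\mathbb{R}^3)$, and the pure-quintic bound $\|\varepsilon^5\|_{H^{2s}}\lesssim\|\varepsilon\|_{H^{2s+1}}^5$ via Kato--Ponce and $H^{2s+1}\hookrightarrow L^{12}$ (giving the $s>\tfrac18$ threshold), are correct and match the paper. But the mixed terms involving $u_{2k-1}$ do not close globally as you describe, and this is a real gap.

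The difficulty is that $u_{2k-1}(\tau,\cdot)$ does \emph{not} belong to $H^{2s+1}(\mathbb{R}^3)$ (nor to any $W^{2s,p}(\mathbb{R}^3)$ with $\tau$-uniform norm): the bulk $u_0=\lambda^{1/2}W_\alpha(\lambda r)$ decays only like $r^{-(\beta+1)/2}$ at spatial infinity, so $u_0\notin L^2(\mathbb{R}^3)$ for $\beta\le 2$, and by scaling $\|u_0(\tau,\cdot)\|_{L^p(\mathbb{R}^3)}=\lambda^{1/2-3/p}\|W_\alpha\|_{L^p}$ blows up as $\tau\to\infty$ for every $p>6$. Your claim that the ``explicit expansions ... give uniform-in-$\tau$ bounds'' on $\|u_{2k-1}\|_{L^\infty_t W^{2s,p}}$ is therefore false. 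Moreover, even with $p\le 6$ one cannot close the Kato--Ponce indices for $u_{2k-1}^4\varepsilon$: taking $u_{2k-1}^4\in L^{3/2}$ (from $u_0\in L^6$) forces the conjugate slot onto $W^{2s,\infty}$, which $H^{2s+1}$ does not embed into in dimension $3$. An $L^\infty$-type multiplier bound of order $\tau^{-2}$, which you lean on for the linear piece, does not in any case control $H^{2s}$ norms: multiplication by an $L^\infty$ function is not bounded on $H^{2s}$ for $s>0$. The paper circumvents all of this by \emph{localizing to unit cubes $Q\subset\mathbb{R}^3$}: on each $Q$, $u_{2k-1}$ and $u_{2k-1}-u_0$ lie in $H^{1+2s}(Q)$ with explicit $Q$-dependent bounds (via $\mathrm{dist}(Q,0)$), the quintilinear estimate $H^{1+2s}(Q)^{\otimes 5}\subset H^{2s}(Q)$ applies with a uniform constant, and the $\ell^2$ sum over $Q$ reconstructs the global $H^{2s}$ norm. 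Without that localization, the global argument you propose does not go through.

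A second, smaller omission: you never use the constraint $\nu>\tfrac{1}{2\beta}$ from Theorem~\ref{thm:main}, but it is essential here. The corrections $v_j$ (and hence $u_{2k-1}$ and $u_{2k-1}-u_0$) have $\mathcal{Q}_\beta$-type singularities of strength $(1-r/t)^{(1+\beta\nu)/2}$ across the light cone, so they only belong to $H^{1+2s}(Q)$ for $s<\tfrac{\beta\nu}{4}$; to meet $s>\tfrac18$ one needs $\tfrac{\beta\nu}{4}>\tfrac18$, i.e.\ $\nu>\tfrac{1}{2\beta}$. Your proposal silently assumes enough regularity for the multipliers, but that regularity is exactly what this hypothesis purchases.
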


	\begin{proof}
		Accroding to Lemma \ref{lem:equiv1} and Lemma \ref{lem:equiv2}, it suffices to prove that $v\mapsto \lambda^{-2}N_{2k-1}(v)$ is locally Lipschitz from $L^{\infty,N-2}H^{2s+1}$ to $L^{\infty,N}H^{2s}$, where
		\begin{equation*}
			N_{2k-1}(v) = 5(u_{2k-1}^4-u_0^4)v + 10u_{2k-1}^3v^2 + 10u_{2k-1}^2v^3 + 5u_{2k-1}v^4 + v^5.
		\end{equation*}
		\par First, we show that from $N_{2k-1}(\cdot)$ maps from $L^{\infty,N-2}H^{2s+1}$ to $L^{\infty,N}H^{2s}$. Since $u_{2k-1}(\tau,\cdot)$ may not belong to $H^{2s+1}(\mathbb{R}^3)$ for any $s\geq 0$, we localize our estimate into a single unit cube $Q$ in $\mathbb{R}^3$. Once we get the localized estimate
		\begin{equation*}
			\|N_{2k-1}(v)\|_{L^{\infty,N}H^{2s}(Q)} \leq C_Q \sum_{k=1}^{5} \|v\|_{L^{\infty,N-2}H^{2s+1}(Q)}^k
		\end{equation*}
		for each $Q$, where $C_Q\lesssim 1$ uniformly in $Q$, then taking the squared sum yields the desired result immediately.
		\par For the localized estimates, we shall take benefit of the following quintilinear estimate (it follows from the fractional Lebiniz rule \cite[page 105, Proposition 1.1]{Tay00} and Sobolev embedding): 
		%(to guess these embeddings, one may work with $Q=(-1,1)^3$ and $v(R)=R^{\gamma}$ to determine the range of avalible $\gamma$):
		\begin{equation*}
			H^{1+2s}\cdot H^{1+2s}\cdot H^{1+2s}\cdot H^{1+2s}\cdot H^{1+2s}\subset H^{2s},
		\end{equation*}
		which holds for $s>\frac{1}{8}$. %with a constant independent of the location of $Q$ (but dependent on the sclae of $Q$, which we have fixed).

		\iffalse
		, and
		\begin{equation*}
			H^{1+\frac{\beta}{2}-}_\alpha\cdot H^{1+\frac{\beta}{2}-}_\alpha\cdot H^{1+\frac{\beta}{2}-}_\alpha\cdot H^{1+2s}_\alpha\cdot H^{1+2s}_\alpha\subset H^{2s}_\alpha
		\end{equation*}
		for $s>\frac{1}{2}-\frac{3\beta}{4}$. 
		\fi

		\par Now we have $u_{2k-1}|_Q\in H^{1+2s}(Q)$ for $s<\min\{\frac{\beta}{4},\frac{\beta\nu}{4}\}$ for all cubes $Q$, and
		\begin{equation*}
			\|u_{2k-1}\|_{H^{1+2s}(Q)}\lesssim \lambda^{\frac{1}{2}}(\tau)\max\{\mathrm{dist}(Q,0)^{\frac{-\beta-1}{2}},\mathrm{dist}(Q,0)^{\frac{\beta-5}{2}}\}|\log(\mathrm{dist}(Q,0))|^{p}
		\end{equation*} Similarly, we have $(u_{2k-1}-u_0)|_Q\in H^{1+2s}(Q)$ for $s<\min\{\frac{\beta}{4},\frac{\beta\nu}{4}\}$ for all cubes $Q$, and
		\begin{equation*}
			\|u_{2k-1}-u_0\|_{H^{1+2s}(Q)}\lesssim \frac{\lambda^{\frac{1}{2}}(\tau)}{\tau^2} \max\{\mathrm{dist}(Q,0)^{\frac{-\beta+3}{2}},\mathrm{dist}(Q,0)^{\frac{\beta-1}{2}}\}|\log(\mathrm{dist}(Q,0))|^{q}.
		\end{equation*}
		Therefore we need $\frac{\beta\nu}{4}>\frac{1}{8}$, i.e. $\nu>\frac{1}{2\beta}$ to invoke the quintilinear estimate.
		
		\par Then we show that $N_{2k-1}(\cdot)$ is locally Lipschitz, whose proof is almost the same as above.
	\end{proof}

	\begin{remark}
		Here we make the assumption that $\alpha>-\frac{1}{4}+\frac{1}{16}$ in order to ensure $\frac{1}{8}<\frac{\beta}{4}$. This restriction on $\alpha$ comes from the equivalence of Sobolev spaces $H_\alpha^s(\mathbb{R}^3)=H^s(\mathbb{R}^3)$. However, one may eliminate or improve this extra $\frac{1}{16}$ in the lower bound by analyzing directly with $H^s_\alpha(\mathbb{R}^3)$.
	\end{remark}

	\section{Proof of the main theorem}\label{sec:proof}

	Let $I\subset\mathbb{R}$ be a nonempty interval, define
	\begin{equation*}
		S(I):=L_{t,x}^8(I\times \mathbb{R}^3)\cap L_t^5L_x^{10}(I\times\mathbb{R}^3)
	\end{equation*}
	to be the Strichartz space and
	\begin{equation*}
		\|\cdot\|_{S(I)}:=\|\cdot\|_{L_{t,x}^8(I\times \mathbb{R}^3)}+\|\cdot\|_{L_t^5L_x^{10}(I\times\mathbb{R}^3)}
	\end{equation*}
	to be the Strichartz norm. Let $S(t)$ denote the solution operator for the free wave equation with an inverse-square potential, that is 
	\begin{equation*}
		S(t)(u_0,u_1):=\cos(t\sqrt{\mathcal{L}_\alpha})u_0+\frac{\sin(t\sqrt{\mathcal{L}_\alpha})}{\sqrt{\mathcal{L}_\alpha}}u_1.
	\end{equation*}
	
	\begin{theorem}[Strichartz estimates]\label{thm:stri}
		Let $\alpha>-\frac{1}{4}+\frac{1}{25}$. Then for $(u_0,u_1)\in \dot{H}_\alpha^1(\mathbb{R}^3)\times L^2(\mathbb{R}^3)$, we have
		\begin{equation*}
			\|S(t)(u_0,u_1)\|_{S(I)}\leq C\|(u_0,u_1)\|_{\dot{H}_\alpha^1(\mathbb{R}^3)\times L^2(\mathbb{R}^3)},
		\end{equation*}
		for some constant $C$ independent of the time interval $I$.
	\end{theorem}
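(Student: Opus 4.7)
The plan is to follow the Keel--Tao abstract Strichartz framework, reducing the stated bound to a frequency-localized dispersive estimate for the half-wave propagator $e^{it\sqrt{\mathcal{L}_\alpha}}$ and then summing the frequency pieces using the Littlewood--Paley calculus associated with $\mathcal{L}_\alpha$. First I would invoke the Littlewood--Paley theory developed in \cite{KMVZZ18}: in the range $\alpha>-\tfrac14+\tfrac1{25}$ the frequency projectors $P_N^\alpha=\varphi(N^{-2}\mathcal{L}_\alpha)$ are bounded on the $L^p$ spaces we need, one has the square-function characterization $\|f\|_{\dot H^1_\alpha}^2\approx \sum_N N^2\|P_N^\alpha f\|_{L^2}^2$, and, by Lemma~\ref{lem:equiv2}, the norm equivalence $\dot H^1_\alpha=\dot H^1$.

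Second, I would establish the pointwise dispersive bound
\[
    \bigl\|e^{it\sqrt{\mathcal{L}_\alpha}}P_N^\alpha f\bigr\|_{L^\infty_x(\mathbb{R}^3)}\;\lesssim\;|t|^{-1}N^2\|f\|_{L^1_x(\mathbb{R}^3)},\qquad N>0,
\]
by combining the Gaussian heat-kernel upper bounds for $e^{-s\mathcal{L}_\alpha}$ available in this range of $\alpha$ with the subordination formula that represents the wave kernel as a Gaussian superposition of heat kernels. As an alternative route, one can read the kernel off the distorted Hankel transform of Section~\ref{sec:spec}, exploiting the expansions for $\phi(R,\xi)$ and $\psi^{\pm}(R,\xi)$ of Propositions~\ref{prop:phi}--\ref{prop:psi} together with the symbol behavior of $\rho(\xi)$ from Proposition~\ref{prop:a}, and then running a stationary-phase argument on the frequency-localized piece.

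Third, interpolating this dispersive estimate against the $L^2$ conservation $\|e^{it\sqrt{\mathcal{L}_\alpha}}g\|_{L^2}=\|g\|_{L^2}$ and feeding the resulting $L^{r'}\to L^r$ decay into the abstract Keel--Tao theorem yields
\[
    \bigl\|e^{it\sqrt{\mathcal{L}_\alpha}}P_N^\alpha f\bigr\|_{L^q_tL^r_x(\mathbb{R}\times\mathbb{R}^3)}\;\lesssim\;N\,\|P_N^\alpha f\|_{L^2_x}
\]
at every wave-admissible pair $(q,r)$ with scaling $s=1$; both pairs appearing in $S(I)$, namely $(q,r)=(8,8)$ and $(q,r)=(5,10)$, are admissible since in each case $\tfrac1q+\tfrac1r\leq\tfrac12$ and $\tfrac32-\tfrac3r-\tfrac1q=1$. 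Since $q,r\geq 2$ for both pairs, Minkowski's inequality lets us move the $\ell^2_N$-sum inside $L^q_tL^r_x$, and the spatial square function together with the identity $\|u_0\|^2_{\dot H^1_\alpha}\approx \sum_N N^2\|P_N^\alpha u_0\|_{L^2}^2$ produces the claimed bound, uniformly in the interval $I$. The $\sin(t\sqrt{\mathcal{L}_\alpha})/\sqrt{\mathcal{L}_\alpha}$ half is handled identically after observing that $\mathcal{L}_\alpha^{-1/2}$ is an isometry from $L^2$ to $\dot H^1_\alpha$.

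The main obstacle is the dispersive step. Because $\alpha/r^2$ is scale-invariant and long-range, it is neither a short-range perturbation of $-\Delta$ nor accessible to a direct classical stationary-phase analysis of the wave kernel; one must route the argument either through heat-kernel subordination, which demands Gaussian upper bounds that begin to fail precisely at $\alpha=-\tfrac14+\tfrac1{25}$ (explaining the lower threshold in the statement and matching the local well-posedness range of \cite{MMZ20}), or through the spectral representation of Section~\ref{sec:spec}, where the small-$\beta$ behavior of $\phi(R,\xi)$ near $R=0$ forces one to use the sharp asymptotics developed there. I would adopt the subordination route, since it suffices for the non-endpoint Strichartz pairs appearing in $S(I)$ and quotes only estimates already available in \cite{KMVZZ18} and \cite{MMZ20}.
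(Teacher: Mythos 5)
The paper's own ``proof'' of Theorem~\ref{thm:stri} is a single-line citation to \cite[Proposition 2.5]{MMZ20}; it does not reprove the Strichartz estimates. Your proposal, by contrast, sketches a complete argument from scratch, so the two are not directly comparable at the level of technique.

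That said, your sketch contains a genuine gap in the dispersive step. You propose to derive
\[
\bigl\|e^{it\sqrt{\mathcal{L}_\alpha}}P_N^\alpha f\bigr\|_{L^\infty_x}\lesssim |t|^{-1}N^2\|f\|_{L^1_x}
\]
by subordination from Gaussian heat-kernel upper bounds for $e^{-s\mathcal{L}_\alpha}$, and you locate the breakdown of Gaussian bounds at $\alpha=-\tfrac14+\tfrac1{25}$. Both claims are off. The Gaussian upper bound for $e^{-s\mathcal{L}_\alpha}(x,y)$ fails for \emph{every} negative $\alpha$: when $-\tfrac14<\alpha<0$ the heat kernel has the extra polynomial weights $(1+\sqrt{s}/|x|)^{\sigma}(1+\sqrt{s}/|y|)^{\sigma}$ with $\sigma=\sigma(\alpha)>0$ and is therefore unbounded as $x\to 0$, so the $L^1\to L^\infty$ smoothing estimate you need is false. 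Correspondingly, the frequency-localized dispersive estimate for the half-wave propagator does not hold for negative $\alpha$; this is precisely why the literature on Strichartz estimates with inverse-square potentials (including the reference chain behind \cite{MMZ20}) does \emph{not} argue via a pointwise dispersive bound plus Keel--Tao, but rather via local smoothing/weighted resolvent estimates and a careful treatment of the low-frequency, near-origin regime, and why the admissible exponent range is restricted in an $\alpha$-dependent way rather than being the full wave-admissible set. Since the theorem explicitly covers $\alpha\in(-\tfrac14+\tfrac1{25},0)$, the subordination route you chose as your primary path is unavailable exactly where the statement is nontrivial. The threshold $\alpha>-\tfrac14+\tfrac1{25}$ comes from the constraint that the specific pairs $(8,8)$ and $(5,10)$ remain inside the $\alpha$-dependent admissible range for the perturbed Strichartz estimates, not from the heat kernel.

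Your alternative route through the distorted Hankel transform of Section~\ref{sec:spec} is closer in spirit to what can actually be made to work, but as written it is only gestured at and would require a genuinely new argument to handle the singular low-frequency behavior of $\phi(R,\xi)$ near $R=0$ (the $R^{(\beta+1)/2}$ vanishing is too weak to give $L^1\to L^\infty$ decay uniformly down to the origin). To fill the gap you would need either to import the weighted/local-smoothing machinery from the references behind \cite{MMZ20}, or to abandon the pointwise dispersive estimate entirely. Given that the paper simply cites \cite{MMZ20}, the cleanest repair is to do the same.
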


	\begin{proof}
		See \cite[Proposition 2.5]{MMZ20}.
	\end{proof}

	\begin{theorem}[local well-posedness]\label{thm:well}
		Let $\alpha>-\frac{1}{4}+\frac{1}{25}$. Let $(u_0,u_1)\in \dot{H}_\alpha^1(\mathbb{R}^3)\times L^2(\mathbb{R}^3)$ satisfying $\|(u_0,u_1)\|_{\dot{H}_\alpha^1(\mathbb{R}^3)\times L^2(\mathbb{R}^3)}\leq A$. Then, there exists $\delta_0=\delta_0(A)>0$ such that if $I\subset\mathbb{R}$ and
		\begin{equation*}
			\|S(t)(u_0,u_1)\|_{S(I)}<\delta
		\end{equation*}
		for some $0<\delta<\delta_0$, then there exists a unique solution $u\in C(I,\dot{H}_\alpha^1(\mathbb{R}^3)\times L^2(\mathbb{R}^3))$ of (\ref{eq:NLW}) with initial data $(u_0,u_1)$, which satisfies
		\begin{equation*}
			\|u\|_{S(I)}\lesssim\delta.
		\end{equation*}
	\end{theorem}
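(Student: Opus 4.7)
The plan is to run the standard small-data Strichartz contraction-mapping argument on the Duhamel form of (\ref{eq:NLW}), using Theorem \ref{thm:stri} as a black box. Define
$$\Phi(u)(t) = S(t)(u_0,u_1) + \int_0^t \frac{\sin((t-s)\sqrt{\mathcal{L}_\alpha})}{\sqrt{\mathcal{L}_\alpha}}\, u^5(s)\, ds,$$
and seek a fixed point in the closed ball $B := \{u \in S(I) : \|u\|_{S(I)} \le 2\delta\}$ equipped with the $S(I)$-distance. Theorem \ref{thm:stri} controls the free evolution, and its dual counterpart (obtained by the usual $TT^{\ast}$ argument, with constants independent of $|I|$) yields the inhomogeneous bound
$$\Bigl\|\int_0^t \frac{\sin((t-s)\sqrt{\mathcal{L}_\alpha})}{\sqrt{\mathcal{L}_\alpha}} F(s)\, ds\Bigr\|_{S(I)} \lesssim \|F\|_{L^1_t L^2_x(I \times \mathbb{R}^3)}.$$

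Second I control the nonlinearity by H\"older in time and space: since $(5,10)$ is a wave-admissible pair in three dimensions,
$$\|u^5\|_{L^1_t L^2_x} = \bigl\|\,\|u\|_{L^{10}_x}^5\bigr\|_{L^1_t} = \|u\|_{L^5_t L^{10}_x}^5 \le \|u\|_{S(I)}^5,$$
and the pointwise inequality $|u^5 - v^5| \lesssim (|u|^4 + |v|^4)|u-v|$ combined with H\"older in the $L^{10}_x$ index gives
$$\|u^5 - v^5\|_{L^1_t L^2_x} \lesssim \bigl(\|u\|_{S(I)}^4 + \|v\|_{S(I)}^4\bigr)\,\|u-v\|_{S(I)}.$$
Chaining these two inputs shows that for $u,v \in B$,
$$\|\Phi(u)\|_{S(I)} \le \delta + C(2\delta)^5, \qquad \|\Phi(u) - \Phi(v)\|_{S(I)} \le 2C(2\delta)^4\,\|u-v\|_{S(I)}.$$
Choosing $\delta_0 = \delta_0(A) > 0$ small enough that $C(2\delta_0)^5 < \delta_0$ and $2C(2\delta_0)^4 < \tfrac12$, we see that $\Phi$ maps $B$ into itself and is a strict contraction for every $0 < \delta < \delta_0$; Banach's fixed point theorem then produces a unique $u \in B$ with $\Phi(u) = u$, which by definition solves (\ref{eq:NLW}) in the Duhamel sense, and the bound $\|u\|_{S(I)} \lesssim \delta$ is built into the conclusion.

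Finally, to promote the solution to $C(I, \dot{H}^1_\alpha(\mathbb{R}^3) \times L^2(\mathbb{R}^3))$: the free part $S(t)(u_0,u_1)$ lies there by the $L^2$-functional calculus for $\mathcal{L}_\alpha$ and the hypothesis $\|(u_0,u_1)\|_{\dot H^1_\alpha \times L^2} \le A$, while the Duhamel term satisfies the energy-space estimate
$$\Bigl\|\int_0^t \tfrac{\sin((t-s)\sqrt{\mathcal{L}_\alpha})}{\sqrt{\mathcal{L}_\alpha}} u^5(s)\, ds\Bigr\|_{L^\infty_t(\dot H^1_\alpha \times L^2)(I)} \lesssim \|u^5\|_{L^1_t L^2_x(I \times \mathbb{R}^3)} \le \|u\|_{S(I)}^5,$$
which is again a dual-Strichartz consequence of Theorem \ref{thm:stri} at the $(\infty,2)$ admissible endpoint, combined with dominated convergence to get the time continuity. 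Uniqueness in $B$ follows directly from the contraction, and the standard persistence-of-regularity argument extends uniqueness to the full class $C(I,\dot H^1_\alpha \times L^2)$ intersecting any $S(J)$-ball on compact $J \subset I$. There is essentially no main obstacle: the inverse-square potential enters only through the hypothesis of Theorem \ref{thm:stri}, which forces the restriction $\alpha > -\tfrac14 + \tfrac1{25}$, and every subsequent step is structurally identical to the small-data Strichartz argument for the flat energy-critical wave equation.
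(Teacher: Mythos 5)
Your argument is correct and is precisely the standard small-data Strichartz contraction argument; the paper does not give its own proof but simply cites \cite[Proposition 2.7]{MMZ20}, and that proposition is proved by exactly this scheme (Duhamel fixed point in $S(I)$ using the Strichartz estimate of Theorem \ref{thm:stri}, with the nonlinearity closed via H\"older using the admissible pair $(5,10)$). The only point worth flagging is that the inhomogeneous and energy-endpoint Strichartz bounds you invoke are not literally part of the statement of Theorem \ref{thm:stri}, but they do follow from it by the usual duality/$TT^\ast$ machinery and are established in \cite{MMZ20} under the same hypothesis $\alpha > -\tfrac14 + \tfrac{1}{25}$, so this is a notational rather than a logical gap.
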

	
	\begin{proof}
		See \cite[Proposition 2.7]{MMZ20}.
	\end{proof}

	\begin{remark}[small data global well-posedness]\label{rem:sdgw}
		Combining Theorem \ref{thm:stri} and Theorem \ref{thm:well}, we see that if $\|(u_0,u_1)\|_{\dot{H}_\alpha^1(\mathbb{R}^3)\times L^2(\mathbb{R}^3)}$ is small enough, the solution of (\ref{eq:NLW}) exists globally.
	\end{remark}

	\begin{remark}[persistence of regularity]
		One can show that if the initial data $(u_0,u_1)\in (\dot{H}^{1}_\alpha(\mathbb{R}^3)\cap \dot{H}^{1+\mu}_\alpha(\mathbb{R}^3))\times H^{\mu}_\alpha(\mathbb{R}^3)$ for some $0\leq\mu\leq 1$, then the solution remains in this space: $(u(t),\partial_t u(t))\in C(I,(\dot{H}^{1}_\alpha(\mathbb{R}^3)\cap \dot{H}^{1+\mu}_\alpha(\mathbb{R}^3))\times H^{\mu}_\alpha(\mathbb{R}^3))$.
	\end{remark}

	\iffalse
	\begin{remark}
		We remark that there is the only point where we demand $\alpha>-\frac{1}{4}+\frac{1}{25}$ and it's not clear whether this $\frac{1}{25}$-increase in the lower bound can be avoided.
	\end{remark}
	\fi

	To end this section and the whole paper, we put all the ingredients established in the above sections together to give a proof of Theorem \ref{thm:main}.
	\begin{proof}[Proof of Theorem \ref{thm:main}]
		Combining Proposition \ref{prop:sol_op} and Proposition \ref{prop:nonlinear}, the contraction mapping principle gives $\underline{\mathbf{x}}\in L^{\infty,N-2}L^{2,s+\frac{1}{2}}_\rho$ satisfying the desired equation. Let $v=R^{-1}\mathcal{F}^{-1}\underline{\mathbf{x}}\in L^{\infty,N-2}H^{2s+1}_\alpha$ and $u=u_{2k-1}+v$, then $u$ satisfies the nonlinear wave equation inside the light-cone. Given any $\varepsilon>0$, by taking $t_0$ small enough, we can assume
		\begin{equation*}
			\int_{K_t} \left((\partial_t u-\partial_t u_0)^2 + |\nabla u-\nabla u_0|^2 + \frac{\alpha}{r^2} (u-u_0)^2 + (u-u_0)^6\right) dx\leq \delta,
		\end{equation*}
		and
		\begin{equation*}
			\int_{K_t^c} \left((\partial_t u)^2 + |\nabla u|^2 + \frac{\alpha}{r^2} u^2 + u^6\right) dx \leq\delta
		\end{equation*}
		for any $0<t\leq t_0$.
		\par Accroding to Theorem \ref{thm:well}, the initial value problem (\ref{eq:NLW}) with $w(t_0)=u(t_0)$, $\partial_t w(t_0)=\partial_t u(t_0)$ admits a (backward) solution $w$ on $(T_-,t_0]$ for some $T_-<t_0$ and $w=u$ inside the light-cone due to the finite speed of propagation. 
		
		\par To show that $w$ blows up exactly at $(0,0)$, we first prove that the $\dot{H}^1_\alpha(\mathbb{R}^3)\times L^2(\mathbb{R}^3)$ norm of $w$ stays small outside the forward light-cone as $t\to 0^+$:
		\begin{equation*}
			\int_{K_t^c} \left((\partial_t w)^2 + |\nabla w|^2 +\frac{\alpha}{r^2} w^2 + w^6\right) dx \lesssim \delta
		\end{equation*}
		for all $0<t\leq t_0$. For this, we use the energy conservation and a bootstrap argument. The conserved energy of $w$ is given by
		\begin{equation*}
			\mathcal{E}[w](t):=\int_{\mathbb{R}^3} \left(\frac{1}{2}(\partial_t w)^2 + \frac{1}{2}|\nabla w|^2 +\frac{\alpha}{2r^2}w^2 -\frac{w^6}{6}\right) dx= \mathcal{E}[w](t_0)=\mathcal{E}[W_\alpha]+O(\delta).
		\end{equation*}
		From above, we have the smallness of energy outside the light-cone:
		\begin{equation*}
			\left|\int_{K_t^c} \left(\frac{1}{2}(\partial_t w)^2 + \frac{1}{2}|\nabla w|^2 +\frac{\alpha}{2r^2} w^2 -\frac{w^6}{6}\right) dx \right|\lesssim \delta
		\end{equation*}
		for all $0<t\leq t_0$. Together with the Sobolev embedding $\dot{H}^1_\alpha=\dot{H}^1 \hookrightarrow L^6$ (with an absolute constant independent of $t$)
		\begin{equation*}
			\int_{K_t^c} w^6 dx\lesssim \left(\int_{K_t^c} \left(|\nabla w|^2 + \frac{\alpha}{r^2} w^2\right) dx\right)^3,
		\end{equation*}
		we get there is a (small) constant $C_0$ such that if 
		\begin{equation*}
			\int_{K_t^c} \left(\frac{1}{2}(\partial_t w)^2 + \frac{1}{2}|\nabla w|^2 + \frac{\alpha}{2r^2} w^2\right) dx \leq C_0,
		\end{equation*}
		then it actually $\lesssim \delta$ (we may assume $\delta\ll C_0$). This is true at time $t_0$, hence a continuity argument applied to the continuous function
		$$t\mapsto \int_{K_t^c} \left(\frac{1}{2}(\partial_t w)^2 + \frac{1}{2}|\nabla w|^2 + \frac{\alpha}{2r^2} w^2\right) dx$$
		shows that it $\lesssim\delta$ for all $t\in (0,t_0]$.

		Then, we can show that $w$ exists up to $t=0$, i.e. $T_-=0$. Suppose $T_->0$, then let $t_->T_-$ be close enough to $T_-$ and let $\widetilde{w}$ be the solution of (\ref{eq:NLW}) with initial data $(\widetilde{w}(t_-),\partial_t\widetilde{w}(t_-))=((1-\chi(|x|/t_-))w(t_-),(1-\chi(|x|/t_-))\partial_t w(t_-))$, where $\chi(|x|/t_-)$ is a smooth cutoff function being $1$ near $|x|\leq\frac{1}{4}t_-$ and being $0$ near $|x|\geq\frac{1}{2}t_-$. Then the small-data-global-well-posedness theory (see Remark \ref{rem:sdgw}) implies that $\widetilde{w}$ is a global solution. Let $w'(t,x)=w(t,x)$ inside the light-cone and $w'(t,x)=\widetilde{w}(t,x)$ on $\{|x|>\frac{3}{4}t\}$, then $w'$ pastes into a solution in a small neighborhood of $\{t_-\}\times\{|x|>\frac{3}{4}t_-\}$ and $w'=w$ in this neighborhood due to the finite speed of propagation. This gives an  extends of $w$ across $T_-$ (we take $t_--T_-$ to be small enough), a contradiction!
		\iffalse
		\par Finally, to show that $w$ blows up exactly at $(0,0)$, we take a cutoff function $\bar{\chi}\in C^\infty_0(\mathbb{R}^3)$ supported in a small neighborhood of $0$, then it suffices to show that $((1-\chi(x))w(t_0,x),(1-\chi(x))\partial_t w(t_0,x))$ leads to a solution living across $t=0$ for $t_0$ small enough. This follows immediately from small-data-global-well-posedness.
		\fi
	\end{proof}

	\appendix
	\section{A kind of power series}\label{app:A}
	In Section \ref{sec:appro}, we frequently use the following type of ``power series":
	\begin{equation*}
		f(R)=R^\delta\sum_{i_1,\cdots,i_N=0}^{\infty}a_{i_1,\cdots,i_N}R^{i_1p_1+\cdots+i_Np_N},\quad R < R_0,
	\end{equation*}
	where $N\geq 1$ is a finite number and $p_1,\cdots,p_N$'s are finitely many ``base powers". This type of series works just like power series, for which $N=1$ and $p_1=1$. In fact, define 
	\begin{equation*}
		g(R_1,\cdots,R_N)=\sum_{i_1,\cdots,i_N=0}^{\infty}a_{i_1,\cdots,i_N}R_1^{i_1}\cdots R_N^{i_N},
	\end{equation*}
	which is a multi-variable power series, then
	\begin{equation*}
		f(R)=R^{\delta}g(R^{p_1},\cdots,R^{p_N}).
	\end{equation*}

	\bigskip
	\textbf{Acknowledgement.} The author would like to thank Lifeng Zhao for introducing this interesting problem and so many useful discussions.

\bibliography{ref_paper}
\bibliographystyle{alpha}

\end{document}